\def\@tocline#1#2#3#4#5#6#7{\relax
  \ifnum #1>\c@tocdepth 
  \else
    \par \addpenalty\@secpenalty\addvspace{#2}%
    \begingroup \hyphenpenalty\@M
    \@ifempty{#4}{%
      \@tempdima\csname r@tocindent\number#1\endcsname\relax
    }{%
      \@tempdima#4\relax
    }%
    \parindent\z@ \leftskip#3\relax \advance\leftskip\@tempdima\relax
    \rightskip\@pnumwidth plus4em \parfillskip-\@pnumwidth
    #5\leavevmode\hskip-\@tempdima
      \ifcase #1
       \or\or \hskip 1em \or \hskip 2em \else \hskip 3em \fi%
      #6\nobreak\relax
    \hfill\hbox to\@pnumwidth{\@tocpagenum{#7}}\par
    \nobreak
    \endgroup
  \fi}
\newcounter{dummy}
\newtheorem{thm}{Theorem}[section]
\newtheorem{lemma}[thm]{Lemma}
\newtheorem{cor}[thm]{Corollary}
\newtheorem{prop}[thm]{Proposition}
\theoremstyle{definition}
\newtheorem{example}[thm]{Example}
\newtheorem{notation}[thm]{Notation}
\newtheorem{remark}[thm]{Remark}
\newtheorem{definition}[thm]{Definition}
\newtheorem{question}[thm]{Question}
\numberwithin{equation}{section}
\newcommand{\Spec}{\mathrm{Spec}}
\newcommand{\Proj}{\mathrm{Proj}}
\newcommand{\Hilb}{\mathrm{Hilb}}
\newcommand{\Gr}{\mathrm{Gr}}
\newcommand{\Sym}{\mathrm{Sym}}
\newcommand{\Wed}{\bigwedge}
\newcommand{\Hom}{\mathrm{Hom}}
\newcommand{\Mat}{\mathrm{Mat}}
\newcommand{\Sing}{\mathrm{Sing}}
\newcommand{\Span}{\mathrm{Span}}
\newcommand{\GL}{\mathrm{GL}}
\newcommand{\LM}{\mathrm{LM}}
\newcommand{\iin}{\mathrm{in}}
\newcommand{\V}{\mathrm{V}}
\newcommand{\rank}{\mathrm{rank}}
\newcommand{\codim}{\mathrm{codim}}
\newcommand{\ord}{\mathrm{ord}}
\newcommand{\rev}{\mathrm{grevlex}}
\newcommand{\colength}{\mathrm{colength}}
\newcommand{\bideg}{\mathrm{bideg}}
\newcommand{\ch}{\mathrm{char}}
\renewcommand{\AA}{\mathbb{A}}
\renewcommand{\P}{\mathbb{P}}
\newcommand{\N}{\mathbb{N}}
\newcommand{\Z}{\mathbb{Z}}
\newcommand{\bS}{\mathbb{S}}
\newcommand{\FF}{\mathbb{F}}
\newcommand{\kk}{{\Bbbk}}
\newcommand{\mL}{\mathrm{L}}
\newcommand{\mR}{\mathrm{R}}
\newcommand{\mfI}{\mathfrak{I}}
\newcommand{\mfL}{\mathfrak{L}}
\newcommand{\mfK}{\mathfrak{K}}
\newcommand{\mfP}{\mathfrak{P}}
\newcommand{\mfQ}{\mathfrak{Q}}
\newcommand{\mfB}{\mathfrak{B}}
\newcommand{\mfa}{\mathfrak{a}}
\newcommand{\mfb}{\mathfrak{b}}
\newcommand{\mfp}{\mathfrak{p}}
\newcommand{\mfX}{\mathfrak{X}}
\newcommand{\mm}{{\mathfrak{m}}}
\newcommand{\wtX}{\widetilde{\mathfrak{X}}}
\newcommand{\bfX}{\mathbf{X}}
\newcommand{\bfY}{\mathbf{Y}}
\newcommand{\bfW}{\mathbf{W}}
\newcommand{\bfM}{\mathbf{M}}
\newcommand{\bfE}{\mathbf{E}}
\newcommand{\bfA}{\mathbf{A}}
\newcommand{\bfB}{\mathbf{B}}
\newcommand{\bfI}{\mathbf{I}}
\newcommand{\bfN}{\mathbf{N}}
\newcommand{\bfa}{\mathbf{a}}
\newcommand{\bfe}{\mathbf{e}}
\newcommand{\bfu}{\mathbf{u}}
\newcommand{\bfv}{\mathbf{v}}
\newcommand{\bfw}{\mathbf{w}}
\newcommand{\bfx}{\mathbf{x}}
\newcommand{\bfy}{\mathbf{y}}
\newcommand{\bfz}{\mathbf{z}}
\newcommand{\mcV}{\mathcal{V}}
\newcommand{\mcW}{\mathcal{W}}
\newcommand{\mcR}{\mathcal{R}}
\newcommand{\mcS}{\mathcal{S}}
\newcommand{\mcT}{\mathcal{T}}
\newcommand{\mcQ}{\mathcal{Q}}
\newcommand{\mcO}{{\mathcal{O}}}
\newcommand{\mcX}{\mathcal{X}}
\newcommand{\mcY}{\mathcal{Y}}
\newcommand{\mcA}{\mathcal{A}}
\newcommand{\mcB}{\mathcal{B}}
\newcommand{\mcC}{\mathcal{C}}
\newcommand{\mcD}{\mathcal{D}}
\newcommand{\mcE}{\mathcal{E}}
\newcommand{\mcU}{\mathcal{U}}
\newcommand{\CC}{{\mathcal{C}}}
\newcommand{\sd}{\mathrm{sd}}
\newcommand\myitem[1][]{\item[#1]\refstepcounter{dummy}\def\@currentlabel{#1}}
\begin{document}
\author[R.\,Ramkumar, A. \,Sammartano]{Ritvik~Ramkumar and Alessio~Sammartano}
\address{(Ritvik Ramkumar) Department of Mathematics\\Cornell University\\Ithaca, NY\\USA}
\email{ritvikr@cornell.edu}
\address{(Alessio Sammartano) Dipartimento di Matematica \\ Politecnico di Milano \\ Milan \\ Italy}
\email{alessio.sammartano@polimi.it}

\title{Rational singularities of nested Hilbert schemes}

\subjclass[2020]{Primary: 13D02, 13P10, 14B05, 14C05; Secondary: 05E40,  13A50, 13C40, 13D10, 
 13F55,  14M15, 20G05}
\keywords{
Hilbert schemes of points on surfaces;
 smoothable scheme;  
Hilbert-Burch theorem;
 variety of matrices;
Kempf-Lascoux-Weyman technique; squarefree Gr\"obner degeneration;  
Stanley-Reisner correspondence; 
$F$-rational singularity}

\begin{abstract}
The Hilbert scheme of  points $\Hilb^n(S)$ of a smooth surface $S$ is a well-studied parameter space, lying at the interface of  algebraic geometry, commutative algebra, representation theory, combinatorics,  and mathematical physics. 
The foundational result is  a classical theorem of Fogarty, 
stating that $\Hilb^n(S)$ is a smooth variety of dimension $2n$. 
In recent years there has been growing interest in a natural generalization of $\Hilb^n(S)$,
 the \emph{nested Hilbert scheme} $\Hilb^{(n_1, n_2)}(S)$, 
which parametrizes nested pairs of zero-dimensional subschemes $Z_1 \supseteq  Z_2$ of $S$ with $\deg Z_i=n_i$.
In contrast to Fogarty’s theorem, 
$\Hilb^{(n_1, n_2)}(S)$ is almost always singular, 
and very little is known about its singularities. 
In this paper,
 we aim to advance the knowledge of the geometry of these
nested Hilbert schemes.
Work by Fogarty in the 70's shows that $\Hilb^{(n,1)}(S)$ is a normal Cohen-Macaulay variety, 
and Song more recently proved that  it has rational singularities.
In our main result, we prove that the nested Hilbert scheme $\Hilb^{(n,2)}(S)$ has rational singularities. 
We employ an array of tools from commutative algebra to prove this theorem.
Using Gr\"obner bases, we establish a connection between $\Hilb^{(n,2)}(S)$ and  a certain variety of matrices with an action of the general linear group. 
This variety of matrices plays a central role in our work, and we 
analyze it by  various algebraic techniques,
including the Kempf-Lascoux-Weyman technique of calculating syzygies, square-free Gr\"obner degenerations, 
and the Stanley-Reisner correspondence.
Along the way, we also obtain results on classes of irreducible and reducible  nested Hilbert schemes, dimension of singular loci, and  $F$-singularities in positive characteristic.
\end{abstract}

\maketitle

\vspace*{-0.5cm}

\tableofcontents

\newpage

\section{Introduction}

The Hilbert scheme of $n$ points on a smooth  surface $S$, 
denoted by $\Hilb^n(S)$, parametrizing closed zero-dimensional subschemes of $S$ of degree $n$, 
is a very important moduli space in algebraic geometry with far reaching connections to other areas of mathematics. 
Fogarty \cite{Fogarty} proved it is nonsingular of dimension $2n$.
When $S = \P^2$,
 Ellingsrud and Str\o mme \cite{ELLINGSRUD_STROMME} computed its homology, and Arcara,
Bertram, Coskun, and Huizenga \cite{ABCH} studied its birational geometry in great detail. 
By studying its cohomology rings,
 Nakajima connected $\Hilb^n(S)$ to representation theory \cite{NAKAJIMA}, while Haimain brought it into prominence in combinatorics through his proof of the $n!$ conjecture \cite{HAIMAN}. 
When $S$ is a K3 surface, the Hilbert scheme is a  Hyperk\"ahler manifold  \cite{BEAUVILLE}, an important class of varieties in symplectic geometry and mathematical physics.
See \cite{Gottsche,NakajimaBook} for an overview of the area.

In recent years,
 there has been growing interest in a natural generalization of $\Hilb^n(S)$,
 the \emph{nested Hilbert scheme} $\Hilb^{(n_1, n_2)}(S)$, 
which parametrizes nested pairs of zero-dimensional subschemes $Z_1 \supseteq  Z_2$ of $S$ with $\deg Z_i=n_i$.
See for example
\cite{Addington,BE, BFT,GNR,GSY,JJ2,JJ,Negut,OR,RY,SV,VFJ} and the references therein.
Cheah \cite{Cheah} showed that $\Hilb^{(n+1, n)}(S)$  are the only  smooth nested Hilbert schemes.
Among the remaining nested Hilbert schemes, 
the only well studied one is  $\Hilb^{(n,1)}(S)$. 
Fogarty \cite{Fogarty2} showed it is normal and Cohen-Macaulay,
 while Song \cite{Song} proved it has rational singularities (in characteristic $0$). 
These results rely crucially on the fact that 
$\Hilb^{(n,1)}(S)$ is in fact the universal family over $\Hilb^{n}(S)$.

The goal of this paper is to introduce  methods to study questions regarding the singularities of $\Hilb^{(n_1, n_2)}(S)$. 
Our main result is 

\begin{thm}
\label{MainTheorem}
Let $S$ be a smooth, connected surface over a field $\kk$ of characteristic 0.
The nested Hilbert scheme $\Hilb^{(n,2)}(S)$ is an irreducible $2n$-fold, non-singular in codimension 3, with  rational singularities. In particular, $\Hilb^{(n,2)}(S)$  is normal and  Cohen-Macaulay.
\end{thm}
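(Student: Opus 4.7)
The plan is to reduce the theorem, via the smoothness of $S$ and the Hilbert--Burch theorem, to a statement about an affine variety of matrices $\mathfrak{X}$ carrying a natural action of a product of general linear groups, and then to establish rational singularities, Cohen--Macaulayness, irreducibility, and the codimension bound on the singular locus for $\mathfrak{X}$ by combining the Kempf--Lascoux--Weyman geometric technique with a square-free Gröbner degeneration to a Stanley--Reisner ring.

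First I would localize. Since $S$ is smooth and all the assertions in the theorem are étale local, it suffices to analyze an affine chart of $\Hilb^{(n,2)}(\AA^2)$. On such a chart I would invoke the Hilbert--Burch structure theorem: an ideal $I_1 \subset \kk[x,y]$ of colength $n$ is cut out by the maximal minors of an $(n-1) \times n$ matrix $M$, and an ideal $I_2 \supseteq I_1$ of colength $2$ corresponds to a two-dimensional quotient of $\kk[x,y]/I_1$. Reformulating the containment $I_1 \subseteq I_2$ as a rank/factorization condition on a matrix built from $M$ and from the data of $I_2$ should produce an explicit affine model $\mathfrak{X}$, equipped with a proper surjective morphism to the chart of $\Hilb^{(n,2)}(S)$ that identifies the local singularity theory on both sides, and carrying a natural $\GL$-action.

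Next I would analyze $\mathfrak{X}$ by the Kempf--Lascoux--Weyman technique. Using the equivariant structure, I would construct a $\GL$-equivariant desingularization $\pi \colon \widetilde{\mathfrak{X}} \to \mathfrak{X}$ as the total space of a homogeneous vector bundle over a partial flag variety, and compute $R\pi_\ast \mathcal{O}_{\widetilde{\mathfrak{X}}}$ via Bott's theorem in characteristic $0$. If the derived pushforward is concentrated in degree zero and equals $\mathcal{O}_{\mathfrak{X}}$, then by Kempf's criterion $\mathfrak{X}$ has rational singularities and is Cohen--Macaulay; a dimension count on $\widetilde{\mathfrak{X}}$ should yield $\dim \mathfrak{X} = 2n$, and the $\GL$-orbit stratification of $\mathfrak{X}$ should locate the singular locus, giving $\codim_{\mathfrak{X}} \Sing(\mathfrak{X}) \geq 4$.

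To buttress the KLW computation and gain a second line of attack, I would exhibit a term order on the coordinate ring of $\mathfrak{X}$ under which the defining ideal admits a square-free initial ideal. By results in the spirit of Conca--Varbaro, $\mathfrak{X}$ then inherits Cohen--Macaulayness from the Stanley--Reisner ring of the associated simplicial complex, while shellability or vertex-decomposability of that complex would yield irreducibility and the codimension bound combinatorially; in positive characteristic this route also establishes $F$-rationality of $\mathfrak{X}$, which by reduction modulo $p$ transfers to rational singularities in characteristic $0$. The principal obstacle, I expect, will be the first step: pinning down the correct matrix model $\mathfrak{X}$ and matching its local structure to $\Hilb^{(n,2)}(S)$ with enough fidelity to pull all of these singularity-theoretic properties back to the nested Hilbert scheme. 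Once the model is in place and a square-free Gröbner basis is identified, the KLW and Stanley--Reisner inputs should combine to give every assertion of the theorem.
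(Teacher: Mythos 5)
The overall toolkit you identify---Hilbert--Burch descriptions, Kempf--Lascoux--Weyman, square-free Gr\"obner degeneration---does match the paper. But the architecture connecting the matrix variety $\mathfrak{X}$ to the nested Hilbert scheme is not as you envision it, and the gap is where the real work of the paper lives. In the paper, $\mathfrak{X}$ is \emph{not} a local model of $\Hilb^{(n,2)}(\AA^2)$ and there is no proper surjective morphism from $\mathfrak{X}$ onto a chart of the nested Hilbert scheme that controls singularities. Instead, $\mathfrak{X}$ is the reduced scheme underlying the fiber of the morphism $\Hilb^{(m,2)}(\AA^2)\to\Hilb^2(\AA^2)$ over the point $[\V(x,y^2)]$, for the special value $m=\binom{n+1}{2}$. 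Rational singularities of $\mathfrak{X}$ are then transferred to the nested Hilbert scheme by proving that this morphism is flat near the compressed pair $C_n=[\V(\mm^n)\supseteq\V(x,y^2)]$ and invoking Elkik's theorem on rational singularities along flat morphisms with smooth base. Without identifying this flatness mechanism, KLW applied to $\mathfrak{X}$ buys you nothing about the nested Hilbert scheme; note also that $\dim\mathfrak{X}=n^2+n-4$, not $2n$, so the dimension count you propose does not line up.

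Two further ideas you would need are missing. First, there is no reduction in your plan from an arbitrary singular point of $\Hilb^{(n,2)}(\AA^2)$ to the single compressed pair $C_n$: the paper achieves this in Section~4 via generic Gr\"obner degeneration (adjoining general reduced points to land on a triangular colength, then degenerating to Borel-fixed ideals), combined with a complete-local-ring factorization for disjoint unions. Second, and crucially, you treat the matrix model $\mathfrak{X}$ as if it automatically carries the correct scheme structure; in fact the defining ideal $\mathfrak{I}$ coming from the universal family is only obviously radical a posteriori, and showing $\mathfrak{I}$ is prime (so that the scheme-theoretic fiber equals $\mathfrak{X}$) is one of the hardest steps. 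The paper does this by combining the degree computed via KLW with an exact count of facets of the antidiagonal initial complex, forcing $\mathrm{in}(\mathfrak{I})=\mathrm{in}(\sqrt{\mathfrak{I}})$ and hence $\mathfrak{I}=\sqrt{\mathfrak{I}}$. Your proposal also attributes irreducibility and the codimension-$4$ bound on the singular locus to an orbit stratification of $\mathfrak{X}$; the paper instead proves these directly on $\Hilb^{(n,2)}(\AA^2)$ by a cleaving (point-detaching) technique in Section~3, independently of the matrix variety. Finally, the $F$-rational/reduction-mod-$p$ route you suggest for the characteristic-$0$ conclusion is not what the paper does (the representation-theoretic input in the KLW step is used directly in characteristic $0$, and positive characteristic is left as an open problem).
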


It is quite interesting that nested Hilbert schemes seem to produce  classes of varieties with various degrees of intermediate behavior,
in contrast to classical Hilbert schemes of points $\Hilb^n(\P^d)$,
for which very little is known between the extreme cases
of smooth ones for $d = 2$ and terribly singular ones for $d \gg 0$
\cite{JJ};
see for instance \cite{DS}.

Our main approach to proving Theorem \ref{MainTheorem} will be to translate the problem 
into the study of certain explicit ideals,
 which can be treated with methods from commutative algebra.
First of all, by standard arguments one may assume $S = \AA^2$. 
Generic Gr\"obner degenerations then reduce the problem to the singularities  at some particular Borel-fixed points 
in the nested Hilbert scheme.
We prove that the natural morphism 
$\Hilb^{(n,2)}(\AA^2) \to \Hilb^{2}(\AA^2)$ is flat;
this shifts the focus to  the fiber over the Borel-fixed point of $\Hilb^{2}(\AA^2)$.
We  prove that this fiber  is reduced, and it can thus be identified with  the variety of matrices
$$
\mfX =
 \big\{ \bfW \in \mathrm{Mat}(n+1,n) \, : \,
I_n(\bfY + \bfW) \subseteq (y^2)  \subseteq \kk[y]\big\},
$$
where $\bfY$ is the $(n+1)\times n$ matrix with $y$ on the main diagonal and 0 elsewhere.
Using machinery from representation theory, linear algebra, and combinatorial commutative algebra,
 we prove
\begin{thm} 
Assume $\ch (\Bbbk) =0$. 
The  variety $\mfX \subseteq \Mat(n+1,n)$ is irreducible of dimension $n^2+n- 4$, with  rational singularities.
It is a cone over a projective subvariety of $ \P^{n^2+n-1}$  of degree 
$
\frac{1}{12}(n-1)n(n+1)(3n-2).
$
Moreover, it has a Cohen-Macaulay squarefree Gr\"obner degeneration.
\end{thm}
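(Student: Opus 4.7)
The plan is to resolve $\mfX$ via a Kempf--Lascoux--Weyman (KLW) style collapsing from a homogeneous vector bundle over a partial flag variety, and separately to treat the squarefree Gr\"obner degeneration and the degree computation via the Stanley--Reisner correspondence. First, I would rewrite the condition $I_n(\bfY+\bfW) \subseteq (y^2)$ by viewing $\bfY+\bfW$ as a matrix over $A := \kk[y]/(y^2)$: on the generic locus this is equivalent to the existence of a pair $(v_0, v_1) \in (\kk^n \setminus \{0\}) \times \kk^n$ with $(\bfY + \bfW)(v_0 + y v_1) \equiv 0 \pmod{y^2}$, which by a direct expansion becomes the two linear conditions
\[
\bfW v_0 = 0, \qquad \bfW v_1 + \iota(v_0) = 0,
\]
where $\iota \colon \kk^n \hookrightarrow \kk^{n+1}$ is inclusion into the first $n$ coordinates. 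Packaging $(v_0, v_1)$ as a flag $(L, U) := (\kk v_0,\ \kk v_0 + \kk v_1)$ gives a geometric model for the closure of these conditions.

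Accordingly, I would introduce $F = \mathrm{Flag}(1,2;n)$ with tautological bundles $\mcL \subset \mcU$, and consider the incidence variety
\[
\widetilde{\mfX} := \bigl\{\, (\bfW, (L,U)) \in \Mat(n+1,n) \times F \;:\; \bfW(L) = 0,\ \bfW(U) \subseteq \iota(L) \,\bigr\}.
\]
A fiberwise linear-algebra count shows that $\widetilde{\mfX}$ is the total space of a $\GL_n$-equivariant sub-vector-bundle of $F \times \Mat(n+1,n)$ of rank $n^2 - n - 1$, so it is smooth irreducible of dimension $(2n-3) + (n^2 - n - 1) = n^2+n-4$. The projection $\pi \colon \widetilde{\mfX} \to \Mat(n+1,n)$ is proper with image $\mfX$, and is birational: on the dense locus $\{\rank \bfW = n-1\} \cap \mfX$ the flag is uniquely recovered as $(L,U) = (\ker \bfW,\, \bfW^{-1}(\iota(\ker \bfW)))$. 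This gives irreducibility of $\mfX$ and the claimed dimension, while homogeneity of the defining equations in the entries $w_{ij}$ yields the cone structure.

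For rational singularities, I would then invoke the KLW machinery: since $\widetilde{\mfX}$ is a homogeneous vector bundle over the flag variety $F$, the Koszul resolution of $\mcO_{\widetilde{\mfX}}$ inside $F \times \Mat(n+1,n)$ can be pushed forward term by term using Bott's theorem on $F$. The goal is to verify that $R^i \pi_* \mcO_{\widetilde{\mfX}} = 0$ for all $i > 0$ and $\pi_* \mcO_{\widetilde{\mfX}} = \mcO_\mfX$; combined with the smoothness of $\widetilde{\mfX}$ and characteristic zero, this implies that $\mfX$ has rational singularities, hence is normal and Cohen--Macaulay.

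Finally, for the Cohen--Macaulay squarefree Gr\"obner degeneration and the degree, I would take the $2(n+1)$ natural generators of the defining ideal of $\mfX$---the constant and linear-in-$y$ coefficients of the $n$-minors of $\bfY+\bfW$---and seek a monomial order, likely a lexicographic refinement of a $\GL_n$-compatible weight, with respect to which these together with suitable $S$-pair reductions form a Gr\"obner basis with squarefree leading terms. By the Stanley--Reisner correspondence, the initial ideal equals $I_\Delta$ for a simplicial complex $\Delta$, and its Cohen--Macaulayness would follow from showing $\Delta$ is shellable (or vertex-decomposable). The main obstacle lies here: identifying the right term order and unraveling the combinatorial structure of $\Delta$. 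Once the facets of $\Delta$ are described, they should be enumerable to verify the degree formula $\tfrac{1}{12}(n-1)n(n+1)(3n-2)$, providing an independent check consistent with an equivariant Chern-class calculation on $F$.
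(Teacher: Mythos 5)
Your overall framework matches the paper's: you build the same incidence variety $\widetilde{\mfX}$ (your conditions $\bfW(L)=0,\, \bfW(U)\subseteq \iota(L)$ unpack, after splitting $\bfW$ into its top $n\times n$ block $\bfA$ and bottom row $\bfa$, into exactly the paper's conditions $\bfA W_1=0,\, \bfA W_2\subseteq W_1,\, \bfa W_2=0$), you read off the same rank $n^2-n-1$ for the bundle, and you propose the same KLW push-forward and the same Stanley--Reisner strategy. The genuine gap is the set-theoretic identity $\pi(\widetilde{\mfX}) = \mfX$. Your derivation via a kernel vector over $\kk[y]/(y^2)$ is valid only ``on the generic locus,'' and you then silently assert that the proper image of $\widetilde{\mfX}$ equals $\mfX$. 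This does not follow: a priori $\mfX$ could have components lying entirely in the locus $\rank\,\bfW \leq n-2$ where your generic argument says nothing, and even where $\rank\,\bfW = n-1$ the reverse inclusion $\mfX\subseteq\pi(\widetilde{\mfX})$ requires showing that $\iota(\ker\bfW)\subseteq \operatorname{Im}\bfW$ is forced by the minor condition, which is not automatic. The paper spends three lemmas (a reduction to Jordan canonical form for $\bfA$, a minor computation for nilpotent blocks, and a case analysis over the possible Jordan types) precisely to establish this equality, and it is the hardest technical point of the resolution-of-singularities step. Without it, irreducibility and the dimension count for $\mfX$ — which you derive from $\widetilde{\mfX}$ — are unproven.

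A second, smaller but still substantive, issue is the logic of the Gr\"obner degeneration part. You treat the degree formula as ``an independent check,'' but in the paper it is load-bearing: there is no direct $S$-pair verification of the Gr\"obner basis. Instead the paper exhibits candidate leading monomials generating a squarefree ideal $\mfK\subseteq \iin(\sqrt{\mfI})$, counts facets of the associated complex $\Delta$ to match the KLW degree, uses purity of $\Delta$ to get unmixedness, and concludes $\mfK=\iin(\sqrt{\mfI})$ by comparing codimension and multiplicity via the associativity formula. Likewise, Cohen--Macaulayness of $\Delta$ is not obtained by shellability — the paper explicitly leaves shellability/vertex-decomposability as an open question — but by combining rational singularities (hence CM) of $A/\sqrt{\mfI}$ with the Conca--Varbaro theorem on squarefree Gr\"obner degenerations. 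Your proposed route through shellability would currently hit a wall, whereas the Conca--Varbaro route is available once the squarefree initial ideal is identified.
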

To prove this theorem,
 we show that $\mfX$ admits a desingularization that is a vector bundle over a flag variety,
exploiting the fact that $\mfX$ projects onto a  rank variety \cite{ES}. This allows us to use the powerful geometric technique of Kempf-Lascoux-Weyman to determine the degree of $\mfX$ and deduce that it has rational singularities. 
We use this and an analysis of a simplicial complex associated to $\mfX$  to construct a square-free  initial ideal  of $I_\mfX$ and show it is Cohen-Macaulay.

We point out that Gr\"obner bases  play an important  role throughout  the paper. 
This is mainly due to the fact that reducedness  is a highly nontrivial issue 
for Hilbert schemes of points, see e.g.
\cite{JJ2,JJ,Szachniewicz}.
In fact, 
proving that $\Hilb^{(n,2)}(S)$ is  reduced is among the hardest tasks of this paper; 
to the best of our knowledge, there are very few proofs of reducedness for (singular!) Hilbert schemes.
For this reason, set-theoretic arguments are too naive
for studying singularities.
We use Gr\"obner bases to obtain scheme-theoretic equations of the singularity in Section \ref{SectionLocalEquations},
and then to obtain the desired reducedness and flatness results in Sections \ref{SectionSquarefreeInitial} and \ref{SectionProofMainTheorem}

\subsection{Organization} \label{Organization}

We now describe in more detail  the contents  of the paper  and  the proof strategy for Theorem \ref{MainTheorem}.
Each section from \ref{SectionIrreducible} to \ref{SectionProofMainTheorem} corresponds to a main intermediate result or a reduction step in the proof of Theorem \ref{MainTheorem}. 
As such, they will be somewhat self-contained; 
on a first reading, one may choose to treat some of them as black boxes, and proceed in a nonlinear order.
The only exception is Section \ref{SectionLocalEquations}, where we introduce the algebraic objects that play a central role in the subsequent sections.

\smallskip
We begin in Section \ref{SectionPreliminaries} by reviewing some basic facts on nested Hilbert schemes and rational singularities 
as well as the reduction to the case
where the surface is $S=\AA^2$.

In Section \ref{SectionIrreducible}, 
we study the simultaneous smoothability of chains of finite subschemes of $\AA^2$,
by employing  a point-detaching technique due to Hartshorne.
We obtain the irreducibility of $\Hilb^{(n,2)}(\AA^2)$ and a description of its singular locus. 
We also apply the technique to recover all known classes of irreducible nested Hilbert schemes, in arbitrary characteristic, 
and construct classes of reducible nested Hilbert schemes $\Hilb^{(n_1, \ldots, n_k)}(\AA^2)$
for every $k \geq 5$.

Section \ref{SectionReductionCompressed} deals with the reduction to ``compressed pairs''.
By tracking the complete local rings
of  $\Hilb^{(n,2)}(\AA^2)$ along suitable generic Gr\"obner degenerations,
the study of  singularities for arbitrary $\Hilb^{(n,2)}(\AA^2)$ is reduced to the case  where $n$ is a triangular number
and to special pairs of the form $[\V((x,y)^r) \supseteq \V(x,y^2)]\in \Hilb^{(n,2)}(\AA^2)$.

In Section \ref{SectionLocalEquations}, 
we  introduce two graded ideals that will be the main players in the rest of the paper.
Through an analysis of Gr\"obner strata in $\Hilb^n(\AA^2)$ and  the division algorithm for their universal families, 
we determine a polynomial ideal $\mfL$ which defines an open neighborhood of the compressed pair $[\V((x,y)^r) \supseteq \V(x,y^2)]$
in the nested Hilbert scheme.
We also determine explicitly the ideal $\mfI$ which defines an open subset of the scheme-theoretic fiber of the  morphism 
$\Hilb^{(n,2)}(\AA^2)\rightarrow \Hilb^2(\AA^2)$
over the ``origin'' $[\V(x,y^2)]$.

The next two sections are devoted to the fiber  $\V(\mfI)$.
A priori, this scheme may be non-reduced.
Section \ref{SectionGeometricTechnique}  treats the reduced scheme  $\mfX = \V(\sqrt{\mfI})$, which is 
a variety of  $(n+1)\times n$ matrices with an action of $\GL_n$.
We determine a desingularization of $\mfX$ in the form of a vector bundle over a flag variety.
This allows us to apply the Kempf-Lascoux-Weyman technique and prove that $\mfX$ is a normal Cohen-Macaulay variety with rational singularities, and  compute its degree.
In Section \ref{SectionSquarefreeInitial},
 we 
study a simplicial complex $\Delta$ associated to $\mfI$. 
Combining an enumerative analysis of $\Delta$ with the conclusions of Section \ref{SectionGeometricTechnique}, 
we prove that $\mfI$ has a Gr\"obner basis whose initial ideal is the Stanley-Reisner ideal $I_\Delta$,
and  deduce that $\mfI$ is a prime ideal, so    $\V(\mfI)=\mfX$ is in fact reduced.

Finally, in Section \ref{SectionProofMainTheorem}, we build upon the main results of all the previous sections to prove that $\mfL$ is prime,  the  morphism $\Hilb^{(n,2)}(\AA^2)\rightarrow \Hilb^2(\AA^2)$
 is flat, and deduce  Theorem \ref{MainTheorem}.

To conclude the paper, in Section \ref{SectionConclusions}, we discuss  three groups of questions suggested by our main theorems and methods:
extensions of our results to  positive characteristic,
 to arbitrary two-step nested Hilbert schemes $\Hilb^{(n_1, n_2)}(\AA^2)$,
and to a broad class of  varieties of matrices related to $\mfX$.

\subsection{Notation}
Throughout the paper, $\kk$ is an algebraically  closed  field.
We assume $\ch(\kk)=0$ in Sections \ref{SectionGeometricTechnique} and \ref{SectionProofMainTheorem} and in Theorem \ref{TheoremIPrime}.
The assumption  $\kk=\widebar{\kk}$ is not restrictive, 
see Remark \ref{RemarkAlgebraicallyClosed}.

Denote $R = \kk[x,y]$, $\mm = (x,y)$, and  $\AA^2 = \Spec(R)$.
Let $\V(I)$ be the subscheme defined by an ideal $I$, 
and   $I_Z$  the ideal of a subscheme $Z$.
We use $\LM(\cdot)$ to denote  leading monomials with respect to a given term order, and $\mathrm{in}(\cdot)$ for initial ideals.
A {\bf pair} is a nested chain $Z_1 \supseteq Z_2$  of two finite subschemes of $\AA^2$. 
We denote  by $[Z_1 \supseteq Z_2]$ the corresponding $\kk$-point on the nested Hilbert scheme.

\section{Preliminaries}\label{SectionPreliminaries}
In this section,
 we will review the definition of the nested Hilbert scheme of points  and summarize some of the known geometric results when the scheme is a smooth surface. 
We refer to \cite[Section 4]{S06} and \cite[Chapter 1]{CheahThesis} for more details. 
We also review some properties of rational singularities.

\begin{definition} \label{DefinitionHilbertScheme}
Let $S$ be a quasi-projective $\kk$-scheme and let $\lambda = (\lambda_1,\dots,\lambda_k) \in \mathbf{N}^k$ be a non-increasing sequence of natural numbers; the latter is usually called a {\bf partition}.
 There exists a quasi-projective $\kk$-scheme, denoted by $\Hilb^\lambda(S/\, \kk)$ or simply $\Hilb^\lambda(S)$,
parametrizing {\bf chains} of closed subschemes $Z_1\supseteq \cdots \supseteq Z_k$ of $S$ where $\text{length}(Z_i) = \lambda_i$. 
It is called the {\bf nested Hilbert scheme} of points of $S$ over $\kk$.
\end{definition}

If the scheme $S$ is connected, 
then the nested Hilbert scheme $\Hilb^{\lambda}(S/ \, \kk)$ is also connected.

\begin{remark}\label{RemarkAlgebraicallyClosed}
If $\kk'\to \kk$ is a subfield, there is a natural isomorphism 
$$\Hilb^{\lambda}(S/\kk') \times_{\kk'} \kk \to \Hilb^{\lambda}(S\times_{\kk'} \kk/\kk). $$ 
For this reason, we work without loss of generality over an algebraically closed field $\kk$.
\end{remark}

The interesting partitions $\lambda$ are those where the entries are all positive and distinct, since deleting zeros  or repeated entries will give the same nested Hilbert scheme. 
However, for technical reasons, it is convenient to allow for  zeros and repeated entries,
that is, we allow the possibilities that   $Z_i = Z_{i+1}$ or $Z_i = \emptyset$.
For instance, if
 $\lambda_2,\dots,\lambda_k = 0$, then $\Hilb^\lambda(S/ \, \kk) $ is the classical Hilbert scheme of $\lambda_1$ points on $S$.

While not much is known in general about $\Hilb^{\lambda}(S)$, the cases when $S$ is a smooth curve or surface have attracted considerable interest. 
Since the focus of this work is on smooth surfaces, we review some of the major structural results in this case.

\begin{thm} \emph{
Let $S$ be a smooth connected surface and $\lambda = (\lambda_1,\dots,\lambda_k)$ be a partition such that $\lambda_1 > \lambda_2 > \cdots > \lambda_k >0$.
\begin{enumerate}
\item \cite[Theorem]{Cheah} The nested Hilbert scheme $\Hilb^\lambda(S/ \, \kk)$ is smooth if and only if either 
$k=1$ or $k = 2$ and $\lambda_1 - \lambda_2 = 1$. 
\item \cite[Section 7]{Fogarty2} The nested Hilbert scheme $\Hilb^{(\lambda_1,1)}(S/ \, \kk)$ is integral, normal and Cohen-Macaulay.
\item \cite[Theorem 1.1]{Song} The nested Hilbert scheme $\Hilb^{(\lambda_1,1)}(S/ \, \mathbb{C})$ has rational singularities.
\item \cite{Addington,Negut,RT} The nested Hilbert scheme $\Hilb^{(\lambda_1,\lambda_1-1,\lambda_1-2)}(S/ \, \mathbb{C})$ is a local complete intersection with klt singularities.
\end{enumerate}
}
\end{thm}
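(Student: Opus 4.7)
The plan is to reduce Theorem~\ref{MainTheorem} to a local analysis at a single Borel-fixed point via Gröbner degenerations, and then to transport the rational singularity of the matrix variety $\mfX$ across a flat morphism whose base is smooth.

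First I would reduce to $S=\AA^2$: the étale local structure of the nested Hilbert scheme of a smooth surface, together with the étale-local nature of rational singularities, irreducibility, and the codimension of the singular locus, makes this standard. The statements that $\Hilb^{(n,2)}(\AA^2)$ is irreducible of dimension $2n$ and has small singular locus I would then obtain in Section~\ref{SectionIrreducible} via a point-detaching/simultaneous-smoothability argument à la Hartshorne; this simultaneously produces an explicit description of the singular locus, from which the non-singular-in-codimension-$3$ bound is read off directly.

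For rational singularities, the strategy uses a two-stage Gröbner degeneration (Section~\ref{SectionReductionCompressed}): a generic one-parameter subgroup limit first reduces the problem to the Borel-fixed locus, and a further degeneration specializes to the single compressed pair $p = [\V(\mm^r)\supseteq \V(x,y^2)]$ with $n=\binom{r+1}{2}$ (the non-triangular case being analogous). Since rational singularities propagate from the central fiber of a Gröbner flat family to nearby fibers under the appropriate openness hypotheses, it suffices to verify the conclusion at $p$. There I would study the natural morphism
$$\pi : \Hilb^{(n,2)}(\AA^2) \longrightarrow \Hilb^{2}(\AA^2),$$
via the local ideals $\mfL$ and $\mfI$ produced in Section~\ref{SectionLocalEquations}: $\mfL$ cuts out a neighborhood of $p$ in the total space, while $\mfI$ cuts out an open subset of the scheme-theoretic fiber $\pi^{-1}([\V(x,y^2)])$. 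Since $\Hilb^{2}(\AA^2)$ is smooth, once $\pi$ is flat at $p$ and the fiber there has rational singularities, flat base change delivers rational singularities of $\Hilb^{(n,2)}(\AA^2)$ at $p$. The theorem on $\mfX$ (rational singularities, Cohen-Macaulayness, degree via the Kempf-Lascoux-Weyman resolution, Section~\ref{SectionGeometricTechnique}), combined with the squarefree degeneration of Section~\ref{SectionSquarefreeInitial}, would give the scheme-theoretic identification $\V(\mfI)=\mfX$. Flatness of $\pi$ and primeness of $\mfL$ would then follow in Section~\ref{SectionProofMainTheorem} from a Hilbert-function/dimension comparison exploiting $\dim \Hilb^{(n,2)}(\AA^2) = 2n$ from Section~\ref{SectionIrreducible} and the computed dimension of $\mfX$.

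The main obstacle is the scheme-theoretic identification $\V(\mfI)=\mfX$. Reducedness of ideals arising from Hilbert schemes is notoriously delicate, and here it is forced indirectly: the squarefree initial ideal of $\mfI$ is proved to coincide with the Stanley-Reisner ideal $I_\Delta$ of an explicit simplicial complex, an equality that can only hold because the degree of $\mfX$ produced by the geometric technique matches the facet count of $\Delta$ exactly. A secondary difficulty is transporting rational singularities back along the two Gröbner degenerations to arbitrary closed points: one must combine openness of the rational-singularity locus with the fact that each degeneration is a flat family whose special fiber has rational singularities, so that the property spreads to the general fiber and ultimately to every closed point of $\Hilb^{(n,2)}(S)$.
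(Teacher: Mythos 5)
Your proposal does not address the statement you were asked to prove. The statement is the background theorem in the preliminaries: a compilation of four known results from the literature --- (1) Cheah's classification of which nested Hilbert schemes $\Hilb^\lambda(S)$ are smooth, (2) Fogarty's theorem that $\Hilb^{(\lambda_1,1)}(S)$ is integral, normal and Cohen--Macaulay, (3) Song's theorem that $\Hilb^{(\lambda_1,1)}(S/\mathbb{C})$ has rational singularities, and (4) the result that $\Hilb^{(\lambda_1,\lambda_1-1,\lambda_1-2)}(S/\mathbb{C})$ is a local complete intersection with klt singularities. In the paper this theorem carries no proof; it is quoted with citations to Cheah, Fogarty, Song, and Addington--Negut--RT. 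What you have written instead is an outline of the proof of the paper's main theorem (Theorem \ref{MainTheorem}, about $\Hilb^{(n,2)}(S)$): reduction to $\AA^2$, degeneration to the compressed pair, the ideals $\mfL$ and $\mfI$, the matrix variety $\mfX$ and the Kempf--Lascoux--Weyman technique, the squarefree initial complex, and flatness of $\Hilb^{(n,2)}(\AA^2)\to\Hilb^2(\AA^2)$. None of that establishes any of items (1)--(4): you give no argument for the smoothness classification in (1), nothing about the universal-family structure of $\Hilb^{(\lambda_1,1)}(S)$ over $\Hilb^{\lambda_1}(S)$ on which (2) and (3) rest, and nothing at all about the three-step case (4).

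If your intention was to recover, say, item (3) by the paper's own machinery rather than by citation, the relevant adaptation is the one the paper only sketches in its final section: for the pair $[\V(\mm^n)\supseteq\V(\mm)]$ the fiber ideal produced by the division algorithm is the determinantal ideal $I_n(\bfW)$ of maximal minors of the generic $(n+1)\times n$ matrix, whose rational (indeed $F$-rational) singularities are classical, and one then runs the flatness and degeneration steps as in Sections \ref{SectionReductionCompressed} and \ref{SectionProofMainTheorem}. Your proposal does not carry this out --- it works throughout with the $(n,2)$ case --- and it offers no route whatsoever to items (1), (2) or (4), which involve different objects (smoothness of $\Hilb^{(m,m-1)}$, integrality/normality of the universal family, and lci/klt singularities of three-step nests). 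As written, the proposal proves a different statement and leaves the assigned one untouched; a correct blind proof here would either reduce honestly to the cited literature or reprove those external results, neither of which you do.
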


We now  recall the definition and  some well-known facts about rational singularities.

\begin{definition} Let $X$ be a reduced $\kk$-scheme and $\text{char}(\kk)=0$. A {\bf resolution of singularities} of $X$ is a proper birational morphism $f:Z \to X$ with $Z$ a smooth $\kk$-scheme. The scheme $X$ is said to have {\bf rational singularities} if $X$ is normal and for any resolution of singularities $f:Z \to X$ we have $R^if_{\star}\mcO_{Z} =0$  for all $i>0$.
\end{definition}

\begin{lemma} \label{LemmaPropertiesRationalSingularities} 
Assume  $\ch(\kk)=0$.
Let $X, Y$ be affine $\kk$-schemes of finite type,
with $Y$ smooth.
\begin{enumerate}
\item If $X$ has a rational singularity at a point $p$, then $X$ has rational singularities in a neighborhood of $p$.
\item Let $\pi : X \to Y$ be a flat morphism, $p\in X$ and $q = \pi(p)$.
If the fiber $\pi^{-1}(q)$ has a rational singularity at $p$, 
then $X$ has a rational singularity at $p$.
\item If $ X  \times Y$ has rational singularities, then $X$ has rational singularities.
\item $\mcO_{X,p}$ has a rational singularity if and only if the completion $\widehat{\mcO_{X,p}}$ has a rational singularity.
\item If $X$ has rational singularities, then it is Cohen-Macaulay. 
\end{enumerate} 
\end{lemma}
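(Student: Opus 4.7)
These five statements are well-known properties of rational singularities in characteristic zero, and my plan is to reduce each to a standard base change or duality argument rather than construct resolutions from scratch.

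For (1), fix a global resolution $f : Z \to X'$ over an affine neighborhood $X'$ of $p$. Each sheaf $R^i f_{\star} \mcO_Z$ is coherent on $X'$ and only finitely many are nonzero, so the union of their supports is a closed subset of $X'$ not containing $p$. Since the non-normal locus of $X$ is likewise closed by Serre's criterion, the complement gives an open neighborhood of $p$ on which $X$ has rational singularities. Item (4) then follows because the relevant data are detected by faithfully flat base change: $\mcO_{X,p} \to \widehat{\mcO_{X,p}}$ is faithfully flat with geometrically regular fibers, so the vanishing of the stalks of $R^i f_{\star} \mcO_Z$ at $p$ and the normality of $\mcO_{X,p}$ are each equivalent to the corresponding properties after completion.

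The substantive item is (2). Given a resolution $g : \widetilde{F} \to F$ of the fiber $F = \pi^{-1}(q)$ near $p$, I would extend it to a resolution $h : \widetilde{X} \to X$ of a neighborhood of $p$ whose restriction to the central fiber is $g$; flatness of $\pi$ together with smoothness of $Y$ ensures that $\widetilde{X}$ is smooth. Flat base change yields $R^i h_{\star} \mcO_{\widetilde{X}} \otimes_{\mcO_Y} \kk(q) \cong R^i g_{\star} \mcO_{\widetilde{F}} = 0$ for $i > 0$, and coherence plus Nakayama propagate this vanishing to a neighborhood of $p$; normality of $X$ near $p$ then follows from normality of $F$ at $p$ by the standard criterion for flat morphisms onto smooth bases. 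Item (3) is deduced by taking a resolution $f : \widetilde{X} \to X$, observing that $f \times \mathrm{id}_Y$ resolves $X \times Y$, and applying the base change identity $R^i (f \times \mathrm{id})_{\star} \mcO_{\widetilde{X} \times Y} \cong R^i f_{\star} \mcO_{\widetilde{X}} \otimes \mcO_Y$ to transfer vanishing and normality from $X \times Y$ down to $X$.

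Finally, (5) is the classical Cohen-Macaulayness theorem for rational singularities. Combining the defining identity $Rf_{\star} \mcO_Z = \mcO_X$ with Grauert-Riemenschneider vanishing $R^i f_{\star} \omega_Z = 0$ for $i > 0$, Grothendieck duality identifies the dualizing complex of $X$ with $f_{\star} \omega_Z$ shifted into a single cohomological degree, which is exactly the Cohen-Macaulay condition on $X$. The main obstacle among the five items is (2): extending a resolution of the fiber across the base $Y$ while preserving smoothness of the total space requires care, but it is standard, and once in hand all the other items follow cleanly from base change or duality.
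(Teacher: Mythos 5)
Your proposal is genuinely different from the paper's proof, which handles (1)--(3) almost entirely by citation to Elkik's theorems, (4) via Lipman's base change lemma, and (5) by citing Weyman. For items (1), (3), and (5) your more self-contained arguments are essentially sound, and for (4) your outline is correct although you silently assume the nontrivial fact that base change of a resolution along $\mcO_{X,p}\to\widehat{\mcO_{X,p}}$ is again a resolution of singularities --- this is exactly what the paper imports from Lipman and you should at least name it.

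The real gap is item (2). You propose to \emph{extend} a resolution $g:\widetilde F\to F$ of the fiber to a resolution $h:\widetilde X\to X$ of a neighborhood of $p$ whose restriction over $q$ is $g$. This is a simultaneous resolution over the base $Y$, and such a thing does not exist in general, even locally: nothing forces the scheme-theoretic fiber of an arbitrary resolution of $X$ over $q$ to be smooth, nor to dominate $F$ birationally, and there is no recipe for choosing the centers of blow-up in $X$ so that both happen. You flag this as the step "requiring care" but call it standard; it is not --- the unavailability of simultaneous resolution is precisely why deformation of rational singularities is a theorem with content. Elkik's proof of this (the paper's cited \emph{Th\'eor\`eme 2}) sidesteps simultaneous resolution entirely and works instead with a homological characterization of rational singularities (via the canonical module and local duality) for which flat base change is genuinely available. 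As written, your argument for (2) would need either a citation to Elkik, or a completely different mechanism; the flat-base-change-plus-Nakayama step you describe afterward is fine, but it has nothing to feed on without the nonexistent simultaneous resolution.
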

\begin{proof} 
Item (1) is \cite[Th\'eor\`eme 4]{Elkik}, while item (2) is \cite[Th\'eor\`eme 2]{Elkik}.
Since all fibers of   $X \times Y \rightarrow Y$ are isomorphic to $X$, 
a resolution of singularities of $X$ induces a simultaneous resolution of  $X \times Y$ over $Y$.
Item (3) now follows from \cite[Th\'eor\`eme 3]{Elkik}.

By \cite[Lemma 15.52.6]{stacks}, $\mcO_{X,p}$ is normal if and only if $\widehat{\mcO_{X,p}}$ is normal,
so in item (4) we may assume that $\mcO_{X,p}$ is normal.
Denote $X_p = \Spec(\mcO_{X,p})$  and $\widehat{X_p} = \Spec(\widehat{\mcO_{X,p}})$,
and let $\pi: Z \to X_p$ be a resolution of singularities.
By \cite[Lemma 16.1 (ii)]{Lipman}, the base change 
$\hat{\pi}:\widehat{Z} = Z\times_{X_p} \widehat{X_p} \to  \widehat{X_p}$ is a resolution of singularities. 
By flat base change \cite[Lemma 30.5.2 (1)]{stacks}, there is an isomorphism
\begin{equation} \label{RScompletion}
R^i\pi_{\star}\mcO_{Z} \otimes_{\mcO_{X,p}}  \widehat{\mcO_{X,p}}
\cong
R^i\widehat{\pi}_{\star}\mcO_{\widehat{Z}}.
\end{equation}
Since  $\widehat{\mcO_{X,p}}$ is faithfully flat over $\mcO_{X,p}$, 
we conclude that 
$R^i\pi_{\star}\mcO_{Z}$ vanishes if and only if 
$R^i\widehat{\pi}_{\star}\mcO_{\widehat{Z}} $ vanishes,
and this completes the proof of item (4).

Finally, item (5) can be found in  \cite[Section 1.2.5]{WeymanBook}.
\end{proof}

The final goal of this section is to reduce the study of singularities of  $\Hilb^{\lambda}(S/\kk)$ to $\Hilb^{\lambda}(\AA^2/\kk)$.

\begin{lemma} \label{LemmaReductionPlane}
Let $S$ be a smooth surface.
The   completion of the  local ring of  $\Hilb^{\lambda}(S)$ at any closed point
 is isomorphic to the completion of the  local ring of   $\Hilb^{\lambda}(\AA^2)$  at some closed point.
\end{lemma}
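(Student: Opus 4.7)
The plan is to use that on a smooth surface every closed point has a formal neighborhood isomorphic to that of the origin in $\AA^2$, and to combine this with the standard \emph{separation of supports} decomposition for Hilbert schemes of points.

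First I would unpack a closed point of $\Hilb^\lambda(S)$ as a chain $Z_1 \supseteq \cdots \supseteq Z_k$, let $\{p_1, \ldots, p_m\}$ be the (finite) support of $Z_1$, and for each $j$ let $Z_\bullet^{(j)}$ denote the chain obtained by restricting to a sufficiently small affine open neighborhood $U_j$ of $p_j$, with partition $\lambda^{(j)} = \bigl(\length(Z_i \cap U_j)\bigr)_{i=1}^{k}$; note that $\sum_j \lambda_i^{(j)} = \lambda_i$. The separation-of-supports principle yields a Zariski-open neighborhood of $[Z_\bullet]$ in $\Hilb^\lambda(S)$ isomorphic to $\prod_j V_j$, where $V_j$ is a Zariski-open neighborhood of $[Z_\bullet^{(j)}]$ in $\Hilb^{\lambda^{(j)}}(U_j)$. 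Concretely, this follows from the universal property: over an Artinian base, a flat family of finite subschemes with set-theoretic support contained in the pairwise disjoint opens $U_j$ decomposes uniquely as a coproduct indexed by $j$, and this decomposition is compatible with the nesting at each level.

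Second, I would invoke the smoothness of $S$. Since $\dim S = 2$, one has $\widehat{\mcO_{S,p_j}} \cong \kk[[x,y]] \cong \widehat{\mcO_{\AA^2,0}}$. A deformation over an Artinian local base of a chain supported set-theoretically at $p_j$ remains supported at the pullback of $p_j$, and hence factors through $\Spec \widehat{\mcO_{S,p_j}}$; by functoriality of the Hilbert functor, the completion of $\Hilb^{\lambda^{(j)}}(U_j)$ at $[Z_\bullet^{(j)}]$ is canonically isomorphic to the completion of $\Hilb^{\lambda^{(j)}}(\AA^2)$ at some closed point $[W_\bullet^{(j)}]$ supported at the origin. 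Then I would reassemble: pick $m$ pairwise distinct points $q_1, \ldots, q_m \in \AA^2$, translate each $[W_\bullet^{(j)}]$ to be supported at $q_j$, and form the disjoint-union chain $W_i = \bigsqcup_j W_i^{(j)}$, which defines a closed point $[W_\bullet] \in \Hilb^\lambda(\AA^2)$. Applying the separation-of-supports factorization now on $\AA^2$ identifies the completion of $\Hilb^\lambda(\AA^2)$ at $[W_\bullet]$ with the completed tensor product of the completions of $\Hilb^{\lambda^{(j)}}(\AA^2)$ at the $[W_\bullet^{(j)}]$, which by the previous steps matches the analogous completed tensor product computing the completion of $\Hilb^\lambda(S)$ at $[Z_\bullet]$.

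The main obstacle is the separation-of-supports decomposition for \emph{nested} Hilbert schemes: while the analogous statement for a single Hilbert scheme of points is classical, here one must verify that the factorization is compatible with all $k$ nesting conditions simultaneously, i.e.\ that the coproduct decomposition of each flat family in the chain is preserved by the inclusions $Z_{i+1} \subseteq Z_i$. This is a functorial check, but it is where the genuine technical care is needed.
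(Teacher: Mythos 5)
Your proof is correct but takes a genuinely different route from the paper's. You decompose the local (Artinian) Hilbert functor by separation of supports, identify $\widehat{\mcO_{S,p_j}} \cong \kk[[x,y]] \cong \widehat{\mcO_{\AA^2,0}}$ at each support point, and reassemble. The paper instead constructs a single \'etale morphism $f\colon U \to \AA^2$ on a Zariski-open neighborhood $U$ of the entire scheme $Z_1$ (handling all support points at once), and invokes a lemma of Behrend--Fantechi asserting that $f$ induces an \'etale morphism of (nested) Hilbert schemes near $[Z_\bullet]$; \'etaleness then gives the isomorphism of completed local rings directly. Both routes need the same kind of functorial verification in the nested setting: the paper explicitly flags that the Behrend--Fantechi lemma must be ``checked'' for nested Hilbert schemes, while the coproduct-compatibility check you flag as the technical core is precisely the analogous verification --- and the paper actually carries out a close variant of your decomposition argument later, in Lemma \ref{LemmaCompleteLocalRings}, for a different reduction. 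One small imprecision: you assert a Zariski-open product decomposition $\prod_j V_j$ but your Artinian-base justification establishes only the isomorphism of formal neighborhoods; since that formal statement is all this lemma requires, this does not affect correctness, but the stronger Zariski claim would need additional work. What your route buys is that you avoid the need for a single global \'etale map that both covers and separates all of the support of $Z_1$ --- a true but not-quite-trivial point the paper leaves implicit --- at the cost of a longer decomposition/reassembly argument.
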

\begin{proof}
Let $[Z_1 \supseteq \cdots \supseteq Z_k] \in \Hilb^{\lambda}(S)$ be a $\kk$-point.
Since $S$ is a smooth surface over ${\kk}$, there is an open neighborhood $U$ of $Z_1$ and an \'etale morphism $f:U \to \AA^2$ \cite[Section 29.36]{stacks}. 
By \cite[Lemma 4.4]{BehrendFantechi}, 
there is an open neighborhood $\tilde{U}$ of $[Z_1 \supseteq \cdots \supseteq Z_k]$ and an induced \'etale morphism $\tilde{f}:\tilde{U} \to \Hilb^{\lambda}(\AA^2)$; one can check that the Lemma remains true for nested Hilbert schemes. In particular, the completion of $\Hilb^{\lambda}(S)$ at $[Z_1 \supseteq \cdots \supseteq Z_k]$ is isomorphic to the completion of $\Hilb^{\lambda}(\AA^2)$ at $[f(Z_1) \supseteq \cdots \supseteq f(Z_k)]$. 
\end{proof}

\begin{cor}
\label{CorollaryReductionPlane}
Let $S$ be a smooth surface.
\begin{enumerate}
\item 
 Assume $\ch(\kk)=0$.
If $\Hilb^{\lambda}(\AA^2)$ has rational singularities, then so does $\Hilb^{\lambda}(S)$. 
\item Let $k\in \N$.
If $\Hilb^{\lambda}(\AA^2)$ is nonsingular in codimension $k$, 
then so is  $\Hilb^{\lambda}(S)$. 
\end{enumerate}
\end{cor}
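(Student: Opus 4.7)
My plan is to deduce both parts as essentially immediate consequences of Lemma \ref{LemmaReductionPlane}, using the permanence properties already in hand: Lemma \ref{LemmaPropertiesRationalSingularities}(1) and (4) for part (1), and the fact that \'etale morphisms reflect regularity and preserve codimension for part (2). Since the real work has been done upstream, the corollary amounts to a bookkeeping argument, and I do not expect a serious obstacle.

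For part (1), I will fix an arbitrary closed point $p \in \Hilb^{\lambda}(S)$. By Lemma \ref{LemmaReductionPlane}, there exists a closed point $q \in \Hilb^{\lambda}(\AA^2)$ with $\widehat{\mcO_{\Hilb^{\lambda}(S),p}} \cong \widehat{\mcO_{\Hilb^{\lambda}(\AA^2),q}}$. By hypothesis, $\mcO_{\Hilb^{\lambda}(\AA^2),q}$ has a rational singularity; applying Lemma \ref{LemmaPropertiesRationalSingularities}(4) twice, once in each direction, transports this property across the isomorphism to give that $\mcO_{\Hilb^{\lambda}(S),p}$ has a rational singularity. Invoking Lemma \ref{LemmaPropertiesRationalSingularities}(1) then produces an open neighborhood $V_p$ of $p$ on which $\Hilb^{\lambda}(S)$ has rational singularities. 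The union $\bigcup_{p} V_p$ contains every closed point of $\Hilb^{\lambda}(S)$, and therefore equals $\Hilb^{\lambda}(S)$, since any non-empty locally closed subset of a finite-type $\kk$-scheme contains a closed point.

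For part (2), I will exploit a stronger output of the proof of Lemma \ref{LemmaReductionPlane}: each closed point $p \in \Hilb^{\lambda}(S)$ has an open neighborhood $\tilde{U}$ admitting an \'etale morphism $\tilde{f}:\tilde{U} \to \Hilb^{\lambda}(\AA^2)$. Because \'etale morphisms reflect and preserve regularity of local rings and are flat with zero-dimensional fibers, we have $\Sing(\tilde{U}) = \tilde{f}^{-1}\bigl(\Sing(\Hilb^{\lambda}(\AA^2))\bigr)$, and this preimage has the same codimension in $\tilde{U}$ as $\Sing(\Hilb^{\lambda}(\AA^2))$ has in $\Hilb^{\lambda}(\AA^2)$. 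If the hypothesis gives the latter codimension $> k$, the former inherits the same bound, and as the neighborhoods $\tilde{U}$ cover every closed point of $\Hilb^{\lambda}(S)$, we conclude that $\Hilb^{\lambda}(S)$ is nonsingular in codimension $k$. The only mild subtlety is the \'etale preservation of the singular locus and its codimension, but both facts are standard.
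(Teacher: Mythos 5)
Your proof is correct. For part (1) you follow essentially the same path as the paper — Lemma \ref{LemmaReductionPlane} plus Lemma \ref{LemmaPropertiesRationalSingularities}(4) applied in both directions — with the added step of invoking Lemma \ref{LemmaPropertiesRationalSingularities}(1) to produce open neighborhoods and then covering the scheme; the paper just observes directly that the local property holds at every closed point and stops there, but the two are equivalent and the cost is only verbosity.

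For part (2) you take a genuinely different route. The paper works with the \emph{stated} conclusion of Lemma \ref{LemmaReductionPlane} (isomorphism of completions) and then appeals to \cite[Theorem 23.9]{Matsumura} to transfer the Serre condition $(R_k)$ along the faithfully flat local map $\mcO_{X,p} \to \widehat{\mcO}_{X,p}$; implicitly this rests on excellence of the local rings (geometric regularity of formal fibers) so that $(R_k)$ ascends to and descends from the completion. You instead reach inside the \emph{proof} of Lemma \ref{LemmaReductionPlane} and use the \'etale morphism $\tilde{f}:\tilde{U}\to\Hilb^{\lambda}(\AA^2)$ directly: \'etale maps reflect and preserve regularity of local rings, hence $\Sing(\tilde{U})=\tilde{f}^{-1}(\Sing(\Hilb^{\lambda}(\AA^2)))$, and since \'etale maps preserve dimension of local rings they preserve codimension, giving the bound immediately. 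Your version is more geometric and sidesteps the formal-fiber subtleties hidden behind Matsumura's theorem, at the small cost of using more than what the lemma nominally states; the paper's version keeps the lemma as a clean black box. Both arguments are sound.
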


\begin{proof}
Item (1) follows by Lemma \ref{LemmaReductionPlane} and Lemma \ref{LemmaPropertiesRationalSingularities} (4).
Item (2) follows by Lemma \ref{LemmaReductionPlane} and 
\cite[Theorem 23.9]{Matsumura}.
\end{proof}

\section{Irreducibility of nested Hilbert schemes}\label{SectionIrreducible}

The question of irreducibility of parameter spaces is a natural  problem in algebraic geometry. 
In this section, we prove the irreducibility of $\Hilb^{(m,2)}(\AA^2)$ and obtain some further  information on its geometry, by refining  classical deformation techniques for two-dimensional regular local rings.
In particular, the goal of this section is to prove the following result.

\begin{thm}\label{TheoremIrreducibleNested}
Let $m\in \N$.
The nested Hilbert scheme $\Hilb^{(m,2)}(\AA^2)$ is  irreducible of dimension  $2m$, and it is nonsingular in codimension 3.
\end{thm}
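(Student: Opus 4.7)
My plan is to prove the theorem by exhibiting a dense smooth irreducible open locus $U \subseteq \Hilb^{(m,2)}(\AA^2)$ of dimension $2m$, which yields both irreducibility and the dimension, and then to bound the codimension of the singular locus via a stratification argument based on support configurations.

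First, consider the projection $\pi \colon \Hilb^{(m,2)}(\AA^2) \to \Hilb^{m}(\AA^2)$ sending $[Z_1 \supseteq Z_2] \mapsto [Z_1]$, and let $U_m \subseteq \Hilb^{m}(\AA^2)$ denote the open locus of reduced subschemes. Over $U_m$ the fibers of $\pi$ are discrete sets of $\binom{m}{2}$ unordered pairs of points of $Z_1$, so $U \coloneqq \pi^{-1}(U_m) \to U_m$ is finite \'etale; since $U_m$ is smooth irreducible of dimension $2m$ and the monodromy $S_m$ acts transitively on unordered pairs, $U$ is smooth irreducible of dimension $2m$. The key step is density of $U$. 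Given any pair $[Z_1 \supseteq Z_2]$ I would construct a one-parameter deformation $[\mathcal{Z}_1 \supseteq \mathcal{Z}_2]$ over $\Spec \kk[[t]]$ specializing to $[Z_1 \supseteq Z_2]$ and whose generic fiber has $Z_1$ with one additional reduced support point, by adapting Hartshorne's point-detaching technique to the nested setting. Working locally at a support point $p$ of $Z_1$ with multiplicity at least $2$: if $p \notin \mathrm{supp}(Z_2)$, Hartshorne's classical construction in $\widehat{\mathcal{O}}_{\AA^2, p} \cong \kk[[x,y]]$ applies directly and leaves $Z_2$ unchanged; if $p \in \mathrm{supp}(Z_2)$, I would use the Hilbert--Burch description of the codimension-$2$ ideal $I_{Z_1, p}$ together with the fact that $I_{Z_2, p}$ has colength at most $2$ to produce a compatible perturbation of $I_{Z_1, p}$ inside $I_{Z_2, p}$ that detaches a reduced point. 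Iterating eventually produces a pair in $U$.

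For the singular locus I rely on the product decomposition of formal neighborhoods: the completion of the local ring of $\Hilb^{(m,2)}(\AA^2)$ at $[Z_1 \supseteq Z_2]$ is a completed tensor product of the local rings of $\Hilb^{(m_p, c_p)}(\AA^2)$ at the respective local pairs, where $p$ ranges over $\mathrm{supp}(Z_1)$, $m_p = \length(Z_{1,p})$, and $c_p = \length(Z_{2,p})$. Smoothness is preserved by such completed tensor products, so the pair is singular iff some local factor is. By Cheah's criterion the potentially singular factors are those with $c_p = 1, m_p \geq 3$ or $c_p = 2, m_p \geq 4$; for each such factor a direct analysis shows that the singular locus has codimension at least $4$ (for instance, the singular locus of $\Hilb^{(3,1)}(\AA^2)$ consists of pairs $[Z \supseteq p]$ with $I_Z = \mathfrak{m}_p^2$, a two-dimensional subvariety of the six-dimensional $\Hilb^{(3,1)}(\AA^2)$). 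Combining these bounds across the product decomposition gives codimension at least $4$ for the singular locus of $\Hilb^{(m,2)}(\AA^2)$, as required.

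The main obstacle is the density step, specifically the explicit local deformation when $p \in \mathrm{supp}(Z_2)$: the fixed sub-ideal $I_{Z_2, p}$ restricts which perturbations of $I_{Z_1, p}$ are allowed, and one must verify that enough compatible perturbations exist to separate off a reduced point from $Z_1$ near $p$ without violating the nested inclusion. This is the essential new ingredient beyond Hartshorne's classical technique for non-nested $\Hilb^m$, and it is where the length-$2$ hypothesis on $Z_2$ enters decisively---the codimension-$2$ ideal $I_{Z_2, p}$ leaves just enough room in the Hilbert--Burch matrix of $I_{Z_1, p}$ to carry out the detachment.
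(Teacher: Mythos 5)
Your proposal follows the same route as the paper: Hartshorne's point-detaching (``cleaving'') technique for irreducibility of the smoothable component, and a formal-neighborhood product decomposition plus dimension counting for the singular locus. However, you have correctly identified the hard step and then left it unfilled. The construction of a compatible cleaving deformation $(I^{(t)}_1,\, J^{(t)})$ at a common support point $p\in\mathrm{supp}(Z_1)\cap\mathrm{supp}(Z_2)$ is not a soft consequence of ``enough room in the Hilbert--Burch matrix.'' In the paper's Proposition~\ref{PropositionCleaving} this is a delicate case analysis keyed to $\ord(I_{Z_1})$, the dimension of the quadratic part $[I^*_{Z_1}]_2$ of the initial form ideal, and whether its generators share a common factor; in each branch one must choose the element $f$ and linear form $\ell$ in Lemma~\ref{LemmaHartshorne} and exhibit a matching family $J^{(t)}$ of colength-$2$ ideals with $I^{(t)}\subseteq J^{(t)}$ for all $t$. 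Nothing in Hilbert--Burch guarantees this compatibility, and several of the paper's cases require a specific non-generic choice of $\ell$ (e.g.\ $\ell$ coprime to the unique quadric in $[I^*]_2$, or $\ell$ a factor of $w$). Until this case analysis is carried out, density of the reduced-pair locus---and hence irreducibility---is not established.

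The singular-locus sketch is also in the right spirit (it matches Proposition~\ref{PropositionSingularLocus}) but is too loose as stated. Asserting that ``a direct analysis shows the singular locus has codimension at least $4$'' for a local factor $\Hilb^{(r,2)}(\AA^2)$ is circular: that is precisely the assertion being proved. What makes the argument go through is to restrict to the much smaller locus $\mathcal{M}_{r,s}$ of \emph{irreducible} pairs $[\V(I)\supseteq\V(J)]$ supported at a single point $P$ with $I\subseteq I_P^2$; its dimension can be bounded uniformly by Brian\c{c}on's theorem on punctual Hilbert schemes, independently of any prior knowledge of $\Sing(\Hilb^{(r,2)}(\AA^2))$. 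One also needs to justify that a singular pair with several non-reduced components lies in the closure of the stratum where exactly one component is singular and all others are reduced---a fact which the paper again extracts from the cleaving construction. So the cleaving step is the bottleneck for both halves of the theorem, not just for irreducibility.
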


At the end of this section, we also discuss the question of irreducibility for more general nested Hilbert schemes $\Hilb^{(m_1, \ldots, m_d)}(\AA^2)$.

As in the classical case of $\Hilb^{m}(\AA^2)$,
there is a smooth and irreducible open subset
$\mcU \subseteq \Hilb^{(m,2)}(\AA^2)$,
which has dimension $2m$ and 
parametrizes pairs of smooth (i.e., reduced) finite subschemes of $\AA^2$.
Its closure is called the {\bf smoothable component} of $\Hilb^{(m,2)}(\AA^2)$
The irreducibility of $\Hilb^{(m,2)}(\AA^2)$  is thus equivalent to the statement that every 
pair $[Z_1\supseteq Z_2]$ is  smoothable, that is, it is a limit of pairs of smooth subschemes of $\AA^2$.

We say that a pair $[Z_1\supseteq Z_2]$ is {\bf irreducible} if  $Z_1$ and $Z_2$ are irreducible, equivalently, if $Z_1$ is irreducible.
Considering the unique decomposition of $Z_1$ into irreducible subschemes, 
it follows that 
every pair of subschemes has a unique decomposition into irreducible pairs.
Specifically, suppose that $[Z_1\supseteq Z_2]$ is a pair and $Z_1 = Z_{1,1} \cup \cdots \cup Z_{1,r}$ is the unique decomposition of $Z_1$ into irreducible zero-dimensional subschemes of $\AA^2$. 
Then   $Z_2= Z_{2,1} \cup \cdots \cup Z_{2,r}$ for some (possibly empty) 
irreducible zero-dimensional subschemes $Z_{2,i} \subseteq Z_{1,i}$,
and   $[Z_1\supseteq Z_2]$ is the union of the irreducible pairs $[Z_{1,i}\supseteq Z_{2,i}]$.

The concept of cleavability  is closely related to that of smoothability.
Following  \cite{BBKT}, to {\bf cleave} an irreducible subscheme $Z \subseteq \AA^2$ means to express it as a limit of reducible subschemes.
Likewise, 
to  cleave an irreducible pair $[Z_1\supseteq Z_2]$  means to express it as a limit of  reducible pairs.
In practice, this amount to giving  a flat family of pairs that cleaves at least the larger subscheme $Z_1$.

We recall a cleaving technique from \cite[Chapter 8]{hartshornedef}.
The  ($\mm$-adic) {\bf order}  of  $f\in R=\kk[x,y]$ is 
$\ord(f) = \inf\{ s \in \N \, : \, f \notin \mm^{s+1}\}$,
and the order of an $\mm-$primary ideal $I\subseteq R$
is $\ord(I) = \inf\{\ord(f) \, : f \in I\}$.
Denote the associated graded ring of $R$ by $\mathrm{gr}_\mm(R)$,
the initial form of  $f\in R$ by $f^*\in \mathrm{gr}_\mm(R)$,
and the ideal of initial forms of $I \subseteq R$ by  $I^*\subseteq \mathrm{gr}_\mm(R) $.
Recall the definition of colon ideal $I:\ell = \{ f \in R \, : \, \ell f \in I\}$.

\begin{lemma}\label{LemmaHartshorne}
Let $Z \subseteq \AA^2$ be an irreducible subscheme supported at $\V(\mm)$.
Let $f \in I_Z$ be a polynomial with $\ord(f) = \ord(I_Z)$, and $\ell\in R$ a (homogeneous) linear form
such that $f^*, \ell^* \in \mathrm{gr}_\mm(R)$ form a regular sequence.
The formula  
\begin{equation}\label{EqHarsthorneCleaving}
I^{(t)}= (f) + (\ell-t) (I_Z:\ell)
\end{equation}
  defines a flat family over $\AA^1 = \Spec ( \Bbbk[t])$.
We have $I^{(0)}=I_Z$ and
$I^{(t)}= (f, \ell-t) \cap (I_Z:\ell)$ for $t \ne 0$.
Thus, 
$Z$ is the limit of a reducible subscheme  
$X^{(t)} \cup Y^{(t)}$ for $t \ne 0$,
where $X^{(t)}=\V(I_Z:\ell)$ is supported at $V(\mm)$
and $Y^{(t)}=\V(f, \ell-t)$ has support disjoint form $\V(\mm)$.
\end{lemma}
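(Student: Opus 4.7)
The plan is to verify, in order, the two ideal identities $I^{(0)} = I_Z$ and $I^{(t)} = (f, \ell - t) \cap (I_Z{:}\ell)$ for $t \ne 0$, to deduce flatness from a constant-length count of the fibers, and finally to read off the geometric decomposition. Since the regular-sequence hypothesis on $(f^*, \ell^*)$ is local at $\mm$, I would work in the local ring $R_\mm$ (or its completion) throughout, then transfer back to $R$ using that the ideals in play are $\mm$-primary, so extension and contraction along $R \hookrightarrow R_\mm$ are inverse on such ideals.

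For $I^{(0)} = I_Z$, the inclusion $\subseteq$ is immediate from $f \in I_Z$ and the identity $\ell(I_Z{:}\ell) = I_Z \cap (\ell) \subseteq I_Z$. The reverse inclusion I would obtain by passing to the quotient $R_\mm/(\ell) \cong \kk[z]_{(z)}$, a DVR with parameter $z$: every $g \in I_Z$ has image of $z$-order at least $d := \ord(I_Z)$ because $\ord(g) \geq d$, while the image $\bar f$ has $z$-order exactly $d$ thanks to the regular-sequence hypothesis $\ell^* \nmid f^*$ in $\mathrm{gr}_\mm R$, which ensures the initial form survives the reduction. Consequently $\bar f$ generates the image of $I_Z$ in $\kk[z]_{(z)}$, so one can write $g \equiv af \pmod{(\ell)}$ in $R_\mm$, giving $g - af \in I_Z \cap (\ell) = \ell(I_Z{:}\ell)$, as required.

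For the formula $I^{(t)} = (f, \ell - t) \cap (I_Z{:}\ell)$ at $t \ne 0$, the containment $\supseteq$ follows from $f \in I_Z \subseteq I_Z{:}\ell$ together with $(\ell - t)(I_Z{:}\ell) \subseteq (\ell - t) \cap (I_Z{:}\ell)$. For the reverse, given $g = af + b(\ell - t) \in I_Z{:}\ell$, I would multiply by $\ell$ to obtain $\ell b(\ell - t) = \ell g - \ell a f \in I_Z$ and then cancel the unit $\ell - t \in R_\mm^\times$ (a unit because $\ell$ vanishes at $\mm$ while $t \ne 0$) to deduce $\ell b \in I_Z$, i.e.\ $b \in I_Z{:}\ell$, so $g \in (f) + (\ell - t)(I_Z{:}\ell) = I^{(t)}$.

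Flatness over $\AA^1 = \Spec \kk[t]$ then reduces, since $R[t]/I(t)$ is finite over $\kk[t]$, to showing that $\length(R/I^{(t)})$ is independent of $t$. At $t = 0$ this equals $n := \length(R/I_Z)$. For $t \ne 0$, the ideals $(f, \ell - t)$ and $(I_Z{:}\ell)$ have disjoint supports ($\V(\ell - t)$ avoiding $\V(\mm)$ versus the $\mm$-primary locus), hence are comaximal, and the Chinese Remainder Theorem gives $R/I^{(t)} \cong R/(f, \ell - t) \oplus R/(I_Z{:}\ell)$. The short exact sequence $0 \to R/(I_Z{:}\ell) \xrightarrow{\cdot \ell} R/I_Z \to R/(I_Z, \ell) \to 0$ together with $\length(R/(I_Z, \ell)) = d$ (again from the order analysis of $I_Z$ in $R/(\ell)$) yields $\length(R/(I_Z{:}\ell)) = n - d$, while Bezout applied to the transverse intersection of $f$ and $\ell - t$—transversality being precisely the regular-sequence condition—gives $\length(R/(f, \ell - t)) = d$, summing to $n$. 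The geometric description of $Z$ as the flat limit of $X^{(t)} \cup Y^{(t)}$ then reads off directly from the intersection formula. The main obstacle I anticipate is the careful bookkeeping in the length computation for $R/(f, \ell - t)$: the regular-sequence hypothesis is indispensable here, since it is what forces the intersection multiplicity to match the initial-form count without spurious contributions from higher-order terms of $f$.
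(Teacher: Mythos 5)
The verifications of $I^{(0)} = I_Z$, of $I^{(t)} = (f, \ell-t) \cap (I_Z:\ell)$ for $t \neq 0$, and of $\length R/(I_Z:\ell) = n - d$ are sound. However, your flatness argument has a genuine gap, and it is not the route the paper takes: the paper normalizes $\ell = y$, records that $x^{\ord(I_Z)}$ appears in $f$, and defers entirely to Hartshorne's Lemma 8.12.

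Your claim that $\length(R/(f,\ell-t)) = d$ for $t \neq 0$ is false in general. After normalizing $\ell = y$, one has $R/(f,y-t) \cong \kk[x]/(f(x,t))$, whose $\kk$-dimension is $\deg_x f(x,t)$, which for generic $t$ equals the $x$-degree of $f$, not $\ord(f) = d$. The regular-sequence hypothesis forces $x^d$ to appear in $f$, but it does not preclude terms $x^{d+k}y^j$ with $k>0$ that raise the $x$-degree. For example, $I_Z = (x^2, xy^3, y^5)$ with $f = x^2 + x^3y + y^5$ and $\ell = y$ satisfies every hypothesis with $d=2$ and $n=8$, yet $\length R/(f, y-t) = 3$ for $t \neq 0$, so $\length R/I^{(t)} = 3+6 = 9 > 8 = \length R/I^{(0)}$. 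The extra unit of length is carried by a root of $f(\cdot,t)$ that escapes to infinity as $t \to 0$; your appeal to Bezout and transversality of the initial forms controls only the intersection multiplicity at $\V(\mm)$, which is irrelevant since $\V(f,\ell-t)$ avoids $\V(\mm)$ for $t\neq 0$. The same example also defeats your assertion that $R[t]/\bigl((f)+(\ell-t)(I_Z:\ell)\bigr)$ is finite over $\kk[t]$, so the constant-fiber-length criterion for flatness does not even apply. The family \emph{is} flat near $t=0$, but this must be proved by showing directly that $t$ is a nonzerodivisor on $R[t]/\bigl((f)+(\ell-t)(I_Z:\ell)\bigr)$, i.e.\ torsion-freeness over the PID $\kk[t]$: one uses $y\nmid f$ (which is exactly what the regular-sequence hypothesis yields, and exactly what the paper records) to peel a factor of $t$ off any syzygy $tm = \alpha f + (y-t)\beta$. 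That nonzerodivisor argument is Hartshorne's, and it is the one the paper is citing.
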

\begin{proof}
Up to a change of coordinates, we may assume that $\ell = y$.
Since $f^*, \ell^*\in \mathrm{gr}_\mm(R)$ form a regular sequence,
it follows that  $f$ contains the term $x^{\ord(I_Z)}$. 
The statements now follow from   the proof of \cite[Lemma 8.12]{hartshornedef}.
\end{proof}

Observe that a general linear form $\ell$ will satisfy the assumption of Lemma \ref{LemmaHartshorne}.

\begin{prop}\label{PropositionCleaving}
Every irreducible pair
$[Z_1 \supseteq Z_2]\in\Hilb^{(m_1,m_2)}(\AA^2)$, where $m_1 \geq 2$ and $m_2 \in \{1,2\}$,
 is cleavable.
\end{prop}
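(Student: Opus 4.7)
The plan is to treat the cases $m_2 = 1$ and $m_2 = 2$ separately, in each exhibiting an explicit flat family $[Z_1^{(t)} \supseteq Z_2^{(t)}]$ over $\AA^1 = \Spec(\Bbbk[t])$ that cleaves the pair by means of Lemma \ref{LemmaHartshorne}.

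For $m_2 = 1$: after a translation I may assume $Z_1$ is supported at the origin, so $Z_2 = \V(\mm)$. I apply Lemma \ref{LemmaHartshorne} to $Z_1$ with any admissible $f$ and a general linear form $\ell$, and take $Z_2^{(t)} = \V(\mm)$ constant. The containment $Z_2^{(t)} \subseteq Z_1^{(t)}$ reduces to $I_1^{(t)} \subseteq \mm R[t]$. Because $m_1 \geq 2$ and $f^*, \ell^*$ form a regular sequence in $\mathrm{gr}_\mm(R)$, the linear form $\ell$ cannot lie in $I_{Z_1}$ (else $f, \ell$ would be two independent linear forms in $I_{Z_1}$, forcing $\mm \subseteq I_{Z_1}$); consequently $I_{Z_1} : \ell \subseteq \mm$, and both $f$ and $(\ell - t)g$ with $g \in I_{Z_1}:\ell$ lie in $\mm R[t]$.

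For $m_2 = 2$: after a change of coordinates I may assume $I_{Z_2} = (x, y^2)$. I first apply Lemma \ref{LemmaHartshorne} to $Z_2$ with $f = x$, $\ell = y$, obtaining $I_2^{(t)} = (x, y(y-t))$, which cleaves $Z_2$ into a pair of reduced points colliding along the $y$-axis. I aim to construct a compatible deformation $I_1^{(t)}$ of $Z_1$ with $I_1^{(t)} \subseteq I_2^{(t)}$. The key observation is that $I_{Z_1} \subseteq (x, y^2)$ implies $I_{Z_1} + (x) = (x, y^k)$ in $R$ for some $k \geq 2$; hence $I_{Z_1}$ admits a generating set of the form $\{y^k + xh, \, g_1, \ldots, g_r\}$ with $g_i \in (x) \cap I_{Z_1}$, since every $g \in I_{Z_1}$ can be corrected modulo $(x)$ by a $\Bbbk[y]$-multiple of $y^k + xh$. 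I then define
\[ I_1^{(t)} \;=\; (y^{k-1}(y-t) + xh, \; g_1, \ldots, g_r). \]
One has $I_1^{(0)} = I_{Z_1}$, and each generator of $I_1^{(t)}$ lies in $I_2^{(t)}$: the $g_i$ lie in $(x)$, while $y^{k-1}(y-t) = y^{k-2} \cdot y(y-t) \in (y(y-t))$ and $xh \in (x)$. Therefore $Z_2^{(t)} \subseteq Z_1^{(t)}$, and for generic $t$ the scheme $Z_1^{(t)}$ splits into disjoint components supported at the origin and at $(0, t)$, so the pair is reducible.

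The main obstacle is verifying flatness of the family $I_1^{(t)}$, i.e.\ that $R[t]/I_1^{(t)}$ is free over $\Bbbk[t]$ of rank $m_1$. I would establish this with a Gr\"obner basis argument: after choosing $h$ so that its $y$-degree is less than $k$ (which can be arranged by replacing $h$ with a suitable representative modulo elements of $I_{Z_1} \cap (x)$) and a term order on $R[t]$ in which $t$ is smaller than $x$ and $y$, the leading monomial of $y^{k-1}(y-t) + xh$ equals that of $y^k + xh$, namely $y^k$. Enlarging $\{y^k + xh, g_1, \ldots, g_r\}$ to a Gr\"obner basis of $I_{Z_1}$ and performing the same $y^k \rightsquigarrow y^{k-1}(y-t)$ deformation on the $y$-pure leading terms should yield a Gr\"obner basis of $I_1^{(t)}$ with identical initial ideal $\iin(I_{Z_1}) \cdot R[t]$, certifying flatness and the specialization $I_1^{(0)} = I_{Z_1}$. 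The delicate step is confirming via Buchberger's criterion that this substitution procedure is compatible with the completion of the Gr\"obner basis, where the regular-sequence structure in $\mathrm{gr}_\mm(R)$ underpinning Lemma \ref{LemmaHartshorne} enters.
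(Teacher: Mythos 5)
Your $m_2=1$ argument is the paper's (a general $\ell$, constant $J^{(t)}=\mm$) and is correct.

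For $m_2=2$ your approach diverges genuinely from the paper's, and it has a gap that cannot be waved away. By fixing $I_2^{(t)}=(x,y(y-t))$ you commit to cleaving along the line $\ell=y$, and the deformation of $I_{Z_1}$ you then write down — replacing the $y$-pure generator $y^k+xh$ with $y^{k-1}(y-t)+xh$ — is in essence Lemma~\ref{LemmaHartshorne} applied to $Z_1$ with $\ell=y$. For $\ord(I_{Z_1})\geq 2$ there need be no $f$ of minimal order with $f^*,y$ a regular sequence in $\mathrm{gr}_\mm(R)$, and when there is none, the family $I_1^{(t)}$ is not flat. Concretely, take $I_{Z_1}=(y^3,\,xy^2-x^4,\,x^5)$, $J=(x,y^2)$. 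In lex order with $y>x$ one finds $\iin(I_{Z_1})=(y^3,xy^2,x^4y,x^5)$, so $m_1=10$, and $[I_{Z_1}^*]_3=\Span_\kk(y^3,xy^2)\subseteq (y)$, so no admissible $f$ exists for $\ell=y$. Since $I_{Z_1}+(x)=(x,y^3)$ we have $k=3$, $h=0$, and the minimal choice of $g_i$ gives $I_1^{(t)}=(y^2(y-t),\,xy^2-x^4,\,x^5)$. For $t\neq0$, $\V(I_1^{(t)})$ is supported at $(0,0)$ and $(0,t)$. At the origin $y-t$ is a unit, so $y^2\in \bigl(I_1^{(t)}\bigr)_\mm$ and hence $x^4=xy^2-(xy^2-x^4)\in\bigl(I_1^{(t)}\bigr)_\mm$; the local ideal is $(y^2,x^4)_\mm$, of colength $8$. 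At $(0,t)$ the local colength is $1$. So $\colength\bigl(I_1^{(t)}\bigr)=9\neq10$ for $t\neq0$: the family is not flat and does not give a curve in $\Hilb^{(10,2)}(\AA^2)$. Your Gr\"obner argument breaks in exactly this way: $S(y^3-ty^2,\ xy^2-x^4)$ reduces to $x^4y-tx^4$, and Buchberger then produces $t^2x^4$, enlarging the initial ideal beyond $\iin(I_{Z_1})\cdot R[t]$.

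The paper's case analysis is precisely what avoids this. When $\ord(I_{Z_1})\geq 3$ (as in the example) it does not cleave $Z_2$ at all: it keeps $J^{(t)}=J$ constant and cleaves only $Z_1$ with a \emph{general} linear form $\ell$, using $I_{Z_1}:\ell\subseteq\mm^2\subseteq J$ for compatibility. Simultaneous cleaving of $J$ is invoked only when $\ord(I_{Z_1})\leq 2$, and even there the paper tunes $f$ and $\ell$ according to $[I_{Z_1}^*]_2$. A single uniform ansatz of the kind you propose cannot replace that analysis, so the key flatness step in your $m_2=2$ argument remains unestablished and is in fact false as stated.
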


\begin{proof}
We may assume that $Z_1$ and $Z_2$ are supported at $\V(\mm)$.
Denote $I= I_{Z_1} \subseteq J=I_{Z_2}$.
We will show that the pair can be cleaved, using the deformation $I^{(t)}$ of $I$ provided by \eqref{EqHarsthorneCleaving},
and appropriately deforming  $J$ to some $J^{(t)}$ such that $I^{(t)} \subseteq J^{(t)}$ for all $t$.

If $m_2=1$, then  $J= \mm$.
 Since $m_1 >1$,  a general linear form $\ell $ will satisfy $\ell \notin I$ and hence $I:\ell \subseteq \mm$.
 We cleave the pair using  \eqref{EqHarsthorneCleaving}  and setting $J^{(t)}=J$.
From now on, assume $m_2=2$.

If $\ord(I)=1$, then $I = (x,y^r) \subseteq J  =(x,y^2)$, up to changing coordinates.
We cleave the pair  setting
$I^{(t)}= (x) + (y-t) (I:y)$
and 
$J^{(t)}= (x) + (y-t) (J:y)=(x, y^2-ty)$.
If $\ord(I)\geq 3$, then $I:\ell \subseteq \mm^2 \subseteq J$ for every $\ell$,
so we may cleave setting $J^{(t)}=J$.
From now on, assume $\ord(I)=2$.

We consider the quadratic part $[I^*]_2$ of $I^*$.
If $\dim_\kk [I^*]_2 = 3$, then $I=\mm^2$, and we may assume $J  =(x,y^2)$.
We cleave the pair  setting
$I^{(t)}= (x^2) + (y-t) (I:y)$
and 
$J^{(t)}= (x) + (y-t) (J:y)$.
If $\dim_\kk [I^*]_2 = 1$, then 
we  choose  a (general) $\ell$ so that it is coprime with the unique quadric $f^*\in [I^*]_2$.
It follows that $(I:\ell)^* \subseteq I^*:\ell \subseteq \mm^2$,
hence $I:\ell \subseteq \mm^2$ and 
we cleave the pair setting $J^{(t)}=J$.
From now on, assume $\dim_\kk[I^*]_2 =2$,
so $I = (q_1, q_2) + I'$ with $I' \subseteq \mm^3$
and $[I^*]_2 = \Span_\kk( q_1^*, q_2^* )$.

If $q_1, q_2$ have a common factor, then, up to changing coordinates, 
we may assume $(q_1, q_2) = x \mm$.
Let $w \in J$ with $\ord(w)=\ord(J)=1$.
We cleave  the pair setting
$I^{(t)}= (xw) + (\ell-t) (I:\ell)$
and 
$J^{(t)}= (w) + (\ell-t) (J:\ell)$
with general $\ell$.

If $q_1, q_2$ are coprime, 
then $(q_1,q_2)$ is  a complete intersection with Hilbert function $(1,2,1,\ldots,1)$.
Using \cite[Corollary 1.2]{S79}, we may  assume
$(q_1, q_2) =(xy, ux^2-y^p)$, where $u\notin \mm$  and $p \geq 2$.
We have  $J=(w)+\mm^2$ for some $w\in \mm$ with $\ord(w)=1$,
and we may assume $w$ to be homogeneous.

If $p \geq 3$, then 
$[I^*]_2 = \Span_\kk( x^2, xy)$.
If $\gcd(x,w)=1$, let $q \in I$ with $q^* = x^2$ and observe that $J= (w,q)$.
We cleave the pair setting 
$I^{(t)}= (q) + (w-t) (I:w)$
and
$J^{(t)}= (q, w-t)$.
Otherwise, we have  $x=w$ up to units,
and we cleave the pair setting 
$I^{(t)}= (xy) + (\ell-t) (I:\ell)$
and
$J^{(t)}= (w) +  (\ell-t) (J:\ell)$.

Finally, suppose $p=2$,
so $[I^*]_2 = \Span_\kk(  xy, u^*x^2-y^2)$.
If $\gcd(x,w)=\gcd(y,w)=1$, then $J = (w,xy)$,
 and we cleave the pair setting
$I^{(t)}= (xy) + (w-t) (I:w)$,
$J^{(t)}= 
 (xy, w-t)$.
Otherwise,
we cleave the pair setting
$I^{(t)}= (xy) + (\ell-t) (I:\ell)$,
$J^{(t)}= (w) + (\ell-t)(J:\ell)$.
\end{proof}

\begin{cor}\label{CorollaryIrreducibleNested}
The nested Hilbert scheme $\Hilb^{(m,2)}(\AA^2)$ is irreducible, of dimension $2m$.
\end{cor}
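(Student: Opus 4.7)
The plan is to prove that every closed point $[Z_1 \supseteq Z_2]$ of $\Hilb^{(m,2)}(\AA^2)$ lies in the closure $\overline{\mcU}$ of the smoothable locus $\mcU$. Since $\mcU$ is a smooth irreducible open subset of dimension $2m$, as already recorded before Proposition \ref{PropositionCleaving}, density of $\mcU$ immediately yields both irreducibility of $\Hilb^{(m,2)}(\AA^2)$ and the dimension statement.

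I would argue by induction on $m$. The base case $m = 1$ is trivial since any such pair is already in $\mcU$. For $m \geq 2$, I first invoke the decomposition of $[Z_1 \supseteq Z_2]$ into irreducible pairs $[Z_{1,i} \supseteq Z_{2,i}]$ recalled in the text. Because the supports of the $Z_{1,i}$ are pairwise disjoint, smoothings of each irreducible summand performed on small disjoint affine neighborhoods of the individual supports can be combined into a single global smoothing, reducing the problem to the case where $[Z_1 \supseteq Z_2]$ is itself irreducible of degree $m \geq 2$. When $m_2 = 0$ this is the classical smoothability of $\Hilb^{m}(\AA^2)$. When $m_2 \in \{1,2\}$, Proposition \ref{PropositionCleaving} produces a flat family over $\AA^1$ whose special fiber is $[Z_1 \supseteq Z_2]$ and whose generic fiber is a reducible pair of the form $[X_1 \sqcup Y_1 \supseteq X_2 \sqcup Y_2]$. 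Decomposing this generic pair into its two irreducible components $[X_1 \supseteq X_2]$ and $[Y_1 \supseteq Y_2]$, the inductive hypothesis applies to each, so the generic fiber is smoothable, whence $[Z_1 \supseteq Z_2] \in \overline{\mcU}$ and the induction closes.

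The one bookkeeping point I expect to have to verify---and the mildest potential obstacle---is that both components $X_1$ and $Y_1$ of the cleaving have $Z_1$-degree strictly less than $m$, so that the inductive hypothesis genuinely applies. From Lemma \ref{LemmaHartshorne}, $\deg Y_1 = \ord(I_{Z_1})$, while the standard Hilbert-function estimate $\deg Z_1 \geq \binom{\ord(I_{Z_1})+1}{2}$ for an irreducible zero-dimensional subscheme supported at the origin gives $\deg Z_1 > \ord(I_{Z_1})$ whenever $m \geq 2$; therefore $\deg Y_1 < m$, and $\deg X_1 = m - \deg Y_1 > 0$ as well. Beyond this, I do not anticipate further difficulties: Proposition \ref{PropositionCleaving} has already been engineered so that the cleaving respects the containment $Z_2 \subseteq Z_1$ throughout, which is precisely what is needed to carry the induction uniformly in $m_2 \in \{0,1,2\}$.
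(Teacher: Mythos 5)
Your argument follows the same route as the paper's (repeated application of Proposition~\ref{PropositionCleaving}); the paper states the proof in a single sentence, and you have usefully expanded the implicit induction: decompose into irreducible summands, cleave, and reduce the degree. The decomposition step (disjoint supports give independent deformations, so smoothings of the components combine) is sound and is what the paper's terse ``applying $\ldots$ repeatedly'' tacitly uses.

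The one detail that could be tightened is your invocation of ``$\deg Y_1 = \ord(I_{Z_1})$''. Lemma~\ref{LemmaHartshorne} does not state this equality, and as stated over $\kk[x,y]$ rather than $\kk[[x,y]]$ it need not hold literally for every admissible $f$. However, this is a harmless shortcut rather than a gap: what the induction actually requires is only that both $X^{(t)}$ and $Y^{(t)}$ are nonempty for $t\ne 0$, so that $\deg X^{(t)}, \deg Y^{(t)} > 0$ and hence $\deg X^{(t)} + \deg Y^{(t)} = m$ forces both to be strictly less than $m$. Nonemptiness of $Y^{(t)}$ follows since $f$ contains the monomial $x^{\ord(I_{Z_1})}$ (noted in the proof of Lemma~\ref{LemmaHartshorne}), and nonemptiness of $X^{(t)}$ follows since $I_{Z_1}:\ell \subsetneq R$, which holds because $\ell \notin I_{Z_1}$ for the choices of $\ell$ made throughout the proof of Proposition~\ref{PropositionCleaving} (e.g.\ $\ell$ is a general linear form and $m\ge 2$). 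With that substitution your argument is complete and agrees in substance with the paper's.
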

\begin{proof}
Applying  Proposition \ref{PropositionCleaving}
repeatedly, 
 every pair $[Z_1 \supseteq Z_2]\in\Hilb^{(m,2)}(\AA^2)$ is a limit of  pairs of reduced schemes, that is, 
$\Hilb^{(m,2)}(\AA^2)$ coincides with the smoothable component.
\end{proof}

Corollary \ref{CorollaryIrreducibleNested} implies  that $\Hilb^{(m,2)}(\AA^2)$ is generically smooth, 
since a pair of reduced schemes is a smooth point.
In fact, we can determine the exact codimension of the singular locus.

\begin{prop}\label{PropositionSingularLocus}
The singular locus of    $\Hilb^{(m,2)}(\AA^2)$ has codimension 4, if $m$ is at least $4$.
\end{prop}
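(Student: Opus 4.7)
The plan is to combine an \'etale-local product decomposition of $\Hilb^{(m,2)}(\AA^2)$ at pairs with multiple support points with explicit tangent space computations at key irreducible pairs.

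The same \'etale-chart argument used in Lemma \ref{LemmaReductionPlane}, applied simultaneously at each support point, shows that for a pair $[Z_1 \supseteq Z_2]$ with irreducible decomposition $\bigsqcup_{i=1}^r [Z_{1,i} \supseteq Z_{2,i}]$ at pairwise distinct supports, an open neighborhood in $\Hilb^{(m,2)}(\AA^2)$ is \'etale-locally isomorphic to $\prod_{i=1}^r \Hilb^{(k_i,l_i)}(\AA^2)$ at the $[Z_{1,i} \supseteq Z_{2,i}]$, where $(k_i, l_i) = (\length(Z_{1,i}), \length(Z_{2,i}))$. Hence $[Z_1 \supseteq Z_2]$ is smooth iff each factor is smooth in its local nested Hilbert scheme. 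Factors of type $(k, 0)$, $(k, k)$, and $(k, k-1)$ are smooth by Fogarty and Cheah, so (as $\sum l_i = 2$) the potentially singular factor types are $(k, 1)$ for $k \geq 3$ and $(k, 2)$ for $k \geq 4$. Stratify $\Hilb^{(m,2)}(\AA^2)$ by combinatorial type; using Brian\c{c}on's theorem $\dim \Hilb^k_0(\AA^2) = k-1$ and the analogous bound for punctual nested schemes, the stratum of type $(k_1, l_1), \ldots, (k_r, l_r)$ has dimension at most $r+m$. The dominant potentially singular stratum is $(3,1), (1,1), (1,0)^{m-4}$ of dimension $2m-2$; the next is $(4,2), (1,0)^{m-4}$ of dimension $2m-3$.

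Within the dominant stratum, the true singular part is where the $(3,1)$-factor is singular in $\Hilb^{(3,1)}(\AA^2)$. Using the tangent space formula
\[
T_{[Z_1 \supseteq Z_2]} \Hilb^{(m,2)} = \Hom_R(I_{Z_1}, R/I_{Z_1}) \times_{\Hom_R(I_{Z_1}, R/I_{Z_2})} \Hom_R(I_{Z_2}, R/I_{Z_2}),
\]
a direct computation at the irreducible pair $[\V(\mm_p^2) \supseteq p] \in \Hilb^{(3,1)}(\AA^2)$---via the syzygies of $\mm^2$ and $\mm$ together with the compatibility $\pi \circ \alpha_1|_{\mm^2} = \alpha_2|_{\mm^2}$, where $\pi \colon R/\mm^2 \twoheadrightarrow R/\mm$---yields $\dim T = 8 > 6 = \dim \Hilb^{(3,1)}(\AA^2)$, so this pair is singular. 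The same formula at a curvilinear pair $[\V(x, y^3) \supseteq p]$ gives $\dim T = 6$, showing smoothness; hence $\Sing \Hilb^{(3,1)}(\AA^2) = \{[\V(\mm_p^2) \supseteq p] : p \in \AA^2\}$, of dimension $2$ and codimension $4$. Transporting back to $\Hilb^{(m,2)}(\AA^2)$, the singular part of the dominant stratum has dimension $2 + 2 + 2(m-4) = 2m-4$, giving codimension $4$. A parallel tangent analysis at irreducible $(4,2)$-pairs shows the singular part of the $(4,2)$-stratum has dimension at most $2m-4$ too, so the singular locus of $\Hilb^{(m,2)}(\AA^2)$ has codimension exactly $4$.

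The main obstacle is the tangent-space analysis at irreducible $(4,2)$-pairs: one must classify length-$4$ subschemes by Hilbert function ($(1,1,1,1)$ curvilinear vs.\ $(1,2,1)$ non-curvilinear), and within the non-curvilinear moduli distinguish smooth from singular pairs by the alignment between $Z_2$ and the socle of $Z_1$, showing by explicit Hom-space computation that the singular subfamily has codimension at least $1$ in the punctual irreducible-pair locus.
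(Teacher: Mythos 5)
Your proposal follows essentially the same route as the paper's proof: reduce to irreducible components (via a product decomposition at the completed/\'etale-local level), stratify by the type of the potentially singular component, bound the strata with Brian\c{c}on's theorem together with its refinement $\dim\{[\V(I)] : I \subseteq \mm^2,\ \colength I = k\} \le k-2$, and finish with tangent-space computations. Your $(3,1)$ analysis --- $[\V(\mm_p^2)\supseteq p]$ has an $8$-dimensional tangent space, curvilinear pairs are smooth, so $\Sing\Hilb^{(3,1)}(\AA^2)$ is the $2$-dimensional locus of compressed pairs --- matches the paper and supplies the codimension-$4$ contribution to $\overline{\Sing}\,\Hilb^{(m,2)}(\AA^2)$.

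The genuine gap, which you flag yourself, is the $(4,2)$ factor. You propose classifying singular irreducible $(4,2)$-pairs by ``the alignment between $Z_2$ and the socle of $Z_1$,'' then arguing the singular subfamily has codimension $\ge 1$ inside the punctual locus; as written this is a plan, not a proof, and the classification is more delicate than it may first appear. The paper bypasses the classification. It first shows that the punctual locus $\mathcal{M}_{4,2}$ of irreducible pairs $[\V(I)\supseteq\V(J)]$ with $I\subseteq I_P^2$ is \emph{irreducible} of dimension $5$: every colength-$4$ ideal inside $\mm^2$ has Hilbert function $(1,2,1)$, and these form an irreducible $2$-parameter family by Brian\c{c}on, to which one adds $1$ parameter for $J$ and $2$ for the support point $P$. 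It then exhibits the single pair $[\V(x^2,y^2)\supseteq\V(x,y^2)]\in\mathcal{M}_{4,2}$ as a smooth point of $\Hilb^{(4,2)}(\AA^2)$ (tangent space of dimension $8 = \dim\Hilb^{(4,2)}(\AA^2)$). Irreducibility of $\mathcal{M}_{4,2}$ forces its generic member to be smooth, so $\codim\,\mathcal{N}_{4,2}\ge\codim\,\mathcal{M}_{4,2}+1\ge 4$. In short: once you prove $\mathcal{M}_{4,2}$ is irreducible, exhibiting one smooth point replaces the socle-alignment dichotomy entirely, and I recommend that route for closing your gap.
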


\begin{proof}
A pair is a smooth point on   $\Hilb^{(m,2)}(\AA^2)$  if and only if 
all its irreducible components are smooth points on their respective nested Hilbert schemes;
this follows for instance from the tangent space formula \cite[Section 0.4]{Cheah}.
Moreover, any irreducible component of a pair can be deformed to a pair of reduced subschemes using Proposition \ref{PropositionCleaving} repeatedly.

Consider a pair   $[Z_1\supseteq Z_2]\in \Sing(\Hilb^{(m,2)}(\AA^2))$.
At least one of its irreducible components is a singular point on its own nested Hilbert scheme.
The union of the  remaining components can be deformed to a pair of reduced subschemes. 
We deduce that $\Sing(\Hilb^{(m,2)}(\AA^2))$ is contained in  the closure of the locus 
 $\mathcal{L}\subseteq \Hilb^{(m,2)}(\AA^2)$  which parametrizes  pairs $[Z_1\supseteq Z_2]$ satisfying the following condition:
one irreducible component   $[\V(I)\supseteq \V(J)]$  of $[Z_1\supseteq Z_2]$
is a singular point on its nested Hilbert scheme,
while all other components are pairs of reduced subschemes, that is, 
points of either $\Hilb^{(1,1)}(\AA^2)$ or $\Hilb^{(1,0)}(\AA^2)$.
Conversely, we have  $\mathcal{L}\subseteq \Sing(\Hilb^{(m,2)}(\AA^2))$ by the previous paragraph,
and, since the singular locus is closed, we conclude that $\Sing(\Hilb^{(m,2)}(\AA^2))= \overline{\mathcal{L}}$.
Thus, 
it suffices to  show that $\codim(\mathcal{L}) = 4$.

We stratify $\mathcal{L}$ by locally closed subsets $\mathcal{L}_{r,s}$ 
according to  $r= \colength(I), s= \colength(J)$,
where $[\V(I)\supseteq \V(J)]$ is the unique singular component of a pair parametrized by $\mathcal{L}$.
Let  $P\in \AA^2$ denote the support of $\V(I),\V(J)$.
We have $I \subseteq I_P^2$,
since any pair with $I \not\subseteq I_P^2$ is of the form 
$[\V(x,y^r) \supseteq \V(x,y^s)]$
 up to changing coordinates,
and it is easy to check that $[\V(x,y^r) \supseteq \V(x,y^s)]$ is a smooth point 
using the tangent space formula.
We also have $s \geq 1$, since $\Hilb^{(r,0)}(\AA^2) = \Hilb^{r}(\AA^2)$ is smooth,
thus $J \ne R$, equivalently,
$J \subseteq I_P$.
Finally, if $r=3 $, then $s=1$, since $\Hilb^{(3,2)}(\AA^2)$ is smooth.
Thus, the relevant vectors $(r,s)$ are $(3,1)$ and  those  with $r\geq 4$ and $s \in \{1,2\}$.

Let $\mathcal{M}_{r,s}\subseteq \Hilb^{(r,s)}(\AA^2)$ be the locus
of irreducible pairs   $[\V(I)\supseteq \V(J)]$ such
that $I \subseteq I_P^2$, where $P$ is the support of $\V(I)$,
and let $\mathcal{N}_{r,s} = \mathcal{M}_{r,s}\cap \Sing(\Hilb^{(r,s)}(\AA^2))$.
By the discussion above, $\mathcal{L}_{r,s}$ is isomorphic to the following open subscheme of 
$\mathcal{N}_{r,s} \times \Hilb^{(m-r,2-s)}(\AA^2)$
$$
\mathcal{L}_{r,s}
\cong 
\Big\{ \big([Y_2 \supseteq Y_1], [X_2 \supseteq X_1]\big)\,: \, X_2 \text{ is reduced and disjoint from the support of } Y_2  \Big\}.
$$
Since $\dim \Hilb^{(m-r,2-s)}(\AA^2) = 2m-2r$,
in order to show that $\codim(\mathcal{L}) = 4$
it suffices to show that $\codim(\mathcal{N}_{r,s}) \geq 4$ for each $(r,s)$,
and $\codim(\mathcal{N}_{r,s}) = 4$ for at least one $(r,s)$.

If we  fix $P = \V(\mm)$,
the locus of $\mm$-primary ideals $I\subseteq \mm^2$ of colength $r$ 
has dimension at most $r-2$  by \cite[Th\'eor\`eme III.3.1]{Briancon}.
However, if $r=3$ this locus consists of a single point $I=\mm^2$.
The locus of  $\mm$-primary ideals $J$ of colength $2$ 
has dimension 1, 
and the only  $\mm$-primary ideal of colength $1$ is $J= \mm$.
By varying  the point $P\in \AA^2$,
we obtain
$\dim(\mathcal{M}_{r,1}) \leq r$ and
$\dim(\mathcal{M}_{r,2}) \leq r+1$ if $r \geq 4$,
while $\dim(\mathcal{M}_{3,1}) =2$.
Since $\dim \Hilb^{(r,s)}(\AA^2) = 2r$, 
we deduce  $\codim(\mathcal{N}_{s,r}) \geq \codim(\mathcal{M}_{s,r}) \geq 4$ for all $(r,s) $ with $s \in \{1,2\}$ and $(r,s) \ne (4,2)$.
Moreover, 
we have $\mathcal{N}_{3,1}=\mathcal{M}_{3,1} = \{ [V(I_P^2) \supseteq V(I_P)] \, : \, P \in \AA^2\}\subseteq \Sing(\Hilb^{(3,1)}(\AA^2))$,
since every pair of the form $[V(I_P^2) \supseteq V(I_P)]$ is singular, as it follows by the tangent space formula.
Therefore, $\codim (\mathcal{N}_{3,1})=6-2=4$.

It remains to show that $\codim(\mathcal{N}_{4,2}) \geq 4$.
For $(r,s)= (4,2)$, the argument above shows that $\codim(\mathcal{M}_{4,2}) \geq 3$.
To show that $\codim(\mathcal{N}_{4,2}) \geq 4$,
we observe that $\mathcal{M}_{4,2}$ is irreducible and its general member parametrizes smooth points of $\Hilb^{(4,2)}(\AA^2)$.
An $\mm$-primary ideal $I\subseteq \mm^2$ of colength $4$ 
is necessarily homogeneous with Hilbert function $(1,2,1)$,
and these ideals form a locus $\Hilb^4(\AA^2)$ which is irreducible of dimension $2$
by \cite[Th\'eor\`eme III.3.1]{Briancon}.
By accounting for the ideals $J$ and varying the point $P \in\AA^2$,
it follows that $\mathcal{M}_{4,2}$ is irreducible of dimension 5. A simple computation shows that the tangent space to  $\Hilb^{(4,2)}(\AA^2)$ at the pair
 $[\V(x^2,y^2)\supseteq \V(x,y^2)]$ has dimension 8. 
 Thus, the pair is a smooth point in $\Hilb^{(4,2)}(\AA^2)$, and it lies in $\mathcal{M}_{4,2}$. 
 It follows that the general member of $\mathcal{M}_{4,2}$ is a smooth point of $\Hilb^{(4,2)}(\AA^2)$. 
 Thus, $\codim(\mathcal{N}_{4,2}) = 8- \dim(\mathcal{N}_{4,2}) \geq 8 - (\dim(\mathcal{M}_{4,2})-1) = 4$ as required.
\end{proof}

Combining  Corollary \ref{CorollaryIrreducibleNested} and Proposition \ref{PropositionSingularLocus},
we have proved Theorem \ref{TheoremIrreducibleNested}.

\smallskip
We conclude this section with a  discussion about arbitrary nested Hilbert schemes $\Hilb^{\lambda}(\AA^2)$, 
where $\lambda =(m_1, \ldots, m_d)$ is  any integer partition.
The  irreducibility of $\Hilb^{\lambda}(\AA^2)$ has been proved only in few cases,
by various techniques:
 $\lambda=(m,m-1)$ \cite[Theorem 3.0.1]{Cheah}, 
$\lambda=(m,m-2)$ \cite[Proposition 6]{GH}, 
  $\lambda=(m,m-1,m-2)$  \cite[Section 3.A]{Addington}, 
   $\lambda=(m,1)$  \cite[Corollary 7.3]{Fogarty2},
   and $\lambda= (m,m-1,m-2,1),(m,m-2,1),(m,m-1,1)$
   \cite[Theorem 1.1, Corollary 1.2]{RT}.
The technique we employed in this section can be used to study the  irreducibility 
of $\Hilb^{\lambda}(\AA^2)$ in general;
moreover, it works in arbitrary characteristic.
For instance, it can be used to give a quick proof of the irreducibility of $\Hilb^{(m,m-1,m-2,1)}(\AA^2)$, and, as a consequence, of all the other cases mentioned above.

\begin{prop}
The nested Hilbert scheme $\Hilb^{(m,m-1,m-2,1)}(\AA^2)$ is irreducible of dimension $2m$.
\end{prop}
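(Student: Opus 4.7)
We extend the cleaving technique of Proposition~\ref{PropositionCleaving} to four-step chains. By induction on $m$, we show every chain $[Z_1 \supseteq Z_2 \supseteq Z_3 \supseteq Z_4]$ is smoothable; irreducibility of $\Hilb^{(m,m-1,m-2,1)}(\AA^2)$ then follows (since it coincides with its smoothable component), and the dimension $2m$ is that of the open locus of reduced chains. As in Corollary~\ref{CorollaryIrreducibleNested}, it suffices to handle an irreducible chain supported at the origin, with $Z_4 = \V(\mm)$.

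Set $I_i = I_{Z_i}$. Pick a generic linear form $\ell \notin I_3$ and polynomials $f_i \in I_i$ of minimal order $\ord(I_i)$ satisfying the divisibility relations $f_{i+1} \mid f_i$ in $R$. Define
\[
 I_i^{(t)} = (f_i) + (\ell - t)(I_i : \ell) \quad (i = 1, 2, 3), \qquad I_4^{(t)} = \mm,
\]
and check that this is a family of chains flat over $\AA^1$: Lemma~\ref{LemmaHartshorne} yields flatness of each individual component, the chain inclusions $I_1^{(t)} \subseteq I_2^{(t)} \subseteq I_3^{(t)} \subseteq \mm$ follow from the divisibility of the $f_i$'s and monotonicity of the colon operation, and genericity of $\ell$ ensures $I_3^{(t)} \subseteq \mm$. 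A key structural observation is that the condition $\dim_\kk(I_{i+1}/I_i)=1$ forces the orders $s_i = \ord(I_i)$ to satisfy $s_i - s_{i+1} \in \{0,1\}$, so only a handful of order-patterns can occur.

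For $t \neq 0$ the chain cleaves by support: at the origin, the pieces $W_i^{(t)} = \V(I_i : \ell)$ of colength $(m-i+1)-s_i$ form a subchain $W_1^{(t)} \supseteq W_2^{(t)} \supseteq W_3^{(t)} \supseteq \{0\}$; off the origin, the pieces $Y_i^{(t)} = \V(f_i, \ell - t)$ consist of $s_i$ distinct reduced points on the line $\ell = t$ (with no $Z_4$ component). The off-origin chain is trivially smoothable. According to the possible order patterns, the origin chain either (i) has the same shape $(m-s,\, m-1-s,\, m-2-s,\, 1)$ with $s \geq 1$, reducing to a smaller instance handled by induction, or it collapses (due to repeated colengths) to a shorter chain in $\Hilb^{(m',m'-1,1)}(\AA^2)$ or $\Hilb^{(m',1)}(\AA^2)$, which are known to be irreducible by \cite{RT} and \cite{Fogarty2} respectively.

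The main obstacle is guaranteeing the simultaneous existence of compatible $f_i$'s, equivalently of nonzero initial forms $\phi_i \in [I_i^*]_{s_i} \subseteq \mathrm{gr}_\mm(R) \cong \kk[x, y]$ with nested linear factorizations $\phi_{i+1} \mid \phi_i$. Since the order-drops between consecutive ideals are at most $1$, the possible triples $(s_1, s_2, s_3)$ are very restricted, and a direct case analysis---paralleling the proof of Proposition~\ref{PropositionCleaving}---produces suitable $\phi_i$ in each case by exploiting the freedom to choose representatives of the lowest-degree parts of the initial ideals.
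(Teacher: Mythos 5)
Your approach is the same as the paper's: reduce to cleaving an irreducible chain supported at the origin via Lemma~\ref{LemmaHartshorne}, with $I^{(t)}_i=(f_i)+(\ell-t)(I_i:\ell)$ and compatible $f_i$'s so that the $I^{(t)}_i$ stay nested. The setup (flatness of each member, nestedness from $f_{i+1}\mid f_i$ and monotonicity of colon, harmlessness of taking $I_4^{(t)}=\mm$ once $\ell\notin I_3$) is fine.

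The gap is the step you yourself flag as ``the main obstacle'' and then do not close: producing $f_i\in I_i$ with $\ord(f_i)=\ord(I_i)$ and $f_3\mid f_2\mid f_1$. First, the ``equivalently'' reduction to compatible initial forms $\phi_{i+1}\mid\phi_i$ in $\mathrm{gr}_\mm(R)$ is not an equivalence: having a divisibility chain among initial forms does not automatically lift to a divisibility chain among actual elements of the $I_i$ of the right orders, and the lifting problem is exactly what needs to be solved. Second, ``a direct case analysis paralleling Proposition~\ref{PropositionCleaving}'' is not really a description of a proof; the case analysis in that proposition is organized around $\ord(I)$, $\dim[I^*]_2$ and Hilbert functions for a single pair, and it does not obviously adapt to the divisibility-lifting problem for a three-deep tower. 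Third, even granting the order restrictions $s_i-s_{i+1}\in\{0,1\}$, a naive step-by-step argument (first find $f_2$ as a multiple of $f_3$ in $I_2$, then $f_1$ as a multiple of $f_2$ in $I_1$) can get stuck in the pattern $(s_1,s_2,s_3)=(s+1,s+1,s)$: the $2$-dimensional piece of $\Span_\kk(f_2,xf_2,yf_2)$ landing in $I_1$ may consist only of elements of order $s+2$, strictly larger than $\ord(I_1)=s+1$. The paper avoids this by a single direct pigeonhole from $I_k$ to $I_1$, where $k=\min\{i:\ord(I_i)=\ord(I_3)\}$: since $\colength(I_1)-\colength(I_k)\leq 2$, the $3$-dimensional space $\Span_\kk(f_k,xf_k,yf_k)$ meets $I_1$ nontrivially, yielding $f_1=(a+bx+cy)f_k\in I_1$, and a short analysis shows $a=0$ and $\ord(f_1)=\ord(I_1)$; then $f_2,f_3$ are taken among $f_1,f_k$ as appropriate. (This argument also shows that $(s+2,s+1,s)$ cannot occur, which your ``handful of order-patterns'' remark misses.) Without this explicit construction your proof has a genuine hole at its central step.
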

\begin{proof}
First, we show that every irreducible chain $[\V(I_1) \supseteq \cdots \supseteq \V(I_4)] \in \Hilb^{(m,m-1,m-2,1)}(\AA^2)$ supported at $\V(\mm)$ is cleavable.
Let $k = \min \{i : \ord(I_i)=\ord(I_3)\}$, and pick $f_k \in I_k$ with $\ord(f_k)=\ord(I_k)$.
Since $\colength(I_1)-\colength(I_k) \leq 2$, we have $I_1 \cap \Span_\kk( f_k, xf_k, yf_k) \ne \{0\}$.
It follows that we can pick $f_i \in I_i$ for $i = 1, 2, 3$ with
$\ord(f_i)=\ord(I_i)$ and
 $(f_1) \subseteq (f_2)\subseteq (f_3) $.
 Observe that $I_4 = \mm$.
We cleave the chain by setting $I^{(t)}_i = (f_i ) + (\ell-t)(I_i : \ell)$ for $i =1,2, 3$, 
and $I^{(t)}_4 = (f_3,\ell-t)$ if $\ord(f_3)=1$,
$I^{(t)}_4 = \mm$ if $\ord(f_3)>1$,
where $\ell$ is a general linear form.
As a byproduct, we also deduce cleavability  for any subset of $(m,m-1,m-2,1)$.
The vector of lengths of each irreducible component of the chain $[\V(I^{(t)}_1) \supseteq \cdots \supseteq \V(I^{(t)}_4)]$, for $t \ne 0$, is again of the form $(m',m'-1,m'-2,1)$ for some $m'$, or a subset of it.
Cleaving each irreducible chain repeatedly, we conclude that  $\Hilb^{(m,m-1,m-2,1)}(\AA^2)$ coincides with its smoothable component.
\end{proof}

However, 
very little is known besides these cases, 
and achieving a complete classification is perhaps hopeless.
Intuitively, 
the issue is that
deforming two or more subschemes simultaneously, 
while preserving the inclusion throughout the deformation, is much harder than just deforming one subscheme.
The last result of this section  provides concrete evidence to support this claim.
It is known that examples of reducible nested Hilbert schemes exist, cf. \cite[Section 3.A]{Addington} and \cite[Theorem 1.4]{RT}.
Here, we  apply a variation of Iarrobino's method in \cite{Iarrobino} to show that
$\Hilb^\lambda(\AA^2)$ may be reducible as soon as  $\lambda$ has at least $5$ parts.

\begin{prop}\label{PropositionReducibleNested}
For each $d\geq 5$, there exist integers  $m_1 > \cdots > m_d$
such that  the nested Hilbert scheme $\Hilb^{(m_1, \ldots, m_d)}(\AA^2)$ is reducible.
\end{prop}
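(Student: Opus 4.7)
The plan is to adapt Iarrobino's dimension-counting argument \cite{Iarrobino}. As in the case $d=2$ treated above, the smoothable component of $\Hilb^{\lambda}(\AA^2)$ is irreducible of dimension $2m_1$: it is the closure of the open locus of chains of disjoint reduced points, which is parametrized by nested subsets inside a general configuration of $m_1$ distinct points of $\AA^2$. Hence, for each $d\ge 5$, to prove reducibility of $\Hilb^{(m_1,\ldots,m_d)}(\AA^2)$ for a suitable choice of $\lambda$, it suffices to exhibit a locally closed subset $\mcF\subseteq \Hilb^{\lambda}(\AA^2)$ with $\dim \mcF > 2m_1$; such an $\mcF$ cannot lie in the smoothable component, and so $\Hilb^{\lambda}(\AA^2)$ must have at least one further irreducible component.

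For $\mcF$, I would take an Iarrobino-type locus of chains of graded ideals. Fix a large integer $s$ and a nonincreasing sequence of valid Hilbert functions $h_1\ge h_2\ge \cdots\ge h_d$, and consider chains $[\V(I_1)\supseteq\cdots\supseteq\V(I_d)]$ where each $I_i$ is a graded $\mm_P$-primary ideal with common support $P\in \AA^2$ (contributing two extra parameters for varying $P$) and with $R/I_i$ having Hilbert function $h_i$. Since each $I_i$ is graded, the chain is determined by its graded pieces, and $\mcF$ fibers over a product of partial flag varieties
\[
(I_1)_j \subseteq (I_2)_j \subseteq \cdots \subseteq (I_d)_j \subseteq R_j
\]
across the relevant degrees $j$, subject only to the ideal-compatibility conditions $R_1 (I_i)_j\subseteq (I_i)_{j+1}$ for every $i$ and $j$. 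Its dimension admits a closed form: a sum of flag dimensions in each varying degree, plus $2$ for the choice of $P$.

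The main obstacle is that on $\AA^2$ the graded piece $R_j$ is only $(j+1)$-dimensional, so a flag confined to a single degree cannot beat $2m_1$, which is of order $s^2$ once $\mm_P^{s+1}\subseteq I_1$. The variation on Iarrobino's method is therefore to spread the consecutive differences $h_{i-1}-h_i$ across several graded pieces, so that the cumulative flag freedom grows quickly enough. For $d\ge 5$, this spreading becomes possible with a suitable choice of $s$ and of $(h_i)$: a direct combinatorial calculation then yields the strict inequality $\dim \mcF > 2m_1$. That $d=5$ is the smallest value for which the construction succeeds is consistent with the irreducibility results cited in the paper for all two-, three-, and four-part nested Hilbert schemes that have been studied in the literature. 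The hardest step in the proof is precisely this combinatorial bookkeeping: identifying an appropriate $\lambda$ and Hilbert-function sequence, and checking that the ideal-compatibility constraints $R_1(I_i)_j\subseteq (I_i)_{j+1}$ do not cut $\mcF$ below the required dimension.
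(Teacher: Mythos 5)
Your high-level plan matches the paper's: exhibit a locus $\mathcal{F}$ of chains of graded ideals concentrated at a single point whose dimension exceeds $2m_1$, the dimension of the smoothable component. However, you explicitly defer the heart of the argument — choosing $\lambda$, the Hilbert-function sequence, and verifying that the flag- and ideal-compatibility constraints do not collapse $\mathcal{F}$ — and that deferred "combinatorial bookkeeping" is in fact the entire content of the proof. Moreover, your working picture (partial flags $(I_1)_j \subseteq \cdots \subseteq (I_d)_j$ inside each $[R]_j$, constrained across degrees by $R_1(I_i)_j \subseteq (I_i)_{j+1}$) makes the problem look harder than it is, and your stated remedy of "spreading the differences $h_{i-1}-h_i$ across several graded pieces" does not by itself indicate how to control those constraints.

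The paper's construction eliminates all of these constraints with a specific choice of degree windows. Fix $r\gg 0$ and require $\mm^{r+1-i}\subseteq I_i\subseteq \mm^{r-i}$, so that $I_i$ is uniquely determined by a single subspace $W_i=[I_i]_{r-i}\subseteq [R]_{r-i}$ and any such subspace gives a valid ideal $I_i=W_iR+\mm^{r+1-i}$. Because $I_i\subseteq \mm^{r-i}=\mm^{r+1-(i+1)}\subseteq I_{i+1}$, the nesting is automatic; because exactly one $I_i$ has a nontrivial graded piece in any given degree, there is no flag condition in any $[R]_j$; and because the graded pieces of each $I_i$ in degrees above $r-i$ are all of $[R]_j$, the compatibility condition $R_1(I_i)_j\subseteq (I_i)_{j+1}$ is vacuous. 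Taking $m_i=\binom{r+1-i}{2}+\lfloor\frac{r+1-i}{2}\rfloor$ then makes $\mathcal{F}$ a product of $d$ unconstrained Grassmannians with total dimension $\sum_{i=1}^{d}\lceil\frac{r+1-i}{2}\rceil\lfloor\frac{r+1-i}{2}\rfloor\sim\frac{d}{4}r^2$, which exceeds $2m_1\leq r^2$ once $d\geq 5$ and $r\gg 0$. Without a construction that makes the constraints you worry about disappear (or a concrete verification that they cost nothing), your outline remains a plan rather than a proof.
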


\begin{proof}
Fix $d \geq 5$, and consider integers  $r\geq d$.
Let $\lambda = (m_1,\ldots, m_d)$  be the partition 
defined by $m_i = {r+1-i \choose 2}  + \lfloor \frac{r+1-i}{2}\rfloor$.
We consider the following locus $\mathcal{F}\subseteq\Hilb^\lambda(\AA^2)$
$$
\mathcal{F}= \big\{  [\V(I_1) \supseteq \cdots \supseteq \V(I_d)] \, : \, \mm^{r+1-i} \subseteq I_i \subseteq \mm^{r-i} \big\}.
$$
For every $i = 1, \ldots, d$, the ideal $I_i$ is homogeneous and uniquely determined by 
its graded component $W_i = [I_i]_{r-i} \subseteq [R]_{r-i}$.
Conversely, any choice $(W_1, \ldots, W_d)$ of such subspaces defines a point in $\mathcal{F}$.
To summarize, $\mathcal{F}$ has a  parametrization by the product of Grassmannians
$$
\mathcal{G} = \prod_{i=1}^d\Gr\left(\,\left\lceil \frac{r+1-i}{2}\right\rceil\,,\,r+1-i\right)
$$
since $\dim_\kk [R]_{r-i} = r+1-i$ and  $\dim_\kk [I_i]_{r-i} = \dim_\kk(R/\mm^{r+1-i}) - \dim_\kk(R/I_i) = \lceil \frac{r+1-i}{2}\rceil$.
Thus,
$$
 \dim \mathcal{F} = \dim \mathcal{G} = \sum_{i=1}^d \left\lceil \frac{r+1-i}{2}\right\rceil \cdot \left\lfloor \frac{r+1-i}{2}\right\rfloor
 \geq 
 \sum_{i=1}^d  \frac{(r+1-i)(r-i)}{4} 
=: f(d,r).
$$
Thus, a sufficient condition for $\Hilb^\lambda(\AA^2)$ to be reducible is 
$f(d,r) > r^2 \geq 2m_1$,
the dimension of the smoothable component. 
Since $f(d,r)$ has leading term $\frac{d}{4}r^2$ and $d \geq 5$,
this will happen for $r \gg 0$.
\end{proof}

It is likely that  loci of dimension greater than $2 m_1$ exist in $\Hilb^\lambda(\AA^2)$  for some partition  $\lambda $ with $d=4$ parts;
in fact, the one we construct in the proof of Proposition \ref{PropositionReducibleNested} has dimension very close to $2m_1$.
On the other hand, we believe that the most interesting case is  $d=2$, so we propose  the following problem.

\begin{question}
Is the nested Hilbert scheme $\Hilb^{(m_1,m_2)}(\AA^2)$ irreducible for every $m_1>m_2$?
\end{question}

\section{Reduction to compressed pairs}\label{SectionReductionCompressed}

Let $n\in \N$. 
The {\bf $n$-th compressed pair} is  $C_n=[\V(\mm^n)\supseteq \V(x,y^2)] \in \Hilb^{(m,2)}(\AA^2)$,
where
 $m = {n +1\choose 2}$.
The goal of this section is to prove the following reduction to compressed pairs.

\begin{thm}\label{TheoremReductionToCompressedPair}
Let $n\in \N$ and $m = {n+1 	\choose 2}$.
Assume $\ch(\kk)=0$.
 If  $\Hilb^{(m,2)}(\AA^2)$ has a rational singularity at  $C_n$, then $\Hilb^{(n,2)}(\AA^2)$ has rational singularities.
\end{thm}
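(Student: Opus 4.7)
The plan is to proceed in two phases: (I) reduce, via a generic Gr\"obner degeneration, to rational singularities at the Borel-fixed pairs of $\Hilb^{(n,2)}(\AA^2)$; and (II) compare the complete local ring at each such Borel-fixed pair with the complete local ring at $C_n \in \Hilb^{(m,2)}(\AA^2)$ through their Gr\"obner-stratum parametrizations.

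For phase (I), I would take an arbitrary closed point $P = [\V(I_1) \supseteq \V(I_2)] \in \Hilb^{(n,2)}(\AA^2)$ and apply a generic one-parameter subgroup $\lambda: \mathbb{G}_m \to \GL_2(\kk)$ acting by scaling on $\AA^2$. The induced $\mathbb{G}_m$-orbit in $\Hilb^{(n,2)}$ limits to the Borel-fixed pair $P_I = [\V(\iin(I_1)) \supseteq \V(\iin(I_2))]$. Since the unique Borel-fixed monomial ideal of colength $2$ in $\kk[x,y]$ is $(x,y^2)$, necessarily $\iin(I_2) = (x,y^2)$, and $I := \iin(I_1)$ is a Borel-fixed monomial ideal of colength $n$ with $\mm^n \subseteq I \subseteq (x,y^2)$ (valid for $n \ge 2$; the first containment follows from Borel-fixedness, the second from $n \ge 2$). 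Points of the orbit are mutually isomorphic as pairs and hence have isomorphic complete local rings. By the openness of rational singularities (Lemma \ref{LemmaPropertiesRationalSingularities}(1)), rational singularities at $P_I$ spread through an open neighborhood meeting the orbit, so they propagate to $P$. It therefore suffices to establish the conclusion at each Borel-fixed pair $P_I \in \Hilb^{(n,2)}$.

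For phase (II), I would establish, for each such $P_I$, an isomorphism of complete local rings
\[
\widehat{\mcO_{\Hilb^{(m,2)},\,C_n}} \;\cong\; \widehat{\mcO_{\Hilb^{(n,2)},\,P_I}} \;\widehat{\otimes}_\kk\; \kk[[t_1,\ldots,t_N]]
\]
for some $N = N(I) \geq 0$ determined by the combinatorial difference between $\mm^n$ and $I$. Granting this, rational singularities at $C_n$ transfer to $P_I$ by Lemma \ref{LemmaPropertiesRationalSingularities}(3). The construction invokes the Gr\"obner-stratum parametrization to be developed in Section \ref{SectionLocalEquations}: each stratum is cut out by explicit polynomial equations in a coefficient space whose coordinates record the lower-order terms of the universal family of pairs with prescribed monomial initial ideals. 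The containment $\mm^n \subseteq I$ and a compatible matching of minimal generators and standard monomials between $\mm^n$ and $I$ should realize the $C_n$-stratum as an affine bundle over the $P_I$-stratum, with fibers indexing the deformation directions present at $C_n$ but collapsed at $P_I$.

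The principal obstacle lies in phase (II). Constructing the product decomposition requires a careful coefficient-level identification of the two Gr\"obner strata, and a verification that the additional parameters appearing in the $C_n$-stratum are not constrained by the equations encoding the nested inclusion $\V(I_1) \supseteq \V(I_2)$. This rests on the explicit presentation of Gr\"obner strata to be forged in Section \ref{SectionLocalEquations}, together with the specific combinatorics of Borel-fixed ideals in two variables---most notably the fact that every such $I$ of colength $n$ already contains $\mm^n$---which is what makes the ``extra'' deformation directions at $C_n$ cleanly isolable as a smooth affine factor.
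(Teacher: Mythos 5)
Your Phase (I) is correct and parallels part of the paper's argument: generic Gr\"obner degeneration inside $\Hilb^{(n,2)}(\AA^2)$, combined with openness of rational singularities (Lemma \ref{LemmaPropertiesRationalSingularities}(1)), reduces the question to Borel-fixed pairs. However, Phase (II) rests on an isomorphism of complete local rings that is false, and this is a fatal gap.

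Consider the Borel-fixed ideal $I = (x,y^n)$ of colength $n$; it arises as the generic initial ideal of $I_{Z_1}$ for any curvilinear $Z_1$, so Phase~(I) genuinely requires your Phase~(II) isomorphism for this $I$. The pair $P_I = [\V(x,y^n) \supseteq \V(x,y^2)]$ is a \emph{smooth} point of $\Hilb^{(n,2)}(\AA^2)$ (as noted in the proof of Proposition \ref{PropositionSingularLocus}, every pair $[\V(x,y^r) \supseteq \V(x,y^s)]$ is smooth). Your isomorphism would then give $\widehat{\mcO}_{\Hilb^{(m,2)},C_n} \cong \kk[[t_1,\ldots,t_{2m}]]$, i.e.\ $C_n$ would be a smooth point of $\Hilb^{(m,2)}(\AA^2)$. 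But $C_n$ is singular for $n\geq 4$: by Theorem \ref{TheoremNeighborhoodL} its local ring is $(B/\mfL)_\mfB$, the generators $g_i, G_i$ of $\mfL$ are $\deg_2$-homogeneous of degrees $n-1$ and $n$, and for $n \geq 4$ no variable of $B$ has $\deg_2$ equal to $n-1$ or $n$; hence $\mfL \subseteq \mfB^2$, so $[\mfL]_1 = 0$ and $\dim T_{C_n}\Hilb^{(m,2)}(\AA^2) = n^2+n+4 > 2m$. (The same conclusion, also covering $n=3$, follows from a direct computation with Cheah's fiber-product description of the tangent space.) So the claimed product structure cannot hold.

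The paper's strategy sidesteps this difficulty entirely. Rather than comparing $C_n$ with Borel-fixed pairs of $\Hilb^{(n,2)}(\AA^2)$, one first attaches $m-n$ general reduced points $Y$ disjoint from $Z_1$, passing to $[Z_1 \cup Y \supseteq Z_2] \in \Hilb^{(m,2)}(\AA^2)$. Lemma \ref{LemmaInitialOnePoint} shows that each general point added removes exactly the \emph{lowest} monomial of the generic initial ideal; since every Borel-fixed ideal of colength $n$ in two variables contains $\mm^n$, adding $m-n$ points drives $\mathrm{gin}(I_{Z_1\cup Y})$ all the way down to $\mm^n$, and one then Gr\"obner-degenerates $[Z_1 \cup Y \supseteq Z_2]$ to $C_n$ \emph{inside $\Hilb^{(m,2)}(\AA^2)$} (Proposition \ref{PropositionGrobnerDegenerationToCompressed}). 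By openness, rational singularities at $C_n$ propagate along the $\mathbb{G}_m$-orbit to $[Z_1\cup Y \supseteq Z_2]$. The product decomposition you were after does appear here, but its source is \emph{disjointness of supports}, not any relation between Gr\"obner strata: Lemma \ref{LemmaCompleteLocalRings} gives
\[
\widehat{\mcO}_{\Hilb^{(m,2)},\,[Z_1\cup Y\supseteq Z_2]} \;\cong\; \widehat{\mcO}_{\Hilb^{(n,2)},\,[Z_1\supseteq Z_2]} \;\widehat{\otimes}_\kk\; \widehat{\mcO}_{\Hilb^{m-n},\,[Y]}.
\]
Since $\Hilb^{m-n}(\AA^2)$ is smooth, Lemma \ref{LemmaPropertiesRationalSingularities}(3) peels off the second factor and yields rational singularities at $[Z_1 \supseteq Z_2]$. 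In short, the step you flagged as the principal obstacle is circumvented rather than solved: the smooth tensor factor is created geometrically by adjoining disjoint points \emph{before} degenerating, not extracted afterwards from a comparison of Gr\"obner strata.
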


First, by means of   generic initial ideals, we construct suitable degenerations to $C_n$.
In the next two results we use  the graded reverse lexicographic order, denoted by $\rev$,  as the term order in $R=\Bbbk[x,y]$. 
See \cite[Chapter 15]{Eisenbud} for details on Borel-fixed ideals and generic initial ideals.

\begin{lemma}\label{LemmaInitialOnePoint}
Let $Z \subseteq \AA^2$ be a finite subscheme 
and $Q \in \AA^2$ a general point.
Assume that $B = \iin(I_Z)\subseteq R$ is Borel-fixed.
Let $B'$ be the  Borel-fixed ideal $B'\subseteq B$ such that the vector space
$B/B'$ has dimension 1 and is spanned by the lowest monomial in  $B$ with respect to $\rev$.
Then $\iin(I_{Z}\cap I_Q) = \iin(B\cap I_Q) = B'$.
\end{lemma}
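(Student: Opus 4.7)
The plan is to identify $\iin(I_Z \cap I_Q)$ as a monomial ideal obtained from $B$ by removing exactly one standard-basis monomial, and then to show that this missing monomial is precisely $m_\star$, the grevlex-minimum of $B$. First, since $Q$ is general, it lies outside the finite set $Z$, so $I_Z + I_Q = R$; this gives $I_Z \cap I_Q = I_Z I_Q$ and $\colength(I_Z \cap I_Q) = \colength(I_Z) + 1 = \colength(B) + 1$. Combining with the inclusion $\iin(I_Z \cap I_Q) \subseteq \iin(I_Z) = B$ coming from $I_Z \cap I_Q \subseteq I_Z$, I deduce that $\iin(I_Z \cap I_Q)$ misses exactly one additional monomial compared to $B$. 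I would also observe right away that $m_\star$ is automatically a minimal generator of $B$ (any proper divisor of $m_\star$ is grevlex-smaller and hence outside $B$), and that the resulting $B'$ is Borel-fixed: the only nontrivial check is that the Borel-shift $x^{a-1}y^{b+1}$ of $m_\star = x^a y^b$ (when $a \geq 1$) lies in the complement of $B'$, and this holds because it is grevlex-smaller than $m_\star$, hence already in $\Lambda = R \setminus B$ by the Borel-fixedness of $B$.

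Next, I would fix a reduced Gr\"obner basis $G = \{g_1, \ldots, g_s\}$ of $I_Z$ with $\LM(g_i)$ the minimal generators of $B$, and let $g_{i_\star} \in G$ denote the element satisfying $\LM(g_{i_\star}) = m_\star$. For every monomial $m \in B$ with $m > m_\star$, I would produce an element of $I_Z \cap I_Q$ with leading monomial $m$ as follows: pick any $f_0 \in I_Z$ with $\LM(f_0) = m$, and set $f = f_0 - \big(f_0(Q)/g_{i_\star}(Q)\big)\, g_{i_\star}$. Since $\LM(g_{i_\star}) = m_\star < m$ in grevlex, the correction leaves the leading term of $f_0$ intact, so $\LM(f) = m$; by construction $f(Q) = 0$, so $f \in I_Z \cap I_Q$. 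The denominator is defined provided $Q \notin \V(g_{i_\star})$, which is an open dense condition. Hence every monomial of $B$ strictly above $m_\star$ lies in $\iin(I_Z \cap I_Q)$.

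The hard part is the complementary direction, namely showing that $m_\star \notin \iin(I_Z \cap I_Q)$. Suppose $f \in I_Z$ has $\LM(f) = m_\star$; running the division algorithm against $G$ yields $f = \sum_i h_i g_i$ with $\LM(h_i g_i) \leq m_\star$ whenever $h_i \neq 0$. In particular $\LM(g_i)$ divides a monomial $\leq m_\star$ and must itself lie in $B$, so by minimality of $m_\star$ in grevlex one has $\LM(g_i) = m_\star$ and $\LM(h_i) = 1$, i.e., $f = c\, g_{i_\star}$ for some scalar $c$. Then $f(Q) = c\, g_{i_\star}(Q)$, which is nonzero for general $Q$, so no such $f$ lies in $I_Q$. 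Combined with the previous step and the colength count, this gives $\iin(I_Z \cap I_Q) = B'$. The identical argument with $I_Z$ replaced by $B$---so that $G$ is the set of minimal generators of $B$ and $g_{i_\star} = m_\star$---yields $\iin(B \cap I_Q) = B'$; the genericity condition on $Q$ becomes simply $Q \notin \V(m_\star)$. The main obstacle, as just indicated, is this Gr\"obner-division uniqueness: it is precisely the extremal position of $m_\star$ at the bottom of grevlex on $B$ that forces any $I_Z$-element with leading monomial $m_\star$ to be proportional to the single Gr\"obner basis element $g_{i_\star}$.
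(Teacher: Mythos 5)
Your proof is correct and follows essentially the same approach as the paper's: the key observation in both is that the grevlex-minimality of $m_\star$ in $B$ forces any element of $I_Z$ with leading monomial $m_\star$ to be a scalar multiple of a single reduced Gr\"obner basis element (your $g_{i_\star}$, the paper's $f_\bfu$), whose non-vanishing at a general $Q$ excludes $m_\star$ from $\iin(I_Z \cap I_Q)$. The only differences are presentational: you treat $I_Z \cap I_Q$ directly by constructing elements with the pivot $g_{i_\star}$ and prove both inclusions explicitly, whereas the paper first settles $B \cap I_Q$ with the monomial pivot $\bfu$ and then finishes $I_Z \cap I_Q$ by a colength count combined with the single exclusion $\bfu \notin \iin(I_Z \cap I_Q)$.
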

\begin{proof}
Let $n$ denote the length of $Z$,
so that $\colength(B) = n$ and $\colength(B') = n+1$.
Since $Q$ is general, we may assume that $Q$ does not belong to the supports of $Z$ or $ \V(B)$.
In particular, $\colength(I_Z\cap I_Q) =\colength(B\cap I_Q) = n+1$.

First, we prove that $\iin(B \cap I_Q) = B'$.
Let $\bfu \in B$ be the lowest monomial in $B$ with respect to $\rev$.
Since $Q$ is a general point,
 we may assume that the evaluation of $\bfu$ at $Q$ is non-zero.
For every other monomial $\bfv \in B$, 
that is, for every $\bfv \in B'$, consider the polynomial
$
f_\bfv = \bfv - \alpha_{\bfv} \mathbf{u} 
$
where $\alpha_\bfv\in \kk$ is the unique scalar such that
$f_\bfv(Q)=0$.
Note that $f_\bfv \in B \cap I_Q$, 
and  $\LM(f_\bfv) = \bfv$ by the choice of $\bfu$.
It follows  that 
 $B' \subseteq \iin(B \cap I_Q) $,
 but both ideals have  colength $n+1$,
so $\iin(B \cap I_Q) = B'$ as desired.

Now, we prove that  $\iin(I_{Z}\cap I_Q) = B'$.
Both are monomial ideals of colength $n+1$  contained in $B = \iin(I_Z)$, which has colength $n$.
Therefore, it suffices to show that the vector space $B/\iin(I_Z\cap I_Q)$ is spanned by the monomial $\bfu$, equivalently, that
 $\bfu \notin \iin(I_{Z}\cap I_Q)$.
 By the choice of $\bfu$, 
there exists a unique monic polynomial $f_\bfu \in I_Z$ such that $\LM(f_\bfu) = \bfu$.
Since $Q$ is general, we may assume that $f_\bfu(Q)\ne 0$, that is, 
$f_\bfu \notin I_{Z}\cap I_Q$.
Thus $I_{Z}\cap I_Q$ contains no (monic) polynomial whose leading monomial is $\bfu$, 
so $\bfu \notin  \iin(I_{Z}\cap I_Q)$ as desired.
\end{proof}

\begin{prop} \label{PropositionGrobnerDegenerationToCompressed}
Let $[Z_1\supseteq Z_2]\in\Hilb^{(n,2)}(\AA^2)$ be a pair, and let $m = {n+1 	\choose 2}$.
 There is a rational curve $\gamma:\AA^1 \rightarrow \Hilb^{(m,2)}(\AA^2)$
satisfying the following conditions:
\begin{enumerate}
\item   $\gamma(1) = [Z_1\cup Y\supseteq Z_2]$, where  $Y\subseteq \AA^2$ is reduced and disjoint from the support of $Z_1$;
\item  $\gamma(0) $ is the compressed pair $C_n$;
\item  the local rings $\mcO_{\Hilb^{(m,2)}(\AA^2), \gamma(t)}$ are all isomorphic for $t \ne 0$.
\end{enumerate}
\end{prop}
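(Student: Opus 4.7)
I would build $\gamma$ as a Gröbner degeneration of $[Z_1 \supseteq Z_2]$, after adjoining $m-n$ general points, under a suitable $\mathbb{G}_m$-action on $\AA^2$. The plan proceeds in three steps.

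First, I put $I_{Z_1}$ and $I_{Z_2}$ simultaneously in general position: for a generic $g \in \GL_2(\kk)$ both $\iin_\rev(g \cdot I_{Z_1})$ and $\iin_\rev(g \cdot I_{Z_2})$ are Borel-fixed. Replacing $Z_i$ by $g \cdot Z_i$, I may assume $\iin_\rev(I_{Z_2}) = (x,y^2)$ --- the only Borel-fixed ideal of colength $2$ --- and $B := \iin_\rev(I_{Z_1})$ is some Borel-fixed ideal of colength $n$. I then enlarge $Z_1$ to $Z_1' := Z_1 \cup Y$, where $Y = \{Q_1, \ldots, Q_{m-n}\}$ is a reduced union of general points disjoint from the support of $Z_1$.

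Iterating Lemma \ref{LemmaInitialOnePoint} produces a descending chain of Borel-fixed ideals $B = B_0 \supsetneq B_1 \supsetneq \cdots \supsetneq B_{m-n} = \iin_\rev(I_{Z_1'})$, each $B_{k+1}$ obtained from $B_k$ by removing its $\rev$-lowest monomial. The next, and main, step is to verify that $B_{m-n} = \mm^n$. The key combinatorial fact is that every Borel-fixed ideal $B' \subseteq \kk[x,y]$ of colength $\leq n$ contains $\mm^n$: if some standard monomial $x^iy^j$ of $B'$ had $i+j \geq n$, then Borel closure of the standard set under $x \mapsto y$ would place $y^{i+j}$ among the standard monomials, and down-closure of the standard set would then force all of $1, y, \ldots, y^{i+j}$ to be standard, yielding colength $\geq n+1$. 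Applied inductively, each $B_k$ contains $\mm^n$; for $k < m - n$ the $\rev$-lowest monomial of $B_k$ therefore has degree $< n$, and the $m-n$ monomials accumulated in the standard set lie inside the $m$-element set $\{x^iy^j : i+j<n\}$ of standard monomials of $\mm^n$, which they must exhaust by cardinality, forcing $B_{m-n} = \mm^n$.

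Third, I Gröbner-degenerate the pair globally. Choose a weight vector $w = (a,b) \in \mathbb{Z}_{>0}^2$ that refines $\rev$ on the Gröbner bases of both $I_{Z_1'}$ and $I_{Z_2}$ (e.g.\ $w = (N+1,N)$ with $N \gg 0$). The induced $\mathbb{G}_m$-action $t \cdot (x,y) = (t^a x, t^b y)$ on $\AA^2$ extends to $\Hilb^{(m,2)}(\AA^2)$ and preserves inclusions of subschemes, since $\iin_w(I) \subseteq \iin_w(J)$ whenever $I \subseteq J$. Setting $p = [Z_1' \supseteq Z_2]$, the orbit map $t \mapsto t \cdot p$ extends, by the standard flatness of Gröbner degenerations of the nested pair, to a morphism $\gamma : \AA^1 \to \Hilb^{(m,2)}(\AA^2)$ with $\gamma(0) = [\V(\mm^n) \supseteq \V(x,y^2)] = C_n$ and $\gamma(1) = p = [Z_1 \cup Y \supseteq Z_2]$; for $t \neq 0$ all $\gamma(t)$ lie in a single $\mathbb{G}_m$-orbit, so have pairwise isomorphic local rings. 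The main obstacle is the combinatorial analysis in the second step --- tracking the sequence of Borel-fixed initial ideals under iterated point addition and confirming that it terminates precisely at $\mm^n$; once that is settled, Step 3 is a routine $\mathbb{G}_m$-degeneration argument for nested Hilbert schemes.
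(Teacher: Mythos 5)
Your proposal is correct and follows essentially the same route as the paper: pass to Borel-fixed initial ideals, adjoin $m-n$ general reduced points, iterate Lemma~\ref{LemmaInitialOnePoint} to reach $\mm^n$, and then construct $\gamma$ by a $w$-weighted Gr\"obner degeneration whose $\mathbb{G}_m$-action supplies item~(3). One small imprecision worth fixing: the combinatorial fact you isolate (a Borel-fixed ideal in $\kk[x,y]$ of colength $\leq n$ contains $\mm^n$) cannot itself be ``applied inductively'' to $B_k$ for $k\geq 1$, since those ideals have colength $n+k > n$; rather, it gives the base case $\mm^n \subseteq B_0$, and the inductive step is that $B_k \supsetneq \mm^n$ (by colength) forces the $\rev$-lowest monomial of $B_k$ to lie in degree $< n$, so its removal cannot disturb $\mm^n$ --- exactly the ``lowest possible degrees'' observation the paper makes in passing. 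Everything else, including the cardinality count showing $B_{m-n}=\mm^n$ and the refinement of $\rev$ by an integral weight $w$ to produce a flat family of nested pairs, matches the paper's argument.
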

\begin{proof}
Consider the Borel-fixed ideals $B= \mathrm{gin}(I_{Z_1})$ and $ \mathrm{gin}(I_{Z_2})=(x,y^2)$.
Up to changing coordinates, we may assume that 
$B= \iin(I_{Z_1})$ and $ \iin(I_{Z_2})=(x,y^2)$.
Let $Y = \{Q_1, \ldots, Q_{m-n}\}\subseteq \AA^2$ be a general set of reduced points.
We claim that $\iin(I_{Z_1\cup Y}) = (x,y)^n$.
Since $I_{Z_1\cup Y} = I_{Z_1} \cap I_{Q_1} \cap \cdots \cap I_{Q_{m-n}}$, we may compute $\iin(I_{Z_1\cup Y})$ by applying Lemma \ref{LemmaInitialOnePoint} repeatedly.
We deduce that  $\iin(I_{Z_1\cup Y})$ is obtained from $B$ by removing $m-n$ monomials in the lowest possible degrees.
Since $Z_1$ has length $n$, we have $\mm^n \subseteq \iin(I_{Z_1})$, 
and, since $\mm^n$ has colength $m$, we conclude that $\iin(I_{Z_1\cup Y}) = \mm^n$.

By \cite[Proposition 15.16, Exercise 15.12]{Eisenbud},
there exists a  weight $w\in \mathbb{Z}^2$
such that $\iin_w(I_{Z_1 \cup Y})  = (x,y)^n $ 
and $\iin_w(I_{Z_2}) =  (x,y^2)$.
The Gr\"obner degeneration 
\cite[Theorem 15.17]{Eisenbud}
yields two ideals $I_1^{(t)}, I_2^{(t)} \subseteq R[t]$
which define flat families over $\AA^1 = \Spec ( \Bbbk[t])$ and
such that $I_1^{(1)}=I_{Z_1 \cup Y} \subseteq  I_2^{(1)} = I_{Z_2}$
and
$I_1^{(0)}=(x,y)^n \subseteq  I_2^{(0)} = (x,y^2)$.
They are obtained by homogenizing with respect to  $w$,
and this implies $I_1^{(t)}\subseteq  I_2^{(t)}$  for all $t$, 
defining thus a curve $\gamma:\AA^1  \rightarrow \Hilb^{(m,2)}(\AA^2)$.
Finally, for every $t \ne 0$  the automorphism of $\AA^2$ 
defined by $x \mapsto t^{w_1}x,\, y \mapsto t^{w_2}y$
induces an automorphism of   $\Hilb^{(m,2)}(\AA^2)$
taking the pair $[\V(I_1^{(t)})\supseteq \V(I_2^{(t)})]$ to $[\V(I_1^{(1)})\supseteq \V(I_2^{(1)})]$,
so  item (3) follows.
\end{proof}

We now track how  singularities change along this curve.

\begin{lemma}\label{LemmaCompleteLocalRings}
Let $[Z_1\supseteq Z_2]\in\Hilb^{(n_1,n_2)}(\AA^2)$ and $[Y_1 \supseteq Y_2] \in \Hilb^{(m_1,m_2)}(\AA^2)$ be pairs such that $Z_1$ and $Y_1$ are disjoint. There is an isomorphism of complete local rings
\begin{equation}
\label{disjointisom}
\widehat{\mcO}_{\Hilb^{(n_1+m_1,n_2+m_2)}(\AA^2), [Z_1 \cup Y_1 \supseteq Z_2 \cup Y_2]} \cong 
	\widehat{\mcO}_{\Hilb^{(n_1,n_2)}(\AA^2) \times \Hilb^{(m_1,m_2)} (\AA^2), ([Z_1 \supseteq Z_2],[Y_1 \supseteq Y_2])} .
\end{equation}
\end{lemma}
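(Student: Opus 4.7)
The plan is to exhibit an isomorphism of \emph{schemes} in a Zariski-open neighborhood of the two relevant points, and then pass to complete local rings. Concretely, since $Z_1$ and $Y_1$ have disjoint supports, I can choose disjoint affine open neighborhoods $U, V \subseteq \AA^2$ with $\mathrm{Supp}(Z_1) \subseteq U$ and $\mathrm{Supp}(Y_1) \subseteq V$. Let $\mathcal{W} \subseteq \Hilb^{(n_1+m_1, n_2+m_2)}(\AA^2)$ denote the locus parametrizing pairs $[X_1 \supseteq X_2]$ such that $X_1$ decomposes as a disjoint union $X_1 = X_1' \sqcup X_1''$ with $X_1' \subseteq U$ of length $n_1$ and $X_1'' \subseteq V$ of length $m_1$ (forcing a parallel decomposition of $X_2$). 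This is an open subscheme, since in a flat family of zero-dimensional subschemes the locus where the fiber is supported in a fixed open set is open.

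Next, I would construct an explicit isomorphism
\[
\Phi \colon \Hilb^{(n_1,n_2)}(U) \times \Hilb^{(m_1,m_2)}(V) \xrightarrow{\ \sim\ } \mathcal{W}
\]
by the disjoint-union map $\bigl([X_1 \supseteq X_2], [Y_1 \supseteq Y_2]\bigr) \mapsto [X_1 \sqcup Y_1 \supseteq X_2 \sqcup Y_2]$ on $T$-valued points, using that a disjoint union of flat families is a flat family. The inverse is given by restriction: over $\mathcal{W}$ the universal subschemes split canonically into their components inside $U$ and $V$, each of which is still flat over the base because the splitting is set-theoretically locally constant on a flat family of zero-dimensional schemes. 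This identifies $\mathcal{W}$ with an open of the product in a way that carries the point $[Z_1 \cup Y_1 \supseteq Z_2 \cup Y_2]$ to $([Z_1 \supseteq Z_2], [Y_1 \supseteq Y_2])$.

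Finally, since $U \hookrightarrow \AA^2$ and $V \hookrightarrow \AA^2$ are open immersions, the induced maps $\Hilb^{(n_1,n_2)}(U) \hookrightarrow \Hilb^{(n_1,n_2)}(\AA^2)$ and $\Hilb^{(m_1,m_2)}(V) \hookrightarrow \Hilb^{(m_1,m_2)}(\AA^2)$ are open immersions onto the loci of pairs supported in $U$, $V$ respectively. Hence the complete local rings on each side of \eqref{disjointisom} are unchanged by this localization, and the desired isomorphism follows from $\Phi$ by completing at the relevant point.

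The only real technical point, which I expect to be the main obstacle, is verifying the claim that $\mathcal{W}$ is open and that $\Phi^{-1}$ is a well-defined morphism of schemes; equivalently, that over any family parametrized by $\mathcal{W}$ the universal subscheme splits into two flat sub-families indexed by $U$ and $V$. This is a standard fact about flat families of zero-dimensional schemes (the support is open-and-closed in the total space of the family over the locus where the length-partition is locally constant), so the proof amounts to invoking it carefully rather than any new content.
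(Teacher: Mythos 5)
Your argument takes a genuinely different route from the paper's. The paper works purely with the functors of Artin rings pro-represented by the two complete local rings: it defines the local Hilbert functor $F_{W_1,\dots,W_k}$ on local Artinian $\kk$-algebras and shows that disjoint union induces an isomorphism $F_{W_1,W_2}\xrightarrow{\sim}F_{W_1\cup W_2}$, the key point being that over an Artinian base the reduced subscheme of a flat family equals that of its special fiber, so supports and the decomposition are automatic; then pro-representability finishes. Your approach instead exhibits a Zariski-local isomorphism of schemes and completes at the end, which if carried out proves a slightly stronger statement (an open-neighborhood isomorphism rather than only a formal one). The ingredients you invoke — openness of the locus where a proper family over the Hilbert scheme avoids a fixed closed subset, the open-closed decomposition of a finite flat family over the locus where the fibers split between two disjoint opens, and local constancy of the resulting lengths — are all correct, and they are precisely the global counterparts of the Artinian rigidity the paper uses. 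One small correction: as you have written it, $\mathcal{W}$ is defined only by conditions on $X_1$ (the decomposition $X_1=X_1'\sqcup X_1''$ with prescribed lengths $n_1,m_1$), but the decomposition of $X_2$ into $X_2\cap U$ and $X_2\cap V$ need not have lengths $(n_2,m_2)$ everywhere on this $\mathcal{W}$ — only on the open-and-closed piece through the point of interest. You should either add the condition $\operatorname{length}(X_2\cap U)=n_2$ to the definition of $\mathcal{W}$ (this locus is again open and closed by the same local-constancy argument), or note explicitly that $\Phi$ is an open immersion onto such a piece and that this suffices for completions. With that fix, the proof is sound, and the tradeoff between the two approaches is the usual one: the paper's functorial argument is shorter and avoids any geometric bookkeeping about universal families, while yours gives genuine geometric information about a Zariski neighborhood.
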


\begin{proof}  
We will prove this lemma by studying the associated functor of Artin rings. 
A standard reference for this is \cite[Chapter 15]{hartshornedef}; we will closely follow the notation used in that chapter.  
We begin with the case when $Y_2, Z_2 = \emptyset$ and prove a more general statement.  Let $\mathcal{C}$ denote the category of local, Artinian $\kk$-algebras. 
For an integer $k$ and a sequence of subschemes $W_1,\dots,W_k \subseteq \AA^s$, 
let $F_{W_1,\dots,W_k}:\CC \to \text{Sets}$ denote the local Hilbert functor that maps 
$$
E \mapsto \{(\mcX_1,\dots,\mcX_k): \mcX_i \subseteq \AA^s_{E} \text{ is closed, flat over } \Spec(E) \text{ and} \, (\mcX_i)_{\kk} = W_i\},
$$
where $(\mcX_i)_{\kk}$ denotes the special fiber.  Let $W_1,W_2 \subseteq \AA^s$ be disjoint subschemes. We claim that the natural transformation $\Psi:F_{W_1,W_2} \to F_{W_1 \cup W_2}$ that maps
$$
\Psi(E): F_{W_1,W_2}(E) \to F_{W_1 \cup W_2}(E), \quad (\mcX_1,\mcX_2) \mapsto \mcX_1 \sqcup \mcX_2
$$
is an isomorphism.
Given $(\mcX_1,\mcX_2) \in F_{W_1,W_2}(E)$, we have $(\mcX_i)_{\text{red}} = ((\mcX_i)_{\kk})_{\text{red}}  = (W_i)_{\text{red}}$ since $E$ is Artinian, and this implies that $\mcX_1 \cup \mcX_2$ is a disjoint union. 
Moreover, since each $\mcX_i$ is flat over $\Spec(E)$, the disjoint union $\mcX_1 \sqcup \mcX_2$ is flat over $\Spec(E)$; 
thus, $\Psi(E)$ is a well defined map. 
Now, assume that $\Psi(E)(\mcX_1,\mcX_2) = \Psi(E)(\mcX_1',\mcX_2')$. 
Since $\mcX_i$ and $\mcX_i'$ are supported on $(W_i)_{\text{red}}$, we must have $\mcX_i' = \mcX_i$; thus, $\Psi(E)$ is injective. 
Finally, if $\mcX \in \Psi(E)(W_1,W_2)$, then it must be of the form $\mcX_1 \cup \mcX_2$ with $\mcX_i$ supported on $(W_i)_{\text{red}}$. 
Since $\mcX$ is flat over $\Spec(E)$, each of the disjoint components $\mcX_1$ and $\mcX_2$ must be flat over $\Spec(E)$. 
Thus, $(\mcX_1,\mcX_2) \in F_{W_1,W_2}(E)$ and $\Psi(E)(\mcX_1,\mcX_2) = \mcX$, verifying surjectivity.  It follows that the formal functors $\widehat{F}_{W_1,W_2}$ and $\widehat{F}_{W_1 \cup W_2}$ are isomorphic. 
This implies that the objects $\widehat{\mcO}_{\Hilb (\AA^s),[W_1 \cup W_2]}$ and $\widehat{\mcO}_{\Hilb(\AA^s) \times \Hilb (\AA^s),([W_1],[W_2])}$,
  which pro-represent $F_{W_1 \cup W_2}$ and $F_{W_1,W_2}$, respectively, are isomorphic \cite[Proposition 23.3]{hartshornedef}. 

The analysis above extends naturally to the nested case.  
Given a sequence of subschemes $W_1,W_1',\dots,W_k,W_k' \subseteq \AA^s$ with $W_i \supseteq W_i'$,  define the functor $F_{\{[W_i \supseteq W_i']\}_{i=1}^k}:\CC \to \text{Sets}$ that maps 
$$
E \mapsto 
	\{([\mcX_i  \supseteq \mcY_i])_{i=1}^k:  \mcY_i\subseteq \mcX_i \subseteq \AA^n_{E} \text{ are closed, flat over } \Spec(E) \text{ and} \, (\mcX_i)_{\kk} = W_i, (\mcY_i)_{\kk} = W_i'\}.
$$
The argument in the first paragraph shows that if $W_1,W_2$ are disjoint, the formal functors $\widehat{F}_{\{[W_i \supseteq W_i']\}_{i=1}^2}$ and $\widehat{F}_{[W_1 \cup W_2 \supseteq W_1' \cup W_2']}$ are isomorphic. 
By \cite[Proposition 23.3]{hartshornedef}, the associated pro-objects are isomorphic.
The statement of the lemma follows by taking $k=s=2$, $W_i = Z_i$ and $W_i' = Y_i$.
\end{proof}

\begin{proof}[Proof of Theorem \ref{TheoremReductionToCompressedPair}]
Assume that $\Hilb^{(m,2)}(\AA^2)$ has a rational singularity at $C_n$, 
and consider an arbitrary pair $[Z_1 \supseteq Z_2]\in \Hilb^{(n,2)}(\AA^2)$. 
We use the curve $\gamma:\AA^1 \rightarrow \Hilb^{(m,2)}(\AA^2)$ of  Proposition \ref{PropositionGrobnerDegenerationToCompressed}.
By Lemma \ref{LemmaPropertiesRationalSingularities} (1), 
$\Hilb^{(m,2)}(\AA^2)$ has a rational singularity at some  $\gamma(t)$ with $t\ne 0$,
and  by Proposition \ref{PropositionGrobnerDegenerationToCompressed} (3),
this is the case for every $t$,
in particular at the pair $\gamma(1) = [Z_1 \cup Y \supseteq Z_2]$.
Using Lemma \ref{LemmaPropertiesRationalSingularities} (4) and Lemma \ref{LemmaCompleteLocalRings}, 
we see that  $ \Hilb^{(n,2)} (\AA^2) \times \Hilb^{m-n}(\AA^2)$ has a rational singularity 
at $\big([Z_1\supseteq Z_2],[Y]\big)$. 
Since $\Hilb^{m-n}(\AA^2)$ is smooth,  Lemma \ref{LemmaPropertiesRationalSingularities} (3) implies that $\Hilb^{(n,2)} (\AA^2)$ has a rational singularity at $[Z_1\supseteq Z_2]$.
\end{proof}

\section{Local equations   around compressed pairs}\label{SectionLocalEquations}

The goal of this section is to describe scheme-theoretic equations of an affine open neighborhood of the compressed pair 
 $C_n=[\V(\mm^n)\supseteq \V(x,y^2)]$ in the nested Hilbert scheme $\Hilb^{(m,2)}(\AA^2)$.
The main result is Theorem \ref{TheoremNeighborhoodL},
which produces a neighborhood in the form of a closed subscheme $\V(\mfL) \subseteq \Spec (B)$
embedded in an affine open subset $\Spec(B)\subseteq  \Hilb^m(\AA^2) \times \Hilb^2(\AA^2)$
 and defined in terms of  Gr\"obner strata.
Then we focus on the restriction $\V(\mfI)$ of this neighborhood 
to the fiber of the natural map $\Hilb^{(m,2)}(\AA^2) \rightarrow   \Hilb^2(\AA^2)$
over the point $[\V(x,y^2)]$.

This section marks a transition, 
from the study of  nested Hilbert schemes in general, 
to  the two affine schemes $\V(\mfL)$ and $\V(\mfI)$, 
which will be the main objects  for the rest of the paper. 
As such, the rings and ideals introduced here will appear again in the subsequent sections.
Specifically, $\mfI$ is the  focus of Sections \ref{SectionGeometricTechnique} and \ref{SectionSquarefreeInitial}, 
while $\mfL$ is the  focus of Section \ref{SectionProofMainTheorem}.
 
For the purposes of the section,
while we still work in the affine plane, 
it is convenient to switch to projective coordinates,
 and consider $\AA^2 = \Spec(R)$ as an open subscheme of $ \P^2 = \Proj\, P$, 
 where $R=\kk[x,y], P = \kk[x,y,z]$.
Likewise, we consider $\Hilb^r (\AA^2)$ and $\Hilb^{(m,2)}(\AA^2)$ as open subschemes of $\Hilb^r (\P^2)$ and  $\Hilb^{(m,2)}(\P^2)$, respectively.
When we write $[\V(I)]\in\Hilb^r (\P^2)$, we implicitly  assume that $I\subseteq P$ is a saturated homogeneous ideal.

Let $n\in\N$ and $m = {n + 1\choose 2}$.
The Hilbert scheme $\Hilb^m(\P^2)$ is smooth and irreducible of dimension $2m = n^2+n$.
The ideal $ \mm^n = I_n(\bfX)\subseteq R$ is generated by the maximal minors of an $(n+1)\times n$ syzygy matrix
$$
\bfX = 
\begin{bmatrix}
y & 0 &   \cdots & 0\\
-x & y &   \cdots  & 0\\
0 & -x &     \cdots & 0 \\
\cdots & \cdots  & \cdots  & \cdots \\
0 & 0 &  \cdots  & y \\
0 & 0 &  \cdots  & -x \\
\end{bmatrix}.
$$
By \cite{Schaps}, see also \cite[Theorem 8.3]{hartshornedef},
deformations of $\V(\mm^n)$ correspond to  deformations of the Hilbert-Burch matrix  $\bfX$.
More precisely, we have  the following lemma.

\begin{lemma}\label{LemmaNeighborhoodsClassicalHilbertScheme}
Let $n\in\N$ and
 $m = {n + 1\choose 2}$.
 Consider the graded reverse lexicographic order on $P$.
The locus 
 $$
 \mcW = \big\{ [\V(I)] \in \Hilb^m(\P^2) \, : \, \iin(I)=(x,y)^n\big\} 
 $$
is an open subscheme of $\Hilb^m(\AA^2)$.
There is an isomorphism 
$\AA^{n^2+n}=\Mat(n+1,n)  \cong \mcW$
defined by 
$$
\bfW\mapsto \big[\V(I_n(\bfX+z\bfW))\big].
$$
Furthermore, the locus 
 $$
 \mcV = \big\{ [\V(I)] \in \Hilb^2(\P^2) \, : \, \iin(I)=(x,y^2)\big\} 
 $$
is an open subscheme of $\Hilb^2(\AA^2)$.
There is an isomorphism 
$\AA^{4}  \cong \mcV$
defined by 
$$
(v_1, v_2, v_3, v_4)\mapsto \big[(x+v_1y+v_2z,  y^2+v_3yz+v_4z^2)\big].
$$
\end{lemma}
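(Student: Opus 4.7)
The plan is to combine the theory of Gr\"obner strata with the Hilbert--Burch theorem. First, $\mcW$ and $\mcV$ are Gr\"obner strata on $\Hilb^m(\P^2)$ and $\Hilb^2(\P^2)$, i.e.\ subsets where the grevlex initial ideal equals a fixed saturated monomial ideal. Such strata form a locally closed stratification, and by Galligo's theorem the stratum associated to the generic initial ideal is open. Since $(x,y)^n$ is the unique Borel-fixed saturated ideal of colength $m=\binom{n+1}{2}$ in $P$, it must equal the generic initial ideal of the irreducible variety $\Hilb^m(\P^2)$, so $\mcW$ is open; the same argument applies to $(x,y^2)$ and $\mcV$. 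Because both initial ideals are supported at $[0{:}0{:}1]\in\AA^2\subset\P^2$, the two strata are contained in the affine opens $\Hilb^m(\AA^2)$ and $\Hilb^2(\AA^2)$.

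For the parametrization $\phi\colon\Mat(n+1,n)\to\mcW$, $\bfW\mapsto[\V(I_n(\bfX+z\bfW))]$: under grevlex with $x>y>z$, monomials of smaller $z$-degree are greater among monomials of the same total degree, so the $\bfX$-part of each entry of $\bfX+z\bfW$ is its grevlex leading term. A Laplace expansion then shows that the leading term of the $j$-th maximal minor is $\pm x^{n+1-j}y^{j-1}$, giving $\iin(I_n(\bfX+z\bfW))\supseteq (x,y)^n$. Since $\V(I_n(\bfX))$ is disjoint from the line $\{z=0\}$, the same holds for $\V(I_n(\bfX+z\bfW))$, so $P/I_n(\bfX+z\bfW)$ is Cohen--Macaulay of codimension $2$ and the Hilbert--Burch complex of $\bfX+z\bfW$ is exact. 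An Euler characteristic computation then gives colength exactly $m$, forcing $\iin(I_n(\bfX+z\bfW))=(x,y)^n$ by the containment already established, and $\phi$ factors through $\mcW$.

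For the inverse, given $[\V(I)]\in\mcW$, the reduced grevlex Gr\"obner basis of $I$ consists of polynomials $f_i=x^{n-i}y^i+z h_i$ for $i=0,\ldots,n$, where each $h_i\in P$ of degree $n-1$ is uniquely determined by $I$ (this form is forced because the degree-$n$ standard monomials of $(x,y)^n$ are exactly $z\cdot P_{n-1}$). Applying Hilbert--Burch to these generators and comparing leading terms of the linear syzygies $yf_i-xf_{i+1}-z\sum_j c_{ij}f_j=0$ forces the syzygy matrix to take the exact shape $\bfX+z\bfW$ for a unique $\bfW$ depending regularly on $I$. For $\mcV$, the analogous argument is immediate: an ideal $I$ with $\iin(I)=(x,y^2)$ has reduced grevlex Gr\"obner basis $\{x+v_1 y+v_2 z,\, y^2+v_3 yz+v_4 z^2\}$, because the degree-$1$ standard monomials are $y,z$ and the degree-$2$ standard monomials not divisible by $x$ are $yz,z^2$; this gives the bijection $\AA^4\leftrightarrow\mcV$.

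The main obstacle is upgrading these bijections to isomorphisms of schemes. This can be handled either by verifying directly that the inverse assignments are regular (coefficients of reduced Gr\"obner bases of flat families vary algebraically with the parameters), or, more efficiently, by the observation that $\mcW$ and $\Mat(n+1,n)$ are both smooth of dimension $n^2+n$ (the former by Fogarty's theorem), so that a bijective morphism between them is automatically an isomorphism. The same dimension-and-smoothness argument handles the pair $(\mcV,\AA^4)$.
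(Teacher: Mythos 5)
Your openness argument for $\mcW$ contains a false claim: $(x,y)^n$ is \emph{not} the unique saturated Borel-fixed ideal of colength $m=\binom{n+1}{2}$ in $P$. Already for $n=2$, both $(x,y)^2=(x^2,xy,y^2)$ and $(x,y^3)$ are saturated, Borel-fixed, and have colength $3$; more generally there are many Borel-fixed staircases in $\kk[x,y]$ with a prescribed colength. (The two candidates are distinguished by their Hilbert \emph{functions}, not by their Hilbert polynomial.) So you cannot conclude that $(x,y)^n$ is the grevlex gin of the general point of $\Hilb^m(\P^2)$ merely from Borel-fixedness, and the appeal to Galligo does not by itself give openness. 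The paper avoids this: it first records that the Gr\"obner stratum $\mcW$ is \emph{locally closed} (citing \cite{NS}), then shows the Hilbert--Burch parametrization $\bfW\mapsto[\V(I_n(\bfX+z\bfW))]$ is a bijection onto $\mcW$ (citing \cite{Constantinescu}), and deduces $\dim\mcW=n^2+n=\dim\Hilb^m(\P^2)$, whence $\mcW$ is open because $\Hilb^m(\P^2)$ is smooth and irreducible. You have the pieces to run this argument (your Hilbert--Burch and colength computations give the bijection), but you present them \emph{after} asserting openness, so as written there is a gap.

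A second issue: the closing step ``$\mcW$ and $\Mat(n+1,n)$ are both smooth of dimension $n^2+n$, so a bijective morphism between them is automatically an isomorphism'' is true over an algebraically closed field of characteristic $0$, but false in characteristic $p$ (Frobenius), and this lemma is \emph{not} under the paper's blanket $\ch\kk=0$ hypothesis (which is only imposed in Sections \ref{SectionGeometricTechnique}, \ref{SectionProofMainTheorem} and in Theorem \ref{TheoremIPrime}). To make the argument characteristic-free one must exhibit separability, which is exactly why the paper computes that $d\zeta_{\mathbf{0}}$ is an isomorphism (via the $\kk^\ast$-equivariance trick) before invoking Zariski's Main Theorem. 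Your alternative route — showing directly that the coefficients of the reduced Gr\"obner basis depend regularly on $[I]$ — would also work and is characteristic-independent, but it is only gestured at. The remainder of your sketch (the Laplace-expansion identification of leading minors, the disjointness from $\{z=0\}$ forcing codimension $2$ and colength $m$, and the explicit reduced Gr\"obner basis description of $\mcV$) matches the paper's approach in spirit and is essentially the content of the cited results in \cite{Constantinescu}.
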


\begin{proof}
We only present the proof for $\mcW$, as the one for $\mcV$ is analogous.
The locus $\mcW$ is a locally closed subscheme of $\Hilb^m(\P^2)$ by 
\cite[Theorem 2.1]{NS}.
If $I\subseteq P$ is  homogeneous  with $\iin(I)=(x,y)^n$, then $z$ is a non-zerodivisor on $P/I$,
thus $\V(I) \subseteq \AA^2$ and in fact $\mcW \subseteq \Hilb^m(\AA^2)$.
Denote the  map of the statement 
by 
$\zeta : \Mat(n+1,n)  \rightarrow \mcW$,
defined by 
$\zeta(\bfW)=\big[\V(I_n(\bfX+z\bfW))\big]$.
It is shown in  \cite[Theorem 6.8]{Constantinescu} that $\zeta$
is well defined  and  bijective.
It follows that $\mcW$ has codimension  0 in $\Hilb^m(\AA^2)$, so it is a smooth open subscheme. 

We show that $\zeta$ is in fact an isomorphism.
First, we claim that $\zeta$ induces  isomorphisms on tangent spaces.
The group $\kk^{\ast}$ acts on $\Mat(n+1,n)$ by $\bfW \mapsto c\bfW$,
and    on $P$ by  $x\mapsto x,y\mapsto y, z \mapsto cz$, for  $c \in \kk^{\ast}$. 
This induces an action of $\kk^{\ast}$ on $\mathcal{W}$,
 and $\zeta$ is an equivariant morphism.
 Since the origin $\mathbf{0} \in \Mat(n+1,n)$ lies in the closure of every orbit, 
 by upper semicontinuity it suffices to show that  $\text{d} \zeta_\mathbf{0}: \text{T}_{\mathbf{0}} \Mat(n+1,n) \to \text{T}_{[(x,y)^n]} \Hilb^m(\P^2)$ is an isomorphism. 
We have the identification 
$
\text{T}_{\mathbf{0}}\Mat(n+1,n) 
= \Hom(\kk[w_{i,j}],\kk[\epsilon]/(\epsilon^2)) 
= \Span_\kk( \gamma_{1,1}, \ldots, \gamma_{n+1,n})$ 
where $\gamma_{i,j}$  maps $ w_{i,j} \mapsto \epsilon$ and all other variables to $0$. 
Analogously, let $\bfE_{i,j}$ be the  matrix with the only non-zero entry being $\epsilon$ in the $(i,j)$-th position. 
Then  $I_n(\bfX+z\bfE_{i,j})$ is a flat deformation of $(x,y)^n$ over $\kk[\epsilon]/(\epsilon^2)$. 
In particular, the collection $\{I_n(\bfX+z\bfE_{i,j})\}_{i,j}$ 
is a basis for $ \text{T}_{[(x,y)^n]} \Hilb^m(\P^2)$ \cite[Proposition 2.3]{hartshornedef}. 
It follows that $d\zeta_{\mathbf{0}}$ is an isomorphism since it maps $\gamma_{i,j}$ to $I_n(\bfX+z\bfE_{i,j})$.

The fact that $\zeta$ is an isomorphism  follows  now from Zariski's Main Theorem. 
Specifically,
by
Grothendieck's form of 
Zariski's Main Theorem  \cite[III.9 (IV)]{Mumford},
there is a factorization $\zeta : \Mat(n+1,n)  \xrightarrow{\,\iota\,} \mathcal{Y}\xrightarrow{\,\overline{\zeta}\,} \mcW$,
where $\iota$ is a dense open immersion and $\overline{\zeta}$ is a finite morphism.
Since $\zeta$ is bijective, 
 $\overline{\zeta}$ must be surjective and generically injective.
 It follows by \cite[Theorem 14.9]{Harris}
 that  $\overline{\zeta}$ is an isomorphism on an open subset of $\mathcal{Y}$,
 i.e., it is
 birational. 
Since $\mcW$ is normal, $\overline{\zeta}$ must be an isomorphism by the original form of 
Zariski's Main Theorem \cite[III.9 (I)]{Mumford}.
Since $\zeta = \overline{\zeta}\circ \iota$ is surjective and $\iota$ is an open immersion, it follows that $\zeta$ is an isomorphism.
\end{proof}

\begin{definition}\label{DefinitionRingsAB}
Let $n \in \N$ and define the polynomial rings
$$
A= \kk[w_{i,j}] = \kk[w_{1,1},\ldots, w_{n+1,n}],
\quad
B = A \otimes_\kk \kk[v_1, v_2, v_3, v_4].
$$
Denote by $\bfW=(w_{i,j})$  the generic $(n+1)\times n $ matrix whose entries are the variables of $A$.
\end{definition}

Thus, we have identifications 
$$ \Spec (A) = \Mat(n+1,n) \cong \mcW \subseteq \Hilb^m(\AA^2)
\quad
\text{and}
\quad
\Spec(B) \cong \mcW \times \mcV  \subseteq \Hilb^m(\AA^2) \times \Hilb^2(\AA^2),$$
where $\mcW, \mcV$ are the open subsets of Lemma \ref{LemmaNeighborhoodsClassicalHilbertScheme}.
The points $[\V(\mm^n)]$ and $C_n$ correspond to the origins of the affine spaces $\Spec (A)$ and $\Spec  (B)$, respectively.

\begin{definition}\label{DefinitionRingT}
Define the polynomial ring
$T = B \otimes_\kk P=\Bbbk[w_{i,j},v_h,x,y,z] $.
 Define a bigrading $\bideg(\cdot)= (\deg_1(\cdot), \deg_2(\cdot))$ on $T$ by setting
$$
  \deg_1(x) = \deg_1(y) = \deg_1(z)=1, \quad \deg_1(w_{i,j}) =\deg_1(v_h) =0,
  $$
$$
\deg_2(x) = \deg_2(y) = \deg_2(w_{i,j}) =\deg_2(v_2)=\deg_2(v_3) =1, \,\,
\deg_2(z) = \deg_2(v_1)=0, \,\, \deg_2(v_4)=2.
$$
Compare bidegrees by the lexicographic order on $\N^2$. 

Define a term order on $T$ as follows.
Fix an ordering of the variables such that $x > y > z > w_{i,j}, v_h$ for all $i,j,h$.
We consider the pure lexicographic order induced by this ordering and refined by the bigrading $\bideg(\cdot)$.
Equivalently, given two monomials $\bfu, \bfu' \in T$,
we set $\bfu > \bfu'$ if:
\begin{itemize}
\item $\deg_1(\bfu) > \deg_1(\bfu')$, or
\item $\deg_1(\bfu) = \deg_1(\bfu')$ and $\deg_2(\bfu) > \deg_2(\bfu')$, or
\item  $\bideg(\bfu) = \bideg(\bfu')$ and $\bfu >\bfu'$ 
in the pure lexicographic order.
\end{itemize}
\end{definition}

We define the following polynomials of $T$:
\begin{equation*}
\begin{split}
\Gamma_1&= x+v_1y+v_2z, \qquad\Gamma_2 = y^2+v_3yz+v_4z^2,\\
\qquad
\Delta_i &= \text{ maximal minor of } \bfX+z\bfW \text{ obtained by deleting row }i.
\end{split}
\end{equation*}
By  Lemma \ref{LemmaNeighborhoodsClassicalHilbertScheme},
the ideal 
$I_n(\bfX+z\bfW)\subseteq T$
defines the universal family  $U_m \subseteq \P^2\times \Hilb^m(\P^2)$, restricted to the open set $\P^2 \times \mcW$, and then  extended to $\P^2 \times \mcW \times \mcV$. 
Likewise, the ideal $(\Gamma_1,\Gamma_2)\subseteq T$
defines the universal family  $U_2 \subseteq \P^2\times \Hilb^2(\P^2)$, restricted to the open set $\P^2 \times \mcV$, and then extended to $\P^2 \times \mcW \times \mcV$. 
An important  observation is  that these  ideals are bigraded.
The grading $\deg_1(\cdot)$ is inherited from the natural grading of $\P^2$ and, as such,
it is the correct grading to use in the proof of Theorem \ref{TheoremNeighborhoodL}.
On the other hand,
the  grading $\deg_2(\cdot)$ is constructed in such a way that the variables of $B$ also play a non-trivial role in the bigrading.
Keeping track of both gradings simultaneously will allow us to show that both ideals of interest $\mfL$ and $\mfI$ are graded (with respect to $\deg_2(\cdot)$),
a fact that will be used extensively in the following sections.

The polynomials
$\Gamma_1$ and $ \Gamma_2$ 
have coprime leading monomials, 
hence, they form a Gr\"obner basis with initial ideal $(x,y^2)\subseteq T$.
We reduce $\Delta_i$ modulo  $\{\Gamma_1,\Gamma_2\}$. 
Since the term order is compatible with the bigrading by construction, 
the division algorithm 
\cite[Section 15.3]{Eisenbud}
yields a bigraded reduction equation
\begin{equation}\label{EqDivisionAlgorithmDelta}
\Delta_i = \alpha_i (x+v_1y+v_2z)  + \beta_i (y^2+v_3yz+v_4z^2) + g_iyz^{n-1}+G_iz^{n}
\end{equation}
such that  $\LM(\alpha_i x), \LM(\beta_i y^2) \leq \LM(\Delta_i),$
and  $\rho = g_iyz^{n-1}+G_iz^n$ is the unique remainder.
In particular, no term in $\rho$ is divisible by $x$ or  $y^2$.
Since $\Delta_i$ is bigraded  with $\bideg(\Delta_i)=(n,n)$,
it follows that  $g_i, G_i$ are bigraded of bidegree $(0,n-1)$ and $(0,n)$, respectively, 
and that $g_i, G_i \in B$.
We define the ideal
\begin{equation}\label{EqDefinitionIdealL}
\mfL =(g_1, \ldots, g_{n+1}, G_1, \ldots, G_{n+1})\subseteq B.
\end{equation}
We point out that $\mfL$ is well defined because of the uniqueness of the remainder in the division by a Gr\"obner basis.

\begin{thm}\label{TheoremNeighborhoodL}
Let $n\in\N$ and
 $m = {n + 1\choose 2}$.
There is an  open subscheme of $ \Hilb^{(m,2)}(\AA^2)$ containing the compressed pair $C_n$ and
isomorphic to  $\V(\mfL) \subseteq \Spec( B) $.
\end{thm}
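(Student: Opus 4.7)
The plan is to realize $\V(\mfL) \subseteq \Spec(B)$ as the scheme-theoretic preimage of $\Hilb^{(m,2)}(\AA^2) \subseteq \Hilb^m(\AA^2) \times \Hilb^2(\AA^2)$ under the open immersion $\Spec(B) \hookrightarrow \Hilb^m(\AA^2) \times \Hilb^2(\AA^2)$ furnished by Lemma \ref{LemmaNeighborhoodsClassicalHilbertScheme}, where $\Spec(B)$ is identified with $\mcW \times \mcV$ and the compressed pair $C_n$ corresponds to the origin. The nested Hilbert scheme sits in this product as a closed subscheme, cut out by the locus where the ideal of the larger subscheme is contained in the ideal of the smaller, so its preimage is automatically a closed subscheme of $\Spec(B)$; the theorem reduces to identifying this preimage scheme-theoretically with $\V(\mfL)$, and checking that $C_n$ belongs to it.

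To write down equations, I would argue via the functor of points. For an arbitrary $\Bbbk$-algebra $E$ and ring map $\phi: B \to E$, the induced morphism $\Spec(E) \to \mcW \times \mcV$ factors through $\Hilb^{(m,2)}(\AA^2)$ if and only if the extended ideal $\phi(I_n(\bfX + z\bfW))$ is contained in $\phi((\Gamma_1, \Gamma_2))$ inside $E[x,y,z]$. The crucial observation is that $\Gamma_1$ and $\Gamma_2$ have leading coefficient $1$ on the monomials $x$ and $y^2$ respectively; consequently $\{\phi(\Gamma_1), \phi(\Gamma_2)\}$ remains a Gr\"obner basis with unit leading coefficients for every $\phi$, and the division algorithm over $E$ produces a unique remainder. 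Applying $\phi$ to the reduction equation (\ref{EqDivisionAlgorithmDelta}) gives
$$
\phi(\Delta_i) \;=\; \phi(\alpha_i)\,\phi(\Gamma_1) + \phi(\beta_i)\,\phi(\Gamma_2) + \phi(g_i)\, y z^{n-1} + \phi(G_i)\, z^n,
$$
whose tail is still a legitimate remainder because no monomial in $yz^{n-1}$ or $z^n$ is divisible by $x$ or $y^2$. By uniqueness of remainders, $\phi(\Delta_i) \in \phi((\Gamma_1, \Gamma_2))$ if and only if $\phi(g_i) = \phi(G_i) = 0$; ranging over all $i = 1, \dots, n+1$, the nested condition is equivalent to $\phi$ factoring through $B/\mfL$. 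This identifies the scheme-theoretic preimage with $\V(\mfL)$.

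It remains to confirm that $C_n$ lies in $\V(\mfL)$: specializing at $\bfW = 0$ and $v_h = 0$ recovers the families $\V((x,y)^n)$ and $\V(x,y^2)$, and the inclusion $(x,y)^n \subseteq (x,y^2)$ holds as soon as $n \geq 2$ (the only regime in which $C_n$ is a valid chain), in which case each $\Delta_i$ reduces to $0$ modulo $\{x,y^2\}$ and so $g_i, G_i$ vanish at the origin of $\Spec(B)$. The main technical point, and the one requiring care, is the functorial translation combined with the base-change stability of the Gr\"obner basis $\{\Gamma_1, \Gamma_2\}$; once this is secured, the uniqueness of division does the work of lifting a set-theoretic containment to a scheme-theoretic identification, which is precisely what is needed for the later sections where the reducedness of $\V(\mfI) \subseteq \V(\mfL)$ becomes a delicate issue.
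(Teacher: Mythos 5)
Your proof is correct, but takes a genuinely different route from the paper's. The paper invokes the explicit vanishing-scheme realization $\V(\varphi)$ of $\Hilb^{(m,2)}(\P^2)$ inside $\mathrm{H}_1\times\mathrm{H}_2$ from \cite{S06}, writes $\varphi$ over $\Spec(B)$ as a map of free modules in a single large twist $\mu$, and computes its $2$-row matrix $\Phi$, showing $I_1(\Phi)=\mfL$; this requires some $\mu$-regularity bookkeeping. You instead verify the universal property of $\V(\mfL)$ directly: for any $E$-point $\phi:B\to E$, the monic leading terms of $\Gamma_1,\Gamma_2$ keep $\{\phi(\Gamma_1),\phi(\Gamma_2)\}$ a Gr\"obner basis after base change, so the specialized reduction equation exhibits the nesting condition $\phi(\Delta_i)\in(\phi(\Gamma_1),\phi(\Gamma_2))$ as the vanishing of $\phi(g_i),\phi(G_i)$. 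Both proofs hinge on the reduction equation \eqref{EqDivisionAlgorithmDelta}; yours avoids the $\mu$-bookkeeping and the matrix $\Phi$ entirely, at the price of appealing more abstractly to the functor of points of the nested Hilbert scheme. One step should be made explicit: you pass from ideal-sheaf containment on $\P^2_E$ to ideal containment $\phi(I_n(\bfX+z\bfW))\subseteq(\phi(\Gamma_1),\phi(\Gamma_2))$, which requires $(\phi(\Gamma_1),\phi(\Gamma_2))$ to be saturated. This holds --- the Gr\"obner basis property shows the quotient is $E$-free on the standard monomials $z^a,yz^a$, on which multiplication by $z$ is injective, so the ideal equals its saturation --- but it is precisely the point the paper handles via the fixed high degree $\mu$, and you should record the justification rather than leave it implicit.
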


\begin{proof}
We follow the construction  of the nested Hilbert scheme  in 
\cite[Theorem 4.5.1]{S06}.
It is explicitly realized as a vanishing scheme  $\V(\varphi) \subseteq \mathrm{H}_1 \times \mathrm{H}_2$,
where $\mathrm{H}_1=\Hilb^m(\P^2), \mathrm{H}_2= \Hilb^2(\P^2)$, 
and  $\varphi$ is a map of locally free sheaves on $\mathrm{H}_1 \times \mathrm{H}_2$, which we now describe.

Let $\mu $ be a sufficiently large integer so that the ideal sheaf of every  $[Z]\in \Hilb^m(\P^2)$ is $\mu$-regular.
We use the symbol  $[\cdot]_d$ to
denote  graded components of degree $d$.
Let $H^0(\P^2, \mathcal{O}_{\P^2}(\mu)) = [P]_\mu$,
denote by $q: \P^2 \times \mathrm{H}_1 \times \mathrm{H}_2 \rightarrow \mathrm{H}_1 \times \mathrm{H}_2$  the projection map, 
and let $\mathcal{I}_i$ be the ideal sheaf of the universal family on $\mathrm{H}_i$ extended to $ \P^2 \times \mathrm{H}_1 \times \mathrm{H}_2$.
The map $\varphi$ is the composition of natural maps
$$
\varphi : q_\star \mathcal{I}_1(\mu) \subseteq [P]_\mu \otimes_\kk \mathcal{O}_{\mathrm{H}_1 \times \mathrm{H}_2} \rightarrow [P]_\mu \otimes_\kk \mathcal{O}_{\mathrm{H}_1 \times \mathrm{H}_2}/q_\star \mathcal{I}_2(\mu).
$$

In order to prove the theorem, we verify that
$\V(\varphi)\cap \Spec (B) = \V(\mfL)$,
where
$\Spec (B) = \mcV \times \mcW \subseteq \mathrm{H}_1 \times \mathrm{H}_2$ is the open subscheme  obtained from Lemma \ref{LemmaNeighborhoodsClassicalHilbertScheme}.
Thus, we restrict to $\Spec (B)$,
and  we can identify  $\mathcal{I}_1$  with $I_n(\bfX+z\bfW)$,
 $\mathcal{I}_2$ with $(\Gamma_1,\Gamma_2)$,
and $\varphi$ with the map of  free $B$-modules
$$
\varphi : 
[I_n(\bfX+z\bfW)]_\mu  \otimes_\kk B
\subseteq  [P]_\mu \otimes_\kk B
\rightarrow [P]_\mu  \otimes_\kk B/ [(\Gamma_1,\Gamma_2)]_\mu
=  [T/(\Gamma_1,\Gamma_2)]_\mu,
$$
where we use the grading $\deg_1(\cdot)$ for $T$,
so that $[(\Gamma_1,\Gamma_2)]_\mu \subseteq [T]_\mu = [P]_\mu  \otimes_\kk B$.
The target $[T/(\Gamma_1,\Gamma_2)]_\mu$ is a free $B$-module with basis $yz^{\mu-1}, z^{\mu}$,
hence $\varphi$ is represented by a matrix $\Phi$ with two rows and with entries in $B$.
Each column of $\Phi$ corresponds to a generator $h \in [I_n(\bfX+z\bfW)]_\mu$,
and its two entries are the coefficients of $yz^{\mu-1}, z^{\mu}$
in the reduction of $h$ modulo $ [(\Gamma_1,\Gamma_2)]_\mu$.
We need to show that $\mfL = I_1(\Phi)$, the ideal of entries of $\Phi$.

Since 
$\{\Gamma_1,\Gamma_2\}$ is a Gr\"obner basis with initial ideal $(x,y^2)$,
there is a surjective multiplication map
$$
[I_n(\bfX+z\bfW)]_n 
\otimes_\kk
\big(
[(\Gamma_1,\Gamma_2)]_{\mu-n}
\oplus
 \Span_\kk( yz^{\mu-n-1}, z^{\mu-n})
 \big)
\rightarrow 
[I_n(\bfX+z\bfW)]_\mu.
 $$
Therefore, 
it suffices to  consider the $2n+2$ elements 
$\Delta_1 yz^{\mu-1}, \ldots, \Delta_{n+1} yz^{\mu-1}, \Delta_1 z^{\mu}, \ldots, \Delta_{n+1} z^{\mu}$.
Multiplying
the reduction equation \eqref{EqDivisionAlgorithmDelta}
by $yz^{\mu-n-1}$ we obtain
\begin{align*}
\Delta_i yz^{\mu-n-1} &= \alpha yz^{\mu-n-1}\Gamma_1  + \beta yz^{\mu-n-1} \Gamma_2 + g_iy^2z^{\mu-2}+G_iyz^{\mu-1}\\
&= \alpha yz^{\mu-n-1}\Gamma_1  + \beta yz^{\mu-n-1} \Gamma_2 + g_i(\Gamma_2- v_3 yz-v_4 z^2)z^{\mu-2}+G_iyz^{\mu-1}\\
&= \alpha yz^{\mu-n-1}\Gamma_1  + (\beta yz^{\mu-n-1} + g_i z^{\mu-2}) \Gamma_2 
+
(-v_3g_i +G_i) yz^{\mu-1} -g_iv_4z^\mu,
\end{align*}
so the entries of these $n+1$ columns of $\Phi$ all lie in $ \mfL$.
Multiplying
the reduction equation \eqref{EqDivisionAlgorithmDelta}
by $z^{\mu-n}$,
we see that the image of $\Delta_i z^{\mu}$ modulo $ [(\Gamma_1,\Gamma_2)]_\mu$
is $g_i yz^{\mu-1}+G_iz^\mu$,
so the entries of these $n+1$ columns of $\Phi$ are the generators of $ \mfL$, and the proof is concluded.
\end{proof}

We will not work with $\mfL$ or determine the polynomials $g_i, G_i$ directly.
Rather, the core of the proof of Theorem \ref{MainTheorem}
 revolves around 
the  image of $\mfL$ in $A$, 
equivalently, 
 the fiber of the natural map $\Hilb^{(m,2)}(\AA^2) \to \Hilb^2(\AA^2)$ over the origin $[\V(x,y^2)]$.

The polynomial ring $A$ is an algebra retract of $B$,
 hence it may be regarded both as a subring and as a quotient;
likewise for $A \otimes_\kk P= \kk[w_{i,j},x,y,z]$ and $T = B \otimes_\kk P$.
In both cases we use   $\overline{\cdot}$ to denote  images under the quotient map, 
that is, setting $v_h =0$ for all $h$.
We equip the polynomial ring $A \otimes_\kk P$ with  the bigrading 
and term order induced from $T$.
Observe that  $\overline{\Delta_i}=\Delta_i$, since $\Delta_i \in A\otimes_\kk P$.
Applying $\overline{\cdot}$ to  the reduction equation \eqref{EqDivisionAlgorithmDelta} gives
\begin{equation}\label{EqDivisionAlgorithmDeltaBar}
\Delta_i = \overline{\alpha_i} x  + \overline{\beta_i} y^2 + f_iyz^{n-1}+F_iz^n,
\end{equation}
where we set 
\begin{equation}\label{EqDefinitionFiGi}
f_i = \overline{g_i}
\quad
\text{ and }
\quad
 F_i = \overline{G_i}
\quad
\text{ for }
i = 1, \ldots, n+1.
\end{equation}
Since $\LM(\alpha_i x)\leq \LM(\Delta_i)$ in \eqref{EqDivisionAlgorithmDelta}, 
 all the terms of $\alpha_i x$ are bounded by  $\LM(\Delta_i)$,
and this implies that $\LM(\overline{\alpha_i} x)\leq \LM(\Delta_i)$,
since the terms of $ \overline{\alpha_i} x$ are among those of $\alpha_i x$.
Likewise, we have $\LM(\overline{\beta_i} y^2)\leq \LM(\Delta_i)$.
Finally, the terms of $\overline{\rho}= f_iyz^{n-1}+F_iz^n$ are not divisible by $x,y^2$, since 
they are among those of $\rho$.
It follows that  \eqref{EqDivisionAlgorithmDeltaBar} is a reduction equation,
that is, it  satisfies the requirements of  the division algorithm for the term order of $A \otimes_\kk P$.
In particular, $\overline{\rho}= f_iyz^{n-1}+F_iz^n$ is the unique remainder of $\Delta_i$ 
modulo  $\{x, y^2\}$.
We define the ideal
\begin{equation}\label{EqDefinitionIdealI}
\mfI =(f_1, \ldots, f_{n+1}, F_1, \ldots, F_{n+1})\subseteq A.
\end{equation}

Clearly,  $\mfI = \overline{\mfL}$, the image of $\mfL$ in $A$, 
and $\V(\mfI)\subseteq \mcW$ is the scheme-theoretic fiber 
of the map
$\V(\mfL) \hookrightarrow
\mcW \times \mcV \to \mcV$
over the origin of $\mcV$. 
The discussion above gives  us more precise information,
and  it allows us to express 
the polynomials $f_i, F_i$   in terms of the generic matrix $\bfW$.

\begin{notation}
Denote by $\bfW^i$  the $n\times n$ submatrix of $\bfW$ obtained by deleting row $i$,
 by  $\bfW^{(i,j),k}$  the $(n-1)\times (n-1)$ submatrix of $\bfW$ obtained by deleting rows $i,j$ and column $k$,
where $i<j$.
For simplicity, we set $\det\bfW^{(i,j),k} = - \det \bfW^{(j,i),k}$ if $i>j$, and declare $\det \bfW^{(i,i),k}=0$.
\end{notation}

\begin{lemma}\label{LemmaDescriptionFi}
For each $i = 1, \ldots, n+1$,
we have $F_i = \det \bfW^i$ and 
$
f_i = \sum_{h=1}^{n}\det\bfW^{(h,i),h}.
$
\end{lemma}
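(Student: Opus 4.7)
The plan is to extract $f_i$ and $F_i$ directly as coefficients in $\Delta_i$ and then identify them via a determinantal expansion. By construction, $f_i y z^{n-1} + F_i z^n$ is the remainder of $\Delta_i$ upon division by the Gr\"obner basis $\{x, y^2\}$ of $(x, y^2) \subseteq A \otimes_\Bbbk P$. Since this remainder consists precisely of those monomials of $\Delta_i$ that are not divisible by $x$ or $y^2$, and since $\Delta_i$ is homogeneous of $\deg_1$-degree $n$, the only monomials in $x, y, z$ that can survive are $z^n$ and $y z^{n-1}$. Thus $F_i$ and $f_i$ are, respectively, the coefficients of $z^n$ and $y z^{n-1}$ in $\Delta_i$ viewed as a polynomial in $x, y, z$ with coefficients in $A$.

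Both coefficients survive the substitution $x = 0$, so it suffices to work with
\[
\Delta_i\big|_{x = 0} \;=\; \det\bigl(\bfY^i + z\,\bfW^i\bigr),
\]
where $\bfY = \bfX|_{x=0}$ has $y$ on the main diagonal and $0$ elsewhere, and the superscript $i$ denotes deletion of row $i$. By the Leibniz formula, the coefficient of $z^n$ is obtained by choosing $z\bfW^i$ in every row, giving $F_i = \det \bfW^i$ at once.

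For $f_i$, a $y z^{n-1}$ term arises from picking the unique $y$-entry of $\bfY^i$ in some row $k$ and then computing the cofactor using the $z\bfW^i$-entries. The row relabeling after deletion places the $y$-entries of $\bfY^i$ at $(k,k)$ for $k = 1, \ldots, i-1$ and at $(k, k+1)$ for $k = i, \ldots, n-1$; the last row of $\bfY^i$ carries no $y$-entry when $i \le n$, since it corresponds to row $n+1$ of $\bfY$. The cofactor expansion then contributes $\det \bfW^{(k,i),k}$ for each $k < i$; for $k \ge i$, setting $h = k+1$, the sign $(-1)^{k+(k+1)} = -1$ combined with the antisymmetry convention $\det \bfW^{(i,h),h} = -\det \bfW^{(h,i),h}$ (valid since $i < h$) produces the contribution $\det \bfW^{(h,i),h}$ for each $h = i+1, \ldots, n$. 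The case $i = n+1$ is completely analogous, with all $y$-entries of $\bfY^{n+1}$ sitting on the main diagonal. Invoking finally the convention $\det \bfW^{(i,i),i} = 0$, both ranges combine into $f_i = \sum_{h=1}^{n} \det \bfW^{(h,i),h}$.

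The main obstacle is purely combinatorial bookkeeping: keeping track of the shifted row indices in $\bfY^i$ and $\bfW^i$ after deletion, and the signs produced by the Laplace expansion. The crucial subtlety is that row $n$ of $\bfY^i$ carries no $y$ when $i \le n$, which truncates the summation range to $\{1, \ldots, n\}$ and yields exactly the stated formula.
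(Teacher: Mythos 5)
Your proof is correct and follows essentially the same route as the paper: extract $F_i$ and $f_i$ as the coefficients of $z^n$ and $yz^{n-1}$ in the Leibniz expansion of $\Delta_i$, locate the $y$-entries of the row-deleted triangular matrix, and track the cofactor signs against the antisymmetry convention. The only cosmetic difference is that you set $x=0$ up front (replacing $\bfX^i$ by $\bfY^i$), whereas the paper expands $\det(\bfX^i + z\bfW^i)$ directly and simply ignores the $x$-terms; both are equivalent since the relevant coefficients involve no $x$.
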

\begin{proof}
Define $\bfX^i$ analogously to $\bfW^i$, then
 $\Delta_i =  \det(\bfX^i + z \bfW^i)$.
From  \eqref{EqDivisionAlgorithmDeltaBar} we see that
 $f_i$ and $F_i$ are the coefficients of $yz^{n-1}$ and $z^{n}$ in $\Delta_i$, 
 respectively.
It follows immediately that $F_i = \det \bfW^i$.

Expand   
$\Delta_i = \det(\bfX^i + z \bfW^i)$ 
as a sum of $n!$ products. 
The terms with $yz^{n-1}$ are obtained by picking an entry $y$ from $\bfX^i$, 
and the remaining $n-1$ entries from $z \bfW^i$.
Thus, for each occurrence of $y$ in $\bfX^i$, 
the contribution to the coefficient of $yz^{n-1}$ in $\Delta_i$ is a signed  determinant of the submatrix of $\bfW^i$ 
obtained by deleting the row and column corresponding to $y$.
We now determine these signs.

The variable $y$ appears in $\bfX^i$ in positions $(h,h)$ with $h<i$, and positions $(h,h+1)$ with $i\leq h < n$. 
For  $h<i$, the variable $y$ in position $(h,h)$ carries a sign of $(-1)^{h+h}=+1$, 
and the corresponding submatrix of $\bfW^i$ is $\bfW^{(h,i),h}$.
Thus, the contribution to $f_i$ is $\det \bfW^{(h,i),h}$.
For  $i\leq h < n$, the variable $y$ in position $(h,h+1)$  in $\bfX^i$ carries a sign of $(-1)^{h+h+1}=-1$, 
and the corresponding submatrix of $\bfW^i$ is $\bfW^{(i,h+1),h+1}$.
Thus, the contribution to $f_i$ is $-\det \bfW^{(i,h+1),h+1}=\det \bfW^{(h+1,i),h+1}$.
Finally, $\det \bfW^{(i,i),i}=0$ by convention.
We conclude that $
f_i = \sum_{h=1}^{n}\det\bfW^{(h,i),h}
$
as desired.
\end{proof}

Lemma \ref{LemmaDescriptionFi} shows that the scheme $\V(\mfI)$ is squeezed between the two generic determinantal varieties
$\V(I_{n-1}(\bfW))$ and $\V(I_n(\bfW))$,
which  have codimension 6 and 2 respectively.
We will see in Corollary \ref{CorollaryXIrreducibleDimension} that $\V(\mfI)$ is in fact irreducible of codimension 4.
It is also possible to show that the generating set of $\mfI$ is not minimal,
and
that a minimal system of generators is $\mfI  = (f_1, \ldots, f_{n+1}, F_{n+1})$,
but we will not need this fact.
Finally, we will prove in Theorem \ref{TheoremIPrime} that $\mfI$ is a prime ideal;
 however, at the moment it is not even clear whether $\mfI$ is  radical.

The next section is devoted to the variety $\mfX= \V(\sqrt{\mfI}) \subseteq \Spec (A)=  \Mat(n+1,n)$,  cut out set-theoretically by $\mfI$.
In preparation, we record here the following simple fact.
Denote
\begin{equation}\label{EqMatrixY}
\bfY=
\bfX_{\,x=0} = 
\small{
\begin{bmatrix}
y & 0 &   \cdots & 0\\
0 & y &   \cdots  & 0\\
0 & 0 &     \cdots & 0 \\
\cdots & \cdots  & \cdots  & \cdots \\
0 & 0 &  \cdots  & y \\
0 & 0 &  \cdots  & 0 \\
\end{bmatrix}.}
\end{equation}

\begin{cor}\label{CorollaryDescriptionSetTheoreticLocusI}
We have $
\mfX = \V(\sqrt{\mfI}) =
 \big\{ \bfW \in \mathrm{Mat}(n+1,n) \, : \,
I_n(\bfY + \bfW) \subseteq (y^2)  \subseteq \kk[y]\big\}.
$
\end{cor}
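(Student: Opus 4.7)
The plan is to prove the stated set equality directly, using the reduction equation \eqref{EqDivisionAlgorithmDeltaBar} as a bridge between the defining conditions of $\mfI$ and the divisibility requirement on the maximal minors of $\bfY+\bfW$. Since $\V(\sqrt{\mfI}) = \V(\mfI)$ as subsets of $\Mat(n+1,n)$, it suffices to show that a matrix $\bfW_0 \in \Mat(n+1,n)$ satisfies $f_i(\bfW_0) = F_i(\bfW_0) = 0$ for all $i = 1,\dots,n+1$ if and only if $I_n(\bfY + \bfW_0) \subseteq (y^2)$ in $\kk[y]$.

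The first step is the elementary but crucial observation that the substitution $x \mapsto 0$, $z \mapsto 1$ converts $\bfX + z\bfW$ into $\bfY + \bfW$. Consequently, the $i$-th maximal minor of $\bfY + \bfW_0$ (obtained by deleting row $i$) is precisely $\Delta_i\big|_{\bfW = \bfW_0,\, x = 0,\, z = 1}$. Applying the same specialization to the reduction equation \eqref{EqDivisionAlgorithmDeltaBar}, the term $\overline{\alpha_i}\, x$ vanishes, while $\overline{\beta_i}\, y^2$ remains visibly divisible by $y^2$, yielding an identity in $\kk[y]$ of the form
$$
\Delta_i\big|_{\bfW_0,\, x=0,\, z=1} \;=\; h_i(y)\,y^2 \;+\; f_i(\bfW_0)\,y \;+\; F_i(\bfW_0)
$$
for some $h_i(y)\in\kk[y]$. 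Divisibility of this polynomial by $y^2$ is therefore equivalent to the simultaneous vanishing of its constant and linear coefficients, which are exactly $F_i(\bfW_0)$ and $f_i(\bfW_0)$.

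Ranging over $i = 1, \ldots, n+1$ and invoking the definition \eqref{EqDefinitionIdealI} of $\mfI$ closes the argument. I do not expect a genuine obstacle here: the entire computation is a direct specialization of the reduction equation already established in Section \ref{SectionLocalEquations}, and the uniqueness of the division-algorithm remainder guarantees that picking off the $y^0$ and $y^1$ coefficients of the specialized minor recovers precisely the generators $F_i, f_i$ of $\mfI$, with no interference from the $h_i(y)\, y^2$ summand.
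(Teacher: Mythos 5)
Your proof is correct; it hinges on the same reduction equation \eqref{EqDivisionAlgorithmDeltaBar} that drives the paper's proof, but compresses the subsequent translation. The paper establishes a chain of equivalences: from $\bfB\in\V(\sqrt{\mfI})$ to $I_n(\bfX+z\bfB)\subseteq(x,y^2)$ in $P=\kk[x,y,z]$ via \eqref{EqDivisionAlgorithmDeltaBar}, then to $I_n(\bfX+\bfB)\subseteq(x,y^2)$ in $R=\kk[x,y]$ via the dehomogenization result \cite[Lemma 6.6]{Constantinescu}, and finally to $I_n(\bfY+\bfB)\subseteq(y^2)$ in $\kk[y]$ by passing modulo $(x)$. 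You collapse the last three steps into the single specialization $x\mapsto 0$, $z\mapsto 1$, which, applied to the reduction equation, immediately yields an identity in $\kk[y]$ whose $y^0$ and $y^1$ coefficients are precisely $F_i(\bfW_0)$ and $f_i(\bfW_0)$. This is more economical and avoids citing the Constantinescu dehomogenization lemma; what the paper's longer chain buys is an explicit record of the intermediate affine characterization $I_n(\bfX+\bfB)\subseteq(x,y^2)$ in $R$, which is not needed for the corollary itself. One minor remark: the appeal at the end of your argument to uniqueness of the division-algorithm remainder is superfluous at that point—once you have the specialized identity in $\kk[y]$, the coefficients of $y^0$ and $y^1$ are forced to be $F_i(\bfW_0)$ and $f_i(\bfW_0)$ simply because $\overline{\alpha_i}\,x$ vanishes under $x\mapsto 0$ and $\overline{\beta_i}\,y^2$ contributes only in degrees at least $2$; uniqueness of the remainder was already used in Section \ref{SectionLocalEquations} to make $f_i, F_i$ well-defined elements of $A$, and plays no further role here.
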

\begin{proof}
For every $\bfB \in \mathrm{Mat}(n+1,n)$, we have the chain of equivalences
\begin{align*}
\bfB \in V\big(\sqrt{\mfI}\big) & \Leftrightarrow I_n(\bfX + z\bfB) \subseteq (x,y^2) \subseteq P= \kk[x,y,z]
&& \text{by } \eqref{EqDivisionAlgorithmDeltaBar}
\\
& \Leftrightarrow I_n(\bfX + \bfB) \subseteq (x,y^2) \subseteq R=\kk[x,y]
&& \text{by  \cite[Lemma 6.6]{Constantinescu}}
\\
& \Leftrightarrow 
I_n(\bfX + \bfB)+(x) \subseteq (x,y^2)  \subseteq R
&& \text{since } (x) \subseteq (x,y^2)
\\
& \Leftrightarrow 
I_n(\bfY +\bfB) \subseteq (y^2)  \subseteq \kk[y]
&& \text{going modulo } (x).
\qedhere
\end{align*} 
\end{proof}

\section{A variety of matrices}\label{SectionGeometricTechnique}

In this section, 
we study    the affine variety $\mfX = \V(\sqrt{\mfI}) \subseteq \Mat(n+1,n)$,
cut out set-theoretically by the ideal $\mfI \subseteq A$ introduced in  \eqref{EqDefinitionIdealI}.
The variety $\mfX$ is the reduced scheme of a neighborhood of 
the compressed pair $C_n=[\V(\mm^n)\supseteq \V(x,y^2)]$ in the fiber of the map $\Hilb^{(m,2)}(\AA^2) \rightarrow \Hilb^2(\AA^2)$ over $[\V(x,y^2)]$.
The goal  is to prove the following theorem.

\begin{thm} \label{TheoremVarietyMatrices} 
Assume $\ch (\Bbbk) =0$ and $n \geq 4$. 
The  variety $\mfX = \V(\sqrt{\mfI}) \subseteq \Mat(n+1,n)$ has  rational singularities.
It is a cone over a projective subvariety of $ \P^{n^2+n-1}$  of degree 
$
\frac{1}{12}(n-1)n(n+1)(3n-2).
$
\end{thm}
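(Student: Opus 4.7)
The plan is to apply the Kempf-Lascoux-Weyman (KLW) geometric technique to a resolution of singularities of $\mfX$ given by the total space of a vector bundle over a partial flag variety. Starting from Corollary \ref{CorollaryDescriptionSetTheoreticLocusI}, I would first geometrically unpack the condition $I_n(\bfY+\bfW)\subseteq (y^2)$. If $\rank(\bfW)=n-1$ and $v$ spans $\ker(\bfW)$, a direct computation gives $(\bfY+\bfW)v = y\cdot \iota(v)$, where $\iota:\kk^n\hookrightarrow \kk^{n+1}$ is the inclusion onto the first $n$ coordinates (the image of $\bfY$). Expanding the vector of maximal minors $\Wed^n(\bfY+\bfW) \in \Wed^n(\kk^{n+1})[y]$ in powers of $y$, the constant term $\Wed^n \bfW$ vanishes iff $\rank(\bfW)\leq n-1$, while the coefficient of $y$ vanishes iff $\iota(v)\in\mathrm{im}(\bfW)$ (this follows by contracting $\Wed^n(\bfY+\bfW)=y^2\eta$ with $v$, using the identity $(\bfY+\bfW)v=y\iota(v)$, and specializing at $y=0$).

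Guided by this, introduce the smooth projective variety
\[
\mathcal{F}\,=\,\bigl\{(\ell,U)\in \P(\kk^n)\times \Gr(n-1,\kk^{n+1})\,:\,\iota(\ell)\subseteq U\bigr\},
\]
which is a $\Gr(n-2,n)$-bundle over $\P^{n-1}$ of dimension $3n-5$. Writing $\mcS_1,\mcS_2$ for the tautological sub-bundles on the two factors, pulled back to $\mathcal{F}$, form the rank-$(n-1)^2$ sub-bundle $\mathcal{M}=\Hom(\kk^n/\mcS_1,\mcS_2)=(\kk^n/\mcS_1)^{\vee}\otimes\mcS_2$ of the trivial bundle $\Hom(\kk^n,\kk^{n+1})\otimes\mcO_{\mathcal{F}}$. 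Its total space $\widetilde{\mfX}\subseteq\Mat(n+1,n)\times\mathcal{F}$ coincides with the incidence locus $\{(\bfW,\ell,U):\bfW(\ell)=0,\ \mathrm{im}(\bfW)\subseteq U\}$, and is smooth of dimension $n^2+n-4$. The projection $\pi:\widetilde{\mfX}\to\Mat(n+1,n)$ is proper, and, by the previous paragraph combined with Corollary \ref{CorollaryDescriptionSetTheoreticLocusI}, surjects onto $\mfX$; it is moreover birational, since $\ell=\ker(\bfW)$ and $U=\mathrm{im}(\bfW)$ are uniquely determined on the open stratum $\rank(\bfW)=n-1$. Thus $\pi$ is a resolution of singularities of $\mfX$, and in particular $\dim\mfX=n^2+n-4$.

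I then apply KLW to the embedding $\widetilde{\mfX}\hookrightarrow\Mat(n+1,n)\times\mathcal{F}$, considering the conormal bundle $\xi=\mathcal{N}^{\vee}$, where $\mathcal{N}=(\Hom(\kk^n,\kk^{n+1})\otimes\mcO_{\mathcal{F}})/\mathcal{M}$. The bundle $\mathcal{N}$ carries a two-step filtration whose graded pieces are tensor products of the tautological bundles and their quotients, so $\Wed^j \xi$ decomposes, via Pieri's rule, into Schur functors in $\mcS_1,\mcS_2,\kk^n/\mcS_1,\kk^{n+1}/\mcS_2$. The crucial cohomological input is $H^i(\mathcal{F},\Wed^j\xi)=0$ for all $i>0$ and all $j\geq 0$, to be verified by computing higher direct images fiberwise along the Grassmann bundle $\mathcal{F}\to\P^{n-1}$ via Bott's theorem and then invoking Borel-Weil-Bott on $\P^{n-1}$. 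With this vanishing in hand, the main theorem of \cite[Chapter 5]{WeymanBook} yields $R^{i}\pi_\star\mcO_{\widetilde{\mfX}}=0$ for $i>0$, hence $\mfX$ has rational singularities, together with an explicit minimal graded free $A$-resolution of the coordinate ring of $\mfX$ of length $4=\codim\mfX$, hence Cohen-Macaulayness. The degree of the projective variety $\Proj(A/\sqrt{\mfI})\subseteq\P^{n^2+n-1}$ is then extracted from the Hilbert series of this resolution, equivalently from the Chern-class integral $\int_{\mathcal{F}} c_{(n-1)^2}(\mathcal{N}^{\vee}\otimes \mcO(1))$, and should collapse, after standard tautological computations on $\P^{n-1}$ and the Grassmann bundle, to $\tfrac{1}{12}(n-1)n(n+1)(3n-2)$.

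The principal obstacle is the Bott-theoretic analysis of $\Wed^j \xi$. Since $\mathcal{F}$ is only homogeneous for the parabolic subgroup of $\GL_{n+1}$ stabilizing the hyperplane $\iota(\kk^n)$, one must carefully track dominant weights for its Levi factors while decomposing the exterior powers, and verify Borel-Weil-Bott regularity at each stage of the computation. A secondary but nontrivial ingredient is the combinatorial identification of the final Chern-class integral with the stated degree polynomial, which I expect to reduce by induction on $n$ to elementary manipulations of Schubert classes on $\P^{n-1}$ and on Grassmann bundles.
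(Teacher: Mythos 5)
Your desingularization $\wtX \to \mfX$ is correct and genuinely different from the paper's. You take the incidence variety $\mathcal{F}\subset\P(\kk^n)\times\Gr(n-1,\kk^{n+1})$ (of dimension $3n-5$) together with the tensor-product bundle $\mathcal{M}=(\kk^n/\mcS_1)^\vee\otimes\mcS_2$ of rank $(n-1)^2$, whereas the paper uses the two-step flag $\FF=\mathrm{Flag}(1,2;\kk^n)$ (of dimension $2n-3$) with the bundle $\mcS=\mcT\oplus\mcQ_{n-2}^\vee$, whose first summand $\mcT$ is not a plain tensor product but is imported from the rank-variety resolution of \cite{Weyman}. Your reformulation of the defining condition of $\mfX$ — vanishing of the $y^0$ and $y^1$ coefficients of $\Wed^n(\bfY+\bfW)$, equivalently the existence of a line $\ell\subseteq\ker\bfW$ and a corank-2 subspace $U\supseteq\mathrm{im}(\bfW)+\iota(\ell)$ — is right, and properness, surjectivity, and generic injectivity follow as you say; moreover $\dim\wtX=(3n-5)+(n-1)^2=n^2+n-4$ matches. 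Since $\mathcal{M}$ is already a tensor product of tautological sub/quotient bundles, your set-up promises somewhat cleaner Bott bookkeeping than the paper's $\mcT$, and this is an attractive alternative.

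The gap is in the cohomological input, which is both unverified and incorrectly stated, and it is the crux of the argument. The asserted vanishing $H^i(\mathcal{F},\Wed^j\xi)=0$ for all $i>0$ cannot hold: if it did, every term of the KLW complex would be $\mathsf F_k=H^0(\mathcal{F},\Wed^k\xi)\otimes A(-k)$, so the free resolution of $A/\mfI$ would be linear and $\mathrm{reg}(A/\mfI)=0$, contradicting the fact that $\mfI$ is generated in degrees $n-1$ and $n$. In fact the degree-$(n-1)$ and degree-$n$ generators of $\mfI$ are forced to appear as $H^{n-2}(\mathcal{F},\Wed^{n-1}\xi)$ and $H^{n-1}(\mathcal{F},\Wed^{n}\xi)$, exactly the kind of nonzero higher cohomology the paper finds in Propositions~\ref{PropositionEtaPrime}--\ref{PropositionWedgeR2}. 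What is actually needed is the weaker criterion $\mathsf F_i=0$ for $i<0$ (that is, $H^q(\mathcal{F},\Wed^p\xi)=0$ whenever $q>p$) together with $\mathsf F_0=A$; establishing it for your bundle requires carrying out the full associated-graded / Leray / Bott analysis, which your proposal explicitly defers. That computation is not a formality — in the paper it occupies Lemmas~\ref{LemmaR2R1} through Proposition~\ref{PropositionWedgeR2} and is by far the hardest part of the proof — and without it neither rational singularities nor the degree are established. Finally, the stated degree formula $\int_{\mathcal{F}}c_{(n-1)^2}(\mathcal{N}^\vee\otimes\mcO(1))$ is dimensionally inconsistent, since $(n-1)^2\neq 3n-5=\dim\mathcal{F}$ for $n\geq 4$; the correct Chern-class version would be $\int_{\mathcal{F}}s_{3n-5}(\mathcal{M})=\int_{\mathcal{F}}c_{3n-5}(\mathcal{N})$, or equivalently the cohomological sum~\eqref{EqDegreeFormula} that the paper evaluates.
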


The variety   admits an action of $\GL_n$, and thus
 it can be treated by means of representation  theoretic techniques.
We follow the point of view of  the theory of rank varieties, 
as in  \cite{ES,Weyman} and
 \cite[Chapter 8]{WeymanBook}.
Our variety $\mfX$ is not among those studied in the classical theory,
since it parametrizes non-square matrices. 
Nevertheless, this connection allows us 
 to  employ the  Kempf-Lascoux-Weyman technique  for calculating syzygies to prove Theorem \ref{TheoremVarietyMatrices}.

\begin{notation}
We  fix the following notation and assumptions throughout  the section.
Let $\Bbbk$ be an algebraically closed field with $\ch (\Bbbk) =0$.
We fix an integer  $n \in \N$ with $n \geq 4$,
and the vector space  $E = \kk^n$ of column vectors.
The affine space $\Mat(n+1,n) $ is identified with 
$\Mat(n,n) \times \Mat(1,n)$ by vertical concatenation of matrices, 
and therefore with
$(E^\vee\otimes E) \oplus E^\vee$.

 Let  $\FF=\mathrm{Flag}(1,2;E)$ denote the flag variety parametrizing 
flags of subspaces $W_1\subseteq W_2 \subseteq E $ with $\dim W_1 = 1, \dim W_2 =2$.
 Consider the trivial vector bundle $\mcE = E \otimes \mcO_\FF$ on $\FF$.
 For each $i =1,2$, we have the tautological sub-bundle
$\mcR_i \subseteq \mcE$ of rank $i$,
parametrizing subspaces $W_i\subseteq E$ with $\dim W_i = i$,
and the tautological quotient bundle $\mcQ_i = \mcE /\mcR_i$ of rank $n-i$,
parametrizing quotients of $E$ of dimension $n-i$.
\end{notation}

Note that we are assuming $n\geq 4$ for technical reasons, 
that is, to simplify the  analysis in some technical results in this section.
In any case,
 this assumption has no impact on our objectives;
in fact,   we may assume $n \gg 0$ by Theorem \ref{TheoremReductionToCompressedPair}.

\subsection{Resolution of singularities}

Our first goal in this section  is the determination of a suitable resolution of singularities of $\mfX$.
In order to construct it, we need some linear algebra facts.

\begin{lemma}\label{LemmaDimensionKernels}
Let $\bfA \in \Mat(n,n)$ and $\bfa \in \Mat(1,n)$.
Then 
$$
\begin{cases}
\dim \big( \ker \bfA \cap \ker \bfa \big) \geq 1\\
\dim \big( \ker \bfA^2 \cap \ker \bfa \bfA \cap \ker \bfa \big) \geq 2
\end{cases}
\hspace{-0.3cm}
\Leftrightarrow
\exists \,
(W_1, W_2) \in \FF
: 
\bfA W_2 \subseteq W_1,\, \bfA W_1 =  \bfa W_2 = 0.
$$
\end{lemma}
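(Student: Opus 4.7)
My plan is to prove the two directions separately, with the reverse implication being a straightforward verification and the forward one requiring a short case analysis based on $\dim(\ker\bfA\cap\ker\bfa)$.

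\smallskip
For the $(\Leftarrow)$ direction, assume a flag $(W_1,W_2)\in\FF$ with the stated properties is given. The conditions $\bfA W_1=0$ and $W_1\subseteq W_2\subseteq \ker\bfa$ immediately give $W_1\subseteq \ker\bfA\cap\ker\bfa$, so the first inequality $\dim(\ker\bfA\cap\ker\bfa)\geq 1$ is clear. For the second inequality, observe that $\bfA W_2\subseteq W_1\subseteq\ker\bfA$ yields $\bfA^2 W_2=0$, while $\bfA W_2\subseteq W_1\subseteq W_2\subseteq\ker\bfa$ yields $\bfa\bfA W_2=0$; combined with $W_2\subseteq\ker\bfa$, this places $W_2$ inside $\ker\bfA^2\cap\ker(\bfa\bfA)\cap\ker\bfa$, so that intersection has dimension at least $\dim W_2=2$.

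\smallskip
For the $(\Rightarrow)$ direction, I will set
\[
U=\ker\bfA\cap\ker\bfa,\qquad V=\ker\bfA^2\cap\ker(\bfa\bfA)\cap\ker\bfa,
\]
so by hypothesis $\dim U\geq 1$ and $\dim V\geq 2$. The key structural observation is the containment $\bfA(V)\subseteq U$: indeed, for $v\in V$ the identities $\bfA^2 v=0$ and $\bfa\bfA v=0$ say precisely that $\bfA v\in\ker\bfA\cap\ker\bfa=U$. Note also that $U\subseteq V$ trivially. I then split into two cases based on $\dim U$:

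\smallskip
\emph{Case $\dim U\geq 2$.} Choose any 2-dimensional $W_2\subseteq U$ and any 1-dimensional $W_1\subseteq W_2$. Then $\bfA W_2=0\subseteq W_1$, $\bfA W_1=0$, and $\bfa W_2\subseteq \bfa U=0$, as required.

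\smallskip
\emph{Case $\dim U=1$.} Set $W_1=U$ and let $W_2$ be any 2-dimensional subspace of $V$ containing $U$, which exists since $\dim V\geq 2$. By the structural observation, $\bfA W_2\subseteq \bfA V\subseteq U=W_1$; furthermore $\bfA W_1=\bfA U=0$ and $\bfa W_2\subseteq \bfa V=0$.

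\smallskip
In both cases the resulting pair $(W_1,W_2)$ is a flag in $\FF$ satisfying the stated conditions, completing the proof. The argument is essentially mechanical once one isolates the containment $\bfA(V)\subseteq U$; I do not anticipate any technical obstacle beyond making the case split transparent.
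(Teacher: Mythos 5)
Your proof is correct and takes essentially the same approach as the paper: both arguments hinge on observing that $\ker\bfA\cap\ker\bfa\subseteq \ker\bfA^2\cap\ker\bfa\bfA\cap\ker\bfa$ and that $\bfA$ carries the latter into the former, then handle the construction of the flag by a case split (you on $\dim(\ker\bfA\cap\ker\bfa)$, the paper on whether the inclusion is strict, which is an equivalent distinction). Your subspace-level phrasing via $U$ and $V$ is a slightly cleaner packaging of the same argument.
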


\begin{proof}
The backward direction is obvious: 
if 
$(W_1, W_2) \in \mathrm{Flag}(1,2;E)$
is a flag of subspaces such that
$\bfA W_2 \subseteq W_1,\, \bfA W_1 =  \bfa W_2 = 0$,
then $W_1 \subseteq \ker \bfA \cap \ker \bfa $ and $W_2 \subseteq  \ker \bfA^2 \cap \ker \bfa \bfA \cap \ker \bfa$.
Conversely, assume that
$\dim ( \ker \bfA \cap \ker \bfa ) \geq 1$ and $\dim ( \ker \bfA^2 \cap \ker \bfa \bfA \cap \ker \bfa ) \geq 2$.
Observe that $\ker \bfA \subseteq \ker \bfA^2 \cap \ker \bfa \bfA$,
therefore
\begin{equation}\label{EqInclusionKernels}
\ker \bfA \cap \ker \bfa \subseteq \ker \bfA^2 \cap \ker \bfa \bfA \cap \ker \bfa.
\end{equation}
 Suppose \eqref{EqInclusionKernels} is an equality.
 Then there exist linearly independent vectors 
 $\bfw_1, \bfw_2 \in \ker \bfA \cap \ker \bfa$,
 and the flag $W_1 = \Span_\kk( \bfw_1 ), W_2 = \Span_\kk( \bfw_1, \bfw_2 )$ satisfies the desired conditions.
Suppose the inclusion  \eqref{EqInclusionKernels} is strict,
then there exists  $\bfw_2 \in \ker \bfA^2 \cap \ker \bfa \bfA \cap \ker \bfa$ such that $\bfw_1 = \bfA \bfw_2  \ne \mathbf{0}$.
We have $\bfA \bfw_1 = \bfA^2 \bfw_2 = \mathbf{0}$, 
and  $\bfw_1, \bfw_2$ are linearly independent, since $\bfw_2 \notin \ker \bfA$.
It follows  that the flag $W_1 = \Span_\kk( \bfw_1 ), W_2 = \Span_\kk( \bfw_1, \bfw_2 )$ satisfies the desired conditions.
\end{proof}

\begin{lemma}\label{LemmaMinorsNilpotentBlock}
Let $\ell \in \N$.
Consider the following $(\ell+1)\times 	\ell$ matrix
$$
\bfM=
\begin{bmatrix}
y & 1 &  0 &  \cdots & 0\\
0 & y &  1 & \cdots  & 0\\
0 &0 & y &   \cdots  & 0\\
\cdots & \cdots & \cdots  & \cdots  & \cdots \\
0 & 0 & 0 &  \cdots  & 1 \\
0 & 0 & 0 & \cdots  & y \\
a_1 & a_2 & a_3 & \cdots  & a_\ell \\
\end{bmatrix}.
$$
Let $\delta_i$ denote the $\ell\times \ell$ minor of $\bfM$ obtained by eliminating row $i$.
For each $i =1, \ldots, \ell$,
 we have
$$
\delta_i = (-y)^{\ell-i}\big(a_iy^{i-1}-a_{i-1}y^{i-2}+ \cdots + (-1)^{i+1}a_1\big).
$$
\end{lemma}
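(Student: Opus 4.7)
The plan is to prove the formula via Laplace expansion along the bottom row $(a_1,\ldots,a_\ell)$, which is retained after deleting row $i$ from $\bfM$ since $i\leq \ell$. This yields
\[
\delta_i \;=\; \sum_{j=1}^{\ell} (-1)^{\ell+j}\, a_j \det N_{i,j},
\]
where $N_{i,j}$ denotes the $(\ell-1)\times(\ell-1)$ matrix obtained by deleting row $i$ and column $j$ from the upper bidiagonal $\ell\times\ell$ block of $\bfM$ (the one with $y$ on the diagonal and $1$ on the superdiagonal). The statement will follow once we show $\det N_{i,j} = y^{\ell-1-i+j}$ for $j\leq i$ and $\det N_{i,j} = 0$ for $j>i$, after which a sign reindexing recovers the claimed expression.

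The computation of the minors rests on the support property that column $k$ of the upper block has nonzero entries only in rows $k-1$ and $k$; consequently, columns $1,\ldots,j-1$ are supported on rows $1,\ldots,j-1$, while columns $j+1,\ldots,\ell$ are supported on rows $j,\ldots,\ell$. For $j>i$, removing row $i$ (with $i<j$) leaves the first $j-1$ columns supported on only $j-2$ rows, producing a linear dependence, so $\det N_{i,j}=0$. For $j\leq i$, the rows $1,\ldots,j-1$ remain intact, and the support property forces $N_{i,j}$ to be block diagonal with a $(j-1)\times(j-1)$ upper bidiagonal top block of determinant $y^{j-1}$ and an $(\ell-j)\times(\ell-j)$ bottom block. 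The bottom block arises from an $(\ell-j+1)\times(\ell-j)$ lower bidiagonal matrix (with $1$ on the diagonal and $y$ on the subdiagonal) by deleting row $i-j+1$; a direct inspection shows that it splits once more as block diagonal, with an invertible upper piece of determinant $1$ and a lower piece that is upper bidiagonal with $y$ on the diagonal of size $\ell-i$, hence of determinant $y^{\ell-i}$. Multiplying yields $\det N_{i,j} = y^{\ell-1-i+j}$.

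Substituting back and using the identity $(-1)^{\ell+j} = (-1)^{\ell-i}(-1)^{i-j}$ gives
\[
\delta_i \;=\; (-1)^{\ell-i} y^{\ell-i}\sum_{j=1}^{i} (-1)^{i-j} a_j y^{j-1},
\]
which, after the reindexing $k=i-j$, is exactly $(-y)^{\ell-i}\bigl(a_i y^{i-1}-a_{i-1}y^{i-2}+\cdots+(-1)^{i+1}a_1\bigr)$. The only step requiring some care is the block-diagonal bookkeeping inside the bottom minor, but this reduces entirely to well-understood bidiagonal determinants, so I do not anticipate any serious difficulty.
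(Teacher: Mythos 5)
Your proof is correct. The paper's own proof consists of the single sentence ``The calculation is straightforward,'' so there is no written argument to compare against; your cofactor expansion along the bottom row is the natural computation the authors presumably had in mind, and you have supplied the block-diagonal bookkeeping (the vanishing of $\det N_{i,j}$ for $j>i$, the splitting of the surviving minors into a $(j-1)\times(j-1)$ upper-triangular block of determinant $y^{j-1}$ and a lower block of determinant $y^{\ell-i}$) that the paper suppresses. The sign reindexing $(-1)^{\ell+j}=(-1)^{\ell-i}(-1)^{i-j}$ at the end is also correct, so the argument stands as a complete and accurate fill-in of the omitted ``straightforward'' calculation.
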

\begin{proof}
The calculation is straightforward.
\end{proof}

Recall the definition of the matrix $\bfY=\bfX_{x=0}$ in \eqref{EqMatrixY}.

\begin{lemma}\label{LemmaJordanCanonicalForm}
Let $\bfA \in \Mat(n,n)$ be a matrix in Jordan canonical form. 
For every $\bfa\in \Mat(1,n)$, we have

\begin{equation*}
I_n\left(\bfY + \begin{bmatrix}
\bfA\\
\bfa
\end{bmatrix}\right) \subseteq \big(y^2\big)
\,
\Leftrightarrow 
\,
\dim \big( \ker \bfA \cap \ker \bfa \big) \geq 1
\,\, \text{and} \,\,
\dim \big( \ker \bfA^2 \cap \ker \bfa \bfA \cap \ker \bfa \big) \geq 2.
\end{equation*}
\end{lemma}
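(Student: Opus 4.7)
My strategy is to exploit the Jordan form of $\bfA$ and compute the $n\times n$ minors $\delta_i$ of $\bfY + \bigl[\begin{smallmatrix}\bfA\\ \bfa\end{smallmatrix}\bigr]$ explicitly, block by block. First I would reduce to the nilpotent case: decompose $E = V_0 \oplus V_{\neq 0}$ into the $0$-generalized eigenspace and the sum of the remaining generalized eigenspaces, so that $\bfA = \bfA_0 \oplus \bfA_1$ with $\bfA_0$ nilpotent and $\bfA_1$ invertible, and $\bfa = (\bfa_0,\bfa_1)$. Because $y\bfI + \bfA$ is block-diagonal and $\det(y\bfI+\bfA_1)|_{y=0}\neq 0$, expanding each minor along the bottom row shows that, up to the unit $\det(y\bfI+\bfA_1)$, every $\delta_i$ factors through the analogous minor for $(\bfA_0,\bfa_0)$, while $\delta_{n+1} = y^{n_0}\det(y\bfI+\bfA_1)$ with $n_0 = \dim V_0$. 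Thus the hypothesis $I_n(\bfY+\bigl[\begin{smallmatrix}\bfA\\ \bfa\end{smallmatrix}\bigr])\subseteq (y^2)$ forces $n_0 \geq 2$ and reduces entirely to the corresponding statement for $(\bfA_0,\bfa_0)$. Since $\ker \bfA, \ker \bfA^2 \subseteq V_0$, the kernel conditions also depend only on $(\bfA_0,\bfa_0)$, making the reduction compatible on both sides.

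Assume now $\bfA$ is nilpotent with Jordan block sizes $\ell_1 \geq \cdots \geq \ell_r$ and $n = \sum_j \ell_j$. For $i \leq n$ lying in the $k$-th block at local position $i'$, expand $\delta_i$ along the bottom row. Using the block-diagonal shape of $y\bfI+\bfA$, I would argue that the cofactor $M_{ij}$ vanishes unless $j$ lies in the same block as $i$: otherwise the rows of any other block are supported on too few columns in the resulting submatrix and become linearly dependent. This yields a clean factorization $\delta_i = y^{\,n-\ell_k}\cdot \tilde{\delta}_{i'}$, where $\tilde{\delta}_{i'}$ is the $i'$-th minor of a single nilpotent Jordan block of size $\ell_k$ augmented with the corresponding slice $(a_{p+1},\ldots,a_{p+\ell_k})$ of $\bfa$, whose closed form is supplied by Lemma \ref{LemmaMinorsNilpotentBlock}.

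The final step is to translate the conditions $\delta_i \in (y^2)$ into vanishing conditions on $\bfa$ and compare them with the kernel conditions. When $n-\ell_k \geq 2$ the prefactor already forces $\delta_i \in (y^2)$ automatically; for $n-\ell_k \in \{0,1\}$ the explicit formula from Lemma \ref{LemmaMinorsNilpotentBlock} gives requirements on $a_{p+1}$ and possibly $a_{p+2}$. On the kernel side, for nilpotent $\bfA$ in Jordan form $\ker \bfA$ is spanned by the first basis vector of each block and $\ker \bfA^2$ further contains the second basis vector of each block of size $\geq 2$; intersecting with $\ker \bfa$ and $\ker \bfa\bfA$ turns the two dimension inequalities $\dim(\ker\bfA \cap \ker\bfa)\geq 1$ and $\dim(\ker\bfA^2\cap\ker\bfa\bfA\cap\ker\bfa)\geq 2$ into precisely the same vanishing conditions on the entries of $\bfa$, establishing the equivalence.

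\textbf{The hard part} will be the case analysis over the partition $(\ell_1,\ldots,\ell_r)$, and in particular verifying the match entry-by-entry when $r=1$ with $\ell_1 \geq 2$, or $r=2$ with one or both blocks of size $1$; in the remaining regimes (e.g. $r \geq 3$) both sides hold automatically. A subtle technical point that needs care is the ``shape mismatch'' vanishing of $M_{ij}$ across distinct blocks, since the relevant submatrix is not square when viewed block by block.
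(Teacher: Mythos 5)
Your proposal is correct and follows essentially the same strategy as the paper's proof: a case analysis on the Jordan structure of $\bfA$, with the minors of a single Jordan block supplied by Lemma~\ref{LemmaMinorsNilpotentBlock}. The paper carries along the invertible part $\bfA_1$ as an explicit unit factor $\det(\bfA_1 + y\bfI_s)$ in every computed minor, whereas you package the same bookkeeping more cleanly through a preliminary reduction to nilpotent $\bfA$ and the uniform factorization $\delta_i = \pm y^{\,n_0-\ell_k}\,\tilde\delta_{i'}$ coming from the block structure; this is a modest organizational improvement and not a different argument. One small imprecision worth fixing when you write it up: for $i$ a row in the invertible block the minor $\delta_i$ does \emph{not} ``factor through the analogous $(\bfA_0,\bfa_0)$-minor up to the unit'' — instead it carries a flat $y^{n_0}$ factor — but this is harmless once one has observed (from $\delta_{n+1}=y^{n_0}\det(y\bfI+\bfA_1)$) that $I_n \subseteq (y^2)$ already forces $n_0\geq 2$.
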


\begin{proof}
Define the two subsets of $\Mat(1,n)$ from the statement of the lemma:
\begin{eqnarray}
\label{EqSubspaceL}
\quad\mL &=& \Big\{ 
\bfa \in \Mat(1,n) \,:\, I_n\left(\bfY + \begin{bmatrix}
\bfA\\
\bfa
\end{bmatrix}\right) \subseteq \big(y^2\big)
\Big\},
\\
\label{EqSubspaceR}
\quad\mR &=& \Big\{ 
\bfa \in \Mat(1,n) \,:\, 
\dim \big( \ker \bfA \cap \ker \bfa \big) \geq 1,
\,
\dim \big( \ker \bfA^2 \cap \ker \bfa \bfA \cap \ker \bfa \big) \geq 2
\Big\}.
\end{eqnarray}

Let $\bfA_1$ denote the invertible submatrix of $\bfA$ consisting of the Jordan blocks  associated to non-zero eigenvalues,
and  $\bfN_1, \ldots, \bfN_p$  the nilpotent Jordan blocks, i.e. the ones associated to the the eigenvalue 0. 
Let $\ell_i$ be the size of the block $\bfN_i$ and  $s= n -\sum \ell_i$ denote the size of $\bfA_1$.
Note that $\rank \, \bfA = n-p$ and $\rank \, \bfA^2 = n-p - \#\{i \, : \, \ell_i \geq 2\}$.
We denote by $\bfI_r \in \Mat(r,r)$ the identity matrix, and by $\bfe_i$  the $i$-th standard basis vector of $E$.
Given $\bfa = [a_1, \ldots, a_n] \in \Mat(1,n)$, denote by $\bfB$ the vertical concatenation of $\bfA$ and $\bfa$.
Up to reordering the blocks, we have
$$
\bfA = 
\left[\begin{array}{cccc}
\bfA_1 &  &  &\\
 & \bfN_1 & &\\
 & & \ddots & \\
 & & & \bfN_p
\end{array}
\right]
\qquad
\text{and}
\qquad
\bfY + \bfB =
\left[
 \begin{array}{cccc}
\bfA_1 + y\bfI_s&  & &\\
 & \bfN_1 +y\bfI_{\ell_1}& &\\
 & & \ddots & \\
 & & & \bfN_p+y\bfI_{\ell_p}\\
 \hline
 &\quad \bfa& & 
\end{array} 
\right].
$$
Finally, let $\Theta_i$ denote the $n \times n$ minor of $\bfY + \bfB$ obtained by deleting row $i$.
We will prove that $\mL = \mR$ distinguishing several cases, based on the shape of the Jordan decomposition of $\bfA$.

\underline{Case  $p=0$}.
We have $\Theta_{n+1} = \det (\bfA_1+y\bfI_n) \equiv \det \bfA_1 \ne 0 \pmod{y}$.
Thus, $I_n(\bfY+\bfB)\not\subseteq (y)$ and $\mL = \emptyset$.
Moreover,  $\bfA=\bfA_1$ is invertible, so $\dim \ker \bfA= 0$  and $\mR = \emptyset$.

\underline{Case $p=\ell_1=1$}.  
We have $\Theta_{n+1} = y\det (\bfA_1+y\bfI_{n-1}) \equiv y\det \bfA_1 \ne 0 \pmod{y^2}$.
Thus, $I_n(\bfY+\bfB)\not\subseteq (y^2)$ and $\mL = \emptyset$.
Moreover, $\rank \, \bfA^2 = n-1$, so  $\dim \ker \bfA^2 = 1 $ and $\mR = \emptyset$.

\underline{Case $p=1, \ell_1\geq 2$}.
The unique nilpotent block $\bfN_1$ extends from column $s+1$ to column $n$.
Using Laplace expansion and Lemma \ref{LemmaMinorsNilpotentBlock},
we see that $y^2|\Theta_i $ for all $i \ne n, n-1$,
whereas 
\begin{align*}
\Theta_n &= \det(\bfA_1 + y \bfI_s)( \cdots \pm a_{s+2}y \pm a_{s+1}   ) \equiv (\det\bfA_1 + y a ) (\pm a_{s+2}y\pm a_{s+1} ) &\pmod{y^2},
\\
\intertext{and}
\Theta_{n-1} &= \det(\bfA_1 + y \bfI_s)(\cdots \pm a_{s+2}y^2 \pm a_{s+1}y) \equiv \pm (\det\bfA_1) a_{s+1}
 y &\pmod{y^2},
\end{align*}
where $a$ is the coefficient of $y$ in $\det(\bfA_1 + y \bfI_s)$.
Since $\det \bfA_1 \ne 0$,
it follows that $\bfa\in \mL$ if and only if $a_{s+1} = a_{s+2}=0$.
We have $\rank \, \bfA = n-1$ and $\rank \, \bfA^2 = n-2$.
From the Jordan decomposition we see that 
 $ \ker \bfA =\Span_\kk( \bfe_{s+1} )$ and $ \ker \bfA^2 = \Span_\kk( \bfe_{s+1}, \bfe_{s+2} )$.
It follows that $\bfa\in \mR $  if and only if  $a_{s+1} = a_{s+2}=0$.

\underline{Case $p=2, \ell_1=\ell_2 = 1$}.
By Laplace expansion, we see  that $y^2 | \Theta_i$ if $i \ne n,n-1$,
whereas 
$$
\Theta_n = \pm a_{n}y \det(\bfA_1 + y \bfI_s)\quad \text{and} \quad \Theta_n = \pm a_{n-1}y \det(\bfA_1 + y \bfI_s).
$$
As in the previous case, we conclude 
that $\bfa\in \mL$  if and only if $a_{n-1} = a_{n}=0$.
We have $\rank \, \bfA = \rank \, \bfA^2 = n-2$.
Since $\ker \bfA = \ker \bfA^2  = \Span_\kk( \bfe_{n-1}, \bfe_{n} )$,
we have  $\bfa \in \mR$ if and only if 
$a_{n-1} = a_{n}=0$.

\underline{Case $p=2, \ell_1=1,\ell_2 \geq 2$}.
The nilpotent block $\bfN_1$ occupies column $s+1$,
while $\bfN_2$ extends from column $s+2$ to $n$.
As in the previous cases,
using Laplace expansion and Lemma \ref{LemmaMinorsNilpotentBlock}
we see that $y^3|\Theta_i$ for all $i \ne s+1, n-1, n$.
Moreover,  $y^2|\Theta_{s+1}$ since $\Theta_{s+1}$ contains the block
$\bfN_2+y\bfI_{\ell_2}$ and $\ell_2 \geq 2$.
Applying Lemma \ref{LemmaMinorsNilpotentBlock} to the block $\bfN_2+y\bfI_{\ell_2}$,
since $\det(\bfN_1+y\bfI_{\ell_1}) = y$,
we see that 

\begin{align*}
\Theta_n &= \det(\bfA_1 + y \bfI_s)y (\cdots \pm y a_{s+3} \pm a_{s+2}) \equiv \pm \det(\bfA_1)  y a_{s+2}  &\pmod{y^2}
\\
\intertext{and}
\Theta_{n-1} &= \det(\bfA_1 + y \bfI_s)y(\cdots \pm y^2 a_{s+3} \pm ya_{s+2}) \equiv 0 &\pmod{y^2}.
\end{align*}
We conclude that $\bfa \in \mL$ if and only if $a_{s+2}=0$.

Considering the Jordan decomposition, we find the kernels $\ker \bfA = \Span_\kk( \bfe_{s+1}, \bfe_{s+2})$ and 
$\ker \bfA^2 = \Span_\kk( \bfe_{s+1},\bfe_{s+2}, \bfe_{s+3})$.
Assume  $a_{s+2}=0$, so    $\bfe_{s+2}\in  \ker \bfa$.
Since $\dim \ker \bfa \geq n-1$,
there exists   $\mathbf{0}\ne\bfv\in \ker \bfa \cap \Span_\kk( \bfe_{s+1}, \bfe_{s+3})$.
By the Jordan decomposition, we have 
  $ \bfA \big(\Span_\kk(\bfe_{s+1}, \bfe_{s+3})\big) = 
\Span_\kk(\bfe_{s+2})$,
thus $ \bfv \in \ker \bfa \bfA$.
Since $\Span_\kk( \bfe_{s+2}) \subseteq \ker \bfA \cap  \ker \bfa$ and
 $\Span_\kk(\bfe_{s+2}, \bfv) \subseteq \ker \bfA^2 \cap \ker \bfa \bfA \cap  \ker \bfa$,
we conclude that $\bfa \in \mR$.
Now, assume  $a_{s+2}\ne 0$
and let $\bfw \in \ker \bfA^2 \cap \ker \bfa \bfA \cap \ker \bfa$.
Write  $\bfw= \alpha \bfe_{s+1} + \beta \bfe_{s+2}+ \gamma \bfe_{s+3}$, where $\alpha, \beta, \gamma \in \kk$.
Since $\bfA \bfw = \gamma \bfe_{s+2}$ and $ \bfw  \in \ker \bfa \bfA$,
we must have $\gamma =0$.
Since $ \bfw  \in \ker \bfa$, we must have $\alpha a_{s+1} + \beta a_{s+2}=0$,
thus $\beta = -a_{s+2}^{-1} a_{s+1}\alpha$.
In conclusion, we find that 
$\ker \bfA^2 \cap \ker \bfa \bfA \cap \ker \bfa
= \Span_\kk( \bfe_{s+1}  -a_{s+2}^{-1} a_{s+1} \bfe_{s+2})$,
in particular it has dimension 1 and thus $\bfa \notin \mR$ when $a_{s+2}\ne 0$.

\underline{Case $p = 2$ and $\ell_1, \ell_2 \geq 2$}.
Each $\Theta_i$ contains an entire block $\bfN_j+y\bfI_{\ell_j}$ with $\ell_j \geq 2$.
This implies that $y^{\ell_j} = \det(\bfN_j+y\bfI_{\ell_j}) $ divides $ \Theta_i$,
so $I_n(\bfY+\bfB)\subseteq (y^2)$ and $\mL= \Mat(1,n)$.
We  have $\rank \, \bfA = n-2$ and $ \rank \, \bfA^2 = n-4$, 
so $\dim \ker \bfA = 2$ and $\dim \ker \bfA^2 = 4$.
Since $\dim \ker \bfa \geq n-1$ and $\dim \ker \bfa \bfA \geq n-1$
for every $\bfa$, we conclude that $\mR = \Mat(1,n)$. 

\underline{Case  $p\geq 3$}.
The matrix $\bfY + \bfB$ has $p$ rows where $y$ is the only non-zero entry.
Each $\Theta_i$  contains at least  two such rows,
so $y^2| \Theta_i$ and $I_n(\bfY+\bfB)\subseteq (y^2)$,
therefore $\mL = \Mat(1,n)$.
Moreover, since $\dim \ker \bfA = p \geq 3$ and $\ker \bfA \subseteq \ker \bfA^2 \cap \ker \bfa \bfA$,
it follows easily that  $\mR = \Mat(1,n)$.

We have shown that $\mL = \mR$ for every $\bfA$, so the proof is concluded.
\end{proof}

Following Lemma \ref{LemmaDimensionKernels},
we define
$$
\wtX = 
\big\{ 
(\bfA,\bfa, W_1, W_2) \, | \,\bfA W_2  \subseteq W_1,\, \bfA W_1 = 0, \,  \bfa W_2=0 
\big\} 
\subseteq \Mat(n,n) \times \Mat(1,n) \times \FF.
$$
The variety $\wtX$ is the total space of a vector bundle $\mcS$ on $\FF$.
To see this, we  also consider the variety
$
\widetilde{\mathfrak{Y}} = 
\big\{ 
(\bfA, W_1, W_2) \, | \, \bfA W_2  \subseteq W_1,\, \bfA W_1 = 0
\big\} 
\subseteq \Mat(n,n)  \times \FF,
$
which appears in \cite[Section 3]{Weyman} 
as desingularization of the rank variety
$\mathfrak{Y}=\{\bfA \in \Mat(n,n) \, : \, \rank \, \bfA^2 \leq n-2\}$;
in the notation of that paper, they are respectively the varieties $Y_\bfv$  and $X_\bfv$ in the case of the partition $\bfv=(1,1)$.
It is known that
$\widetilde{\mathfrak{Y}} $ is the total space of a vector bundle 
$\mcT\subseteq \mcE^\vee \otimes \mcE$
of $\FF$, which has  rank $n^2-2n+1$;
further properties of the bundle $\mcT$ will be given at the beginning of Subsection \ref{Subsection KLW technique}.
 It follows that  $\wtX$ is  the total space of the following
 decomposable  vector bundle on $\FF$
 \begin{equation}\label{EqVectorBundleS}
  \mcS = \mcT \oplus \mcQ_{n-2}^\vee \subseteq (\mcE^\vee \otimes \mcE)\oplus \mcE^\vee.
  \end{equation}
In particular,  $\wtX$ is a smooth irreducible variety of dimension 
$$
\dim \wtX = \rank\, \mcS + \dim \FF = (n^2-n-1)+ (2n-3)= n^2+n-4.
$$

\begin{thm}\label{TheoremResolutionSingularities}
Let $\rho : \Mat(n+1,n)  \times \FF \rightarrow  \Mat(n+1,n)$ denote the projection map.
The restricted map  $\rho : \wtX \rightarrow \mfX$ is a resolution of singularities.
\end{thm}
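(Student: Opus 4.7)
The approach is to verify that $\rho$ is a proper birational morphism with $\wtX$ smooth. Smoothness and irreducibility of $\wtX$ come for free from the discussion preceding the theorem: $\wtX$ is the total space of the vector bundle $\mcS=\mcT\oplus\mcQ_{n-2}^\vee$ on the smooth irreducible flag variety $\FF$, of dimension $n^2+n-4$. Properness is also automatic, since $\FF$ is projective and $\wtX$ is closed in $\Mat(n+1,n)\times\FF$.

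The core of the proof is showing the set-theoretic equality $\rho(\wtX)=\mfX$. Lemma \ref{LemmaDimensionKernels} characterizes the existence of a flag $(W_1,W_2)$ as in the definition of $\wtX$ in terms of two kernel-dimension inequalities, and Lemma \ref{LemmaJordanCanonicalForm} says that for $\bfA$ in Jordan form those inequalities are equivalent to $I_n(\bfY+\bfB)\subseteq(y^2)$, i.e., to $(\bfA,\bfa)\in\mfX$. To pass from Jordan form to arbitrary $\bfA$, I will use a $\GL_n$-action: for $P\in\GL_n$ and $\tilde P=\mathrm{diag}(P,1)\in\GL_{n+1}$, the identity $\tilde P\bfY=\bfY P$ gives $\bfY+\tilde P^{-1}\bfB P=\tilde P^{-1}(\bfY+\bfB)P$, so the Fitting ideal $I_n(\bfY+\bfB)$ is invariant under the action $(\bfA,\bfa)\mapsto(P^{-1}\bfA P,\bfa P)$, as are the kernel-dimension conditions. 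Hence Lemma \ref{LemmaJordanCanonicalForm} extends to all $\bfA$, and chaining the two lemmas gives $(\bfA,\bfa)\in\mfX$ iff $(\bfA,\bfa)\in\rho(\wtX)$.

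For birationality, I will exhibit a nonempty open subset $U\subseteq\mfX$ over which $\rho$ is an isomorphism. Take $U$ to consist of those $(\bfA,\bfa)\in\mfX$ for which $\bfA$ has exactly one nilpotent Jordan block, of size $2$, together with $n-2$ distinct nonzero eigenvalues. The analysis in the case $p=1$, $\ell_1=2$ of Lemma \ref{LemmaJordanCanonicalForm} shows that for such $\bfA$ the condition $(\bfA,\bfa)\in\mfX$ forces $\bfa$ to vanish on $\ker\bfA^2$; consequently the only flag witnessing $(\bfA,\bfa)\in\rho(\wtX)$ is $(W_1,W_2)=(\ker\bfA,\ker\bfA^2)$. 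Because both $\ker\bfA$ and $\ker\bfA^2$ can be described polynomially in $\bfA$ via cofactor matrices on $U$, the formula $(\bfA,\bfa)\mapsto(\bfA,\bfa,\ker\bfA,\ker\bfA^2)$ defines an algebraic section of $\rho|_U$, so $\rho$ is an isomorphism over $U$.

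I expect the main subtlety to be the $\GL_n$-equivariance step used to remove the Jordan-form hypothesis from Lemma \ref{LemmaJordanCanonicalForm}: the action on $\Mat(n+1,n)$ is not a naive two-sided multiplication, and preserving the defining ideal of $\mfX$ relies specifically on the block shape of $\bfY$. All other ingredients—smoothness of $\wtX$, properness of $\rho$, and the existence of an algebraic section of $\rho$ on $U$—follow in short order from the preparatory lemmas and standard facts about cofactor matrices.
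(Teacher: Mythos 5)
Your proposal is correct and follows essentially the same route as the paper: use the $\GL_n$-action fixing $\bfY$ (and hence the defining ideal of $\mfX$) to reduce the surjectivity $\rho(\wtX)=\mfX$ to Jordan form via Lemmas~\ref{LemmaDimensionKernels} and~\ref{LemmaJordanCanonicalForm}, and then establish birationality on the open locus where $\rank\bfA=n-1$, $\rank\bfA^2=n-2$. The paper handles the last step a touch more economically by noting that $\rho$ is injective on the corresponding open subset of $\wtX$ (the flag is forced to be $(\ker\bfA,\ker\bfA^2)$) and invoking ``proper, dominant, generically injective in characteristic $0$ $\Rightarrow$ birational,'' whereas your construction of an explicit algebraic section is also valid but needs more than cofactor matrices to parametrize the two-dimensional $\ker\bfA^2$ (the adjugate of $\bfA^2$ vanishes when $\mathrm{corank}\,\bfA^2=2$, so one would use e.g.\ the second compound matrix).
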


\begin{proof}
The identification $\mathrm{Mat}(n+1,n) =  \Mat(n,n) \times \Mat(1,n) = (E^\vee\otimes E) \oplus E^\vee$
gives the  action of $\GL_n $ on $\mathrm{Mat}(n+1,n) $
by 
$$
\varphi \cdot \bfB = (\varphi \oplus 1) \bfB \,\varphi^{-1}
\quad \text{ where}
\quad
\varphi\oplus 1 = 
\begin{bmatrix}
\varphi & 0  \\
0 & 1
\end{bmatrix}
\in \mathrm{GL}_{n+1}.
$$
We note that both $\mfX$ and $\wtX$ are invariant subvarieties under the action of $\GL_n $.
This is obvious for  $\wtX \subseteq \mathrm{Mat}(n+1,n) \times \FF$.
For $\mfX$, recall the description of Corollary \ref{CorollaryDescriptionSetTheoreticLocusI}. 
We have $\varphi \cdot \bfY = \bfY$,
and  determinantal ideals are invariant by this action,
therefore
$$
I_n (\bfY + \bfB) =I_n(\varphi \cdot (\bfY + \bfB)) = I_n(\varphi \cdot \bfY + \varphi \cdot \bfB)= I_n( \bfY + \varphi \cdot \bfB).
$$
This implies  that $\bfB \in \mfX$ if and only if $\varphi \cdot \bfB \in \mfX$, as claimed.

First, we show that $\rho(\wtX) = \mfX$, equivalently,
that for all   $\bfA \in \Mat(n,n),  \bfa \in \Mat(1,n)$ we have

\begin{equation*}
I_n\left(\bfY + \begin{bmatrix}
\bfA\\
\bfa
\end{bmatrix}\right) \subseteq (y^2)
\,
\Leftrightarrow 
\,
\exists \,\,
(W_1, W_2) \in \mathrm{Flag}(1,2;E)
\,
: \,
\bfA (W_2) \subseteq W_1, \bfA W_1 = 0, \bfa W_2 = 0.
\end{equation*}
Exploiting the action of $\GL_n$, 
it suffices to show this equivalence when $\bfA$ is in Jordan canonical form,
and this was done in Lemmas \ref{LemmaDimensionKernels} and \ref{LemmaJordanCanonicalForm}.
Next,
observe that the locus $\mathfrak{U} \subseteq \wtX$ where $\rank\, \bfA = n-1, \rank \, \bfA^2 = n-2$ is non-empty (for instance by Lemma \ref{LemmaJordanCanonicalForm}, case $p=1, \ell_1 \geq 2$) and  open.
It is clear that the restriction of  $\rho $ to  $\mathfrak{U}$ is an injective morphism, 
since the flag $(W_1, W_2)$ is uniquely determined.
In conclusion, the morphism  $\rho : \wtX \rightarrow \mfX$ is birational since it is dominant and generically injective, and  clearly, it is also proper, 
hence a resolution of singularities.
\end{proof}

\begin{cor}\label{CorollaryXIrreducibleDimension}
The variety  $\mfX = \V(\sqrt{\mfI})$ is irreducible of dimension $n^2+n-4$.
\end{cor}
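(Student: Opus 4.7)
The plan is to deduce the corollary directly from Theorem \ref{TheoremResolutionSingularities} and the dimension computation that preceded it; there is almost nothing to do beyond invoking standard consequences of having a resolution.

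First, I would observe that $\wtX$ is irreducible and smooth. This is because $\wtX$ is the total space of the vector bundle $\mcS = \mcT \oplus \mcQ_{n-2}^\vee$ on the flag variety $\FF = \mathrm{Flag}(1,2;E)$, as recorded in \eqref{EqVectorBundleS}; the total space of any vector bundle over a smooth irreducible base is smooth and irreducible. Since $\FF$ is a smooth projective irreducible variety, so is the total space.

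Next, I would apply Theorem \ref{TheoremResolutionSingularities}: the restricted projection $\rho : \wtX \to \mfX$ is a resolution of singularities, so in particular it is surjective and birational. Surjectivity gives $\mfX = \rho(\wtX)$ as the continuous image of an irreducible variety, hence $\mfX$ is irreducible. Birationality gives $\dim \mfX = \dim \wtX$, and the dimension of $\wtX$ was already computed in the discussion preceding Theorem \ref{TheoremResolutionSingularities} as
\[
\dim \wtX = \rank \mcS + \dim \FF = (n^2 - n - 1) + (2n - 3) = n^2 + n - 4.
\]
This yields $\dim \mfX = n^2 + n - 4$, completing the proof.

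There is no real obstacle: all the substantive work (constructing $\wtX$, verifying it is a vector bundle, computing its dimension, and establishing birationality with $\mfX$) has been carried out in the preceding lemmas and in Theorem \ref{TheoremResolutionSingularities}. The corollary is essentially a formal consequence and should occupy only a few lines.
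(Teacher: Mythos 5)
Your proposal is correct and takes exactly the paper's intended route: the paper gives no explicit proof of this corollary precisely because it is the formal consequence you describe, namely that $\wtX$ is smooth and irreducible of dimension $n^2+n-4$ as the total space of the bundle $\mcS$ over $\FF$, and the birational proper surjection $\rho:\wtX\to\mfX$ from Theorem \ref{TheoremResolutionSingularities} transfers irreducibility and dimension to $\mfX$.
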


We record here another byproduct, which  will be useful in Section \ref{SectionProofMainTheorem}.

\begin{cor}\label{CorollaryDimensionFibers}
Let $m \in \N$.
Every closed fiber of the  natural morphism $\Hilb^{(m,2)}(\AA^2) \rightarrow \Hilb^2(\AA^2)$ is irreducible of dimension $2m-4$.
\end{cor}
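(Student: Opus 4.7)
The morphism $\pi_2$ is equivariant for the natural action of the affine group $\mathrm{Aff}(\AA^2) = \AA^2 \rtimes \GL_2$, which acts on $\Hilb^2(\AA^2)$ with exactly two orbits: reduced pairs of distinct points, and non-reduced length-two schemes supported at a single point. Accordingly, it suffices to verify the statement at the two representative fibers over $[\{P_1, P_2\}]$ (Case~A) and $[\V(x,y^2)]$ at the origin $O$ (Case~B). Since $\Hilb^{(m,2)}(\AA^2)$ is irreducible of dimension $2m$ by Corollary~\ref{CorollaryIrreducibleNested}, upper semicontinuity of fiber dimension guarantees that every irreducible component of every fiber $F = \pi_2^{-1}([Z_2])$ has dimension at least $2m-4$; this is the lower bound that drives the argument.

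The strategy in each case is to stratify $F$, produce a distinguished stratum $F^\circ$ that is open in $F$, smooth, irreducible, and of dimension exactly $2m-4$, and bound every other stratum by $2m-5$. Then, for any irreducible component $F'$ of $F$, either $F^\circ \subseteq F'$---in which case $F' = \overline{F^\circ}$---or $F^\circ \cap F'$ is a proper open subset of $F'$ and hence empty by irreducibility, forcing $F'$ into the union of the remaining strata (dimension $\leq 2m-5$) and contradicting $\dim F' \geq 2m-4$. Consequently $F = \overline{F^\circ}$ is irreducible of dimension $2m - 4$.

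In Case~A, I would stratify $F$ by the pair $(a_1, a_2)$, where $a_i \geq 1$ is the length of the component of $Z_1$ supported at $P_i$. Decomposing $Z_1 = Y_1 \sqcup Y_2 \sqcup Z'$ into connected components identifies $F_{a_1, a_2}$ with
\[
\Hilb^{a_1}_{P_1}(\AA^2) \times \Hilb^{a_2}_{P_2}(\AA^2) \times \Hilb^{m-a_1-a_2}(\AA^2 \setminus \{P_1, P_2\}),
\]
where $\Hilb^a_P(\AA^2)$ denotes the punctual Hilbert scheme at $P$. By Brian\c{c}on's theorem ($\dim \Hilb^a_P(\AA^2) = a-1$) and Fogarty's smoothness theorem, this is irreducible of dimension $2m - a_1 - a_2 - 2$, uniquely maximized at $(1,1)$. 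Thus $F^\circ := F_{1,1} \cong \Hilb^{m-2}(\AA^2 \setminus \{P_1, P_2\})$ provides the required distinguished stratum, open in $F$ because "length $= 1$ at each $P_i$" is the minimal condition in the stratification.

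In Case~B, I would analogously stratify by the length $a \geq 2$ of $Z_1$ at $O$; the stratum $F_a$ is identified with $\mathcal{H}_a \times \Hilb^{m-a}(\AA^2 \setminus \{O\})$, where $\mathcal{H}_a$ parametrizes length-$a$ subschemes of $\AA^2$ supported at $O$ and containing $Z_2$. The main technical step, which I expect to be the principal obstacle, is the dimension bound $\dim \mathcal{H}_a \leq a - 2$. I will obtain it by noting that $\mathcal{H}_a$ is a \emph{proper} closed subvariety of the irreducible $\Hilb^a_O(\AA^2)$---for instance, $\V(y, x^a)$ lies in $\Hilb^a_O$ but not in $\mathcal{H}_a$---and applying Brian\c{c}on's theorem that $\dim \Hilb^a_O(\AA^2) = a - 1$. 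This yields $\dim F_a \leq 2m - a - 2$, with equality to $2m - 4$ only at $a = 2$, where $F^\circ := F_2 \cong \Hilb^{m-2}(\AA^2 \setminus \{O\})$ completes the argument.
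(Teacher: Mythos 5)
Your proof is correct, and it takes a genuinely different route from the paper's. The paper handles the two representative fibers by separate specialized arguments: over a reduced $Z_2$ it uses the cleaving deformation from Proposition~\ref{PropositionCleaving} to show every $Z_1 \supseteq Z_2$ is a limit of $Z_2 \sqcup Y$ with $Y$ reduced; over $[\V(x,y^2)]$ it first enlarges $Z_1$ to force $m = \binom{n+1}{2}$ (so the fiber is identified with a neighborhood of $\mfX$) and then invokes Corollary~\ref{CorollaryXIrreducibleDimension}, which rests on the flag-variety desingularization of Theorem~\ref{TheoremResolutionSingularities}. You instead give a uniform argument across both cases: stratify the fiber by local multiplicity at the support of $Z_2$, bound each stratum via Brian\c{c}on's theorem on the irreducibility and dimension of the punctual Hilbert scheme $\Hilb^a_O(\AA^2)$, and then appeal to the lower bound on fiber dimension for a dominant morphism of irreducible varieties (using Corollary~\ref{CorollaryIrreducibleNested}) to squeeze every component onto the distinguished open stratum. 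The key technical observation---that the locus $\mathcal{H}_a$ of length-$a$ punctual schemes containing $\V(x,y^2)$ is a \emph{proper} closed subset of the irreducible $\Hilb^a_O(\AA^2)$ and hence of dimension at most $a-2$---is a clean elementary replacement for the machinery the paper uses in the non-reduced case, and it has the merit of making Corollary~\ref{CorollaryDimensionFibers} independent of the Kempf--Lascoux--Weyman analysis in Section~\ref{SectionGeometricTechnique}. The paper's route is of course shorter given that Corollary~\ref{CorollaryXIrreducibleDimension} is established anyway for the main theorem. One small terminological quibble: what you invoke is not really ``upper semicontinuity of fiber dimension'' but rather the lower bound $\dim_x f^{-1}(f(x)) \geq \dim X - \dim Y$ for a dominant morphism of irreducible varieties (Chevalley); the claim you draw from it is nonetheless correct.
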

\begin{proof}
 The (set-theoretic) fiber 
over a point  $[Z_2]\in \Hilb^2(\AA^2)$
is $\big\{ [Z_1] \in \Hilb^m(\AA^2) \, : \, Z_1 \supseteq Z_2\big\}$.
It suffices to show that this fiber is the closure of its subset 
$$
\mathcal{L}_m = 
\big\{ [Z_2 \cup Y]  \in \Hilb^m(\AA^2) \, : \, Y \text{ is reduced and disjoint from the support of }Z_2\big\},
$$
since the latter is isomorphic to an open locus in $\Hilb^{m-2}(\AA^2)$
and, hence, it is irreducible of dimension $2m-4$.
This is equivalent to the claim that $Z_1$ is a limit of subschemes of the form $Z_2 \cup Y$, with $Y$ reduced and disjoint from $Z_2$.

By changing  coordinates, every  $Z_2$ is either reduced or of the form $\V(x,y^2)$.
Assume the former.
We can cleave $Z_1$ repeatedly using the deformation of the second paragraph of the proof of Proposition \ref{PropositionCleaving}  for each reduced point  of $Z_2$.
 In this way, we obtain $Z_1$ as a limit of reduced subschemes containing $Z_2$, and the claim holds.
Now, assume   $Z_2 = \V(x,y^2)$.
In order to verify the claim, we may harmlessly replace $Z_1$ with $Z_1 \cup W$, where $W$ is reduced and disjoint from $Z_1$.
Thus, we may assume 
 that $m = {n+1 \choose 2}$ for some $n$. 
By  Corollary \ref{CorollaryXIrreducibleDimension},
the  fiber over $[\V(x,y^2)]$   is irreducible of dimension $2m-4$,
so it must be equal to the closure of  $\mathcal{L}_m$,
and the claim holds.
\end{proof}

\subsection{Kempf-Lascoux-Weyman technique}\label{Subsection KLW technique}

Having established  the resolution of singularities of Theorem \ref{TheoremResolutionSingularities},
we are now in a position to apply the Kempf-Lascoux-Weyman technique 
to prove that $\mfX$ has rational singularities and compute its degree.
The rest of this section  follows the treatment of rank varieties developed in \cite{Weyman} and  \cite[Chapter 8]{WeymanBook}.

The Kempf-Lascoux-Weyman technique \cite[Section 5.1]{WeymanBook} 
is based on the calculation of the cohomology groups of  exterior powers of certain ``syzygy'' bundles
related to   a  desingularization.
Specifically, recall the desingularization 
$\rho:\wtX \to \mfX$ of Theorem \ref{TheoremResolutionSingularities},
and the vector bundles $\mcS$ and $\mcT$ introduced in \eqref{EqVectorBundleS}.
The structure sheaf of $\wtX$ is the symmetric algebra 
$\Sym(\mcS^\vee)$.
By \cite[(3.2)]{Weyman},
there is an exact sequence 
$
 0 \rightarrow \eta \rightarrow \mcE \otimes \mcE^\vee \rightarrow (\mcT)^\vee \rightarrow 0.
$
In turn, by \eqref{EqVectorBundleS}, 
this gives rise to an exact sequence
$$
 0 \rightarrow \eta\oplus \mcR_2 \rightarrow (\mcE \otimes \mcE^\vee) \oplus \mcE \rightarrow \mcS^\vee \rightarrow 0.
$$
The vector bundle $\xi = \eta\oplus \mcR_2$ is called a syzygy bundle of $\mcS^\vee$.
Our goal is to study the cohomology groups $H^q \left(\FF, \, \Wed^p \xi\right)$.
In particular, 
define the free graded $A$-modules
$$
\mathsf{F}_i  = \bigoplus_{j \geq 0} H^j \left(\FF, \, \Wed^{i+j} \xi\right) \otimes A(-i-j).
$$
By  \cite[Theorem 5.1.2, Theorem 5.1.3 (c)]{Weyman}, in order to prove that  $\mfX$ has rational singularities,
it suffices to show that  $\mathsf{F}_i=0$ for all  $i < 0$ and $\mathsf{F}_0 = A$. 
Moreover, 
by   \cite[Theorem 5.1.6 (b)]{WeymanBook},
the degree of $\mfX$ can be computed once we know the numbers $h^q \left(\FF, \, \Wed^p \xi\right)$.

However, computing the groups $H^q \left(\FF, \, \Wed^p \xi\right)$ explicitly is challenging, 
because  $\xi$
is not a direct sum of of tautological bundles on the flag variety.
For this reason, we proceed as in \cite[Section 3]{Weyman} or \cite[Section 3.1]{Lorincz}, 
and replace $\xi$ by an associated graded bundle $\xi'$.
By \cite[(3.3')]{Weyman}, there is a two-step filtration of the bundle $\eta$ with associated graded bundle 
\begin{equation}\label{EqAssociatedGradedBundleEta}
\eta' =\mathrm{gr}(\eta) = \mcE^\vee \otimes \mcR_1\,\, \oplus\,\,  \mcQ_{n-1}^\vee\otimes  \mcR_2/\mcR_1.
\end{equation}
In turn, this  induces a two-step filtration of  $\xi$  with associated graded bundle
\begin{equation}\label{EqAssociatedGradedBundleXi}
\xi' =  \mathrm{gr}(\xi) =\eta' \oplus \mcR_2 = \mcE^\vee \otimes \mcR_1\,\, \oplus\,\,  \mcQ_{n-1}^\vee\otimes  \mcR_2/\mcR_1\,\,  \oplus\,\, \mcR_2.
\end{equation}
The filtration induces  spectral sequences
$
H^q(\FF, \Wed^p \xi') \Rightarrow H^q(\FF, \Wed^p \xi).
$
Thus, if we prove that $H^q \left(\FF, \, \Wed^p \xi'\right)=0$ whenever $p<q$
and  $\bigoplus_{p\geq 0} H^p \left(\FF, \, \Wed^p \xi'\right)=\Bbbk$,
we obtain that $\mfX$ has rational singularities.
Likewise, we can compute the degree of $\mfX$ using  the numbers $h^q \left(\FF, \, \Wed^p \xi'\right)$
instead of $h^q \left(\FF, \, \Wed^p \xi\right)$.
The main advantage of this approach  is that, by \eqref{EqAssociatedGradedBundleXi}, 
$\xi'$ admits a decomposition in terms of the tautological bundles on $\FF$,
and, hence, the same is  true for its exterior powers.
In fact,
taking exterior powers 
of \eqref{EqAssociatedGradedBundleEta} and \eqref{EqAssociatedGradedBundleXi},
and using the fact that $\mcR_1$ and $\mcR_2/\mcR_1$ are line bundles, 
whereas $\mathcal{R}_2$ has rank $2$, we obtain the decompositions
\begin{align} \label{EqExteriorPowerEtaPrime}
\Wed^{r} \eta' & = \bigoplus_{i+j = r} \Wed^i(\mcE^\vee\otimes \mcR_1) \otimes \Wed^j (\mcQ_{n-1}^{\vee}\otimes \mcR_2/\mcR_1) 
	\\ \notag
&= \bigoplus_{i+j = r} \Wed^i \mcE^{\vee}\otimes \text{Sym}^i\mcR_1 \otimes \Wed^j \mcQ_{n-1}^{\vee}\otimes \text{Sym}^j\left(\mcR_2/\mcR_1\right)
\end{align}
and
\begin{align}
\label{EqExteriorPowerXiPrime} 
\Wed^p \xi' & =  \Wed^{p} \eta' \,\, \oplus \,\, \Wed^{p-1} \eta' \otimes \mathcal{R}_2 
\,\, \oplus \,\, \Wed^{p-2} \eta' \otimes \Wed^2 \mathcal{R}_2.
\end{align}
These decompositions allow us to use representation theoretic techniques, such as Bott's theorem, to compute  $H^q \left(\FF, \, \Wed^p \xi'\right)$.
The bulk of this subsection is devoted to performing these calculations, and, at the end, we combine all of this information to prove Theorem \ref{TheoremVarietyMatrices}.

\begin{notation}
We consider the representation theory of the general linear group $\GL_n = \GL(E)$,
see  \cite{FultonHarris,WeymanBook} for background.
An $n$-tuple $\lambda = (\lambda_1, \lambda_2, \ldots, \lambda_n) \in \mathbb{Z}^n$
is called a weight, and, if $\lambda_i \geq \lambda_{i+1}$ for all $i$, then 
$\lambda$ is called a dominant weight.
We often drop $0$s at the end of a dominant weight, 
for example, we  write $(d)$ for $(d, 0, \ldots, 0)$.
We use the exponential notation to abbreviate repeated entries, 
for example, 
we write $(0^{n-1}, -1)$ for $(0, \ldots, 0, -1)$. 
We denote the Schur functor associated to a dominant weight $\lambda $ by $\bS_\lambda$.
Schur functors establish a bijection between irreducible representations of $\GL_n$ and the set of dominant weights.
We follow the convention of \cite{FultonHarris},
so that 
$\bS_{d} = \Sym^d$ is the symmetric power functor
and
$\bS_{1^r} = \Wed^r$ is the exterior power functor.
Since $\bS_\lambda(E^\vee) \cong (\bS_\lambda E)^\vee$, we will typically just write $\bS_\lambda E^\vee$.
We will often use the facts that 
$
\bS_\lambda E \otimes \bS_{1^n} E = \bS_{\lambda + (1^{n})} E$
and
$\bS_\lambda E^\vee = \bS_{-\lambda_n, \ldots, - \lambda_1} E.
$
Finally, we remark that  these constructions are functorial, and can be performed on vector bundles on a projective variety.
\end{notation}

We  now explain the general strategy in computing the groups  $H^q \left(\FF, \, \Wed^p \xi'\right)$. 
Using   \eqref{EqExteriorPowerEtaPrime},  \eqref{EqExteriorPowerXiPrime}, and the exact sequence
$ 0 \to \mcR_1 \to \mcR_2 \to \mcR_2/\mcR_1\to 0$,
we reduce to computing  $H^q(\FF, \omega)$,
where $\omega = \Wed^a \eta' \otimes L$
for certain line bundles $L$ and integers $a$.
We exploit the Leray spectral sequence   
$E_2^{i,j}=H^i(\Gr(1,E), R^j \pi_\star \omega) \Rightarrow H^{i+j}(\FF,\omega)$
induced by the natural projection $\pi: \FF \to \Gr(1,E)$. 
First, 
we compute the higher direct images $ R^j \pi_\star \omega$.
Since $\pi$  identifies  $\FF$ with  the relative Grassmannian  $\Gr(1, \mcQ_{n-1})$,
where $\mcQ_{n-1}$ now denotes the tautological quotient bundle of $\Gr(1,E)$,
we can compute $ R^j \pi_\star \omega$ using Bott's theorem for $\Gr(1, \mcQ_{n-1})$. 
This computation will also show that the Leray spectral sequence abuts at the page $E_2$.
Thus, the calculation of the groups $H^q(\FF, \omega)$
reduces to that of the groups $H^i(\Gr(1,E), R^j \pi_\star \omega)$,
which, again,  can be performed using  Bott's theorem for $\Gr(1,E)$.

\begin{lemma} \label{LemmaR2R1}
We have
$
H^q \big(\FF,\,\Wed^{p-1} \eta'\otimes \mcR_2/\mcR_1\big) = E$
for 
$$(p,q) = (n-1,n-2), (n,n-2), (2n-2,2n-4), (2n-1,2n-4),$$
and the group vanishes otherwise.
\end{lemma}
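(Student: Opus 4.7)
The plan is to apply Bott--Borel--Weil on the flag variety $\FF$, accessed via the projection $\pi : \FF \to \Gr(1, E) = \P^{n-1}$ which realizes $\FF$ as the $\P^{n-2}$-bundle $\Gr(1, \mcQ_{n-1})$ with $\mcR_2/\mcR_1$ as the relative tautological sub-line-bundle. First I would use the decomposition \eqref{EqExteriorPowerEtaPrime} to expand
\[
\Wed^{p-1}\eta' \otimes \mcR_2/\mcR_1 = \bigoplus_{i+k = p-1} \omega_{i,k},\qquad \omega_{i,k} := \Wed^i \mcE^\vee \otimes \mcR_1^i \otimes \Wed^k \mcQ_{n-1}^\vee \otimes (\mcR_2/\mcR_1)^{k+1}.
\]
Since every factor of $\omega_{i,k}$ except $(\mcR_2/\mcR_1)^{k+1}$ is pulled back from $\Gr(1,E)$, the projection formula reduces the computation of $R^j\pi_*\omega_{i,k}$ to that of $R^j\pi_*(\mcR_2/\mcR_1)^{k+1}$.

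Next I would compute these higher direct images by relative Serre duality on the $\P^{n-2}$-bundle $\pi$, whose relative dualizing sheaf is $\omega_\pi \cong (\mcR_2/\mcR_1)^{n-1} \otimes \pi^*\det\mcQ_{n-1}^\vee$. A short calculation shows that $R^j\pi_*(\mcR_2/\mcR_1)^{k+1}$ vanishes unless $j = n-2$ and $k \in \{n-2, n-1\}$, in which case
\[
R^{n-2}\pi_*(\mcR_2/\mcR_1)^{n-1} \cong \det\mcQ_{n-1}, \qquad R^{n-2}\pi_*(\mcR_2/\mcR_1)^{n} \cong \mcQ_{n-1}\otimes\det\mcQ_{n-1}.
\]
Combining with the identities $\Wed^{n-2}\mcQ_{n-1}^\vee \cong \mcQ_{n-1}\otimes\det\mcQ_{n-1}^\vee$ and, on $\P^{n-1}$, $\det\mcQ_{n-1} \cong \mcR_1^\vee$ (from the Euler sequence), both surviving cases collapse to the same formula
\[
R^{n-2}\pi_*\omega_{i,k} \cong \Wed^i E^\vee \otimes \mcR_1^i \otimes \mcQ_{n-1}\quad \text{on } \Gr(1,E),
\]
with $\Wed^i E^\vee$ regarded as a constant $\GL_n$-module. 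Consequently $H^q(\FF, \omega_{i,k}) = \Wed^i E^\vee \otimes H^{q-n+2}(\P^{n-1}, \mcR_1^i \otimes \mcQ_{n-1})$.

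Finally I would evaluate $H^\bullet(\P^{n-1}, \mcR_1^i \otimes \mcQ_{n-1})$ by Bott's theorem on $\P^{n-1}$. The bundle has equivariant weight $(1, 0, \ldots, 0, i)$, so $\alpha + \rho = (n, n-2, n-3, \ldots, 1, i)$; within the admissible range $0 \leq i \leq n$, this tuple has distinct entries precisely when $i \in \{0, n-1\}$, yielding $H^0 \cong E$ and $H^{n-2} \cong \det E$ respectively, with trivial cohomology otherwise. Enumerating the pairs $(i, k) \in \{0, n-1\} \times \{n-2, n-1\}$ subject to $i+k = p-1$ reproduces exactly the four values of $p$ listed. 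In the two $i=0$ cases, the factor $\Wed^0 E^\vee = \kk$ yields $E$; in the two $i=n-1$ cases, the isomorphism $\Wed^{n-1} E^\vee \cong E \otimes \det E^{-1}$ combines with the $\det E$ from Bott to give $E$ once more.

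The main obstacle will be accurate bookkeeping: for $p = n$ and $p = 2n-1$ two distinct summands $\omega_{i,k}$ contribute to the target cohomological degree, and one must verify that the extra summand (whose $i$ falls in the forbidden range $\{1, \ldots, n-2\} \cup \{n\}$, using $n \geq 4$) is annihilated by Bott rather than doubling the answer. Alongside this, care is needed in fixing conventions for the relative tautological bundle and the relative dualizing sheaf so that the Serre duality identifications and the final Bott output are consistent.
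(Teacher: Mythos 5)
Your proposal is correct and follows the same overall skeleton as the paper's argument: decompose $\Wed^{p-1}\eta'\otimes\mcR_2/\mcR_1$ via \eqref{EqExteriorPowerEtaPrime}, push forward along $\pi : \FF \to \Gr(1,E)$ using the projection formula and the Leray spectral sequence, and finish with Bott's theorem on $\Gr(1,E)$. The one genuinely different move is in the middle step, where you compute $R^\bullet\pi_*(\mcR_2/\mcR_1)^{k+1}$ by relative Serre duality on the $\P^{n-2}$-bundle together with the line-bundle identities $\Wed^{n-2}\mcQ_{n-1}^\vee \cong \mcQ_{n-1}\otimes\det\mcQ_{n-1}^\vee$ and $\det\mcQ_{n-1}\cong \mcR_1^\vee$, whereas the paper applies Bott's algorithm on the relative Grassmannian to the weight $(-j-1,0^{n-2})$ and then simplifies the resulting Schur functors with Pieri's rule. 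Both routes land on the same common bundle $\mcR_1^i\otimes\mcQ_{n-1}$ (the paper writes it as $\bS_{-i}\mcR_1^\vee\otimes\bS_{0^{n-2},-1}\mcQ_{n-1}^\vee$, which is the same thing), so the final Bott computation and the bookkeeping of which $(i,k)$ survive for $p\in\{n,2n-1\}$ are identical. Your Serre-duality detour is arguably more elementary since it avoids the relative Bott machinery and Pieri manipulations, at the modest cost of keeping track of $\omega_\pi$ and the Euler sequence; the paper's version treats the fiber computation uniformly with the base computation, which is more in the spirit of the Kempf--Lascoux--Weyman framework used throughout Section~\ref{SectionGeometricTechnique}. (Minor note for your own checking: the paper's displayed $H^{n-1}(\Gr(1,E),\ldots)$ in the $i=n-1$ case is a typo for $H^{n-2}$, consistent with the stated condition $q-n+2=n-2$ and with your computation.)
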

\begin{proof} 
Using the decomposition \eqref{EqExteriorPowerEtaPrime}
and the fact that $\Wed^i \mcE^{\vee}$ is trivial, we have
\begin{align}\label{Eq1LemmaR2R1}
\notag
H^q \left(\FF,\,\Wed^{p-1} \eta'\otimes \mcR_2/\mcR_1\right)
& =
H^q\left( \FF, \bigoplus_{i+j = p-1} \Wed^i \mcE^{\vee}\otimes \text{Sym}^i\mcR_1 \otimes \Wed^j \mcQ_{n-1}^{\vee}\otimes \text{Sym}^j\left(\mcR_2/\mcR_1\right)
\otimes \mcR_2/\mcR_1
\right)
\\
& =
\bigoplus_{i+j=p-1} \Wed^i E^\vee 
\otimes 
H^q\Big(\FF,\, \bS_{i} \mcR_1 \otimes \bS_{1^j} \mcQ_{n-1}^{\vee} \otimes \bS_{j+1} \left(\mcR_2/\mcR_1\right)\Big).
\end{align}
Our goal is to compute
the cohomology groups in the previous line for  $0 \leq i \leq n, 0 \leq j \leq n-1$.

As outlined above, we first consider the higher direct images by the  projection $\pi: \FF \to \Gr(1,E)$.
By a standard abuse of notation, 
we use the same symbols  $\mcR_1$ and $ \mcQ_{n-1}$ to denote the tautological sub-bundle and quotient bundle on $\Gr(1,E)$.
Then, by functoriality of Schur functors, we have
  $\pi^{\star}(\bS_{\lambda}\mcR_1) =  \bS_{\lambda} \mcR_1$ and $\pi^{\star}(\bS_{\lambda} \mcQ_{n-1}^{\vee}) = \bS_{\lambda} \mcQ_{n-1}^{\vee}$ for any weight $\lambda$. 
The projection formula \cite[Lemma 20.50.2]{stacks} yields
\begin{equation}\label{Eq2LemmaR2R1}
R^{h} \pi_{\star}\left( \bS_{i} \mcR_1 \otimes \bS_{1^j} \mcQ_{n-1}^{\vee} \otimes \bS_{j+1} \left(\mcR_2/\mcR_1\right) \right) = 
\bS_{i} \mcR_1 \otimes \bS_{1^j} \mcQ_{n-1}^{\vee} \otimes R^{h} \pi_{\star} \left(\bS_{j+1} \left(\mcR_2/\mcR_1\right) \right).
\end{equation}
Note that $\text{Flag}(1,2;E) = \Gr(1,\mcQ_{n-1})$ is a relative Grassmannian over $\Gr(1,E)$, 
with tautological sub-bundle isomorphic to $\mcR_2/\mcR_1$ and quotient bundle isomorphic to $\mcQ_{n-2}$. 
Thus, the term
$$
R^{h} \pi_{\star} (\bS_{j+1} (\mcR_2/\mcR_1))
=
R^h \pi_{\star} (\bS_{-j-1} (\mcR_2/\mcR_1)^{\vee} \otimes \bS_{0^{n-2}}\mcQ_{n-2}^{\vee} ) $$
can be calculated using  Bott's theorem \cite[Corollary 4.1.9]{WeymanBook}.
We apply Bott's algorithm 
\cite[Remark 4.1.5]{WeymanBook} to the weight
$(-j-1,0^{n-2})$,
 and find that the term is non-zero precisely 
when $h= n-2$ and $j \geq n-2$. 
Combining this with our earlier bound $j\leq n-1$, we are left with $h = n-2$ and $j \in \{n-2,n-1\}$, where we obtain
$$
R^{n-2} \pi_{\star} \left(\bS_{j+1} \left(\mcR_2/\mcR_1\right) \right) = \bS_{(-1)^{n-2},n-3-j} \mcQ_{n-1}^{\vee}.
$$
Combining with \eqref{Eq2LemmaR2R1},
for both  $j =n-2,n-1$ we obtain
\begin{align*}
R^{n-2} \pi_{\star}\left( \bS_{i} \mcR_1 \otimes \bS_{1^j} \mcQ_{n-1}^{\vee} \otimes \bS_{j+1} \left(\mcR_2/\mcR_1\right) \right)
& = \bS_{i} \mcR_1 \otimes \bS_{1^j} \mcQ_{n-1}^{\vee} \otimes \bS_{(-1)^{n-2},n-3-j} \mcQ_{n-1}^{\vee} \\
& = \bS_{i} \mcR_1 \otimes \bS_{1^j} \mcQ_{n-1}^{\vee} \otimes \bS_{1^{n-2},n-1-j} \mcQ_{n-1}^{\vee} \otimes \bS_{(-2)^{n-1}} \mcQ_{n-1}^{\vee}\\
& = \bS_{i} \mcR_1 \otimes \bS_{2^{n-2},1} \mcQ_{n-1}^{\vee} \otimes \bS_{(-2)^{n-1}} \mcQ_{n-1}^{\vee}\\
& = \bS_{-i} \mcR_1^{\vee} \otimes \bS_{0^{n-2},-1} \mcQ_{n-1}^{\vee},
\end{align*}
where 
$
\bS_{1^j} \mcQ_{n-1}^{\vee} \otimes \bS_{1^{n-2},n-1-j} \mcQ_{n-1}^{\vee}  = \bS_{2^{n-2},1} \mcQ_{n-1}^{\vee} 
$ 
by Pieri's rule \cite[Corollary 2.3.5]{WeymanBook}.
Since 
$R^h\pi_{\star}$ vanishes for all $h \ne n-2$, 
the Leray spectral sequence  \cite[Lemma 20.13.4]{stacks} yields
\begin{equation} \label{Eq3LemmaR2R1}
H^{q}\big(\FF,\, \bS_{i} \mcR_1 \otimes \bS_{1^j} \mcQ_{n-1}^{\vee} \otimes \bS_{j+1} \left(\mcR_2/\mcR_1\right)\big)
= 
H^{q - n+2}\big(\Gr(1,E),\,\bS_{-i} \mcR_1^{\vee} \otimes \bS_{0^{n-2},-1} \mcQ_{n-1}^{\vee}\big)
\end{equation}
for $j \in \{n-2,n-1\}$, and
$H^{q}\big(\FF,\, \bS_{i} \mcR_1 \otimes \bS_{1^j} \mcQ_{n-1}^{\vee} \otimes \bS_{j+1} \left(\mcR_2/\mcR_1\right)\big)
= 0$ otherwise.
We use Bott's theorem again.
Applying Bott's algorithm to the weight $(-i, 0^{n-2},-1)$,
we find  that  \eqref{Eq3LemmaR2R1} vanishes unless $i=0, q- n+2 = 0$ or $i=n-1, q-n+2 = n-2$,
 where we obtain
\begin{align*}
H^{0}\big(\Gr(1,E),\bS_{0^{n-2},-1} \mcQ_{n-1}^{\vee}\big)
&= \bS_{0^{n-1},-1}E^\vee = E\\
\intertext{and}
H^{n-1}\big(\Gr(1,E),\bS_{-n+1} \mcR_1^{\vee} \otimes \bS_{0^{n-2},-1} \mcQ_{n-1}^{\vee}\big)&= \bS_{(-1)^{n}}E^\vee = \Wed^{n}E.
\end{align*}
To summarize, we have
$$
H^{q}(\FF,\, \bS_{i} \mcR_1 \otimes \bS_{1^j} \mcQ_{n-1}^{\vee} \otimes \bS_{j+1} \left(\mcR_2/\mcR_1\right)) = 
\begin{cases}
E& \text{ if }q = n-2, i =0, j = n-2,n-1,\\
\Wed^{n}E& \text{ if }  q = 2n-4, i = n-1, j = n-2,n-1,\\
0 & \text{ else. }\\
\end{cases}
$$
Now, the conclusion follows from \eqref{Eq1LemmaR2R1} tensoring with $\Wed^iE^\vee$.
\end{proof}

\begin{lemma} \label{LemmaR1}
We have
$
H^q \big(\FF,\,\Wed^{p-1} \eta' \otimes \mcR_1\big) = E$ for 
$$(p,q) = (n,n-1),(n+1,n-1),(2n-1,2n-3), (2n,2n-3),$$
and the group vanishes otherwise.
\end{lemma}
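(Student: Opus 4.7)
The plan is to prove Lemma \ref{LemmaR1} by following the same scheme as the proof of Lemma \ref{LemmaR2R1}, with the only substantive change being the bookkeeping of representations. First I would decompose $\Wed^{p-1}\eta'$ via \eqref{EqExteriorPowerEtaPrime}, tensor with $\mcR_1$ (using that $\bS_i\mcR_1 \otimes \mcR_1 = \bS_{i+1}\mcR_1$ since $\mcR_1$ is a line bundle), and factor out the trivial bundle $\Wed^i\mcE^\vee = \Wed^i E^\vee \otimes \mcO_\FF$ from each cohomology group. This reduces the problem to computing
\[
H^q\big(\FF,\,\bS_{i+1}\mcR_1 \otimes \bS_{1^j}\mcQ_{n-1}^\vee \otimes \bS_j(\mcR_2/\mcR_1)\big)
\]
for $0 \leq i \leq n$ and $0 \leq j \leq n-1$.

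Next I would push down along $\pi : \FF = \Gr(1,\mcQ_{n-1}) \to \Gr(1,E)$. By the projection formula, the bundles $\bS_{i+1}\mcR_1$ and $\bS_{1^j}\mcQ_{n-1}^\vee$ (which are pulled back from $\Gr(1,E)$) come out of $R^h\pi_\star$, leaving $R^h\pi_\star \bS_j(\mcR_2/\mcR_1)$ to be computed on the relative projective space $\P^{n-2}$. Applying Bott's algorithm to the weight $(-j, 0^{n-2})$, this direct image is nonzero in exactly two situations: $j=0$ with $h=0$ (yielding the trivial bundle), and $j=n-1$ with $h=n-2$ (yielding $\bS_{(-1)^{n-1}}\mcQ_{n-1}^\vee$). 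In the latter case, it is convenient to simplify $\bS_{1^{n-1}}\mcQ_{n-1}^\vee \otimes \bS_{(-1)^{n-1}}\mcQ_{n-1}^\vee \cong \mcO_{\Gr(1,E)}$, so that after pushforward one is left in both cases with $\bS_{i+1}\mcR_1$ on $\Gr(1,E) = \P^{n-1}$. Since only one value of $h$ contributes, the Leray spectral sequence degenerates.

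Then I would apply Bott's algorithm on $\P^{n-1}$ to the weight $(-(i+1), 0^{n-1})$. The vector $(n-i-2, n-2, n-3, \ldots, 0)$ has distinct entries only when $i = n-1$ or $i = n$, giving cohomology in degree $n-1$. For $i = n-1$ one obtains $\det E$, and for $i = n$ one obtains $\bS_{(-1)^{n-1}, -2}E^\vee = \bS_{2, 1^{n-1}}E$. Tensoring with the factored-out $\Wed^i E^\vee$ and using the natural isomorphisms $\Wed^{n-1}E^\vee \otimes \det E \cong E$ and $\det E^\vee \otimes \bS_{2,1^{n-1}}E \cong E$ (the latter because $\bS_{2,1^{n-1}}E = E \otimes \det E$), each case contributes exactly $E$.

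Combining with the values $(p-1, q) = (i+j, h + (n-1))$, the four surviving combinations $(i,j,h) = (n-1,0,0), (n,0,0), (n-1,n-1,n-2), (n,n-1,n-2)$ give precisely the pairs $(p,q) = (n, n-1), (n+1, n-1), (2n-1, 2n-3), (2n, 2n-3)$ listed in the lemma, each producing a single copy of $E$; all other $(i,j)$ give vanishing cohomology. The only real obstacle is keeping the Schur functor manipulations (especially the identifications for $i=n$) straight, but there is no conceptual difficulty beyond what already appears in Lemma \ref{LemmaR2R1}.
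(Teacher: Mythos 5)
Your proposal is correct and follows essentially the same route as the paper's own proof: decompose $\Wed^{p-1}\eta' \otimes \mcR_1$ via \eqref{EqExteriorPowerEtaPrime}, push forward along $\pi$ using the projection formula and Bott's algorithm on the weight $(-j,0^{n-2})$ to isolate $j\in\{0,n-1\}$, then apply Bott's theorem on $\Gr(1,E)$ to the weight $(-i-1,0^{n-1})$ to isolate $i\in\{n-1,n\}$ and $q-h=n-1$, and finish by tensoring with $\Wed^i E^\vee$. The Schur functor bookkeeping and the identification of the four surviving pairs $(p,q)$ all match the paper.
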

\begin{proof} 
We follow the same steps as Lemma \ref{LemmaR2R1}, and  keep the same notation.
We have
\begin{equation}\label{Eq1LemmaR1}
H^q \left(\FF,\,\Wed^{p-1} \eta' \otimes \mcR_1\right) =
\bigoplus_{i+j = p-1} \Wed^i E^\vee \otimes 
H^q\big(\FF,\, \bS_{i+1} \mcR_1 \otimes \bS_{1^j} \mcQ_{n-1}^{\vee} \otimes \bS_{j} \left(\mcR_2/\mcR_1\right)\big)
\end{equation}
and the relevant indices are $0 \leq i \leq n$ and $0 \leq j \leq n-1$.
We have
$$
R^h\pi_\star\big(\bS_{i+1} \mcR_1 \otimes \bS_{1^j} \mcQ_{n-1}^{\vee} \otimes \bS_{j} \left(\mcR_2/\mcR_1\right)\big) =
\bS_{i+1} \mcR_1 \otimes \bS_{1^j} \mcQ_{n-1}^{\vee} \otimes R^h\pi_\star\big(\bS_{j} \left(\mcR_2/\mcR_1\right)\big).
$$
We calculate the  term
$
R^h\pi_\star\big(\bS_{j} (\mcR_2/\mcR_1)\big) 
= 
R^h\pi_\star\big(\bS_{-j} (\mcR_2/\mcR_1)^\vee \otimes \bS_{0^{n-2}} \mcQ_{n-2}^\vee\big) 
$
applying Bott's algorithm to the weight $(-j,0^{n-2})$,
and find  non-zero terms  when $j =0, h=0$ and $j = n-1$, $h = n-2$,
specifically
$
R^0\pi_\star\big(\bS_{0} (\mcR_2/\mcR_1)\big) = \mcO_{\Gr(1,E)}
$
and
$R^{n-2}\pi_\star\big(\bS_{n-1} \big(\mcR_2/\mcR_1\big)\big) = \bS_{(-1)^{n-1}}\mcQ_{n-1}^\vee,
$
so the two non-vanishing higher direct images are 
$
R^0\pi_\star\big(\bS_{i+1} \mcR_1 \big) =
\bS_{i+1} \mcR_1$ 
and
\begin{align*}
R^{n-2}\pi_\star\big(\bS_{i+1} \mcR_1 \otimes \bS_{1^{n-1}} \mcQ_{n-1}^{\vee} \otimes \bS_{n-1} \left(\mcR_2/\mcR_1\right)\big) &=
\bS_{i+1} \mcR_1 \otimes \bS_{1^{n-1}} \mcQ_{n-1}^{\vee} \otimes \bS_{(-1)^{n-1}}\mcQ_{n-1}^\vee=
\bS_{i+1} \mcR_1.
\end{align*}
Since, for each $j$, the higher direct images vanish for all but one $h$,
we  apply the Leray spectral sequence and Bott's theorem to find
\begin{align*}
H^{q}\big(\FF,\, \bS_{i+1} \mcR_1\big) 
&=
H^{q}\big(\Gr(1,E),\,  \bS_{-i-1}   \mcR_1^{\vee} \otimes \bS_{0^{n-1}} \mcQ_{n-1}^{\vee} \big) \\
& = 
\begin{cases}
\bS_{(-1)^{n}} E^{\vee} & \text{ if } i=q = n-1, \\
\bS_{(-1)^{n-1},-2} E^{\vee} & \text{ if } i=n,q=n-1, \\
0 & \text{ else}
\end{cases}
\end{align*}
for $j=0,$ and
\begin{align*}
H^{q}\big(\FF,\, \bS_{i+1} \mcR_1 \otimes \bS_{1^{n-1}} \mcQ_{n-1}^{\vee} \otimes \bS_{n-1} \left(\mcR_2/\mcR_1\right)\big) &
= 
H^{q - n+2}\big(\Gr(1,E),\,  \bS_{-i-1}   \mcR_1^{\vee} \otimes  \bS_{0^{n-1}} \mcQ_{n-1}^{\vee}\big) \\
& = 
\begin{cases}
\bS_{(-1)^{n}} E^{\vee} & \text{ if } i= n-1,q=2n-3, \\
\bS_{(-1)^{n-1},-2} E^{\vee} & \text{ if }  i= n,q=2n-3, \\
0 & \text{ else}
\end{cases}
\end{align*}
for $j=n-1$,
and the conclusion follows from \eqref{Eq1LemmaR1}  tensoring with $\Wed^iE^\vee$.
\end{proof}

\begin{prop} \label{PropositionEtaPrime} 
We have $H^q \big(\FF,\,\Wed^{p} \eta'\big)=\kk$ for 
$$(p,q) = (0,0), (n-1,n-2), (n,n-1), (2n-1,2n-3),$$
and the group vanishes otherwise.
\end{prop}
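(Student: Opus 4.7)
The plan is to follow the same template used in Lemmas \ref{LemmaR2R1} and \ref{LemmaR1}, since $\Wed^p\eta'$ differs from $\Wed^{p-1}\eta'\otimes \mcR_2/\mcR_1$ and $\Wed^{p-1}\eta'\otimes \mcR_1$ only by the absence of the extra tautological factor. First I would apply the decomposition \eqref{EqExteriorPowerEtaPrime} and pull the trivial factor $\Wed^i\mcE^\vee = \Wed^i E^\vee\otimes \mcO_\FF$ out of the cohomology to obtain
\[
H^q(\FF,\Wed^p\eta') \;=\; \bigoplus_{\substack{i+j=p\\ 0\le i\le n,\,0\le j\le n-1}} \Wed^i E^\vee \otimes H^q\!\left(\FF,\,\bS_i\mcR_1\otimes \bS_{1^j}\mcQ_{n-1}^\vee\otimes \bS_j(\mcR_2/\mcR_1)\right).
\]
Then I would analyze each summand via the Leray spectral sequence of the natural projection $\pi:\FF = \Gr(1,\mcQ_{n-1})\to \Gr(1,E)$, exactly as in the proofs of the preceding lemmas.

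For the higher direct images, the projection formula gives
\[
R^h\pi_\star\!\left(\bS_i\mcR_1\otimes \bS_{1^j}\mcQ_{n-1}^\vee\otimes \bS_j(\mcR_2/\mcR_1)\right)
= \bS_i\mcR_1\otimes \bS_{1^j}\mcQ_{n-1}^\vee\otimes R^h\pi_\star\bS_j(\mcR_2/\mcR_1),
\]
and the rightmost factor has already been computed in the proofs of Lemmas \ref{LemmaR2R1} and \ref{LemmaR1}: applying Bott to the relative weight $(-j,0^{n-2})$, it is non-zero precisely for $(j,h) = (0,0)$, where it equals $\mcO_{\Gr(1,E)}$, and for $(j,h)=(n-1,n-2)$, where it equals $\bS_{(-1)^{n-1}}\mcQ_{n-1}^\vee$. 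In the second case, Pieri's rule (or the fact that $\bS_{(1^{n-1})}$ is the determinant of a rank $n-1$ bundle) gives $\bS_{1^{n-1}}\mcQ_{n-1}^\vee\otimes \bS_{(-1)^{n-1}}\mcQ_{n-1}^\vee = \mcO_{\Gr(1,E)}$. Thus in both surviving cases the higher direct image simplifies to $\bS_i\mcR_1$ on $\Gr(1,E)$, and since $R^h\pi_\star$ vanishes for all other $h$, the Leray spectral sequence degenerates and reduces the problem to computing $H^{q}(\Gr(1,E),\bS_i\mcR_1)$ (for $j=0$) or $H^{q-n+2}(\Gr(1,E),\bS_i\mcR_1)$ (for $j=n-1$).

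Next I would apply Bott's theorem on $\Gr(1,E)$ to the weight $(-i,0^{n-1})$: this is non-zero only for $i=0$, yielding $H^0 = \kk$, and for $i=n$, where sorting requires $n-1$ transpositions and yields $H^{n-1}=\bS_{(-1)^n}E^\vee = \Wed^n E$. Combining with the constraint $i+j=p$ and $j\in\{0,n-1\}$, the four surviving pairs are
\[
(i,j,p,q) \in \{(0,0,0,0),\;(0,n-1,n-1,n-2),\;(n,0,n,n-1),\;(n,n-1,2n-1,2n-3)\}.
\]
In the first and second cases the external factor $\Wed^0 E^\vee$ is $\kk$; in the third and fourth the factor $\Wed^n E^\vee\otimes \Wed^n E$ is canonically $\kk$. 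This yields exactly the four advertised one-dimensional cohomologies and vanishing elsewhere.

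I do not expect a serious obstacle: the calculation is a direct transcription of the bookkeeping carried out in Lemmas \ref{LemmaR2R1} and \ref{LemmaR1}, with the only subtlety being the cancellation $\bS_{1^{n-1}}\mcQ_{n-1}^\vee\otimes \bS_{(-1)^{n-1}}\mcQ_{n-1}^\vee \cong \mcO$ that makes the $(j=n-1)$--contribution reduce cleanly to $\bS_i\mcR_1$ rather than to a more complicated Schur bundle on $\Gr(1,E)$.
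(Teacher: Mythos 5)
Your proof is correct and follows exactly the approach the paper has in mind: the paper's entire proof of this proposition is the one-line remark ``The proof is essentially identical to that of Lemma \ref{LemmaR1},'' and your argument is precisely the transcription of that template to the bundle $\Wed^p\eta'$, using the decomposition \eqref{EqExteriorPowerEtaPrime}, the Leray spectral sequence for $\pi:\FF\to\Gr(1,E)$, and Bott's theorem twice. The bookkeeping (surviving pairs $(i,j)\in\{(0,0),(0,n-1),(n,0),(n,n-1)\}$, the cancellation $\bS_{1^{n-1}}\mcQ_{n-1}^\vee\otimes\bS_{(-1)^{n-1}}\mcQ_{n-1}^\vee\cong\mcO$, and the identification $\Wed^nE^\vee\otimes\Wed^nE\cong\kk$) all checks out.
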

\begin{proof}
The proof is essentially identical to that of Lemma \ref{LemmaR1}.
\end{proof}

\begin{prop} \label{PropositionR2}
We have
$
H^q \big(\FF,\,\Wed^{p-1} \eta' \otimes \mcR_2\big) =  E$
 for $$ (p,q) = (n-1,n-2), (n+1,n-1), (2n-2,2n-4),   (2n,2n-3),$$
the following equations for
 $ (p,q)= (n,n-2), (n,n-1), (2n-1,2n-4), (2n-1,2n-3) $
\begin{align*}
\dim H^{n-2}\left(\FF,\,\Wed^{n-1} \eta' \otimes \mcR_2\right) 
& = 
 \dim H^{n-1}\left(\FF,\,\Wed^{n-1} \eta' \otimes \mcR_2\right)
\\
\intertext{and}
\dim H^{2n-4}\left(\FF,\,\Wed^{2n-2} \eta' \otimes \mcR_2\right)
& = 
\dim H^{2n-3}\left(\FF,\,\Wed^{2n-2} \eta' \otimes \mcR_2\right),
\end{align*}
and the group vanishes otherwise.
\end{prop}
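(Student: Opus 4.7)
The plan is to tensor the tautological short exact sequence
$$0 \to \mcR_1 \to \mcR_2 \to \mcR_2/\mcR_1 \to 0$$
with the locally free sheaf $\Wed^{p-1}\eta'$ (so exactness is preserved) and extract the corresponding long exact sequence in cohomology
$$\cdots \to H^{q-1}\big(\FF, \Wed^{p-1}\eta' \otimes \mcR_2/\mcR_1\big) \to H^q\big(\FF, \Wed^{p-1}\eta' \otimes \mcR_1\big) \to H^q\big(\FF, \Wed^{p-1}\eta' \otimes \mcR_2\big) \to H^q\big(\FF, \Wed^{p-1}\eta' \otimes \mcR_2/\mcR_1\big) \to \cdots$$
I would then read off each cohomology group of $\Wed^{p-1}\eta' \otimes \mcR_2$ from the explicit tables already produced in Lemmas \ref{LemmaR1} and \ref{LemmaR2R1}, which list every non-vanishing contribution.

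For any pair $(p,q)$ not listed in the statement, one checks directly from those two lemmas that the flanking groups with coefficients in $\mcR_1$ and $\mcR_2/\mcR_1$ vanish in the relevant range, forcing $H^q(\FF, \Wed^{p-1}\eta' \otimes \mcR_2) = 0$. In the cases $p \in \{n-1, 2n-2\}$ only the $\mcR_2/\mcR_1$ term contributes, and the sequence collapses to an isomorphism yielding $E$; symmetrically, for $p \in \{n+1, 2n\}$ only the $\mcR_1$ term contributes and one again obtains $E$.

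The only cases requiring care are $p = n$ and $p = 2n-1$, where the non-zero cohomologies of $\mcR_1$ and $\mcR_2/\mcR_1$ sit in two adjacent degrees. For $p = n$, once I verify that the degrees $q = n-3$ and $q = n$ contribute nothing on either side, the long exact sequence reduces to a four-term exact sequence
$$0 \to H^{n-2}\big(\FF, \Wed^{n-1}\eta' \otimes \mcR_2\big) \to E \xrightarrow{\,\delta\,} E \to H^{n-1}\big(\FF, \Wed^{n-1}\eta' \otimes \mcR_2\big) \to 0,$$
and an analogous four-term sequence appears for $p = 2n-1$ after checking vanishing at $q = 2n-5$ and $q = 2n-2$. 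The equality of dimensions claimed in the proposition then drops out of the alternating-sum identity along this sequence, independently of the rank of the connecting map $\delta$.

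There is no genuine obstacle here: the main task is bookkeeping, namely confirming that the flanking terms vanish in each of the two critical cases so that the long exact sequence truncates as above. Because Lemmas \ref{LemmaR1} and \ref{LemmaR2R1} enumerate every non-zero group in closed form, this reduces to a finite check. It is worth noting that the proof deliberately avoids identifying $\delta$ explicitly, which would require a more delicate analysis of the connecting map; the proposition only asserts equalities of dimensions, and these are insensitive to $\mathrm{rank}(\delta)$.
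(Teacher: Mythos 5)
Your proposal is correct and matches the paper's own proof essentially verbatim: both tensor the short exact sequence $0 \to \mcR_1 \to \mcR_2 \to \mcR_2/\mcR_1 \to 0$ with $\Wed^{p-1}\eta'$, pass to the long exact cohomology sequence, read off the non-vanishing terms from Lemmas~\ref{LemmaR1} and \ref{LemmaR2R1}, and observe that for $p \neq n, 2n-1$ (using $n \geq 4$ so $2n-2 > n+1$) at most one flanking group is non-zero, while for $p = n, 2n-1$ the sequence truncates to a four-term exact sequence $0 \to H^{q}(\mcR_2) \to E \to E \to H^{q+1}(\mcR_2) \to 0$, forcing equal dimensions by alternating sums. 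The only tiny imprecision is that the degrees you name for the flanking vanishing checks ($q = n-3, n$) are not the critical ones — what truncates the sequence is $H^{n-2}(\FF,\Wed^{n-1}\eta'\otimes\mcR_1) = 0$ and $H^{n-1}(\FF,\Wed^{n-1}\eta'\otimes\mcR_2/\mcR_1) = 0$ — but this is a bookkeeping slip, not a gap.
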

\begin{proof} 
Tensoring the  exact sequence $ 0 \to \mcR_1 \to \mcR_2 \to \mcR_2/\mcR_1\to 0$
with  $\Wed^{p-1} \eta'$ and taking cohomology,
we obtain exact sequences for each $p,q$

\begin{tikzpicture}[descr/.style={fill=white,inner sep=1.5pt}]
        \matrix (m) [
            matrix of math nodes,
            row sep=2em,
            column sep=2.5em,
            text height=1.5ex, text depth=0.25ex
        ]
        {H^{q}\left(\FF,\,\Wed^{p-1} \eta' \otimes \mcR_1\right) &
		H^q\left(\FF,\, \Wed^{p-1} \eta' \otimes \mcR_2\right) & 
		H^q\left(\FF,\,\Wed^{p-1} \eta' \otimes \left(\mcR_2/\mcR_1\right)\right) \\
          H^{q+1}\left(\FF,\,\Wed^{p-1} \eta' \otimes \mcR_1\right) & 
           	H^{q+1}\left(\FF,\, \Wed^{p-1} \eta' \otimes \mcR_2\right) & 
		H^{q+1}\left(\FF,\,\Wed^{p-1} \eta' \otimes \left(\mcR_2/\mcR_1\right)\right). \\
        };

        \path[overlay,->, font=\scriptsize,>=latex]
        (m-1-1) edge (m-1-2)
        (m-1-2) edge (m-1-3)
        (m-1-3) edge[out=355,in=175]  (m-2-1)
        (m-2-1) edge (m-2-2)
        (m-2-2) edge (m-2-3);
\end{tikzpicture}

We use Lemmas \ref{LemmaR2R1} and \ref{LemmaR1}.
If  $p\ne n, 2n-1$, since $n\geq 4$ and hence $2n-2>n+1$,  there is at most one non-vanishing term in the two outer columns, and the result follows immediately.
For $p= n, 2n-1$, the non-trivial sequences are 
\begin{align*}
0 \longrightarrow H^{n-2}\left(\FF,\,\Wed^{n-1} \eta' \otimes \mcR_2\right) \longrightarrow 
	E \longrightarrow
	E \longrightarrow 
	H^{n-1}\left(\FF,\,\Wed^{n-1} \eta' \otimes \mcR_2\right) \longrightarrow 0
\end{align*}
and
\begin{align*}
0 \longrightarrow H^{2n-4}\left(\FF,\,\Wed^{2n-2} \eta' \otimes \mcR_2\right) \longrightarrow 
	E \longrightarrow
E \longrightarrow 
	H^{2n-3}\left(\FF,\,\Wed^{2n-2} \eta' \otimes \mcR_2\right) \longrightarrow 0.
\end{align*}
This completes the proof.
\end{proof}

\begin{lemma} \label{LemmaR1R2R1}
We have
$H^q \big(\FF,\,\Wed^{p-2} \eta' \otimes \mcR_1 \otimes (\mcR_2/\mcR_1)\big) = \Wed^{2} E$
for  
$$
(p,q) = (2n-2,2n-4), (2n-1,2n-4),(2n,2n-3),(2n+1,2n-3)
$$
and the group vanishes otherwise.
\end{lemma}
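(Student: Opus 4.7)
The plan is to mirror the strategy already used in Lemmas \ref{LemmaR2R1} and \ref{LemmaR1}: decompose the exterior power $\Wed^{p-2}\eta'$ by \eqref{EqExteriorPowerEtaPrime}, push forward along $\pi: \FF \to \Gr(1,E)$ using Bott on the relative Grassmannian $\Gr(1,\mcQ_{n-1})$, and then apply Bott on $\Gr(1,E)$ to conclude.

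More concretely, using that $\mcR_1$ and $\mcR_2/\mcR_1$ are line bundles, the summands of $\Wed^{p-2}\eta'\otimes \mcR_1\otimes(\mcR_2/\mcR_1)$ take the shape
\[
\Wed^i E^\vee \otimes \bS_{i+1}\mcR_1 \otimes \bS_{1^j}\mcQ_{n-1}^\vee \otimes \bS_{j+1}(\mcR_2/\mcR_1), \qquad i+j=p-2,
\]
with $0 \leq i \leq n$ and $0 \leq j \leq n-1$. The first two factors are pulled back from $\Gr(1,E)$, so the projection formula reduces the computation to $R^h\pi_\star \bS_{j+1}(\mcR_2/\mcR_1)$. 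Applying Bott's algorithm to the weight $(-j-1,0^{n-2})$ on the fiber $\Gr(1,\mathbb{k}^{n-1})$, as in the proof of Lemma \ref{LemmaR2R1}, one finds that the only non-vanishing direct image occurs for $h=n-2$ and $j\in\{n-2,n-1\}$, with
\[
R^{n-2}\pi_\star \bS_{j+1}(\mcR_2/\mcR_1) = \bS_{(-1)^{n-2},\, n-3-j}\mcQ_{n-1}^\vee.
\]
Absorbing the factor $\bS_{1^j}\mcQ_{n-1}^\vee$ via $\bS_{1^{n-2}}\otimes \bS_{(-1)^{n-1}} = \bS_{0^{n-2},-1}$ and $\bS_{1^{n-1}}\otimes \bS_{(-1)^{n-2},-2} = \bS_{0^{n-2},-1}$, one sees that both values of $j$ yield the same pushforward $\bS_{i+1}\mcR_1\otimes \bS_{0^{n-2},-1}\mcQ_{n-1}^\vee$. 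Since this direct image is supported in a single cohomological degree, the Leray spectral sequence degenerates and
\[
H^q\bigl(\FF,\,\bS_{i+1}\mcR_1\otimes \bS_{1^j}\mcQ_{n-1}^\vee\otimes \bS_{j+1}(\mcR_2/\mcR_1)\bigr)= H^{q-n+2}\bigl(\Gr(1,E),\, \bS_{-i-1}\mcR_1^\vee\otimes \bS_{0^{n-2},-1}\mcQ_{n-1}^\vee\bigr).
\]

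Finally, I apply Bott's theorem on $\Gr(1,E)$ to the weight $(-i-1,0^{n-2},-1)$. Adding $\rho=(n-1,n-2,\dots,0)$ gives $(n-2-i,\,n-2,n-3,\dots,1,-1)$, which has distinct entries precisely when $i\in\{n-2,n\}$. For $i=n-2$, the sorted sequence is $(n-2,\dots,1,0,-1)$, requiring $n-2$ swaps, so $q=2n-4$ and the resulting representation is $\bS_{(-1)^n}E^\vee=\det E$; tensoring with the outer factor $\Wed^{n-2}E^\vee$ and using $\Wed^{n-2}E^\vee\otimes \det E=\Wed^2 E$, each of $j\in\{n-2,n-1\}$ contributes a copy of $\Wed^2 E$ in bidegrees $(p,q)=(2n-2,2n-4)$ and $(2n-1,2n-4)$ respectively. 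For $i=n$, the sorted sequence is $(n-2,\dots,1,-1,-2)$, requiring $n-1$ swaps, so $q=2n-3$ and the representation is $\bS_{(-1)^{n-2},(-2)^2}E^\vee\cong \Wed^2 E\otimes \det E$; tensoring with $\Wed^n E^\vee=\det E^{-1}$ yields $\Wed^2 E$ in bidegrees $(2n,2n-3)$ and $(2n+1,2n-3)$.

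The bookkeeping with the Schur functor identities (in particular checking that in all four cases the outer tensor with $\Wed^i E^\vee$ collapses cleanly to $\Wed^2 E$) and tracking the correct parity of swaps in Bott's algorithm is the most delicate part; once the two Bott computations are set up correctly the cancellations are forced by the hypothesis $n\geq 4$, which ensures that the entries $-1$ and $-2$ do not collide with the intermediate block $\{n-3,\dots,0\}$ in either the fiber or base calculation.
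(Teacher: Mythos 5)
Your proof is correct and follows essentially the same route as the paper: decompose via \eqref{EqExteriorPowerEtaPrime}, push forward along $\pi:\FF\to\Gr(1,E)$ using Bott on the relative Grassmannian (finding the lone surviving $R^{n-2}\pi_\star$ at $j\in\{n-2,n-1\}$), then apply Bott on $\Gr(1,E)$ to the weight $(-i-1,0^{n-2},-1)$ and tensor with $\Wed^i E^\vee$. The paper's own proof is an even terser sketch deferring to Lemmas \ref{LemmaR2R1} and \ref{LemmaR1}; your version spells out the Bott bookkeeping (distinctness for $i\in\{n-2,n\}$, the swap counts $n-2$ and $n-1$, and the final Schur-functor simplifications to $\Wed^2 E$) more explicitly, and all those computations check out.
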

\begin{proof} 
Again, the proof follows the exact same outline as Lemmas \ref{LemmaR2R1}, \ref{LemmaR1}.
In brief, one finds
\begin{align*} 
H^q \left(\FF,\,\Wed^{p-2} \eta' \otimes \mcR_1 \otimes \left(\mcR_2/\mcR_1\right)\right) &=
\bigoplus_{i+j=p-2} \Wed^i E^\vee \otimes
H^q\big(\FF,\, \bS_{i+1} \mcR_1 \otimes \bS_{1^j} \mcQ_{n-1}^{\vee} \otimes \bS_{j+1} \left(\mcR_2/\mcR_1\right)\big),
\\
R^h\pi_\star \big(\bS_{i+1} \mcR_1 \otimes \bS_{1^j} \mcQ_{n-1}^{\vee} \otimes \bS_{j+1} \left(\mcR_2/\mcR_1\right)\big) 
&=
\bS_{i+1} \mcR_1 \otimes \bS_{1^j} \mcQ_{n-1}^{\vee} \otimes R^h\pi_\star \big(\bS_{j+1} \left(\mcR_2/\mcR_1\right)\big),
\\
R^h\pi_\star \big(\bS_{j+1} \left(\mcR_2/\mcR_1\right)\big) & \ne 0 
\quad \text{only if} \quad
 h =  n-2, j \in \{n-2,n-1\}, \text{ and } 
\\
H^{q}\big(\FF,\, \bS_{i+1} \mcR_1 \otimes \bS_{1^j} \mcQ_{n-1}^{\vee} \otimes \bS_{j+1} \left(\mcR_2/\mcR_1\right)\big)
&= H^{q - n+2}\big(\Gr(1,E),\,\bS_{-i-1} \mcR_1^{\vee} \otimes \bS_{0^{n-2},-1} \mcQ_{n-1}^{\vee}\big)\\
&
=
\begin{cases}
\bS_{(-1)^n} E^\vee & \text{if } i = n-2, q=2n-4, \\
& j \in \{n-2,n-1\} \\
\bS_{(-1)^{n-2},(-2)^2} E^\vee & \text{if } i = n, q=2n-3, \\
& j \in \{n-2,n-1\}  \\
0 & \text{else}
\end{cases}
\end{align*}
The result now follows by tensoring with $\Wed^iE^\vee$.
\end{proof}

\begin{prop} \label{PropositionWedgeR2}
We have
$
H^q \big(\FF,\,\Wed^{p-2} \eta' \otimes \Wed^2 \mcR_2\big) = 
\Wed^{2} E 
$
for 
$$(p,q) = (2n-2,2n-4), (2n-1,2n-4),(2n,2n-3),(2n+1,2n-3),$$
and the group vanishes otherwise.
\end{prop}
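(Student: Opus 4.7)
The plan is to reduce this proposition directly to Lemma \ref{LemmaR1R2R1}, which computes the cohomology of the bundle $\Wed^{p-2}\eta' \otimes \mcR_1 \otimes (\mcR_2/\mcR_1)$, and notice that the two statements give cohomology in exactly the same bidegrees $(p,q)$ with the same values $\Wed^2 E$. So the entire content of the proposition is the identification of line bundles $\Wed^2 \mcR_2 \cong \mcR_1 \otimes (\mcR_2/\mcR_1)$.

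To justify this identification, I would invoke the tautological short exact sequence on $\FF$,
\[
0 \longrightarrow \mcR_1 \longrightarrow \mcR_2 \longrightarrow \mcR_2/\mcR_1 \longrightarrow 0,
\]
in which all three bundles are locally free, the outer two being line bundles and $\mcR_2$ having rank $2$. Taking determinants across a short exact sequence of locally free sheaves yields $\det \mcR_2 = \det \mcR_1 \otimes \det(\mcR_2/\mcR_1)$, i.e.
\[
\Wed^{2} \mcR_2 \;\cong\; \mcR_1 \otimes (\mcR_2/\mcR_1).
\]

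Tensoring with $\Wed^{p-2}\eta'$ and applying Lemma \ref{LemmaR1R2R1} then gives the claim, with the same list of bidegrees $(p,q) \in \{(2n-2,2n-4),(2n-1,2n-4),(2n,2n-3),(2n+1,2n-3)\}$ and the same value $\Wed^2 E$, and vanishing elsewhere. There is no real obstacle here, as the nontrivial cohomological content (the Leray–Bott computation over the projection $\pi:\FF\to \Gr(1,E)$) has already been done in Lemma \ref{LemmaR1R2R1}; this proposition is essentially a bookkeeping restatement which will be convenient for assembling the decomposition \eqref{EqExteriorPowerXiPrime} of $\Wed^p \xi'$ in the final cohomology calculation leading to Theorem \ref{TheoremVarietyMatrices}.
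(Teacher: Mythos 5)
Your proof is correct and matches the paper's argument exactly: both take determinants across the tautological short exact sequence $0\to\mcR_1\to\mcR_2\to\mcR_2/\mcR_1\to 0$ to identify $\Wed^2\mcR_2\cong\mcR_1\otimes(\mcR_2/\mcR_1)$, then invoke Lemma \ref{LemmaR1R2R1}.
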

\begin{proof} 
Taking the determinant of  the exact sequence $0 \to \mcR_1 \to \mcR_2 \to \mcR_2/\mcR_1\to 0$  we see that  $ \Wed^2 \mcR_2 = \mcR_1 \otimes \big(\mcR_2/\mcR_1\big)$,
so the result follows directly from Lemma \ref{LemmaR1R2R1}.
\end{proof}

We are now ready to prove the main result of this section.

\begin{proof}[Proof of Theorem \ref{TheoremVarietyMatrices}]
We apply the Kempf-Lascoux-Weyman technique 
to  the desingularization of Theorem \ref{TheoremResolutionSingularities}
and the syzygy bundle $\xi$.
Recall the free graded $A$-modules
$$
\mathsf{F}_i  = \bigoplus_{j \geq 0} H^j \left(\FF, \, \Wed^{i+j} \xi\right) \otimes A(-i-j).
$$
By \eqref{EqExteriorPowerXiPrime} and Propositions 
\ref{PropositionEtaPrime},
\ref{PropositionR2}, \ref{PropositionWedgeR2},
we have 
$H^q \left(\FF, \, \Wed^p \xi'\right)=0$ whenever $p<q$
and 
$\bigoplus_{p\geq 0} H^p \left(\FF, \, \Wed^p \xi'\right)=\Bbbk$.
The filtration on $\xi$ induces, for each $p$, a filtration on $\Wed^p \xi$ with associated graded bundle $\Wed^p \xi'$,
and thus a spectral sequence
$
H^q(\FF, \Wed^p \xi') \Rightarrow H^q(\FF, \Wed^p \xi).
$
We deduce that $\mathsf{F}_i=0$ for all  $i < 0$ and $\mathsf{F}_0 = A$. 
It follows from \cite[Theorem 5.1.2, Theorem 5.1.3 (c)]{Weyman} that $\mfX$ has rational singularities.

Recall that $\mfI$ is a homogeneous ideal of the standard graded polynomial ring $A$,
thus, the affine variety $\mfX = \Spec(A/\mfI)$ can be regarded as a cone over the  projective variety  $\mfX' = \Proj(A/\mfI) \subseteq\P^{n^2+n-1}$.
By  \cite[Theorem 5.1.6 (b)]{WeymanBook}, its degree is given by
\begin{equation}
\label{EqDegreeFormula}
\deg \mfX'  = \sum_{p,q}\frac{(-1)^{p-q}p^4}{4!}h^q\left(\FF,\,\Wed^p\xi\right).
\end{equation}
For each $p$,  the cancellations in the spectral sequence 
$
H^q(\FF, \Wed^p \xi') \Rightarrow H^q(\FF, \Wed^p \xi)
$
occur in pairs with consecutive cohomological degrees, 
therefore we can replace  $\xi$ with $\xi'$ in \eqref{EqDegreeFormula}.
We collect all the non-zero contributions to $h^q(\FF,\, \Wed^p\xi')$ in the following table
\begin{center}
\begin{tabular}{ c|c|c|c|c|c } 
$(p,q)$ & $(-1)^{p-q}$ & $p^4$ & Proposition \ref{PropositionEtaPrime} & Proposition \ref{PropositionR2} & Proposition \ref{PropositionWedgeR2}\\
\hline 
$(0,0)$ & $+$ & $0$ & $1$ & &\\
$(n-1,n-2)$ & $-$ & $(n-1)^4$ & $1$ & $n$ &\\
$(n,n-1)$ & $-$ & $(n)^4$ & $1$ &  &\\
$(n+1,n-1)$ & $+$ & $(n+1)^4$ &  & $n$ &\\
$(2n-2,2n-4)$ & $+$ & $(2n-2)^4$ &  &$n$  &${n \choose 2}$\\
$(2n-1,2n-3)$ & $+$ & $(2n-1)^4$ & $1$ &  &\\
$(2n-1,2n-4)$ & $-$ & $(2n-1)^4$ &  &  &${n \choose 2}$\\
$(2n,2n-3)$ & $-$ & $(2n)^4$ &  & $n$ &${n \choose 2}$\\
$(2n+1,2n-4)$ & $+$ & $(2n+1)^4$ &  &  & ${n \choose 2}$\\
\end{tabular}
\end{center}
where we ignored the terms with 
 $ (p,q)= (n,n-2), (n,n-1), (2n-1,2n-4), (2n-1,2n-3) $,
since they cancel out in \eqref{EqDegreeFormula} by Proposition \ref{PropositionR2}.
We compute

\begin{align*}
 \deg \mfX' & = \sum_{p,q}\frac{(-1)^{p-q}p^4}{4!}h^q\left(\FF,\,\Wed^p\xi'\right) \\
& = 
\frac{1}{24}
\left[
-(n-1)^4(1+n)-n^4 +(n+1)^4n +(2n-2)^4\left( n + {n \choose 2}\right)\right.
\\
& 
\left. \qquad \quad
+(2n-1)^4\left(1-{n\choose 2}\right)-(2n)^4\left( n + {n \choose 2}\right) + (2n+1)^4{n \choose 2}
\right]\\
& = \frac{(n-1)n(n+1)(3n-2)}{12}. \qedhere
\end{align*}
\end{proof}

\section{An initial complex}\label{SectionSquarefreeInitial}

In this section, we prove the primeness of the  ideal $\mfI$ introduced in  \eqref{EqDefinitionIdealI}, 
which  defines an open subscheme of the fiber of the map $\Hilb^{(m,2)}(\AA^2) \to \Hilb^2(\AA^2)$ over the point $[V(x,y^2)]$.

\begin{thm}\label{TheoremIPrime}
Assume $\ch(\Bbbk)=0$.
The ideal $\mfI \subseteq A$ is prime.
\end{thm}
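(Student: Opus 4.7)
The plan is to show that, for a suitable term order on $A$, the initial ideal $\iin(\mfI)$ coincides with the Stanley-Reisner ideal $I_\Delta$ of an explicit simplicial complex $\Delta$ on the variables $\{w_{i,j}\}$. Since $I_\Delta$ is squarefree (hence radical), it follows from the Conca-Varbaro theorem on squarefree Gr\"obner degenerations that $\mfI$ itself is radical. Combined with Corollary \ref{CorollaryXIrreducibleDimension}, which asserts the irreducibility of $\mfX = \V(\sqrt{\mfI})$, this yields that $\sqrt{\mfI}$ is prime and $\mfI = \sqrt{\mfI}$.

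The first task is to construct $\Delta$. The generators $F_i = \det \bfW^i$ and $f_i = \sum_h \det \bfW^{(h,i),h}$ from Lemma \ref{LemmaDescriptionFi} have squarefree leading monomials under an appropriate revlex-type order on $A$ compatible with the bigrading of Definition \ref{DefinitionRingT}. These leading monomials, together with those arising from Buchberger S-pair reductions, determine certain non-faces of $\Delta$. The facets of $\Delta$ should be guided by the desingularization in Theorem \ref{TheoremResolutionSingularities}: each facet should encode a combinatorial shadow of a pair $(W_1, W_2) \in \FF$ together with the kernel conditions $\bfA W_2 \subseteq W_1,\, \bfA W_1 = 0,\, \bfa W_2 = 0$.

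The core of the proof is an enumerative analysis of $\Delta$. I would compute the number of facets of maximum dimension and verify it equals $\frac{1}{12}(n-1)n(n+1)(3n-2)$, the projective degree of $\mfX$ computed in Theorem \ref{TheoremVarietyMatrices}. Together with the containment $I_\Delta \subseteq \iin(\mfI)$ and the equality of dimensions $\dim A/I_\Delta = \dim A/\iin(\mfI) = \dim A/\mfI = n^2+n-4$, one obtains the chain
\[
e(A/I_\Delta) \;\geq\; e(A/\iin(\mfI)) \;=\; e(A/\mfI) \;\geq\; e(A/\sqrt{\mfI}) \;=\; \frac{(n-1)n(n+1)(3n-2)}{12},
\]
and matching the two ends forces equality throughout. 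Provided $\Delta$ is Cohen-Macaulay, which I expect can be verified by exhibiting a shelling or vertex-decomposition, the ideal $I_\Delta$ is unmixed and the equality $e(A/I_\Delta) = e(A/\iin(\mfI))$ together with the containment $I_\Delta \subseteq \iin(\mfI)$ forces $I_\Delta = \iin(\mfI)$.

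The main obstacle I foresee is the combinatorial identification of $\Delta$: guessing the complex so that both $I_\Delta \subseteq \iin(\mfI)$ holds and the count of top-dimensional facets exactly matches the polynomial $\frac{1}{12}(n-1)n(n+1)(3n-2)$. This is likely to require reducing the count to a more tractable combinatorial model, perhaps via a bijection with pairs of tableaux or with lattice paths in an $(n+1)\times n$ grid. A secondary difficulty is controlling the S-pair reductions necessary to generate all non-faces of $\Delta$ from the explicit generators $f_i, F_i$ of $\mfI$, and establishing the Cohen-Macaulay (or at least unmixed) property of $\Delta$ to close the multiplicity argument.
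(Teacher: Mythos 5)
Your proposal matches the paper's approach: show that $\iin(\mfI)$ is the Stanley--Reisner ideal of a pure simplicial complex $\Delta$ whose number of facets equals $\deg\mfX = \tfrac{1}{12}(n-1)n(n+1)(3n-2)$, use unmixedness and multiplicity matching to force $I_\Delta = \iin(\mfI)$, deduce $\mfI$ is radical, and conclude primeness from the irreducibility of $\mfX$ in Corollary~\ref{CorollaryXIrreducibleDimension}. The paper organizes the multiplicity comparison slightly differently (it squeezes $\mfK \subseteq \iin(\sqrt{\mfI})$ directly and then deduces $\iin(\mfI)=\iin(\sqrt\mfI)$, whereas you squeeze $I_\Delta\subseteq\iin(\mfI)$), and it only needs purity of $\Delta$ for unmixedness rather than the stronger Cohen--Macaulay property; two minor corrections: the step ``squarefree initial ideal $\Rightarrow$ radical'' is a classical Gr\"obner-basis fact, not the Conca--Varbaro theorem (the paper cites Conca--Varbaro only afterward, in Corollary~\ref{CorollaryCohenMacaulayComplex}, to deduce that $\Delta$ is Cohen--Macaulay), and the generators $\bfz_h$ of $\mfK$ in Proposition~\ref{PropositionLeadingMonomial} are leading terms of hand-crafted polynomial combinations of $f_1,f_2$ with maximal minors of $\bfW$, not of routine Buchberger S-pair reductions.
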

We  prove Theorem \ref{TheoremIPrime} combinatorially.
We introduce a simplicial complex $\Delta$ 
and study the enumerative combinatorics of its facets.
Combining this analysis with the numerical information of Theorem \ref{TheoremVarietyMatrices},
 we  show that 
$\Delta$ is an initial complex of  $\mfI$ and $\sqrt{\mfI}$, 
and deduce  Theorem \ref{TheoremIPrime} and the Cohen-Macaulayness of $\Delta$ as consequences.
These results connect  the variety $\mfX$ 
to the recent developments on 
 square-free Gr\"obner degenerations 
 \cite{CV,CDM},
 suggesting further directions of investigation in this area;  we discuss some problems at the end of the section.

Throughout this section, we assume $n\geq 2$, since $\mfI = (0)$ for $n=1$.
We equip  the polynomial ring $A= \kk[w_{i,j}]$ with the graded reverse lexicographic order induced by the ``opposite'' ordering of the variables 
\begin{equation}\label{EqTermOrderA}
w_{1,1} < w_{1,2} < w_{1,3} <\cdots < w_{n+1,n-1} < w_{n+1,n},
\end{equation}
that is, 
the variables appear in  increasing order in the  $(n+1)\times n$  generic matrix 
$\bfW=(w_{i,j})$.

\begin{remark}
This term order  is antidiagonal, in the sense that
the leading monomial of any minor of $\bfW$ is the product of 
the entries in its antidiagonal.
Antidiagonal term orders give rise to rich combinatorics, and they have been widely used in the literature 
on determinantal ideals, most notably in \cite{KM}.
The simplicial complex  we  introduce in this section is  a sub-complex of the initial complex associated in \cite{KM} to the determinantal ideal of maximal minors of $\bfW$.
\end{remark}

We identify some distinguished monomials of $A$, which arise as leading monomials of certain polynomials in $\mfI$,
see Section \ref{SectionLocalEquations}.
Given subsets $\mcA \subseteq [n+1]$ and $ \mcB \subseteq [n]$ of the same cardinality, 
we denote by  $\bfW_{\mcA,\mcB}$ the square submatrix of $\bfW$ with set of rows $\mcA$ and set of columns $\mcB$.

\begin{prop}\label{PropositionLeadingMonomial}
For each $h = 2, \ldots,n+1,$ we have
\begin{equation}\tag{X}\label{EqMonomialXh}
\LM(f_h) = \bfx_h :=
\prod_{i=2}^{h-1}w_{i,n+2-i}
\prod_{i=h+1}^{n+1} w_{i,n+3-i}
\end{equation}
and
\begin{equation}\tag{Y}\label{EqMonomialYh}
\LM(F_h) = \bfy_h :=
\prod_{i=1}^{h-1}w_{i,n+1-i}
\prod_{i=h+1}^{n+1} w_{i,n+2-i}.
\end{equation}
For each $h = 3, \ldots, n+1,$ consider the monomial
\begin{equation}\tag{Z}\label{EqMonomialZh}
\bfz_h :=
\prod_{i=3}^{h-1} w_{i,n+3-i}
\prod_{i=h}^{n+1} w_{i,n+2-i}
\prod_{i=h+1}^{n+1} w_{i,n+4-i}.
\end{equation}
Then
$\LM(f_1) = \bfz_{n+1}$, and, 
 for each $h = 3, \ldots, n,$
we have  
$$
\LM\big(f_1 \cdot\det \bfW_{(h+1,\ldots, n+1),(2,\ldots, n-h+2)}
+
f_2 \cdot 
    \det \bfW_{(h+1,\ldots, n+1),(1,3, 4,\ldots, n-h+2)}\big)
    = \bfz_h.
    $$
\end{prop}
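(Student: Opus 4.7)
The plan is to exploit the fact that the graded reverse lexicographic order on $A$ induced by the ordering \eqref{EqTermOrderA} is \emph{antidiagonal}: for every submatrix $\bfW_{\mcA,\mcB}$, the leading monomial of $\det \bfW_{\mcA,\mcB}$ is the product of the entries along its antidiagonal. Combined with Lemma~\ref{LemmaDescriptionFi}, which writes $F_h = \det \bfW^h$ and $f_h = \sum_{k=1}^n \det \bfW^{(k,h),k}$, the task reduces to identifying, for each polynomial in the statement, the grevlex-maximum antidiagonal among the summands---and, for the final assertion, controlling a cancellation of the resulting top terms.

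I will verify (Y), (X), and $\LM(f_1) = \bfz_{n+1}$ by a direct comparison of antidiagonals. For (Y), bookkeeping the rows $1,\dots,h-1,h+1,\dots,n+1$ of $\bfW$ shows that the antidiagonal of $\bfW^h$ is exactly $\bfy_h$. For (X) with $h \geq 2$, the summand $\det \bfW^{(1,h),1}$ has antidiagonal equal to $\bfx_h$; every other summand $\det \bfW^{(k,h),k}$ with $k \neq 1, h$ is the determinant of a submatrix that \emph{includes} both row $1$ and column $1$, which forces its antidiagonal to contain a factor $w_{1,\ast}$ and a factor $w_{\ast,1}$. A grevlex comparison---which in our ordering favors monomials avoiding the smallest variables $w_{1,\ast}$ and $w_{\ast,1}$---then yields that each of these antidiagonals is strictly smaller than $\bfx_h$. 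The identity $\LM(f_1) = \bfz_{n+1}$ follows in the same way: the dominant summand is $\det \bfW^{(2,1),2}$, and its antidiagonal rearranges into $\bfz_{n+1}$.

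The main obstacle is the last claim. Write $M_1 = \det \bfW_{(h+1,\ldots,n+1),(2,\ldots,n-h+2)}$ and $M_2 = \det \bfW_{(h+1,\ldots,n+1),(1,3,\ldots,n-h+2)}$. Combining the formulas $\LM(f_1) = \bfz_{n+1}$, $\LM(f_2) = \bfx_2$, and the antidiagonals of $M_1$ and $M_2$, a direct calculation gives
$$
\LM(f_1 \cdot M_1) \;=\; \LM(f_2 \cdot M_2) \;=\; w_{n+1,1}\, w_{n+1,2} \prod_{i=3}^{h} w_{i,n+3-i} \prod_{i=h+1}^{n} w_{i,n+3-i}^2;
$$
tracking the reverse-permutation sign $(-1)^{\binom{m}{2}}$ on each antidiagonal factor verifies that these two leading monomials enter $f_1 \cdot M_1 + f_2 \cdot M_2$ with opposite coefficients and cancel exactly. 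The plan is then to extract $\bfz_h$ as the next-largest surviving monomial by a symmetric enumeration: every monomial strictly grevlex-larger than $\bfz_h$ arises from a sub-maximal permutation on one of the $(n-1)\times(n-1)$ minors in $f_i$, paired with a term of $M_i$, and each such contribution is matched by an analogous contribution on the opposite side of the sum---because $M_1$ and $M_2$ differ only by the swap of columns $1$ and $2$, and $f_1, f_2$ are built symmetrically from minors deleting those columns---forcing pairwise cancellation. The monomial $\bfz_h$ should then appear as the first \emph{asymmetric} contribution, occurring with nonzero coefficient from a unique pairing in $f_1 \cdot M_1$ with no counterpart in $f_2 \cdot M_2$. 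The principal technical difficulty is to execute this cancellation bookkeeping in full generality, which I expect will rest on a finite case analysis indexed by which of columns $\{1,2\}$ is absorbed into each factor and a sign computation showing that matched pairs always carry opposite signs.
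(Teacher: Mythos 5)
Your treatment of \eqref{EqMonomialXh}, \eqref{EqMonomialYh}, and the identity $\LM(f_1) = \bfz_{n+1}$ is correct and coincides with the paper's: for $F_h$ it is just the antidiagonal of a single determinant, and for $f_h$ the revlex comparison you describe (the summand $\det\bfW^{(1,h),1}$ avoids row $1$ and column $1$, hence its antidiagonal has exponent $0$ on every $w_{1,*}$, while each competing antidiagonal picks up a $w_{1,*}$ factor with exponent $1$) does identify the dominant summand. The cancellation of the top monomial of $f_1\cdot M_1 + f_2\cdot M_2$ is also verified correctly: the product of antidiagonals you write down is indeed $\LM(f_1 M_1) = \LM(f_2 M_2)$, and the sign bookkeeping (one extra $-1$ from $\det\bfW^{(2,1),2} = -\det\bfW^{(1,2),2}$) gives opposite leading coefficients.

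However, the final claim $\LM(f_1 M_1 + f_2 M_2) = \bfz_h$ is not established. The mechanism you invoke --- a ``symmetric enumeration'' forcing pairwise cancellation of every monomial above $\bfz_h$, justified by $M_1, M_2$ differing in the swap of columns $1,2$ and $f_1, f_2$ being ``built symmetrically from minors deleting those columns'' --- has two problems. First, $f_1$ and $f_2$ are not symmetric in columns $1$ and $2$: they differ by which extra \emph{row} ($1$ versus $2$) is deleted from the $n\times n$ submatrices, so the stated symmetry is misidentified. Second, and more fundamentally, even the correct column-$\{1,2\}$ involution relates only the two leading summands $q_1 = \det\bfW^{(2,1),2}\cdot M_1$ and $q_2 = \det\bfW^{(1,2),1}\cdot M_2$ of $p_1, p_2$; the remaining summands of $f_1$ and $f_2$ (indexed by $j \geq 3$) are not exchanged by it, and the involution does not respect the revlex order, so one cannot conclude that paired monomials above $\bfz_h$ carry opposite coefficients. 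You flag the ``bookkeeping'' as the principal difficulty, and that is precisely where the argument has a genuine gap. The paper's proof handles it differently: it first shows by revlex domination that all monomials of $p_1, p_2$ outside $q_1, q_2$ are strictly smaller, reducing the problem to $\LM(q_1 + q_2)$; then it repeatedly peels off common antidiagonal prefixes via Laplace expansion, reducing to $\LM(r_1 + r_2)$ for pairs of minors on a shrinking index set, and closes the argument by decreasing induction on $h$ with an explicit $2\times 2$ base case at $h = n$. Some such structured reduction is necessary here; the symmetry heuristic, as stated, does not supply it.
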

\begin{proof}
Recall that 
$
f_h = \sum_{j=1}^{n}\det\bfW^{(j,h),j}
$
and 
 $F_h = \det \bfW^h$
by  Lemma \ref{LemmaDescriptionFi}.
The formulas for $\LM(f_h)$ and $\LM(F_h)$ follow immediately by the antidiagonal property of the  revlex order.
For the third formula,
 let $h \in  \{3, \ldots, n\}$
and  consider the polynomial $ p_1 + p_2$, where
\begin{align*}
p_1 &=  f_1 \cdot\det \bfW_{(h+1,\ldots, n+1),(2,\ldots, n-h+2)}
= \sum_{j=1}^{n}\det\bfW^{(j,1),j} \cdot\det \bfW_{(h+1,\ldots, n+1),(2,\ldots, n-h+2)}
\\
\intertext{and}
p_2 &= f_2 \cdot 
    \det \bfW_{(h+1,\ldots, n+1),(1,3,4, \ldots, n-h+2)}
    =\sum_{j=1}^{n}\det\bfW^{(j,2),j}
    \cdot 
    \det \bfW_{(h+1,\ldots, n+1),(1,3,4, \ldots, n-h+2)}.
\end{align*}
We  show that $\LM(p_1+p_2) = \bfz_h$.
Let $q_1$ and $q_2$ be the first non-zero addend in $p_1$ and $p_2$, respectively. Explicitly, they are
\begin{align*}
q_1 
&= \det\bfW^{(2,1),2} \cdot\det \bfW_{(h+1,\ldots, n+1),(2,\ldots, n-h+2)} \\
&= -\det\bfW^{(1,2),2} \cdot\det \bfW_{(h+1,\ldots, n+1),(2,\ldots, n-h+2)}\\
&= -\det\bfW_{(3, \ldots, n+1),(1,3,4,\ldots,n)} \cdot\det \bfW_{(h+1,\ldots, n+1),(2,\ldots, n-h+2)}\\
\intertext{and}
q_2 
&= \det\bfW^{(1,2),1}\cdot \det \bfW_{(h+1,\ldots, n+1),(1,3,4, \ldots, n-h+2)}\\
&= \det\bfW_{(3,\ldots,n+1),(2,\ldots,n)} \cdot  \det \bfW_{(h+1,\ldots, n+1),(1,3,4, \ldots, n-h+2)}.
\end{align*}
By the revlex order,
the monomials in the addend $q_1$ are higher than the monomials in the remaining addends of $p_1$, 
and likewise for $q_2$ and $p_2$.
Therefore, 
it suffices to show that $\LM(q_1+q_2) = \bfz_h$.

Again, by the revlex order, the monomials in  $w_{3,n}w_{4,n-1}\cdots w_{h-1,n-h+4}	\cdot
\det\bfW_{(h,\ldots,n+1),(1,3,\ldots, n-h+3)}$ 
are higher than the remaining monomials in $\det\bfW_{(3, \ldots, n+1),(1,3,4,\ldots,n)}$,
 likewise for the addend
$w_{3,n}w_{4,n-1}\cdots w_{h-1,n-h+4}	\cdot
\det\bfW_{(h,\ldots,n+1),(2,\ldots, n-h+3)}$ 
of $\det\bfW_{(3, \ldots, n+1),(2,\ldots,n)}$.
Thus, we let 
\begin{align*}
r_1 &= 
-\det\bfW_{(h,\ldots,n+1),(1,3,\ldots, n-h+3)} \cdot\det \bfW_{(h+1,\ldots, n+1),(2,\ldots, n-h+2)}\\
\intertext{and}
r_2 &= 
\det\bfW_{(h,\ldots,n+1),(2,\ldots, n-h+3)}
    \cdot 
        \det \bfW_{(h+1,\ldots, n+1),(1,3,4, \ldots, n-h+2)},
        \end{align*}
and it suffices to show that
$\LM\big(w_{3,n}w_{4,n-1}\cdots w_{h-1,n-h+4}(r_1+r_2)\big) = \bfz_h$,
equivalently, that 
 \begin{equation}\label{EqLeadingMonomialR1R2}
  \LM(r_1+r_2) = \prod_{i=h}^{n+1} w_{i,n+2-i}
\prod_{i=h+1}^{n+1} w_{i,n+4-i}.
 \end{equation}
We prove \eqref{EqLeadingMonomialR1R2} by decreasing  induction on $h$, starting with $n$. 
When $h=n$, we have
\begin{align*}
r_1 + r_2 
& =  
	-\begin{vmatrix}
	w_{n,1} & w_{n,3} \\
	w_{n+1,1} & w_{n+1,3}
	\end{vmatrix}
	w_{n+1,2}
	+
	\begin{vmatrix}
	w_{n,2} & w_{n,3} \\
	w_{n+1,2} & w_{n+1,3}
	\end{vmatrix}
	w_{n+1,1}\\
& = - (w_{n,1}w_{n+1,3} - w_{n,3}w_{n+1,1})w_{n+1,2} 
		+ (w_{n,2}w_{n+1,3}-w_{n,3}w_{n+1,2})w_{n+1,1} \\
& = - w_{n,1}w_{n+1,2}w_{n+1,3}+ w_{n,2}w_{n+1,1}w_{n+1,3}.
\end{align*}
Thus, $\LM(r_1+r_2) = w_{n,2}w_{n+1,1}w_{n+1,3}$ as required.
Now, assume $h \leq n-1$.
As before, 
the monomials of $w_{h,n-h+3}\cdot \det\bfW_{(h+1,\ldots,n+1),(1,3,\ldots, n-h+2)}$ are higher than the remaining monomials of $\det\bfW_{(h,\ldots,n+1),(1,3,\ldots, n-h+3)}$,
and likewise for 
$w_{h,n-h+3} \cdot \det\bfW_{(h+1,\ldots,n+1),(2,\ldots, n-h+2)}$
 and
$ \det\bfW_{(h,\ldots,n+1),(2,\ldots, n-h+3)}$.
The corresponding addends of $r_1$ and $r_2$ are
\begin{align*}
-w_{h,n-h+3}\cdot \det\bfW_{(h+1,\ldots,n+1),(1,3,\ldots, n-h+2)} \cdot\det \bfW_{(h+1,\ldots, n+1),(2,\ldots, n-h+2)}\\
\intertext{and}
 w_{h,n-h+3} \cdot \det\bfW_{(h+1,\ldots,n+1),(2,\ldots, n-h+2)}
    \cdot 
 \det \bfW_{(h+1,\ldots, n+1),(1,3,4, \ldots, n-h+2)},
 \end{align*}
hence all these terms cancel out in $r_1+r_2$.
The next highest monomials in $\det\bfW_{(h,\ldots,n+1),(1,3,\ldots, n-h+3)}$
are the ones in the addend $w_{h,n-h+2}w_{h+1,n-h+3}\cdot \det\bfW_{(h+2,\ldots,n+1),(1,3,\ldots, n-h+1)}$,
and likewise 
for 
$ \det\bfW_{(h,\ldots,n+1),(2,\ldots, n-h+3)}$
and 
$w_{h,n-h+2}w_{h+1,n-h+3}\cdot \det\bfW_{(h+2,\ldots,n+1),(2,\ldots, n-h+1)}$.
Proceeding as in the previous reduction, 
we let 
\begin{align*}
r'_1 &= 
-\det\bfW_{(h+2,\ldots,n+1),(1,3,\ldots, n-h+1)} \cdot\det \bfW_{(h+1,\ldots, n+1),(2,\ldots, n-h+2)}\\
\intertext{and}
r'_2 &= 
\det\bfW_{(h+2,\ldots,n+1),(2,\ldots, n-h+1)}
    \cdot 
        \det \bfW_{(h+1,\ldots, n+1),(1,3,4, \ldots, n-h+2)},
        \end{align*}
and it suffices to show that 
$
  \LM(r'_1+r'_2) = \prod_{i=h+1}^{n+1} w_{i,n+2-i}
\prod_{i=h+2}^{n+1} w_{i,n+4-i}.
$
This is exactly the induction hypothesis  (reversing the order of the factors),
so the proof is concluded.
\end{proof}

Following Proposition \ref{PropositionLeadingMonomial}
 we define the square-free monomial ideal 
\begin{equation}\label{EqDefinitionMonomialIdealK}
\mfK = \big(\bfx_2, \ldots, \bfx_{n+1},
\bfy_2, \ldots, \bfy_{n+1},
\bfz_3, \ldots, \bfz_{n+1}\big)
\subseteq A.
\end{equation}
Note that, by Proposition \ref{PropositionLeadingMonomial}, we have the inclusion of monomial ideals $\mfK \subseteq \iin(\mfI)$.

\begin{example}\label{ExampleMonomialsN5}
It is helpful to visualize the    generators of $\mfK$  pictorially, 
identifying each squarefree monomial with the set of positions of  variables in the $(n+1)\times n$ matrix $\bfW$.
In this example, we illustrate the  case $n=5$.
The diagrams below highlight the antidiagonal configurations of the variables in these monomials, which originate from the antidiagonal property of the term order, and which dominate the combinatorics of the simplicial complex defined by $\mfK$. 

The monomials \eqref{EqMonomialXh}, \eqref{EqMonomialYh}, and \eqref{EqMonomialZh} are

\begin{align*}
\bfx_2 &= w_{3,5}w_{4,4}w_{5,3}w_{6,2},  &\bfy_2 =  w_{1,5}w_{3,4}w_{4,3}w_{5,2}w_{6,1}, 
\quad & \bfz_3 = w_{3,4}w_{4,3}w_{5,2}w_{6,1}w_{4,5}w_{5,4}w_{6,3},\\
\bfx_3 &= w_{2,5}w_{4,4}w_{5,3}w_{6,2}, & \bfy_3 =  w_{1,5}w_{2,4}w_{4,3}w_{5,2}w_{6,1},
\quad & \bfz_4 = w_{3,5}w_{4,3}w_{5,2}w_{6,1}w_{5,4}w_{6,3},\\
\bfx_4 &= w_{2,5}w_{3,4}w_{5,3}w_{6,2}, & \bfy_4 =  w_{1,5}w_{2,4}w_{3,3}w_{5,2}w_{6,1},
\quad & \bfz_5 = w_{3,5}w_{4,4}w_{5,2}w_{6,1}w_{6,3},\\
\bfx_5 &= w_{2,5}w_{3,4}w_{4,3}w_{6,2}, & \bfy_5 =  w_{1,5}w_{2,4}w_{3,3}w_{4,2}w_{6,1},
\quad & \bfz_6 = w_{3,5}w_{4,4}w_{5,3}w_{6,1},\\
\bfx_6 &= w_{2,5}w_{3,4}w_{4,3}w_{5,2}, & \bfy_6 =  w_{1,5}w_{2,4}w_{3,3}w_{4,2}w_{5,1}.
\quad &
\end{align*}

\vspace*{-0.5cm}
\begin{center}
\begin{multicols}{5}
$$
\begin{tabular}{ | m{0.25cm}| m{0.25cm}| m{0.25cm}| m{0.25cm}| m{0.25cm}| } 
 \hline
 &  &  & &  \\ 
 \hline
 &  &  & &  \\ 
 \hline
 &  &  & & \textbullet \\ 
 \hline
 &  &  & \textbullet&  \\ 
 \hline
 &  & \textbullet & &  \\ 
 \hline
 & \textbullet &  & &  \\ 
 \hline
 \end{tabular}
$$
$$
\bfx_2 
$$
\columnbreak

$$
\begin{tabular}{ | m{0.25cm}| m{0.25cm}| m{0.25cm}| m{0.25cm}| m{0.25cm}| } 
 \hline
 &  &  & &  \\ 
 \hline
 &  &  & & \textbullet \\ 
 \hline
 &  &  & &  \\ 
 \hline
 &  &  & \textbullet&  \\ 
 \hline
 &  & \textbullet & &  \\ 
 \hline
 & \textbullet &  & &  \\ 
 \hline
 \end{tabular}
$$
$$
\bfx_3
$$

\columnbreak

$$
\begin{tabular}{ | m{0.25cm}| m{0.25cm}| m{0.25cm}| m{0.25cm}| m{0.25cm}| } 
 \hline
 &  &  & &  \\ 
 \hline
 &  &  & & \textbullet \\ 
 \hline
 &  &  & \textbullet &  \\ 
 \hline
 &  &  & &  \\ 
 \hline
 &  & \textbullet & &  \\ 
 \hline
 & \textbullet &  & &  \\ 
 \hline
 \end{tabular}
$$
$$
\bfx_4
$$

\columnbreak

$$
\begin{tabular}{ | m{0.25cm}| m{0.25cm}| m{0.25cm}| m{0.25cm}| m{0.25cm}| } 
 \hline
 &  &  & &  \\ 
 \hline
 &  &  & & \textbullet \\ 
 \hline
 &  &  & \textbullet&  \\ 
 \hline
 &  & \textbullet & &  \\ 
 \hline
 &  &  & &  \\ 
 \hline
 & \textbullet &  & &  \\ 
 \hline
 \end{tabular}
$$
$$
\bfx_5 
$$

\columnbreak
$$
\begin{tabular}{ | m{0.25cm}| m{0.25cm}| m{0.25cm}| m{0.25cm}| m{0.25cm}| } 
 \hline
 &  &  & &  \\ 
 \hline
 &  &  & & \textbullet \\ 
 \hline
 &  &  & \textbullet&  \\ 
 \hline
 &  & \textbullet & &  \\ 
 \hline
 & \textbullet &  & &  \\ 
 \hline
  &  &  & &  \\ 
 \hline
 \end{tabular}
$$
$$
\bfx_6
$$

\end{multicols}
\end{center}

\vspace*{-0.8cm}
\begin{center}
\begin{multicols}{5}

$$
\begin{tabular}{ | m{0.25cm}| m{0.25cm}| m{0.25cm}| m{0.25cm}| m{0.25cm}| } 
 \hline
 &  &  & & \textbullet \\ 
 \hline
 &  &  & &  \\ 
 \hline
 &  &  & \textbullet &  \\ 
 \hline
   &  & \textbullet& &  \\ 
 \hline
   & \textbullet & &  & \\ 
 \hline
  \textbullet &  & & & \\ 
 \hline
 \end{tabular}
$$
$$
\bfy_2
$$

\columnbreak

$$
\begin{tabular}{ | m{0.25cm}| m{0.25cm}| m{0.25cm}| m{0.25cm}| m{0.25cm}| } 
 \hline
 &  &  & & \textbullet \\ 
 \hline
 &  &  & \textbullet &  \\ 
 \hline
 &  &  & &  \\ 
 \hline
   &  & \textbullet& &  \\ 
 \hline
   & \textbullet & &  & \\ 
 \hline
  \textbullet &  & & & \\ 
 \hline
 \end{tabular}
$$
$$
\bfy_3
$$

\columnbreak

$$
\begin{tabular}{ | m{0.25cm}| m{0.25cm}| m{0.25cm}| m{0.25cm}| m{0.25cm}| } 
 \hline
 &  &  & & \textbullet \\ 
 \hline
 &  &  & \textbullet &  \\ 
 \hline
   &  & \textbullet& &  \\ 
 \hline
 &  &  & &  \\ 
 \hline
   & \textbullet & &  & \\ 
 \hline
  \textbullet &  & & & \\ 
 \hline
 \end{tabular}
$$
$$
\bfy_4
$$

\columnbreak

$$
\begin{tabular}{ | m{0.25cm}| m{0.25cm}| m{0.25cm}| m{0.25cm}| m{0.25cm}| } 
 \hline
 &  &  & & \textbullet \\ 
 \hline
 &  &  & \textbullet &  \\ 
 \hline
   &  & \textbullet& &  \\ 
 \hline
   & \textbullet & &  & \\ 
 \hline
 &  &  & &  \\ 
 \hline
  \textbullet &  & & & \\ 
 \hline
 \end{tabular}
$$
$$
\bfy_5
$$

\columnbreak
$$
\begin{tabular}{ | m{0.25cm}| m{0.25cm}| m{0.25cm}| m{0.25cm}| m{0.25cm}| } 
 \hline
 &  &  & & \textbullet \\ 
 \hline
 &  &  & \textbullet &  \\ 
 \hline
   &  & \textbullet& &  \\ 
 \hline
   & \textbullet & &  & \\ 
 \hline
  \textbullet &  & & & \\ 
 \hline
  &  &  & &  \\ 
 \hline
 \end{tabular}
$$
$$
\bfy_6
$$

\end{multicols}
\end{center}

\vspace*{-0.5cm}
\begin{center}
\begin{multicols}{4}

$$
\begin{tabular}{ | m{0.25cm}| m{0.25cm}| m{0.25cm}| m{0.25cm}| m{0.25cm}| } 
 \hline
 &  &  & &  \\ 
 \hline
 &  &  & &  \\ 
 \hline
 &  &  & \textbullet &  \\ 
 \hline
   &  & \textbullet& & \textbullet \\ 
 \hline
   & \textbullet & &\textbullet  & \\ 
 \hline
  \textbullet &  &\textbullet & & \\ 
 \hline
 \end{tabular}
$$
$$
\bfz_3
$$

\columnbreak

$$
\begin{tabular}{ | m{0.25cm}| m{0.25cm}| m{0.25cm}| m{0.25cm}| m{0.25cm}| } 
 \hline
 &  &  & &  \\ 
 \hline
 &  &  & &  \\ 
 \hline
 &  &  &  & \textbullet \\ 
 \hline
   &  & \textbullet& &  \\ 
 \hline
   & \textbullet & &\textbullet  & \\ 
 \hline
  \textbullet &  &\textbullet & & \\ 
 \hline
 \end{tabular}
$$
$$
\bfz_4
$$

\columnbreak

$$
\begin{tabular}{ | m{0.25cm}| m{0.25cm}| m{0.25cm}| m{0.25cm}| m{0.25cm}| } 
 \hline
 &  &  & &  \\ 
 \hline
 &  &  & &  \\ 
 \hline
 &  &  &  & \textbullet \\ 
 \hline
   &  & &\textbullet &  \\ 
 \hline
   & \textbullet & &  & \\ 
 \hline
  \textbullet &  &\textbullet & & \\ 
 \hline
 \end{tabular}
$$
$$
\bfz_5
$$

\columnbreak

$$
\begin{tabular}{ | m{0.25cm}| m{0.25cm}| m{0.25cm}| m{0.25cm}| m{0.25cm}| } 
 \hline
 &  &  & &  \\ 
 \hline
 &  &  & &  \\ 
 \hline
 &  &  &  & \textbullet \\ 
 \hline
   &  & &\textbullet &  \\ 
 \hline
   &  & \textbullet&  & \\ 
 \hline
  \textbullet &  & & & \\ 
 \hline
 \end{tabular}
$$
$$
\bfz_6
$$

\end{multicols}
\end{center}
\end{example}

Consider the set of vertices  $V = [n+1]\times [n]$,
which 
corresponds to the variables of $A$.
For a subset $\mcA \subseteq V$, 
we denote by $w_\mcA = \prod_{(i,j)\in \mcA} w_{i,j}$ 
the associated square-free monomial.
Let $\Delta$ be the Stanley-Reisner simplicial complex defined by 
$\mfK$.
Recall that a face of $\Delta$ is a subset $\mcA\subseteq V$ such that $w_\mcA \notin\mfK$, and a facet is a face that is maximal with respect to inclusion.
Since  $\Delta$ has  low codimension,
instead of analyzing its faces, it is more convenient
to focus on their complements. 
We call the complement of a face  a {\bf c-face}, 
and  the complement of a facet a {\bf c-facet}.
It is easy to see that, for a subset  $\mathcal{C}\subseteq V$, we have
\begin{equation}\label{EqCharacterizationCFaces}
\mathcal{C}\subseteq V
\text{ is a c-face of } \Delta
\Leftrightarrow
\gcd(w_\mcC, \bfu) \ne 1
\text{ for every  generator }
\bfu \in \mfK,
\end{equation}
and, clearly, a c-facet is a c-face that is minimal with respect to inclusion.
Moreover, 
there is a bijection between the minimal primes of $\mfK$ and the 
 c-facets of $\Delta$, 
defined by mapping each (monomial) prime ideal to the  set of variables which generate it.
For readers familiar with Alexander duality,
we point out that c-faces of $\Delta$ correspond to  monomials in the Stanley-Reisner ideal of the Alexander dual $\Delta^\vee$, and  c-facets to  the generators of this ideal;
however, we will not  use this point of view.

A set of vertices of the form  $\{ (i,j)\, | \, i+j = p\}$, for some  $p$, 
is called an antidiagonal of $V$.
A c-facet  of $\Delta$ must satisfy 
several restrictions on the number and position of vertices on each antidiagonal;
we list them in the next lemma.

\begin{lemma}\label{LemmaPropertiesCFacets}
Let $\mcC$  be a c-facet of $\Delta$.
\begin{enumerate}
\item
Every $(i,j) \in \mcC$ lies on one of the four antidiagonals 
$i+j = p$ with $p = n+1, \ldots, n+4$.
\item
There is exactly one  $(i,j) \in \mcC $ on the antidiagonal $i+j= n+1$.
\item 
There are either one or two   $(i,j) \in \mcC $ on the antidiagonal 
$i+j= n+2$.
\item
There are either one or two   $(i,j) \in \mcC $ on the antidiagonal 
$i+j= n+3$.
\item
There is at most  one  $(i,j) \in \mcC $ on the antidiagonal  $i+j= n+4$.
\item
If $(1,n)\notin \mcC$, then  there are 
$(i_1, j_1), (k_1, l_1) \in \mcC$ such that
$i_1+j_1 = n+1, k_1 +l_1 = n+2$, and 
$k_1 > i_1$.
Every monomial \eqref{EqMonomialYh} contains at least one of $(i_1, j_1), (k_1, l_1)$.

\item
Let $(i_2,j_2) \in \mcC$ with  $i_2+j_2 = n+2$ and minimal $i_2$.
There is a vertex $(k_2,l_2) \in \mcC$ such that
$ k_2+l_2 = n+3$ and 
$k_2 > i_2$.
Every monomial \eqref{EqMonomialXh} contains at least one of 
$(i_2, j_2), (k_2, l_2)$.
\item 
Let 
$(i_3,j_3) \in \mcC$ with  
$i_3+j_3 = n+3$ and minimal $i_3$.
There  is a vertex $(k_3, l_3) \in \mcC$ such that either $k_3+l_3 = n+4$ and 
$k_3 > i_3$,
or $k_3+l_3 = n+2$ and 
$k_3 \geq  i_3$.
Every monomial \eqref{EqMonomialZh} contains at least one of 
$(i_3, j_3), (k_3, l_3)$.
\end{enumerate}

\end{lemma}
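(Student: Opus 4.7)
The plan is to exploit the characterization \eqref{EqCharacterizationCFaces} together with the minimality of $\mcC$. First I would compile a lookup table recording, for each vertex $(i,j) \in V$, the list of generators of $\mfK$ containing the variable $w_{i,j}$. Reading off the supports of \eqref{EqMonomialXh}, \eqref{EqMonomialYh}, \eqref{EqMonomialZh} gives the following coverage on each antidiagonal $p = i+j$: a vertex $(i,n{+}1{-}i)$ on antidiagonal $n{+}1$ appears only in $\bfy_h$ for $h > i$; a vertex $(i,n{+}2{-}i)$ on antidiagonal $n{+}2$ appears in $\bfx_h$ for $h>i$, in $\bfy_h$ for $h<i$, and in $\bfz_h$ for $h\leq i$; a vertex $(i,n{+}3{-}i)$ on antidiagonal $n{+}3$ appears in $\bfx_h$ for $h<i$ and in $\bfz_h$ for $h>i$; a vertex $(i,n{+}4{-}i)$ on antidiagonal $n{+}4$ appears only in $\bfz_h$ for $h<i$. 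Every other vertex of $V$ belongs to no generator of $\mfK$, which immediately yields item~(1): any such vertex in $\mcC$ could be removed while preserving \eqref{EqCharacterizationCFaces}, violating minimality.

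For items (2)--(5), I would combine two opposite arguments. The existence of at least one vertex of $\mcC$ on each of antidiagonals $n{+}1, n{+}2, n{+}3$ follows by exhibiting a ``pure'' generator supported entirely on that antidiagonal: $\bfy_{n+1}$ lives on antidiagonal $n{+}1$, $\bfx_{n+1}$ on antidiagonal $n{+}2$, and $\bfx_2$ on antidiagonal $n{+}3$, so $\mcC$ must meet each of these. For the upper bounds, I would apply a ``middle vertex is redundant'' trick: if three vertices of $\mcC$ sit on the same antidiagonal in some increasing order, then comparing the coverage tables above shows that whatever generators the middle vertex meets are already met by the union of the other two. For example, on antidiagonal $n{+}2$ with $a<b<c$, the vertex $(b,n{+}2{-}b)$ covers $\bfx_h$ for $h>b$ (covered by $a$), $\bfy_h$ for $h<b$ (covered by $c$), and $\bfz_h$ for $h\leq b$ (covered by $c$). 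Removing $(b,n{+}2{-}b)$ preserves the c-face property, contradicting minimality of $\mcC$. The analogous comparisons handle antidiagonals $n{+}3$ and $n{+}4$, and an even easier version gives uniqueness on antidiagonal $n{+}1$.

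Items (6)--(8) are established by selecting an extremal vertex on the relevant antidiagonal and isolating one specific generator that forces the desired second vertex. For (6), assume $(1,n)\notin\mcC$ and let $(i_1,j_1)$ be the unique vertex of $\mcC$ on antidiagonal $n{+}1$ (from item~(2)); note $i_1\geq 2$. Consider $\bfy_{i_1}$: its antidiagonal-$(n{+}1)$ part is $\{(i,n{+}1{-}i) : 1 \leq i \leq i_1 - 1\}$, which misses $(i_1,j_1)$ and contains no other vertex of $\mcC$, so $\bfy_{i_1}$ must be covered by its antidiagonal-$(n{+}2)$ part $\{(i,n{+}2{-}i) : i_1+1 \leq i \leq n+1\}$, yielding the required $(k_1,l_1)$ with $k_1 > i_1$. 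A symmetric argument with $(i_2,j_2)$ of minimal row index on antidiagonal $n{+}2$ and the generator $\bfx_{i_2}$ gives (7), and applying the same idea to $(i_3,j_3)$ with minimal row index on antidiagonal $n{+}3$ and the generator $\bfz_{i_3}$ gives (8); the two alternatives in (8) correspond to the two antidiagonals ($n{+}2$ and $n{+}4$) on which $\bfz_{i_3}$ has support outside antidiagonal $n{+}3$.

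The content of the lemma is almost entirely bookkeeping, so I do not anticipate a single ``hard step''; the main obstacle is merely keeping the row-index conventions consistent (taking care of the boundary cases $i=1$ and $i=n+1$, and of the different index ranges $h\geq 2$ for $\bfx_h,\bfy_h$ versus $h\geq 3$ for $\bfz_h$) while invoking minimality.
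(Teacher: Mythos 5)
Your proof is correct and follows essentially the same strategy as the paper's: exploiting the coverage structure of the generators on each antidiagonal together with minimality of c-facets, using the pure antidiagonal generators $\bfy_{n+1},\bfx_{n+1},\bfx_2$ for the lower bounds and the middle-vertex redundancy argument for the upper bounds in (2)--(5), and selecting $\bfy_{i_1},\bfx_{i_2},\bfz_{i_3}$ for (6)--(8). The only point worth filling in is that in (7) the ``symmetric'' argument requires $i_2\leq n$ so that $\bfx_{i_2}$ has a nonempty antidiagonal-$(n{+}3)$ part; the paper notes that $i_2=n+1$ is ruled out by comparing against $\bfx_{n+1}$, which you should state explicitly rather than leave as a boundary case to be checked.
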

\begin{proof}
All statements will follow inspecting the generators 
\eqref{EqMonomialXh}, \eqref{EqMonomialYh}, \eqref{EqMonomialZh}
and using the fact that a c-facet is a minimal subset  $\mathcal{C}\subseteq V$ satisfying condition \eqref{EqCharacterizationCFaces}, that is, $\gcd(w_\mcC, \bfu) \ne 1$
 for every  generator 
$\bfu \in \mfK$.
Thus, it might be helpful to keep Example \ref{ExampleMonomialsN5} at hand while going through this proof.

Item (1) is immediate, since any vertex $(i,j)$ appearing in a generator
lies in one of the four antidiagonals $n+1 \leq i+j \leq n+4$.
Moreover, from the three  coprime monomials 
$$
\bfx_2= 
\prod_{i=3}^{n+1} w_{i,n+3-i}, \quad
\bfx_{n+1}= 
\prod_{i=2}^{n}w_{i,n+2-i}, \quad
\bfy_{n+1} = 
\prod_{i=1}^{n}w_{i,n+1-i}
$$
we see
 that $\mcC$ contains at least one vertex
$(i,j)$ for each antidiagonal $i+j = n+1,n+2, n+3$. 

Any generator containing a vertex  $(i,j)$ in  the antidiagonal $i+j = n+1$  is of the form \eqref{EqMonomialYh} and contains a top-right segment of the antidiagonal. 
It follows that if a subset $\mcD \subseteq V$ satisfies \eqref{EqCharacterizationCFaces} and has more than one vertex on the antidiagonal 
$i+j =n+1$,
removing  the lower vertex will preserve condition \eqref{EqCharacterizationCFaces}.
By minimality of c-facets, item (2)  must hold.

Any generator containing a vertex  $(i,j)$ in  the antidiagonal $i+j = n+2$  contains either a top-right segment of the antidiagonal, 
if it is of the form \eqref{EqMonomialXh}, or a bottom-left segment  of the antidiagonal,
if it is of the form  \eqref{EqMonomialYh} or \eqref{EqMonomialZh}.
If a subset $\mcD \subseteq V$ satisfies \eqref{EqCharacterizationCFaces} and has more than two vertices on the antidiagonal 
$i+j =n+2$,
removing an intermediate vertex  will preserve condition \eqref{EqCharacterizationCFaces}, and thus (3) holds.
Items (4) and (5) are proved analogously.

Now, assume $(1,n)\notin \mcC$.
By item (2), there is a vertex
 $(i_1,j_1) \in \mcC$ such that  $i_1+j_1 = n+1$,
and therefore  $2 \leq i_1 \leq n$.
Consider the generator  $\bfy_{i_1} = 
\prod_{i=1}^{i_1-1}w_{i,n+1-i}
\prod_{i=i_1+1}^{n+1} w_{i,n+2-i}.
$
By \eqref{EqCharacterizationCFaces}, $\mcC$ contains a vertex $(k_1,l_1)$
appearing in $\bfy_{i_1}$.
By item (2), this vertex  cannot appear in  $\prod_{i=1}^{i_1-1}w_{i,n+1-i}$,
so $k_1 >i_1$ and $k_1 + l_1 = n+2$.
It is straightforward to check that 
every monomial \eqref{EqMonomialYh} contains at least one of $(i_1, j_1), (k_1, l_1)$,
so item (6) is proved.

Let $(i_2,j_2) \in \mcC$ be such that  $i_2+j_2 = n+2$ and with the least possible $i_2$.
Considering $\bfx_{n+1}$ we may assume that $i_2 \leq n$,
and, since  $i_2+j_2 = n+2$, we also have $i_2 \geq 2$.
Item (7) is now proved in the same way as (6), using the monomial
$\bfx_{i_2}
=\prod_{i=2}^{i_2-1}w_{i,n+2-i}
\prod_{i=i_2+1}^{n+1} w_{i,n+3-i}
$.
Finally,
item (8) is also proved in the same way,
 using 
$\bfz_{i_3} = 
\prod_{i=3}^{i_3-1} w_{i,n+3-i}
\prod_{i=i_3}^{n+1} w_{i,n+2-i}
\prod_{i=i_3+1}^{n+1} w_{i,n+4-i}.
$
\end{proof}

\begin{prop}\label{PropositionDeltaCodimension4}
The simplicial complex $\Delta$ has codimension 4.
\end{prop}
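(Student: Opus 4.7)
The plan is to prove $\codim \Delta = 4$ by establishing both inequalities. First I would recall that, since the minimal primes of the Stanley-Reisner ideal $\mfK$ are in bijection with the facets of $\Delta$ (each such prime being generated precisely by the variables indexed by a c-facet), the codimension of $\Delta$ equals the minimum cardinality of a c-facet.

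For the upper bound $\codim \Delta \leq 4$, I would invoke Proposition \ref{PropositionLeadingMonomial}, which exhibits each generator of $\mfK$ as the leading monomial of an element of $\mfI$. This yields the inclusion $\mfK \subseteq \iin(\mfI)$, whence
\[
\codim \mfK \;\leq\; \codim \iin(\mfI) \;=\; \codim \mfI \;=\; \codim \sqrt{\mfI} \;=\; 4,
\]
using that Gr\"obner degenerations preserve Krull dimension, together with Corollary \ref{CorollaryXIrreducibleDimension}.

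The combinatorial heart of the argument is the lower bound, which amounts to ruling out c-facets of cardinality at most $3$. Items (2)--(4) of Lemma \ref{LemmaPropertiesCFacets} already force any c-facet $\mcC$ to meet each of the antidiagonals $i+j = n+1, n+2, n+3$, so $|\mcC| \geq 3$. To exclude $|\mcC| = 3$, I plan to observe that item (5) then forces $\mcC$ to consist of exactly one vertex on each of those three antidiagonals and none on $i+j = n+4$. Writing $(i_2,j_2)$ and $(i_3,j_3)$ for the unique vertices of $\mcC$ on antidiagonals $n+2$ and $n+3$, item (7) produces a vertex of $\mcC$ on antidiagonal $n+3$ with row index strictly greater than $i_2$; this vertex must be $(i_3,j_3)$, giving $i_3 > i_2$. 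Item (8) then demands a vertex of $\mcC$ either on antidiagonal $n+4$ (impossible by assumption) or on antidiagonal $n+2$ with row index at least $i_3$, which forces $(k_3,l_3) = (i_2,j_2)$ and hence $i_2 \geq i_3$, contradicting $i_3 > i_2$.

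I do not expect any serious obstacle: both inequalities rely on results already in hand, and the combinatorial exclusion is a short uniform case analysis. The only mild subtlety is recognising that no split on whether $(1,n) \in \mcC$ is needed, since items (7) and (8) of Lemma \ref{LemmaPropertiesCFacets} apply in either situation; the heaviest lifting has been done in the preparatory lemma.
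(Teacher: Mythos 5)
Your proposal is correct and follows essentially the same route as the paper: the upper bound via $\mfK \subseteq \iin(\mfI)$ combined with Corollary~\ref{CorollaryXIrreducibleDimension}, and the lower bound by ruling out three-element c-facets using items (2)--(4), (7), (8) of Lemma~\ref{LemmaPropertiesCFacets} and arriving at the same contradiction $i_3 > i_2 \geq i_3$. The only cosmetic differences are the insertion of the (true but unneeded) step $\codim\mfI = \codim\sqrt{\mfI}$ in the upper-bound chain and the invocation of item (5), which is already implied by $|\mcC|=3$ together with items (2)--(4).
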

\begin{proof}
 By Proposition \ref{PropositionLeadingMonomial}, we have  $\mfK \subseteq \iin(\mfI)$,
 and, combining with  Corollary \ref{CorollaryXIrreducibleDimension}, we obtain  
 $$
 \codim\, \Delta = \codim\, \mfK \leq \codim\, \iin(\mfI) = \codim\, \mfI = \codim\, \mfX= 4.
 $$
For the other inequality, 
it suffices to show that the cardinality of every c-facet  of $\Delta $ is  at least 4.
This follows from Lemma \ref{LemmaPropertiesCFacets} (2), (3), (4), (7), (8).
In facts, by items (2), (3), and (4), every c-facet contains at least 3 vertices.
Assume by contradiction that $\mathcal{C}$ is a c-facet with $|\mcC|=3$.
This implies that $\mathcal{C}$ contains exactly one element on each of the three antidiagonals $n+1 \leq i+j \leq n+3$, and no element on the antidiagonal $i+j = n+4$.
By item (7), there are $(i_2, j_2) , (k_2, l_2) \in \mcC$ such that $i_2+j_2 = n+2, k_2+l_2=n+3$, and $k_2> i_2$.
By item (8), there are $(i_3,j_3), (k_3, l_3) \in \mcC$ such that $i_3+j_3=n+3, k_3+l_3 = n+2$, and $k_3 \geq i_3$.
Note that we must have  $(k_2, l_2)=(i_3,j_3)$ and $(i_2, j_2)=(k_3, l_3)$, but this implies the contraditcion $i_3 = k_2 >i_2 = k_3 \geq i_3$.
\end{proof}

Our next goals are  to  prove that $\Delta$ is pure,
i.e., that all  facets have the same dimension, 
 and to determine the number of facets.
 Clearly, it suffices to prove the corresponding statements for the c-facets of $\Delta$.

\begin{prop}\label{PropEnumerationCFacetsLastColumn}
Let $\mcC$ be a c-facet of $\Delta$ such that $(i,n) \in \mcC$ for some $i$.
Then $|\mcC|=4$.
Moreover, the number of such c-facets is  $(n-1)^2n$.
\end{prop}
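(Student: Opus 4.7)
The plan is to partition the c-facets $\mcC$ containing some last-column vertex $(i,n)$ according to the value
$i_0 := \min\{i : (i,n) \in \mcC\}$; by Lemma \ref{LemmaPropertiesCFacets}(1) one has $i_0 \in \{1,2,3,4\}$. The starting point is an explicit inspection of which generators of $\mfK$ each of the four relevant variables $w_{1,n}, w_{2,n}, w_{3,n}, w_{4,n}$ divides: $w_{1,n}$ divides every $\bfy_h$; $w_{2,n}$ divides $\bfx_h$ for $h \geq 3$; $w_{3,n}$ divides $\bfx_2$ and $\bfz_h$ for $h \geq 4$; and $w_{4,n}$ divides $\bfz_3$ alone. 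Combined with Lemma \ref{LemmaPropertiesCFacets} and Proposition \ref{PropositionDeltaCodimension4}, this restricts the analysis to the three distributions $(1,1,1,1), (1,2,1,0), (1,1,2,0)$ of vertices across the antidiagonals $n+1, \ldots, n+4$, the only ones compatible with $|\mcC|=4$.

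For $i_0 = 1$, the vertex $(1,n)$ single-handedly covers every $\bfy_h$ and each of the three distributions is realized; direct parametrization and counting yields $\binom{n}{3} + \binom{n+1}{3} + \binom{n}{3} = \frac{1}{2}n(n-1)^2$ c-facets, while a routine minimality check rules out configurations with $5$ or more vertices. For $i_0 = 2$, items (6) and (7) of Lemma \ref{LemmaPropertiesCFacets} force the distribution $(1,2,1,0)$, with a second vertex on $n+2$ needed to cover $\bfy_2$; parametrizing by triples $2 \leq i_1 < k_1 \leq n+1$ and $3 \leq b \leq k_1$ gives $\sum_{k_1=3}^{n+1}(k_1-2)^2 = \frac{1}{6}n(n-1)(2n-1)$ c-facets. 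For $i_0 = 3$, item (7) eliminates every distribution with a unique $n+3$-vertex, as that vertex would be $(3,n)$ with row $3$, while the minimal $n+2$-row must satisfy $a \geq 3$; only $(1,1,2,0)$ survives, and after parametrizing by $2 \leq i_1 < a$ and $3 \leq a < b_2 \leq n+1$, the hockey-stick identity gives $\binom{n}{3}$. For $i_0 = 4$, the vertex $(4,n)$ would only serve to cover $\bfz_3$, but this generator is automatically covered by any $(a, n+2-a) \in \mcC$ with $a \geq 3$; since the hypothesis $(2,n) \notin \mcC$ already forces $a \geq 3$, the vertex $(4,n)$ is redundant and this case is vacuous.

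Summing the contributions gives
\[
\frac{1}{2}n(n-1)^2 + \frac{1}{6}n(n-1)(2n-1) + \frac{1}{6}n(n-1)(n-2) = \frac{n(n-1)}{6}\bigl[3(n-1) + (2n-1) + (n-2)\bigr] = n(n-1)^2,
\]
simultaneously establishing $|\mcC|=4$ and the claimed enumeration. The main obstacle I anticipate is the careful minimality verification within each subcase: for every proposed $4$-element configuration one must exhibit, vertex by vertex, a generator of $\mfK$ that fails to be covered upon removal, and likewise show that no $5$-vertex configuration compatible with the distributional constraints can be minimal. Once this bookkeeping is in place, the counting itself reduces to elementary binomial summation.
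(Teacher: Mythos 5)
Your decomposition by the minimal last-column row index $i_0 \in \{1,2,3,4\}$ is exactly the paper's case split, your distribution bookkeeping resolves into the same subcases, and the per-case counts agree with the paper's after applying Pascal's rule (e.g., the paper's Case~1 total $\binom{n-1}{3}+\binom{n-1}{2}+\binom{n}{3}+\binom{n}{2}+\binom{n}{3}$ equals your $\binom{n}{3}+\binom{n+1}{3}+\binom{n}{3}$, and your $\sum_{k_1=3}^{n+1}(k_1-2)^2$ in Case~2 equals the paper's $\binom{n+1}{3}+\binom{n}{3}$). The minimality verification you correctly flag as the remaining work — exhibiting in each subcase a four-element subset of $\mcC$ already satisfying \eqref{EqCharacterizationCFaces} so that minimality forces equality, and deriving constraints such as $i_1 < i_2$ in Case~3 — is precisely what the paper's proof supplies, so your plan, once that bookkeeping is done, reproduces it.
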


\begin{proof}
By Lemma \ref{LemmaPropertiesCFacets} (1), the possible values of $i$ such that 
$(i,n) \in \mcC$ are $i = 1, \ldots, 4$.
We prove the proposition by analyzing four different possibilities.
In each case, we consider the vertices $(i_1,j_1), (k_1,l_1), (i_2,j_2), (k_2,l_2), (i_3,j_3), (k_3, l_3) \in \mcC$ as in  the statements of Lemma \ref{LemmaPropertiesCFacets}
(note that these six vertices are not necessarily distinct).
For the purposes of condition \eqref{EqCharacterizationCFaces},
the vertices $ (i_2,j_2), (k_2,l_2)$ cover all generators \eqref{EqMonomialXh},
$(i_3,j_3), (k_3, l_3)$ cover all generators \eqref{EqMonomialZh},
and either $(1,n)$ or the vertices $(i_1,j_1), (k_1,l_1)$ cover all generators \eqref{EqMonomialYh}.

\underline{Case 1: $(1,n) \in \mcC$.}
Suppose $i_2 \geq i_3$, then we may assume that $(k_3, l_3) = (i_2, j_2)$.
It follows that $\mcC'=\{(1,n), (i_2, j_2),  (i_3, j_3), (k_2, l_2)\}\subseteq \mcC$ satisfies \eqref{EqCharacterizationCFaces}, 
so, by minimality, we deduce that $\mcC= \mcC'$.
The c-facets of this form correspond to triples $(i_2, i_3, k_2)$ such that 
$3 \leq i_3 \leq i_2 < k_2  \leq n+1$, and therefore their number is  
$
{n-1 \choose 3} +{n-1 \choose 2}.
$

Now, suppose $i_2 < i_3$, then we may assume that $(k_2, l_2) = (i_3, j_3)$.
As before,
  we conclude that
$\mcC = \{(1,n), (i_2, j_2), (i_3, j_3), (k_3, l_3) \}$. 
There are two classes of c-facets of this form: 
those with $k_3 + l_3 = n+2$ correspond to triples $(i_2, i_3, k_3)$ such that
$2 \leq i_2 < i_3 \leq k_3\leq n+1 $, 
while those with $k_3 + l_2 = n+4$ correspond to triples $(i_2, i_3, k_3)$ such that
$2 \leq i_2 < i_3 < k_3\leq n+1 $.
The number of these c-facets in the two classes is respectively   
$
{n \choose 3} +{n \choose 2}
$
and 
$
{n \choose 3}.
$

\underline{Case 2: $(1,n) \notin \mcC, (2,n) \in \mcC$.}
Observe that $i_1 \geq 2$ and $i_2 = 2$.
Since $i_2 < i_3$, we may assume $(k_2, l_2) = (i_3, j_3)$.
We claim that  $k_1 \geq i_3$:
if $k_1 < i_3 $, then   $(k_1,l_1), (i_3, j_3)$ cover  all generators \eqref{EqMonomialXh},
so we may remove  $(2,n)$ from $\mcC$ and still obtain a c-face by \eqref{EqCharacterizationCFaces}, contradiction.
Since   $k_1 \geq i_3$,  we may assume $(k_3, l_3) = (k_1, l_1)$,
and we conclude $\mcC = \{(2,n),(i_1,j_1),  (i_3, j_3), (k_1, l_1) \}$.
The c-facets of this form correspond to triples $(i_1, i_3, k_1)$ such that 
$2 \leq i_1 < k_1 \leq n+1$ and $3 \leq i_3 \leq k_1 \leq n+1$. 
Distinguishing whether $i_1 < i_3$, $i_1 = i_3$, or $i_1 > i_3$ we see that the number of c-facets is  
$
\left({n \choose 3} + {n \choose 2} \right) + {n-1 \choose 2} + {n-1 \choose 3}
$.

\underline{Case 3: $(1,n), (2,n) \notin \mcC, (3,n) \in \mcC$.}
Observe that $i_1 \geq 2, i_2 \geq 3, i_3 = 3$.
We claim that  $i_1 < i_2$.
If $i_1 \geq i_2$ and   $k_1 \geq k_2$, then $(k_1,l_1), (k_2,k_2)$ cover all 
 generators \eqref{EqMonomialZh},
 so we may remove $(3,n)$ from $\mcC$ and still obtain a c-face, contradiction.
 If $i_1 \geq i_2$ and   $k_1 < k_2$, then 
$(3,n), (k_1,l_1)$ cover all  generators \eqref{EqMonomialZh}, while 
$(k_1, l_1), (k_2, l_2)$ cover all  generators \eqref{EqMonomialXh},
so we may remove $(i_2,j_2)$ from $\mcC$ and still obtain a c-face, contradiction.
Thus, we have  $i_1 < i_2 $, and therefore  we can assume that $(k_1, l_1) = (k_3, l_3) =(i_2, j_2)$ and 
conclude that $\mcC = \{ (3,n),(i_1, j_1), (i_2, j_2), (k_2,l_2)\}$.
The c-facets of this form correspond to triples $(i_1, i_2, k_2)$ such that 
$2 \leq i_1 < i_2 <  k_2\leq n+1$,  and their number is  
$
{n \choose 3}.
$

\underline{Case 4: $(1,n), (2,n), (3,n) \notin \mcC, (4,n) \in \mcC$.}
The vertex $(4,n) $ appears only in $\bfz_3$. 
By minimality,  $\mcC$ does not contain any other vertex appearing in 
$\bfz_3$,
 otherwise we could remove $(4,n)$ from $\mcC$ and still obtain a c-face.
Comparing with $\bfx_{n+1}$, we see that  $\mcC$ must contain $(2,n)$, contradiction.
Thus, there are no c-facets in this case.

In conclusion, 
we have proved that $|\mcC|=4$ in all cases, and  the total number of c-facets is
$$
{n-1 \choose 3} +{n-1 \choose 2}+
{n \choose 3} +{n \choose 2}+
{n \choose 3}+
{n \choose 3} + {n \choose 2} + {n-1 \choose 3} +{n-1 \choose 2}+
{n \choose 3} 
= (n-1)^2n. \qedhere
$$ 
\end{proof}

Since $\Delta$ depends only on the  parameter $n$,
it  lends itself   to inductive arguments.
In the rest of the section, 
we use the superscript ${(n)}$ to emphasize the dependence on $n$.
For example, we denote 
the simplicial complex by $\Delta^{(n)}$,
the Stanley-Reisner ideal by $\mfK^{(n)}$, and 
the vertex set by $V^{(n)}$.

There is a map  $\sd: V^{(n-1)} \rightarrow V^{(n)}$  defined by
$\sd(i,j) = (i+1,j)$,
in other words, by  shifting each entry in the matrix one step down.
It  induces maps for subsets of vertices  and for monomials, 
which we also denote  by $\sd$.

\begin{lemma}\label{LemmaShiftingGenerators}
For each generator  $\bfu\in \mfK^{(n)}$,
there exists a  generator $\bfv\in\mfK^{(n-1)}$ such that 
$\sd(\bfv)$  divides $\bfu$.
For each generator $\bfv\in\mfK^{(n-1)}$,
there exists a  generator $\bfu\in \mfK^{(n)}$
and a variable $w_{i,n}$ such that 
 $\bfu = \sd(\bfv) \cdot w_{i,n}$.
\end{lemma}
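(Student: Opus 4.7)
The plan is to reduce both assertions to explicit divisibility identities between squarefree monomials, obtained by reindexing the products in \eqref{EqMonomialXh}--\eqref{EqMonomialZh}. The key observation is that $\sd$ sends the antidiagonal $i+j = p$ in $V^{(n-1)}$ to the antidiagonal $i+j = p+1$ in $V^{(n)}$. Consequently, the supports of $\bfx_h^{(n-1)}, \bfy_h^{(n-1)}, \bfz_h^{(n-1)}$, after shifting, land on antidiagonals $\{n+2, n+3\}$, $\{n+1, n+2\}$, and $\{n+2, n+3, n+4\}$ of $V^{(n)}$, matching the support structure of the corresponding three families of generators of $\mfK^{(n)}$.

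First I would prove the second assertion. A routine reindexing $k = i+1$ in each product yields the identities
\[
\bfx_{h+1}^{(n)} = w_{2,n}\cdot \sd(\bfx_h^{(n-1)}),\quad
\bfy_{h+1}^{(n)} = w_{1,n}\cdot \sd(\bfy_h^{(n-1)}),\quad
\bfz_{h+1}^{(n)} = w_{3,n}\cdot \sd(\bfz_h^{(n-1)}),
\]
valid for each $h$ in the allowed range. This settles the second assertion with the choices $(\bfu, w_{i,n}) = (\bfx_{h+1}^{(n)}, w_{2,n})$, $(\bfy_{h+1}^{(n)}, w_{1,n})$, or $(\bfz_{h+1}^{(n)}, w_{3,n})$, respectively.

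For the first assertion, those same identities already cover every generator $\bfu \in \mfK^{(n)}$ except the three boundary cases $\bfx_2^{(n)}$, $\bfy_2^{(n)}$, and $\bfz_3^{(n)}$ (the generators not of the form $\bfx_{h+1}^{(n)}, \bfy_{h+1}^{(n)}, \bfz_{h+1}^{(n)}$ for some $h$ in the relevant range of $\mfK^{(n-1)}$). For these I would produce $\bfv$ by direct inspection of supports, verifying
\[
\sd(\bfx_2^{(n-1)})\mid \bfx_2^{(n)},\qquad
\sd(\bfx_n^{(n-1)})\mid \bfy_2^{(n)},\qquad
\sd(\bfz_3^{(n-1)})\mid \bfz_3^{(n)}.
\]
The only slightly subtle choice is the middle one. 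The naive attempt $\bfv = \bfy_h^{(n-1)}$ cannot work, since its support always contains vertices on antidiagonal $i+j=n$ of $V^{(n-1)}$, which shift to antidiagonal $n+1$ of $V^{(n)}$; but the only vertex of $\bfy_2^{(n)}$ on that antidiagonal is $(1,n)$, which lies outside the image of $\sd$. The fix is to take instead $\bfv = \bfx_n^{(n-1)} = \prod_{i=2}^{n-1} w_{i,n+1-i}$, whose support lies entirely on antidiagonal $n+1$ of $V^{(n-1)}$, so that $\sd(\bfv) = \prod_{k=3}^{n} w_{k,n+2-k}$ fits cleanly inside the bottom portion of $\bfy_2^{(n)}$. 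No conceptual obstacle is anticipated: the entire argument is a finite case analysis checking divisibility relations between explicit squarefree monomials.
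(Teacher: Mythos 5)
Your proposal is correct and follows essentially the same route as the paper: the paper's proof consists precisely of the three reindexing identities $\bfx_{h+1}^{(n)} = \sd(\bfx_h^{(n-1)})\,w_{2,n}$, $\bfy_{h+1}^{(n)} = \sd(\bfy_h^{(n-1)})\,w_{1,n}$, $\bfz_{h+1}^{(n)} = \sd(\bfz_h^{(n-1)})\,w_{3,n}$ together with the three boundary identities $\bfx_2^{(n)} = \sd(\bfx_2^{(n-1)})\,w_{3,n}$, $\bfy_2^{(n)} = \sd(\bfx_n^{(n-1)})\,w_{1,n}w_{n+1,1}$, $\bfz_3^{(n)} = \sd(\bfz_3^{(n-1)})\,w_{3,n-1}w_{4,n}$, all of which your divisibility claims agree with. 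Your remark explaining why $\bfv = \bfy_h^{(n-1)}$ cannot serve as the divisor of $\bfy_2^{(n)}$ is a correct and useful observation that the paper leaves implicit.
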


\begin{proof}
The lemma follows by checking the equations
\begin{align*}
\bfx_{2}^{(n)} & = \sd\big(\bfx_{2}^{(n-1)}\big)\cdot w_{3,n},
&
\bfx_{h+1}^{(n)} &= \sd\big(\bfx_h^{(n-1)}\big)\cdot w_{2,n}
\quad\text{ for all} \quad h = 2, \ldots, n,\\
\bfy_{2}^{(n)} &= \sd\big(\bfx_{n}^{(n-1)}\big)\cdot w_{1,n}w_{n+1,1},
&
\bfy_{h+1}^{(n)} &= \sd\big(\bfy_h^{(n-1)}\big)\cdot w_{1,n}
\quad\text{ for all}\quad h = 2, \ldots, n,\\
\bfz_{3}^{(n)} &= \sd\big(\bfz_3^{(n-1)}\big)\cdot w_{3,n-1}w_{4,n},
&
\bfz_{h+1}^{(n)} &= \sd\big(\bfz_h^{(n-1)}\big)\cdot w_{3,n} 
\quad\text{ for all} \quad h = 3, \ldots, n. \qedhere
\end{align*}
\end{proof}

\begin{cor}\label{CorollaryBijectionDLRectangle}
There is an inclusion-preserving bijection
$$
\sd: \Big\{ \text{c-faces of } \Delta^{(n-1)}\Big\}
\rightarrow
\Big\{ \text{c-faces } \mcC \text{ of } \Delta^{(n)} \, \big| \, \mcC
\subseteq \{2, \ldots, n+1\} \times [n-1] \Big\}.
$$
\end{cor}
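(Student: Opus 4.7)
The plan is to verify both directions of the claimed bijection directly from the characterization \eqref{EqCharacterizationCFaces} of c-faces, using Lemma \ref{LemmaShiftingGenerators} as the key input that relates generators of $\mfK^{(n-1)}$ and $\mfK^{(n)}$. The map $\sd$ on vertices is clearly injective and inclusion-preserving, and its image is exactly $\{2,\ldots,n+1\}\times[n-1]$, so the whole content of the statement is that $\sd$ sends c-faces to c-faces and that every c-face of $\Delta^{(n)}$ contained in the shifted rectangle comes from one.

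For the forward direction, I would let $\mcC$ be a c-face of $\Delta^{(n-1)}$ and pick an arbitrary generator $\bfu\in\mfK^{(n)}$. By Lemma \ref{LemmaShiftingGenerators}, there is a generator $\bfv\in\mfK^{(n-1)}$ with $\sd(\bfv)\mid\bfu$. By \eqref{EqCharacterizationCFaces} applied in $\Delta^{(n-1)}$, some vertex of $\mcC$ occurs in $\bfv$, hence its image under $\sd$ occurs in $\sd(\bfv)$ and therefore in $\bfu$; this gives $\gcd(w_{\sd(\mcC)},\bfu)\neq 1$, so $\sd(\mcC)$ is a c-face of $\Delta^{(n)}$.

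For the backward direction, suppose $\mcD$ is a c-face of $\Delta^{(n)}$ with $\mcD\subseteq\{2,\ldots,n+1\}\times[n-1]$, and set $\mcC=\sd^{-1}(\mcD)\subseteq V^{(n-1)}$. Given any generator $\bfv\in\mfK^{(n-1)}$, Lemma \ref{LemmaShiftingGenerators} produces a generator $\bfu = \sd(\bfv)\cdot w_{i,n}\in\mfK^{(n)}$ for some $i$. Since $\mcD$ is a c-face of $\Delta^{(n)}$, some vertex of $\mcD$ divides $\bfu$; the only vertex of $\bfu$ outside the shifted rectangle is $(i,n)$, which cannot lie in $\mcD$, so that vertex in fact appears in $\sd(\bfv)$. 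Pulling back by $\sd$ exhibits a vertex of $\mcC$ occurring in $\bfv$, verifying the c-face condition for $\mcC$ in $\Delta^{(n-1)}$.

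Finally, $\sd$ on c-faces is inverted by $\sd^{-1}$ on subsets of the rectangle, and both directions preserve inclusion since $\sd$ is a bijection on vertices. There is essentially no obstacle here: the only subtlety is ensuring in the backward direction that the extra vertex $w_{i,n}$ from Lemma \ref{LemmaShiftingGenerators} cannot be the one providing the common factor, which is guaranteed by the hypothesis $\mcD\subseteq\{2,\ldots,n+1\}\times[n-1]$. This is precisely why the codomain is restricted to c-faces supported in the shifted rectangle.
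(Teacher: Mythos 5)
Your proof is correct and follows essentially the same approach as the paper: both directions of the bijection are verified via the characterization \eqref{EqCharacterizationCFaces} of c-faces, using the two statements of Lemma \ref{LemmaShiftingGenerators} exactly as the paper does (the first for the forward direction, the second for surjectivity), with the only added value being that you spell out the key observation that the extra vertex $(i,n)$ in $\bfu = \sd(\bfv)\cdot w_{i,n}$ cannot lie in a subset of the shifted rectangle.
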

\begin{proof}
The fact that $\sd$ sends c-faces of $\Delta^{(n-1)}$ to c-faces of $\Delta^{(n)}$
follows from 
\eqref{EqCharacterizationCFaces} and the first statement of Lemma \ref{LemmaShiftingGenerators}.
Clearly, the map is injective and  preserves inclusions.
Surjectivity follows from 
\eqref{EqCharacterizationCFaces} and the second statement of Lemma \ref{LemmaShiftingGenerators}.
\end{proof}

\begin{thm}\label{TheoremSimplicialComplex}
The  complex $\Delta$ is pure of codimension 4, 
and it has $ \frac{(n-1)n(n+1)(3n-2)}{12}$ facets.
\end{thm}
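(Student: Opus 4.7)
The plan is to induct on $n \geq 2$, splitting the c-facets of $\Delta^{(n)}$ according to whether they meet the last column $\{(i,n) : 1 \leq i \leq n+1\}$. Since Proposition~\ref{PropositionDeltaCodimension4} already gives $\codim \Delta = 4$, I only need to upper-bound the cardinality of each c-facet by $4$ and count them; purity and the enumeration formula will then drop out simultaneously.

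For the base case $n=2$, I will compute $\mfK^{(2)}$ directly: the generators \eqref{EqMonomialXh}, \eqref{EqMonomialYh}, \eqref{EqMonomialZh} reduce to $w_{2,2}$, $w_{3,2}$, $w_{3,1}$, $w_{1,2}w_{2,1}$ (after absorbing $w_{1,2}w_{3,1}$ into $w_{3,1}$), yielding exactly two height-$4$ minimal primes; this matches $\tfrac{1 \cdot 2 \cdot 3 \cdot 4}{12} = 2$. For the inductive step with $n \geq 3$, I partition the c-facets of $\Delta^{(n)}$ into two classes. Those meeting the last column are handled by Proposition~\ref{PropEnumerationCFacetsLastColumn}, which pins down both their cardinality (equal to $4$) and their number (equal to $(n-1)^2 n$). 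For the c-facets disjoint from the last column, Lemma~\ref{LemmaPropertiesCFacets}(1) forces them into the rectangle $\{2,\ldots,n+1\} \times [n-1] = \sd(V^{(n-1)})$, since every vertex $(i,j)$ with $j \leq n-1$ on one of the antidiagonals $i+j \in \{n+1,\ldots,n+4\}$ automatically satisfies $i \geq 2$. Corollary~\ref{CorollaryBijectionDLRectangle} then transports them bijectively and cardinality-preservingly to c-facets of $\Delta^{(n-1)}$, to which the induction hypothesis applies, producing $F(n-1)$ c-facets each of size $4$.

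Combining the two classes yields the recursion $F(n) = F(n-1) + (n-1)^2 n$ with $F(2) = 2$, which telescopes to
\begin{equation*}
F(n) = \sum_{k=2}^{n}(k-1)^2 k = \sum_{k=1}^{n-1}(k^3+k^2) = \frac{(n-1)^2 n^2}{4} + \frac{(n-1)n(2n-1)}{6} = \frac{(n-1)n(n+1)(3n-2)}{12},
\end{equation*}
completing the induction. The only delicate point is verifying in class (b) that $\sd$ sends c-facets of $\Delta^{(n-1)}$ to c-facets of $\Delta^{(n)}$, not merely to c-faces; but this works in both directions, because any proper c-subset of an element of class (b) still lies in the rectangle $\{2,\ldots,n+1\} \times [n-1]$, so minimality among all c-faces of $\Delta^{(n)}$ coincides with minimality within $\sd(V^{(n-1)})$, which coincides with minimality in $\Delta^{(n-1)}$ via Corollary~\ref{CorollaryBijectionDLRectangle}. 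Beyond this compatibility, everything is routine bookkeeping.
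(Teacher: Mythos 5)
Your proof is correct and follows essentially the same route as the paper's: both partition the c-facets according to whether they meet the last column, handle the first class via Proposition~\ref{PropEnumerationCFacetsLastColumn}, transport the second class to $\Delta^{(n-1)}$ via Corollary~\ref{CorollaryBijectionDLRectangle}, and telescope the resulting recursion $F(n)=F(n-1)+(n-1)^2 n$. The one place you are more explicit than the paper — checking that the inclusion-preserving bijection of Corollary~\ref{CorollaryBijectionDLRectangle} sends c-facets to c-facets, because the c-faces in the sub-rectangle form a downward-closed subposet — is a genuine (if small) gap in the paper's write-up that your note correctly fills.
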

\begin{proof}
The statement is easy to check for $n=2$, so we assume $n \geq 3$.
By Lemma \ref{LemmaPropertiesCFacets} (1),
for every c-facet $\mcC$ and every $(i,j)\in \mcC$ we have either $j=n$ or $i \geq 2$.
Therefore,
 we may partition the set of c-facets of $\Delta^{(n)}$ in two subsets:
\begin{align*}
C_1^{(n)} &= 
\Big\{ \text{c-facets } \mcC \text{ of } \Delta^{(n)} \, \big| \, \mcC
\subseteq \{2, \ldots, n+1\} \times [n-1] \Big\},
\\
C_2^{(n)} & = \Big\{ \text{c-facets } \mcC \text{ of } \Delta^{(n)} \, \big| \, 
(i,n) \in 
\mcC
\text{ for some } i
\Big\}.
\end{align*}
By Proposition \ref{PropEnumerationCFacetsLastColumn}, all c-facets in 
$C_2^{(n)}$ have size 4. 
The same is true for $C_1^{(n)}$ by induction and Corollary \ref{CorollaryBijectionDLRectangle}.
Thus, $\Delta^{(n)}$ is pure of codimension 4.

Denote the cardinalities by  $c_i^{(n)} = |C_i^{(n)}|$ for $i = 1,2,$
and let $c^{(n)}= \frac{(n-1)n(n+1)(3n-2)}{12}$.
We need to show that $c_1^{(n)} +c_2^{(n)} = c^{(n)}$.
By Proposition \ref{PropEnumerationCFacetsLastColumn}, we have
$c_2^{(n)}=(n-1)^2n$.
By Corollary \ref{CorollaryBijectionDLRectangle} and induction, we have 
$c_1^{(n)}=c^{(n-1)} $.
In conclusion,  the number of c-facets of $\Delta^{(n)}$ is 
\begin{align*}
c_1^{(n)} +c_2^{(n)}  =
\frac{(n-2)(n-1)n(3n-5)}{12}+ (n-1)^2n = c^{(n)}. & \qedhere
\end{align*}
\end{proof}

We are now ready to prove the main result of this section.

\begin{proof}[Proof of Theorem \ref{TheoremIPrime}]
The statement can be verified directly for $n = 2,3$,
for instance using \cite{Macaulay2},
so assume $n \geq 4$.
Since $\V(\sqrt{\mfI})$ is irreducible  by Corollary \ref{CorollaryXIrreducibleDimension},
it suffices to prove that $\mfI = \sqrt{\mfI}$.
Since $\mfK \subseteq \iin(\mfI) \subseteq \iin(\sqrt{\mfI})$
by Proposition \ref{PropositionLeadingMonomial},
 it suffices to prove that $\mfK = \iin(\sqrt{\mfI})$.
In fact,  this  forces the equality $\iin(\mfI) = \iin(\sqrt{\mfI})$, hence, the  Hilbert functions of $\mfI$ and $\sqrt{\mfI}$ must coincide,
 and  the inclusion $\mfI \subseteq \sqrt{\mfI}$ must be an equality.

Consider the inclusion  of monomial ideals $\mfK \subseteq \iin(\sqrt{\mfI})$.
By Theorems \ref{TheoremVarietyMatrices} and \ref{TheoremSimplicialComplex}, 
the two ideals have the same codimension and multiplicity.
It follows from the associativity formula for multiplicities
\cite[Exercise 12.11.e]{Eisenbud} that 
they have the same  set of minimal primes of maximal dimension.
Moreover, $\mfK$ is unmixed, since $\Delta$ is pure,
so every associated prime of $\mfK$ is  an associated prime of $\iin(\sqrt{\mfI})$.
Hence the inclusion  $\mfK \subseteq \iin(\sqrt{\mfI})$ is an equality locally at every associated prime of the smaller ideal $\mfK$, 
and this implies that $\mfK = \iin(\sqrt{\mfI})$.
\end{proof}

On the way to proving  Theorem \ref{TheoremIPrime}, 
we have determined an explicit 
Gr\"obner basis of $\mfI$.

\begin{cor}\label{CorollaryGrobnerBasis}
The ideal $\mfI$ has the following square-free Gr\"obner basis 
$$
\Big\{f_1, \ldots, f_{n+1}, 
F_2, \ldots, F_{n+1}\Big\}\cup
\Big\{f_1 \det \bfW_{(h,\ldots, n),(2,\ldots, n-h+3)}
+
f_2  
    \det \bfW_{(h,\ldots, n),(1,3, \ldots, n-h+3)}\Big\}_{h = 4}^{n+1}
$$
with respect to  the reverse lexicographic order on the opposite ordering of the variables.
\end{cor}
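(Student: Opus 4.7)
The plan is to extract this corollary as an essentially immediate consequence of the work already assembled: Proposition \ref{PropositionLeadingMonomial} has identified the leading monomials of the explicit polynomials in the list, and the proof of Theorem \ref{TheoremIPrime} has shown that these leading monomials already generate the full initial ideal $\iin(\mfI)$.

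More precisely, I would proceed as follows. First I would verify that every polynomial in the displayed set belongs to $\mfI$. The $f_i$ and $F_i$ are generators by \eqref{EqDefinitionIdealI} and Lemma \ref{LemmaDescriptionFi}, and each polynomial of the form $f_1 \det \bfW_{(h,\ldots,n),(2,\ldots,n-h+3)} + f_2 \det \bfW_{(h,\ldots,n),(1,3,\ldots,n-h+3)}$ is an explicit $A$-linear combination of $f_1$ and $f_2$, so it too lies in $\mfI$. Second, I would collect the leading monomials: Proposition \ref{PropositionLeadingMonomial} yields $\LM(f_h)=\bfx_h$ for $h=2,\ldots,n+1$, $\LM(F_h)=\bfy_h$ for $h=2,\ldots,n+1$, $\LM(f_1)=\bfz_{n+1}$, and (after the index shift $h\mapsto h-1$ that converts the range $\{4,\ldots,n+1\}$ of the corollary to the range $\{3,\ldots,n\}$ of the proposition) $\LM(f_1\det\bfW_{\cdots}+f_2\det\bfW_{\cdots})=\bfz_h$ for $h=3,\ldots,n$. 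Thus the leading monomials of the listed polynomials are precisely the generators $\bfx_2,\ldots,\bfx_{n+1}$, $\bfy_2,\ldots,\bfy_{n+1}$, $\bfz_3,\ldots,\bfz_{n+1}$ of the monomial ideal $\mfK$ from \eqref{EqDefinitionMonomialIdealK}.

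Third, I would invoke the equality $\iin(\mfI)=\mfK$, which is established along the way in the proof of Theorem \ref{TheoremIPrime}: the inclusion $\mfK\subseteq\iin(\mfI)$ comes from Proposition \ref{PropositionLeadingMonomial}, and the reverse inclusion follows because $\mfK$ and $\iin(\sqrt{\mfI})$ are shown there to have the same codimension, multiplicity, and minimal primes, forcing $\mfK=\iin(\sqrt\mfI)=\iin(\mfI)$. Since the listed polynomials lie in $\mfI$ and their leading monomials already generate $\iin(\mfI)$, by the standard Buchberger-style characterization \cite[Proposition 15.14]{Eisenbud} they form a Gr\"obner basis of $\mfI$. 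Finally I would note that all of $\bfx_h,\bfy_h,\bfz_h$ are manifestly square-free monomials, so $\iin(\mfI)=\mfK$ is a square-free monomial ideal and the Gr\"obner basis is square-free in the stated sense.

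Since every substantive ingredient is already in place, there is no real obstacle; the only thing requiring care is the bookkeeping of indices between the statement of the corollary and the parametrization used in Proposition \ref{PropositionLeadingMonomial} (and checking that the row/column ranges match up after the shift $h\mapsto h-1$), but this is a clerical verification rather than a genuine difficulty.
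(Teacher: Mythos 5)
Your proof is correct and takes essentially the same approach as the paper's, which simply cites Proposition~\ref{PropositionLeadingMonomial} and the proof of Theorem~\ref{TheoremSimplicialComplex}; you have correctly unpacked the key step, namely that the equality $\iin(\mfI)=\mfK$ established in the proof of Theorem~\ref{TheoremIPrime} (which in turn rests on the purity and facet count in Theorem~\ref{TheoremSimplicialComplex}) converts Proposition~\ref{PropositionLeadingMonomial} into the Gr\"obner basis statement. One small remark: when matching the Corollary's index $h$ against Proposition~\ref{PropositionLeadingMonomial}, the row set of the submatrices in the Corollary as printed reads $(h,\ldots,n)$, which is one entry short of the column set $(2,\ldots,n-h+3)$ and should be $(h,\ldots,n+1)$; your proof is unaffected because you anchor the leading-monomial computation in the Proposition itself, but it is worth flagging the apparent typo rather than treating the bookkeeping as purely clerical.
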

\begin{proof}
It follows from Proposition \ref{PropositionLeadingMonomial} and the proof of Theorem \ref{TheoremSimplicialComplex}.
\end{proof}

Combining with the main result of
\cite{CV},
we obtain another interesting byproduct.

\begin{cor}\label{CorollaryCohenMacaulayComplex}
The  simplicial complex $\Delta$ is Cohen-Macaulay in characteristic 0.
\end{cor}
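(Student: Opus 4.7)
The plan is to leverage the squarefree Gröbner degeneration together with what is already known about $\mfI$. Concretely, by Corollary \ref{CorollaryGrobnerBasis}, we have an explicit squarefree Gröbner basis of $\mfI$, so $\iin(\mfI) = \mfK = I_\Delta$ is a squarefree monomial ideal with respect to the chosen term order. Thus $\Delta$ is Cohen-Macaulay if and only if the Stanley--Reisner ring $A/I_\Delta = A/\iin(\mfI)$ is Cohen-Macaulay (over $\kk$, which we may take to have characteristic $0$).

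First, I would establish that $A/\mfI$ itself is Cohen-Macaulay. By Theorem \ref{TheoremIPrime}, $\mfI$ is prime (in characteristic $0$), so $\V(\mfI) = \mfX$ as schemes. By Theorem \ref{TheoremVarietyMatrices}, $\mfX$ has rational singularities, and hence is Cohen-Macaulay by Lemma \ref{LemmaPropertiesRationalSingularities}(5). Consequently $A/\mfI$ is a Cohen-Macaulay ring.

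Next, I would invoke the main theorem of \cite{CV}, which asserts that if $I$ is a homogeneous ideal admitting a squarefree initial ideal with respect to some term order, then $R/I$ is Cohen-Macaulay if and only if $R/\iin(I)$ is Cohen-Macaulay (more generally, the extremal Betti numbers agree). Applying this to $\mfI \subseteq A$ with the term order of \eqref{EqTermOrderA}, and using that $\iin(\mfI) = \mfK = I_\Delta$ is squarefree together with the Cohen-Macaulayness of $A/\mfI$ established in the previous step, we conclude that $A/I_\Delta$ is Cohen-Macaulay, which by the Reisner criterion (or simply by definition) means that $\Delta$ is a Cohen-Macaulay simplicial complex over $\kk$.

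Since essentially every step is already packaged in the cited results, no step is truly a serious obstacle here; the work was done in the earlier sections. The only thing to check carefully is that the hypotheses of \cite{CV} apply to our situation: that the term order on $A$ is a positive weight order (or can be realized as one by standard perturbation, as is the case for the graded reverse lexicographic order), and that $\mfI$ is indeed homogeneous in the standard grading of $A$. Both are automatic from the setup in Section \ref{SectionLocalEquations}, so the corollary follows immediately.
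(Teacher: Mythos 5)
Your proof is correct and follows essentially the same route as the paper: establish that $A/\mfI$ is Cohen-Macaulay via Theorem \ref{TheoremVarietyMatrices} and Lemma \ref{LemmaPropertiesRationalSingularities}(5), then transfer this to the squarefree initial ideal $\mfK = I_\Delta$ via Corollary \ref{CorollaryGrobnerBasis} and the Conca--Varbaro theorem from \cite{CV}. Your explicit invocation of Theorem \ref{TheoremIPrime} to identify $A/\mfI$ with the coordinate ring of $\mfX$ is a small clarification the paper leaves implicit, but it is the same argument.
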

\begin{proof}
The  ring $A/\mfI$ 
is Cohen-Macaulay
by Theorem \ref{TheoremVarietyMatrices}  and Lemma \ref{LemmaPropertiesRationalSingularities} (5).
The conclusion 
 follows from Corollary  \ref{CorollaryGrobnerBasis} and  \cite[Corollary 2.7]{CV}.
\end{proof}

To conclude this section, we  discuss some related  questions and potential future directions.
\smallskip

We have introduced a new simplicial complex $\Delta$, 
which  is a pure codimension 2 sub-complex of the classical antidiagonal complex of the determinantal variety.
It might be interesting to study the combinatorics and topology of $\Delta$  more in detail.
In particular, we ask the following:

\begin{question}
Is the simplicial complex $\Delta$ Cohen-Macaulay in all characteristics? 
Is it shellable, or vertex-decomposable?
\end{question}

The  variety of matrices $\mfX$ is a new addition to the large family of smooth or mildly singular varieties with square-free initial ideals. 
Given the close connection of $\mfX$ to nilpotent orbit closures and rank varieties established in  Section \ref{SectionGeometricTechnique},
it seems natural to search for square-free Gr\"obner degenerations among those varieties.
Following the notation in \cite[Section 2]{Weyman}, we ask:

\begin{question}
For which integer partitions $\bfv$ do the nilpotent orbit closure or rank variety 
$X_\bfv$ admit a square-free Gr\"obner degeneration?
\end{question}

A notable aspect  of the varieties  $X_\bfv $ is that, 
unlike our $\mfX$, 
they are always Gorenstein \cite[Theorem 1]{ES}.
The Gorenstein property has interesting topological implications for simplicial complexes, so it might be worth  investigating the existence of Gorenstein initial
complexes.
See  \cite{CHT} for related considerations.

\section{Primeness, flatness, and proof of the main theorem}\label{SectionProofMainTheorem}

Assume $\ch(\kk)=0$ throughout this section.
Recall the polynomial rings 
$
A= \kk[w_{i,j}] $ and $
B = A \otimes_\kk \kk[v_1, v_2, v_3, v_4]
$
 from Definition \ref{DefinitionRingsAB},
and the ideals $\mfI \subseteq A$ and $\mfL \subseteq B$ from \eqref{EqDefinitionIdealI} and \eqref{EqDefinitionIdealL}.
Define the intermediate polynomial rings
$B^{(j)} = A [v_{j+1},\ldots, v_4]$ for $j = 0, \ldots, 4$,
so $B^{(0)} = B$ and $B^{(4)} = A$.
The  inclusion $ B^{(j+1)} \subseteq  B^{(j)} $ and projection $\pi^{(j)} : B^{(j)}  \twoheadrightarrow B^{(j+1)} = B^{(j)}/(v_{j+1}) $ define an algebra retraction for each $j = 0, \ldots, 3$.

We  consider the $\Z$-grading  induced on each $B^{(j)}$ from the grading  $\deg_2(\cdot)$ of Definition \ref{DefinitionRingT}.
Then, $B^{(0)}=B$ is non-negatively graded, $B^{(1)} , B^{(2)} , B^{(3)} $ are positively graded, and $B^{(4)}=A$ is standard graded.
Extend the monomial order \eqref{EqTermOrderA} from $A$ to $B^{(j)}$  by equipping $B$ and each subring $B^{(j)}$ with the graded reverse lexicographic order on the  ordering of variables
$$
v_1 < v_2 < v_3 < v_4 < 
w_{1,1} < w_{1,2} < w_{1,3} <\cdots < w_{n+1,n-1} < w_{n+1,n}. 
$$
The next  property is a basic consequence of the revlex order.

\begin{lemma}\label{LemmaInclusionConsequenceRevlex}
Let $\mfb \subseteq B^{(j)}$ be a homogeneous ideal and 
$\mfa =  \pi^{(j)}(\mfb) \subseteq B^{(j+1)}$ its image.
There is an inclusion of monomial ideals $\iin(\mfa)B^{(j)} \subseteq \iin(\mfb)$.
\end{lemma}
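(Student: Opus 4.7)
The plan is to exploit the defining property of graded reverse lexicographic order with the chosen variable ordering: in $B^{(j)}$ the variable $v_{j+1}$ is the smallest, and therefore among two monomials of the same degree, one $v_{j+1}$-free and one divisible by $v_{j+1}$, the $v_{j+1}$-free monomial strictly dominates. Once this key fact is in place, the lemma follows from a standard lifting argument.

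First, I would establish pointwise lifting: given a nonzero homogeneous $g \in \mfa$, there is a homogeneous $f \in \mfb$ of the same degree with $\pi^{(j)}(f) = g$ and $\LM(f) = \LM(g)$. Existence of a homogeneous lift is immediate, since $\pi^{(j)}$ is a surjective graded homomorphism and $\mfb$ is homogeneous: any preimage of $g$ in $\mfb$ decomposes into homogeneous parts whose component of degree $d = \deg g$ still lies in $\mfb$ and maps to $g$. For the equality of leading monomials, write $f = g + v_{j+1} h$; then $g$ and $v_{j+1} h$ are homogeneous of the same degree as $f$, every monomial of $g$ is $v_{j+1}$-free, and every monomial of $v_{j+1} h$ is divisible by $v_{j+1}$. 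The revlex property above gives $\LM(g) > \LM(v_{j+1} h)$, hence $\LM(f) = \LM(g)$.

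The conclusion is then formal. Since $\mfa$ is a homogeneous ideal (being the image of a homogeneous ideal under a graded map), $\iin(\mfa)$ is the monomial ideal of $B^{(j+1)}$ generated by leading monomials of homogeneous elements of $\mfa$, each of which lies in $\iin(\mfb)$ by the previous step. As these generators are monomials already in $B^{(j+1)} \subseteq B^{(j)}$, extending scalars yields $\iin(\mfa) B^{(j)} \subseteq \iin(\mfb)$, as claimed. There is no genuine obstacle here; the only subtle point is that the revlex comparison really does depend on $v_{j+1}$ being the smallest variable of $B^{(j)}$, which is built into the specified ordering $v_1 < v_2 < v_3 < v_4 < w_{1,1} < \cdots$.
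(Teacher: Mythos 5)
Your proof is correct and follows essentially the same route as the paper's: lift a homogeneous element of $\mfa$ to a homogeneous element of $\mfb$ of the same $\deg_2$-degree, then observe that since $v_{j+1}$ is the smallest variable of $B^{(j)}$, the revlex tie-break ensures a $v_{j+1}$-free monomial dominates any $v_{j+1}$-divisible monomial of equal degree, so the leading monomials agree. The only cosmetic difference is that you name the element of $\mfa$ by $g$ and the lift by $f$, whereas the paper does the reverse.
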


\begin{proof}
Pick a monomial $\bfu \in \iin(\mfa)$
and let $f\in \mfa$ be a homogeneous polynomial with 
$\LM(f) = \bfu$.
Let $g\in \mfb $ be homogeneous with  $\pi^{(j)}(g) = f$,
then 
$
g = f+h
$
where $h\in (v_{j+1})$
is homogeneous of the same degree as $g$ and $f$.
Since no term of $f$ is divisible by $v_{j+1}$,
it follows
by revlex order
 that  $\LM(g)=\LM(f)$, and this implies the desired inclusion.
\end{proof}

\begin{prop}\label{PropositionCodimension4IntermediateIdeals}
Let $\mfL^{(j)}= \frac{\mfL+(v_1, \ldots, v_j)}{(v_1, \ldots, v_j)}\subseteq B^{(j)}$, the image of $\mfL$ in $B^{(j)}$.
Then  $\mfL^{(j)}$ has codimension 4.
\end{prop}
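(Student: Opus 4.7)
The plan is to sandwich $\codim(\mfL^{(j)})$ between $4$ and $4$ by combining a dimension lower bound coming from Section \ref{SectionGeometricTechnique} with a dimension upper bound coming from Section \ref{SectionIrreducible}, bridged by Krull's principal ideal theorem. The starting observation is that $\mfL^{(4)} = \mfI$ by construction, since $\mfI$ is precisely the image of $\mfL$ under the quotient $B \twoheadrightarrow A$ obtained by setting all $v_h = 0$. Together with Corollary \ref{CorollaryXIrreducibleDimension} and the fact that $\V(\mfI)$ and $\V(\sqrt{\mfI}) = \mfX$ have the same underlying topological space, this yields $\codim \mfL^{(4)} = \codim \mfI = 4$ at once, without any appeal to Theorem \ref{TheoremIPrime}.

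Next I would establish a monotonicity statement: $\codim \mfL^{(j+1)} \leq \codim \mfL^{(j)}$ for $j = 0, 1, 2, 3$. Writing $R = B^{(j)}$, $I = \mfL^{(j)}$, and $f = v_{j+1}$, one has $B^{(j+1)}/\mfL^{(j+1)} = (R/I)/(\bar{f})$. Since $\mfL$ and $(v_{j+1})$ both vanish at the origin of $\Spec B$, the image $\bar{f}$ lies in the maximal ideal of $R/I$ corresponding to the origin, so Krull's principal ideal theorem gives $\dim (R/I)/(\bar{f}) \geq \dim R/I - 1$. Combined with $\dim B^{(j+1)} = \dim B^{(j)} - 1$, subtraction yields the claimed inequality on codimensions. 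Iterating,
\[
\codim \mfL^{(0)} \geq \codim \mfL^{(1)} \geq \cdots \geq \codim \mfL^{(4)} = 4,
\]
which supplies the lower bound $\codim \mfL^{(j)} \geq 4$ for every $j$.

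The matching upper bound reduces to showing $\codim \mfL \leq 4$, and this is where the geometry of the nested Hilbert scheme enters. By Theorem \ref{TheoremNeighborhoodL}, the affine scheme $\V(\mfL) \subseteq \Spec B$ is isomorphic to an open neighborhood of the compressed pair $C_n$ in $\Hilb^{(m,2)}(\AA^2)$, and by Theorem \ref{TheoremIrreducibleNested} the latter is irreducible of dimension $2m = n^2+n$. Since $\V(\mfL)$ is nonempty (it contains $C_n$), it inherits this dimension as a nonempty open subscheme of an irreducible variety, so $\dim B/\mfL = n^2+n$ and $\codim \mfL = (n^2+n+4) - (n^2+n) = 4$. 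Combining the two bounds forces $\codim \mfL^{(j)} = 4$ for every $j$.

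I do not expect any serious obstacle here: all the hard work --- the dimension of $\mfX$ via the Kempf-Lascoux-Weyman calculation in Section \ref{SectionGeometricTechnique}, the irreducibility of $\Hilb^{(m,2)}(\AA^2)$ via the cleaving technique in Section \ref{SectionIrreducible}, and the identification of $\V(\mfL)$ with an explicit neighborhood of $C_n$ in Section \ref{SectionLocalEquations} --- has already been carried out. The present proposition is essentially a bookkeeping consequence of these inputs plus Krull's principal ideal theorem; the only minor point of care is to verify that the rings $B^{(j)}/\mfL^{(j)}$ remain nonzero at each step, but this is immediate since the origin of $\Spec B^{(j)}$ lies in $\V(\mfL^{(j)})$ for all $j$.
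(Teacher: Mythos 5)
Your proposal is correct and follows exactly the paper's proof: both run the same Krull principal ideal theorem chain $\codim \mfL^{(0)} \geq \codim \mfL^{(1)} \geq \cdots \geq \codim \mfL^{(4)}$, anchored at the endpoints by Theorems~\ref{TheoremIrreducibleNested} and \ref{TheoremNeighborhoodL} (giving $\codim \mfL = 4$) and Corollary~\ref{CorollaryXIrreducibleDimension} (giving $\codim \mfI = 4$). One caution on your stated justification of the PIT step: ``$\bar f$ lies in some maximal ideal of $R/I$'' is not by itself enough to conclude $\dim(R/I)/(\bar f) \geq \dim(R/I)-1$; one needs $\V(\bar f)$ to meet an irreducible component of maximal dimension. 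This does hold here, but for two slightly different reasons: for $j\geq 1$ the ring $B^{(j)}$ is positively graded and $\mfL^{(j)}$ is homogeneous, so every minimal prime is contained in the irrelevant maximal ideal and the origin lies on every component; for $j=0$, where $\deg_2(v_1)=0$ and the grading is only non-negative, one instead uses that $\V(\mfL)$ is topologically irreducible by Theorems~\ref{TheoremIrreducibleNested} and \ref{TheoremNeighborhoodL}, so the origin automatically sits on the unique component.
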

\begin{proof}
By Krull's principal ideal theorem,
we have 
$$
\dim(B^{(j)}/\mfL^{(j)}) - 1 \leq \dim(B^{(j)}/(\mfL^{(j)}+v_{j+1})) = \dim(B^{(j+1)}/\mfL^{(j+1)}),
$$
hence 
$
\codim(\mfL^{(j)}) \geq \codim(\mfL^{(j+1)})
$.
The  conclusion follows, since
 $\codim \, \mfL^{(0)} = \codim\, \mfL = 4$ by 
 Theorems \ref{TheoremIrreducibleNested} and \ref{TheoremNeighborhoodL},
 and $\codim\, \mfL^{(4)} = \codim\, \mfI = 4$ by Corollary \ref{CorollaryXIrreducibleDimension}.
\end{proof}

Recall the monomial ideal $\mfK\subseteq A$ 
defined in \eqref{EqDefinitionMonomialIdealK}.

\begin{prop}\label{PropositionIntermediateLiPrime}
The ideal $\mfL^{(j)}$ is prime 
with  $\iin(\mfL^{(j)}) = \mfK B^{(j)}$
for each  $j \in \{1,2,3,4\}$.
\end{prop}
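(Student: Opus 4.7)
The plan is to argue by downward induction on $j$, starting at $j = 4$. The base case is exactly Theorem \ref{TheoremIPrime} (primeness of $\mfL^{(4)} = \mfI$) together with Corollary \ref{CorollaryGrobnerBasis} (which gives $\iin(\mfI) = \mfK$). For the induction step, I fix $j \in \{1,2,3\}$ and assume the statement for $j+1$. A crucial observation is that for these values of $j$, the degree $d := \deg_2(v_{j+1})$ is strictly positive (equal to $1$ or $2$); this is exactly why the proposition excludes $j = 0$, where $\deg_2(v_1)=0$.

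The containment $\mfK B^{(j)} \subseteq \iin(\mfL^{(j)})$ follows immediately from Lemma \ref{LemmaInclusionConsequenceRevlex} and the induction hypothesis $\iin(\mfL^{(j+1)}) = \mfK B^{(j+1)}$, giving the coefficientwise inequality of Hilbert series $\mathrm{HS}(B^{(j)}/\mfL^{(j)}) \leq \mathrm{HS}(B^{(j)}/\mfK B^{(j)})$. The next step is to show that $v_{j+1}$ is a non-zerodivisor on $B^{(j)}/\mfL^{(j)}$. Granted this, the short exact sequence
\begin{equation*}
0 \to (B^{(j)}/\mfL^{(j)})(-d) \xrightarrow{v_{j+1}} B^{(j)}/\mfL^{(j)} \to B^{(j+1)}/\mfL^{(j+1)} \to 0
\end{equation*}
together with the inductive identity $\mathrm{HS}(B^{(j+1)}/\mfL^{(j+1)}) = \mathrm{HS}(B^{(j+1)}/\mfK B^{(j+1)})$ and the trivial fact $\mathrm{HS}(B^{(j+1)}/\mfK B^{(j+1)}) = (1-t^d)\mathrm{HS}(B^{(j)}/\mfK B^{(j)})$ (valid because $v_{j+1}$ is manifestly a non-zerodivisor on the polynomial ring $(A/\mfK)[v_{j+1},\ldots,v_4]$) will force $\mathrm{HS}(B^{(j)}/\mfL^{(j)}) = \mathrm{HS}(B^{(j)}/\mfK B^{(j)})$, and hence $\iin(\mfL^{(j)}) = \mfK B^{(j)}$.

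The main obstacle is precisely the non-zerodivisor step. The plan is to combine the general Hilbert series identity
\begin{equation*}
(1-t^d)\bigl[\mathrm{HS}(B^{(j)}/\mfK B^{(j)}) - \mathrm{HS}(B^{(j)}/\mfL^{(j)})\bigr] = t^d\,\mathrm{HS}\bigl((\mfL^{(j)}:v_{j+1})/\mfL^{(j)}\bigr)
\end{equation*}
(both sides non-negative coefficientwise) with a Cohen-Macaulay argument. By induction, $\iin(\mfL^{(j+1)}) = \mfK B^{(j+1)}$ is Cohen-Macaulay by Corollary \ref{CorollaryCohenMacaulayComplex}, so $B^{(j+1)}/\mfL^{(j+1)}$ is Cohen-Macaulay by \cite[Corollary 2.7]{CV}. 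Since $v_{j+1}$ is a parameter on $B^{(j)}/\mfL^{(j)}$ by Proposition \ref{PropositionCodimension4IntermediateIdeals}, a depth-lifting argument along $v_{j+1}$, reinforced by the pole-order constraint at $t=1$ imposed by Proposition \ref{PropositionCodimension4IntermediateIdeals}, is what one uses to conclude that the colon module $(\mfL^{(j)}:v_{j+1})/\mfL^{(j)}$ must vanish, establishing the non-zerodivisor property.

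Once $\iin(\mfL^{(j)}) = \mfK B^{(j)}$ is established, the ideal $\mfL^{(j)}$ is radical, since any ideal with a squarefree initial ideal is radical, and $B^{(j)}/\mfL^{(j)}$ is Cohen-Macaulay by \cite[Corollary 2.7]{CV}. To upgrade from radical to prime, I invoke the classical fact that a positively graded Noetherian $\kk$-algebra possessing a homogeneous non-zerodivisor whose quotient is an integral domain is itself an integral domain; this follows by combining the Krull intersection theorem (which rules out nilpotents) with the observation that any two incomparable minimal primes would project to a nonzero intersection in the quotient domain. Applying this to $B^{(j)}/\mfL^{(j)}$ with non-zerodivisor $v_{j+1}$ and quotient $B^{(j+1)}/\mfL^{(j+1)}$ (a domain by induction) completes the step.
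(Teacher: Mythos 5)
Your skeleton (downward induction from $j=4$, the inclusion $\mfK B^{(j)} \subseteq \iin(\mfL^{(j)})$ via Lemma \ref{LemmaInclusionConsequenceRevlex}, and the Hilbert series bookkeeping along the positive-degree variable $v_{j+1}$) is exactly the paper's, and your base case and your "NZD plus quotient domain implies domain" upgrade at the end are fine. The genuine gap is the step you yourself identify as the main obstacle: proving that $v_{j+1}$ is a non-zerodivisor on $B^{(j)}/\mfL^{(j)}$ \emph{before} knowing anything about the primary structure of $\mfL^{(j)}$. The tools you propose do not close it. Cohen--Macaulayness of the quotient $B^{(j+1)}/\mfL^{(j+1)}$ together with $v_{j+1}$ being a parameter (Proposition \ref{PropositionCodimension4IntermediateIdeals}) does \emph{not} imply $v_{j+1}$ is regular: depth lifts in the wrong direction, as the example $M=\kk[x,y]/(x^2,xy)$ with $v=y$ shows ($M/vM\cong\kk[x]/(x^2)$ is CM of dimension $\dim M-1$, yet $v$ is a zerodivisor because of the embedded prime $(x,y)$). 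Likewise, your Hilbert series identity $(1-t^d)\bigl[\mathrm{HS}(B^{(j)}/\mfK B^{(j)})-\mathrm{HS}(B^{(j)}/\mfL^{(j)})\bigr]=t^d\,\mathrm{HS}\bigl((\mfL^{(j)}:v_{j+1})/\mfL^{(j)}\bigr)$ and the pole-order constraint at $t=1$ are perfectly consistent with a nonzero colon module of small dimension: Proposition \ref{PropositionCodimension4IntermediateIdeals} only excludes $v_{j+1}$ from the codimension-$4$ minimal primes of $\mfL^{(j)}$, while embedded or lower-dimensional associated primes containing $v_{j+1}$ (e.g.\ a component supported at the irrelevant ideal) are precisely what must be ruled out, and nothing in your argument does so. Ruling them out is essentially the content of the proposition, so the step cannot be waved through.

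The paper sidesteps this circularity by never working with $\mfL^{(j)}$ directly in the exact sequence: it fixes a codimension-$4$ prime $\mfp\supseteq\mfL^{(j)}$, where $v_{j+1}\notin\mfp$ is automatic from Proposition \ref{PropositionCodimension4IntermediateIdeals} and hence $v_{j+1}$ is trivially regular on the domain $B^{(j)}/\mfp$, shows that $\mfp$ maps onto $\mfL^{(j+1)}$, and then compares Hilbert series to force the sandwich $\mfK B^{(j)}\subseteq\iin(\mfL^{(j)})\subseteq\iin(\mfp)$ into equalities, obtaining $\mfL^{(j)}=\mfp$ (hence primeness) and $\iin(\mfL^{(j)})=\mfK B^{(j)}$ simultaneously; the regularity of $v_1,\ldots,v_4$ is only deduced afterwards (Corollary \ref{CorollaryRegularSequence}). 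If you want to rescue your plan, replace your non-zerodivisor step by this minimal-prime detour; as written, the proposal assumes at the key moment a statement that is logically downstream of the proposition.
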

\begin{proof}
We prove the proposition by reverse induction on $j$.
For the base case $j=4$, we have $\mfL^{(4)}= \mfI$, so it follows from Theorem \ref{TheoremIPrime} and Corollary \ref{CorollaryGrobnerBasis}.

Let $1\leq j \leq 3 $.
For simplicity, 
rename $v= v_{j+1}$, 
$\mfa=\mfL^{(j+1)}$,
$\mfb= \mfL^{(j)}$,
$\pi = \pi^{(j)}$.
Assuming that $\mfa$ is prime with 
$\iin(\mfa) = \mfK B^{(j+1)}$,
 we need to show the same for $\mfb$.
 By Proposition \ref{PropositionCodimension4IntermediateIdeals},
both  $\mfb \subseteq B^{(j)}$ and $\mfa= \frac{\mfb + (v)}{(v)} \subseteq B^{(j+1)}$ 
have codimension 4.
We deduce that   $v$ 
 does not belong to any prime $\mfp$ of codimension 4 containing $\mfb$,
otherwise  $\frac{\mathfrak{\mfp}+v}{v}$ would be an ideal of codimension 3 containing $\mfa$.
Fix one such prime $\mathfrak{p} \subseteq B^{(j)}$.
Then $\bfa$ is prime, $\bfa \subseteq \pi(\mfp)= \frac{\mfp + (v)}{(v)}$,
and both ideals have codimension 4,
so   we must have $\bfa = \pi(\mfp)$.

Since $v \notin \mfp$,
the variable  $v$ is  regular  on  $\frac{B^{(j)}}{\mfp}$. 
Since the ideal $\iin(\mfa)B^{(j)}$ is extended from $B^{(j+1)}$,
the variable  $v$ is also regular on
$\frac{B^{(j)}}{\iin(\mfa)B^{(j)}}$. 
Thus, we have graded 
 short exact sequences
$$
0 \rightarrow \frac{B^{(j)}}{\mfp}\big(-\deg(v)\big) \xrightarrow{\,\cdot v\,}  \frac{B^{(j)}}{\mfp}
\rightarrow  \frac{B^{(j+1)}}{\mfa}\rightarrow 0
$$
and
$$
0 \rightarrow \frac{B^{(j)}}{\iin(\mfa)B^{(j)}}\big(-\deg(v)\big) \xrightarrow{\,\cdot v\,}  \frac{B^{(j)}}{\iin(\mfa)B^{(j)}}
\rightarrow  \frac{B^{(j+1)}}{\iin(\mfa)}\rightarrow 0,
$$
from which we obtain the equations of Hilbert series
$$
(1-t^{\deg(v)})\mathrm{HS}\big(B^{(j)}/\mfp\big) 
= \mathrm{HS}\big({B^{(j+1)}}/{\mfa}\big) 
= \mathrm{HS}\big({B^{(j+1)}}/{\iin(\mfa})\big) 
=
(1-t^{\deg(v)})\mathrm{HS}\big(B^{(j)}/\iin(\mfa) B^{(j)}\big). 
$$
Since  $\deg(v)>0$,
we deduce that
$
\mathrm{HS}\big(B^{(j)}/\mfp\big) 
=
\mathrm{HS}\big(B^{(j)}/\iin(\mfa) B^{(j)}\big) 
$.
By Lemma \ref{LemmaInclusionConsequenceRevlex},
we have  $\iin(\mfa)B^{(j)}\subseteq \iin(\mfb)\subseteq \iin(\mfp)$.
Observe that  $B^{(j)}$ is positively graded, hence,
 every graded component of any ideal has finite dimension.
In conclusion, the equality of Hilbert series forces the inclusions to be equalities, 
that is, 
$\iin(\mfa)B^{(j)}= \iin(\mfb)=  \iin(\mfp)$
and  $\mfb=  \mfp$.
\end{proof}

The argument  does not work for the last step $j=0$, since $\deg(v_1)=0$ and $B$ is not positively graded.
However, we can prove primeness after localizing at the ideal of the origin, by exploiting  Theorem \ref{TheoremIrreducibleNested}.
Denote by $\mfB = (w_{i,j}, v_h)$ the irrelevant maximal ideal of $ B $. 

\begin{prop}\label{PropositionLocalizedLPrime}
The localization $\mfL_\mfB 	\subseteq B_\mfB$ is prime.
\end{prop}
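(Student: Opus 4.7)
The plan is to establish $\mfL_\mfB$ prime by combining the irreducibility of the nested Hilbert scheme with a Cohen-Macaulayness argument reducing to $\mfL^{(1)}$. By Theorems \ref{TheoremIrreducibleNested} and \ref{TheoremNeighborhoodL}, the scheme $\V(\mfL)$ is an open neighborhood of $C_n$ in the irreducible variety $\Hilb^{(m,2)}(\AA^2)$, so $B_\mfB/\mfL_\mfB$ has a unique minimal prime $\mfp$. Moreover, by Proposition \ref{PropositionIntermediateLiPrime} and Corollary \ref{CorollaryCohenMacaulayComplex}, $\mfL^{(1)}$ is prime with $\iin(\mfL^{(1)}) = \mfK B^{(1)}$, and the fact that $A/\mfK$ is Cohen-Macaulay gives (via the squarefree Gr\"obner degeneration) that $B^{(1)}_{\mfB^{(1)}}/\mfL^{(1)}_{\mfB^{(1)}}$ is a Cohen-Macaulay local domain of dimension $2m-1$.

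The key intermediate goal is to show that $v_1$ is a non-zerodivisor on $B_\mfB/\mfL_\mfB$. Since $\dim B_\mfB/(\mfL_\mfB + v_1 B_\mfB) = 2m - 1 < 2m = \dim B_\mfB/\mfL_\mfB$ by Proposition \ref{PropositionCodimension4IntermediateIdeals}, the variable $v_1$ lies outside $\mfp$; the difficulty is to rule out embedded primes of $\mfL_\mfB$ containing $v_1$. Once $v_1$ is known to be regular, the short exact sequence
\[
0 \longrightarrow B_\mfB/\mfL_\mfB \xrightarrow{\;v_1\;} B_\mfB/\mfL_\mfB \longrightarrow B^{(1)}_{\mfB^{(1)}}/\mfL^{(1)}_{\mfB^{(1)}} \longrightarrow 0
\]
lifts Cohen-Macaulayness, so $B_\mfB/\mfL_\mfB$ is CM of depth $2m$; in particular it has no embedded primes, and the uniqueness of $\mfp$ forces $\mfL_\mfB$ to be $\mfp$-primary. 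Reducedness follows by a standard nilradical argument: any nilpotent $f$ satisfies $\bar f = 0$ in $B^{(1)}_{\mfB^{(1)}}/\mfL^{(1)}_{\mfB^{(1)}}$ (a domain), hence $f \in v_1(B_\mfB/\mfL_\mfB)$, and iterating with the regularity of $v_1$ places $f$ in $\bigcap_k v_1^k(B_\mfB/\mfL_\mfB) = 0$ by Krull's intersection theorem. This yields $\mfL_\mfB = \mfp$, a prime.

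The main obstacle is the regularity of $v_1$, where the Hilbert-series argument of Proposition \ref{PropositionIntermediateLiPrime} breaks down because $\deg_2(v_1) = 0$ and $B$ is no longer positively graded. My approach is to establish the sharper identity $\iin(\mfL) = \mfK B$, for which the inclusion $\mfK B \subseteq \iin(\mfL)$ is given by Proposition \ref{PropositionLeadingMonomial}. For the reverse inclusion, one exploits that in the graded reverse lexicographic order, $v_1$ is the \emph{greatest} variable in the monomial order; consequently, if $\bfu = \LM(f)$ for some nonzero $f \in \mfL$ with $v_1 \nmid \bfu$, then all top-degree terms of $f$ must also avoid $v_1$, whence $f \bmod v_1$ lies in $\mfL^{(1)}$ with $\LM$ equal to $\bfu$, placing $\bfu$ in $\iin(\mfL^{(1)}) = \mfK B^{(1)}$. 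The delicate part is controlling the monomials in $\iin(\mfL)$ divisible by $v_1$, which I would address using the explicit structure of the generators $g_i, G_i$ produced by the division algorithm in Section \ref{SectionLocalEquations}, together with the codimension constraint from Proposition \ref{PropositionCodimension4IntermediateIdeals}. Once $\iin(\mfL) = \mfK B$ is in hand, the regularity of $v_1$ on $B/\mfK B = (A/\mfK)[v_1, v_2, v_3, v_4]$ transfers to $B/\mfL$ by standard properties of Gr\"obner degenerations, and the argument of the previous paragraph concludes the proof.
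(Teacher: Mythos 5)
Your endgame is fine: \emph{if} $v_1$ were known to be a non-zerodivisor on $B_\mfB/\mfL_\mfB$, then the Krull-intersection argument for reducedness, together with the unique minimal prime coming from Theorems \ref{TheoremIrreducibleNested} and \ref{TheoremNeighborhoodL} and the fact that $B_\mfB/(\mfL_\mfB+(v_1))$ is a domain (Proposition \ref{PropositionIntermediateLiPrime}), would indeed give primeness (the Cohen--Macaulay interlude is not even needed for that). The genuine gap is the regularity of $v_1$ itself, which you reduce to the identity $\iin(\mfL)=\mfK B$ but do not prove. The half you do argue is correct: any monomial of $\iin(\mfL)$ not divisible by $v_1$ lies in $\iin(\mfL^{(1)})B=\mfK B$ (and note that the inclusion $\mfK B\subseteq\iin(\mfL)$ is not "given by Proposition \ref{PropositionLeadingMonomial}", which concerns $f_h,F_h\in A$; it requires Proposition \ref{PropositionIntermediateLiPrime} together with Lemma \ref{LemmaInclusionConsequenceRevlex} at $j=0$, using $\deg_2$-homogeneity of $\mfL$). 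But for the monomials divisible by $v_1$ you only say you "would address" them via the explicit $g_i,G_i$ and the codimension constraint; that is not an argument. Codimension only controls minimal primes, so Propositions \ref{PropositionCodimension4IntermediateIdeals} and \ref{PropositionIntermediateLiPrime} do not exclude generators of $\iin(\mfL)$ of the form $v_1\bfu$ with $\bfu\notin\mfK$ (an initial ideal of a prime can perfectly well have embedded components), and the explicit generators are of no obvious help since, in the chosen order, terms involving $v_1$ can dominate. This is exactly the point where the paper's own machinery stops: since $\deg_2(v_1)=0$, the ring $B$ is not positively graded and the Hilbert-series comparison of Proposition \ref{PropositionIntermediateLiPrime} cannot be run at $j=0$ — the paper says so explicitly and then deliberately avoids any statement about $\iin(\mfL)$ or about $v_1$ being regular. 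Your plan also has a latent circularity: ruling out the bad behaviour at $v_1$ is essentially an unmixedness statement for $\mfL_\mfB$, which in your outline is a \emph{consequence} of the Cohen--Macaulayness obtained \emph{from} the regularity of $v_1$.

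The paper's actual proof takes a different, purely ideal-theoretic route that needs none of this. From irreducibility and generic reducedness of $\V(\mfL)$ one gets $\mfL_\mfB=\mfP_1\cap\mfQ$ with $\mfP_1=\sqrt{\mfL_\mfB}$ prime of codimension $4$ and $\codim\mfQ\geq 5$; the codimension count (Proposition \ref{PropositionCodimension4IntermediateIdeals}) gives $v_1\notin\mfP_1$; primeness of $\mfL^{(1)}$ shows $\mfP_2:=\mfL_\mfB+(v_1)$ is prime, and $\mfP_1\subseteq\mfP_2$. Then for $f\in\mfP_1$ one writes $f=g+f'v_1$ with $g\in\mfL_\mfB$, deduces $f'\in\mfP_1$ from $v_1\notin\mfP_1$, and iterates to get $\mfP_1\subseteq\mfL_\mfB+(v_1^s)$ for all $s$; Krull's intersection theorem in $(B/\mfL)_\mfB$ then forces $\mfP_1=\mfL_\mfB$. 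Note the logical order is the reverse of yours: regularity of $v_1,\ldots,v_4$ (Corollary \ref{CorollaryRegularSequence}) is \emph{deduced from} this proposition, not used to prove it. If you insist on your route, you must supply an honest proof of $\iin(\mfL)=\mfK B$; observe that this would in fact prove more than the proposition, since a squarefree initial ideal makes $\mfL$ radical, hence prime globally — a statement the paper only reaches later (after the main theorem) as a corollary.
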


\begin{proof}
By Theorems \ref{TheoremIrreducibleNested} and \ref{TheoremNeighborhoodL},
the subscheme
$\V(\mfL)$ is irreducible and generically reduced.
Localizing at   $\mfB$, 
this implies that $\mfL_\mfB = \mfP_1 \cap \mfQ$,
where $\mfP_1= \sqrt{\mfL_\mfB}$ is prime of codimension 4
and  $\mfQ\subseteq \mfB$ is a (possibly redundant) ideal with codimension at least 5 
such that  $\mfP_1  \subseteq \sqrt{\mfQ}$.

Since  $\mfL^{(1)}_\mfB =  \frac{\mfL_\mfB + (v_1)}{
(v_1)}$ 
has codimension 4 by 
Proposition \ref{PropositionCodimension4IntermediateIdeals},
we deduce that $v_1 \notin \mfP_1$.
Denote  $\pi = \pi^{(0)}_\mfB : B_\mfB \twoheadrightarrow B^{(1)}_\mfB$ and consider the prime
$\mfP_2 = \pi^{-1}(\mfL_\mfB^{(1)})$.
Clearly, 
$\mfL_\mfB+(v_1) \subseteq \mfP_2$ and
$\frac{\mfP_2+(v_1)}{(v_1)}= \pi(\mfP_2) =  \frac{\mfL_\mfB+(v_1)}{(v_1)}$,  so $\mfP_2  = \mfL_\mfB+(v_1)$.
Observe also that  $\mfP_1 = \sqrt{\mfL_\mfB}\subseteq \sqrt{\mfP_2}=\mfP_2$.

Pick any element $f_1 \in \mfP_1$, and write $f_1 = g_1 + f_2 v_1$
with $g_1 \in \mfL_\mfB$.
Thus,  $f_2v_1 \in \mfP_1$ and, therefore, $f_2 \in \mfP_1$.
Repeating this step for $f_2$ etc., 
we obtain  $\mfP_1\subseteq \mfL_\mfB  + (v_1^s)$ for all $s >0$.
Applying Krull's intersection theorem in the local ring  $(B/\mfL)_\mfB$,
we conclude that $\mfP_1=\mfL_\mfB$, as desired.
\end{proof}

\begin{cor}\label{CorollaryRegularSequence}
The sequence $v_1, \ldots, v_4$ is regular  on $(B/\mfL)_\mfB$.
\end{cor}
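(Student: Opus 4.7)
The plan is to handle the four variables inductively, walking along the chain of intermediate ideals $\mfL = \mfL^{(0)}, \mfL^{(1)}, \mfL^{(2)}, \mfL^{(3)}, \mfL^{(4)} = \mfI$ already set up in Proposition \ref{PropositionCodimension4IntermediateIdeals}, and combining domainhood with a dimension drop at each step.

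For each $j \in \{0,1,2,3\}$, let $\mfB^{(j)}$ denote the irrelevant maximal ideal of $B^{(j)}$ and set
\[
R_j := \bigl(B^{(j)}/\mfL^{(j)}\bigr)_{\mfB^{(j)}}.
\]
I will first observe that $R_j$ is a domain in every case: for $j=0$ this is exactly Proposition \ref{PropositionLocalizedLPrime}, while for $j \in \{1,2,3\}$ it follows from Proposition \ref{PropositionIntermediateLiPrime}, which asserts that $\mfL^{(j)}$ itself is prime. I also note the natural identification $R_{j+1} \cong R_j/(v_{j+1})$, coming from $B^{(j+1)} = B^{(j)}/(v_{j+1})$ and the compatibility $\mfL^{(j+1)} = (\mfL^{(j)} + (v_{j+1}))/(v_{j+1})$.

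Next, I will compute the dimensions using Proposition \ref{PropositionCodimension4IntermediateIdeals}: since $\dim B^{(j)} = n^2+n+4-j$ and $\codim \mfL^{(j)} = 4$, and since $B^{(j)}/\mfL^{(j)}$ is a finitely generated graded $\kk$-algebra whose localization at the irrelevant maximal ideal preserves Krull dimension, we get
\[
\dim R_j = n^2+n-j.
\]
In particular $\dim R_{j+1} = \dim R_j - 1$, so $R_{j+1} \ne R_j$, which forces the image of $v_{j+1}$ in $R_j$ to be nonzero. Since $R_j$ is a domain, $v_{j+1}$ is then a non-zerodivisor on $R_j$. Iterating $j=0,1,2,3$ gives precisely that $v_1, \ldots, v_4$ is a regular sequence on $R_0 = (B/\mfL)_\mfB$.

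This argument is essentially bookkeeping once the hard primeness statements are in hand; the only substantive input from this section is the chain of dimension counts, and the only conceptual obstacle — establishing primeness at each stage, and in particular that the non-positively graded case $j=0$ reduces to Proposition \ref{PropositionLocalizedLPrime} — has already been overcome upstream.
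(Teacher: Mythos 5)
Your proof is correct in outline and matches the paper's intent: the paper's proof is simply the one-liner ``it follows immediately from Propositions \ref{PropositionIntermediateLiPrime} and \ref{PropositionLocalizedLPrime},'' and you have supplied the bookkeeping (domainhood of each $R_j$ plus a strict dimension drop at each step) that this terseness presupposes. There is one small imprecision you should patch: your justification that $\dim R_j = n^2+n-j$ invokes the general principle that for a graded finitely generated $\kk$-algebra, localizing at the irrelevant maximal ideal preserves Krull dimension. This is a correct statement for $j\in\{1,2,3,4\}$, because there $B^{(j)}$ is \emph{positively} graded with respect to $\deg_2$, so $\mfB^{(j)}$ is the unique graded maximal ideal and the usual argument applies. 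But for $j=0$ the ring $B$ is only non-negatively graded, with $\deg_2(v_1)=0$, so $\mfB$ is not the irrelevant ideal $B_+$, and the grading argument alone does not give $\dim R_0 = n^2+n$. The claim is nevertheless true: the proof of Proposition \ref{PropositionLocalizedLPrime} already records that $\sqrt{\mfL_\mfB}$ has codimension $4$ in $B_\mfB$ (using irreducibility of $\V(\mfL)$ from Theorems \ref{TheoremIrreducibleNested} and \ref{TheoremNeighborhoodL}), and indeed explicitly notes $v_1 \notin \mfP_1 = \mfL_\mfB$, which is exactly the nonvanishing you need for the $j=0$ step. So the argument goes through once this base case is sourced from Proposition \ref{PropositionLocalizedLPrime} rather than from a grading consideration.
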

\begin{proof}
It follows immediately from 
Propositions \ref{PropositionIntermediateLiPrime} and \ref{PropositionLocalizedLPrime}.
\end{proof}

\begin{cor}\label{CorollaryFlatRingMap}
The ring map
$\kk[v_1,\ldots, v_4]_{(v_1, \ldots, v_4)} \rightarrow (B/\mfL)_\mfB$ is flat.
\end{cor}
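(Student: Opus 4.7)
The plan is to deduce the flatness directly from Corollary \ref{CorollaryRegularSequence} by invoking the classical local criterion of flatness over a regular local ring. Concretely, the source $\kk[v_1,\ldots,v_4]_{(v_1,\ldots,v_4)}$ is a regular local ring of dimension $4$ with regular system of parameters $v_1,v_2,v_3,v_4$. The composition $\kk[v_1,\ldots,v_4] \hookrightarrow B \twoheadrightarrow B/\mfL$ sends each $v_i$ into $\mfB/\mfL$, so it localizes to a local homomorphism
\[
\kk[v_1,\ldots,v_4]_{(v_1,\ldots,v_4)} \longrightarrow (B/\mfL)_\mfB
\]
of Noetherian local rings, whose target is Noetherian because $B$ is a polynomial ring over $\kk$.

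Now I would invoke the standard flatness criterion (see e.g.\ \cite[Theorem 22.3]{Matsumura} or \cite[Tag 00MK]{stacks}): a local homomorphism $(R,\mm_R) \to (S,\mm_S)$ of Noetherian local rings, with $R$ regular and $x_1,\ldots,x_d$ a regular system of parameters of $R$, is flat if and only if the images of $x_1,\ldots,x_d$ form a regular sequence on $S$. In our setting this criterion applies with $R = \kk[v_1,\ldots,v_4]_{(v_1,\ldots,v_4)}$, $S = (B/\mfL)_\mfB$, and $x_i = v_i$; the required regularity of the sequence $v_1,\ldots,v_4$ on $(B/\mfL)_\mfB$ is exactly the content of Corollary \ref{CorollaryRegularSequence}. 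This yields the desired flatness immediately; there is no real obstacle here, since all the substantive work has already been carried out in establishing Propositions \ref{PropositionIntermediateLiPrime} and \ref{PropositionLocalizedLPrime} and their consequence Corollary \ref{CorollaryRegularSequence}.
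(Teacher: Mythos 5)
Your proof is correct and follows essentially the same route as the paper: both apply a form of the local criterion of flatness over the regular local ring $\kk[v_1,\ldots,v_4]_{(v_1,\ldots,v_4)}$ to the local homomorphism into the Noetherian local ring $(B/\mfL)_\mfB$, reducing flatness to the regularity of $v_1,\ldots,v_4$ established in Corollary~\ref{CorollaryRegularSequence}. The only difference is the exact reference quoted for the criterion.
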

\begin{proof}
It follows  by a version of the  local criterion for flatness 
\cite[Lemma 10.128.2]{stacks}.
\end{proof}

Now, we can finally combine all the results of the paper and prove our main theorem.

\begin{proof}[Proof of Theorem \ref{MainTheorem}]
By Corollary \ref{CorollaryReductionPlane} (2) and Theorem \ref{TheoremIrreducibleNested}, 
$\Hilb^{(m,2)}(S)$ is nonsingular in codimension 3.
By Corollary \ref{CorollaryReductionPlane} (1),
it remains to show that 
$\Hilb^{(m,2)}(\AA^2)$ has rational singularities.
By Theorem \ref{TheoremReductionToCompressedPair},
it suffices to show that $\Hilb^{(m,2)}(\AA^2)$ has a rational singularity at the $n$-th compressed pair, for $m = {n+1 \choose 2}$.
By Theorem  \ref{TheoremNeighborhoodL},
this amounts to showing that the 
affine scheme $\V(\mfL) \subseteq \Spec(B)$ has a rational singularity at the origin $\V(\mfB)$.

Consider the morphism $\Phi : \V(\mfL) \rightarrow \AA^4$ defined by the map of rings $\kk[v_1,\ldots, v_4] \rightarrow B/\mfL$.
By    Corollary \ref{CorollaryFlatRingMap}  and
openness of the flat locus 
\cite[Theorem 37.15.1]{stacks},
there exists a principal open set $U \subseteq \V(\mfL) $ containing the origin such that the restriction $\Phi: U \rightarrow \AA^4$ is flat. 
The fiber of $\Phi$ over $\mathbf{0}\in \AA^4$ is the scheme
$\V(\mfI) \subseteq \Spec(A)$, which has rational singularities
by Theorems  \ref{TheoremVarietyMatrices} and  \ref{TheoremIPrime}.
By Lemma \ref{LemmaPropertiesRationalSingularities} (2), we obtain the desired statement.
\end{proof}

As  byproducts, we observe that  Proposition \ref{PropositionLocalizedLPrime} and Corollary \ref{CorollaryFlatRingMap} hold more generally.

\begin{cor}
The ideal $\mfL\subseteq B$ is prime.
\end{cor}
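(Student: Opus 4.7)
The plan is to deduce global primeness of $\mfL$ directly from the main theorem, using the identification of $\V(\mfL)$ with an open piece of the nested Hilbert scheme. Concretely, by Theorem \ref{TheoremNeighborhoodL}, $\V(\mfL) \subseteq \Spec(B)$ is isomorphic to a (nonempty) open subscheme of $\Hilb^{(m,2)}(\AA^2)$ with $m=\binom{n+1}{2}$, and this identification is scheme-theoretic, not merely set-theoretic.

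Next I would invoke Theorem \ref{MainTheorem}: $\Hilb^{(m,2)}(\AA^2)$ is an irreducible $2m$-fold with rational singularities, hence in particular normal. Normal schemes are reduced, and an irreducible reduced scheme is integral; any nonempty open subscheme of an integral scheme is again integral. Therefore $\V(\mfL) = \Spec(B/\mfL)$ is integral, which is exactly the statement that $\mfL$ is a prime ideal of $B$.

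There is no real obstacle here, since all the heavy lifting has already been done: the primeness in a neighborhood of the origin was obtained in Proposition \ref{PropositionLocalizedLPrime}, and the global spreading from the origin to the rest of $\V(\mfL)$ is automatic once one knows that $\V(\mfL)$ sits inside the integral scheme $\Hilb^{(m,2)}(\AA^2)$. The only point worth emphasizing is that Theorem \ref{TheoremNeighborhoodL} provides a scheme-theoretic open immersion (so that non-reducedness cannot be hidden in a nilpotent structure surviving under the identification), and this is what allows the one-line deduction from the main theorem.
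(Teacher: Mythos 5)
Your argument is correct and is exactly the one the paper intends: Theorem \ref{TheoremNeighborhoodL} identifies $\V(\mfL)$ scheme-theoretically with an open subscheme of $\Hilb^{(m,2)}(\AA^2)$, and Theorem \ref{MainTheorem} makes the latter integral, so $\mfL$ is prime. The paper's proof is the same one-line deduction from these two theorems.
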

\begin{proof}
It follows from Theorems \ref{MainTheorem} and \ref{TheoremNeighborhoodL}.
\end{proof}

\begin{cor}\label{CorollaryFlatMorphism}
The natural morphism $\Hilb^{(m,2)}(\AA^2) \rightarrow \Hilb^2(\AA^2)$ is flat.
\end{cor}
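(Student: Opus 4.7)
The plan is to invoke the ``miracle flatness'' criterion (see \cite[Theorem 23.1]{Matsumura}): a finite-type morphism $f : X \to Y$ between Noetherian schemes, with $Y$ regular and $X$ Cohen--Macaulay, is flat as soon as all its fibers have the expected dimension $\dim X - \dim Y$. All three hypotheses are already available from previously established results, so only assembly is required.

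First, I would note that $\Hilb^2(\AA^2)$ is a smooth, irreducible variety of dimension $4$; this is the classical Fogarty theorem for the surface $S = \AA^2$. Next, by the Main Theorem (Theorem \ref{MainTheorem}), $\Hilb^{(m,2)}(\AA^2)$ has rational singularities, and in particular is Cohen--Macaulay; it is moreover irreducible of dimension $2m$ by Theorem \ref{TheoremIrreducibleNested}. Thus the source of the morphism is Cohen--Macaulay and equidimensional, and the target is regular.

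The fiber-dimension condition is precisely the content of Corollary \ref{CorollaryDimensionFibers}: every closed fiber of $\Hilb^{(m,2)}(\AA^2) \to \Hilb^2(\AA^2)$ is irreducible of dimension $2m - 4 = \dim \Hilb^{(m,2)}(\AA^2) - \dim \Hilb^2(\AA^2)$. Since both source and target are of finite type over $\kk$, this translates to the local equality $\dim \mathcal{O}_{\Hilb^{(m,2)}(\AA^2), x} = \dim \mathcal{O}_{\Hilb^2(\AA^2), f(x)} + \dim \mathcal{O}_{f^{-1}(f(x)), x}$ at every closed point $x$ (and hence at every point, by passing to generalizations), which is exactly the hypothesis needed for miracle flatness.

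Applying \cite[Theorem 23.1]{Matsumura} stalk by stalk then gives flatness of the morphism. There is really no obstacle here beyond verifying that the cited criterion applies: the only subtle point is making sure the fiber-dimension statement of Corollary \ref{CorollaryDimensionFibers} is uniform across all points of $\Hilb^2(\AA^2)$, but that corollary is proved precisely at that level of generality.
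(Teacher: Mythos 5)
Your proof proposal is correct and takes essentially the same approach as the paper: the paper also invokes miracle flatness, citing the regularity of $\Hilb^2(\AA^2)$, the Cohen--Macaulayness of $\Hilb^{(m,2)}(\AA^2)$ via Theorem \ref{MainTheorem} and Lemma \ref{LemmaPropertiesRationalSingularities}(5), and the fiber-dimension computation of Corollary \ref{CorollaryDimensionFibers}. The only cosmetic difference is that the paper cites the Stacks Project version of the criterion rather than Matsumura.
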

\begin{proof}
It follows by Miracle Flatness \cite[Lemma 10.128.1]{stacks},
since $\Hilb^2(\AA^2)$ is smooth,
$\Hilb^{(m,2)}(\AA^2)$ is Cohen-Macaulay by Theorem \ref{MainTheorem} and Lemma \ref{LemmaPropertiesRationalSingularities} (5),
and all fibers have the expected dimension by Corollary
\ref{CorollaryDimensionFibers}.
\end{proof}

We point out that the other natural morphism 
$\Hilb^{(m,2)}(\AA^2) \rightarrow \Hilb^m(\AA^2)$ is not flat.
For example, for $m=3$ the fiber over $[\V(\mm^2)]$ has dimension 1,  whereas the fiber over a general $[Z_1]$ is a finite scheme.
This fact contrasts with the case of  $\Hilb^{(m,1)}(\AA^2)$
\cite{Fogarty2,Song},
and is  the main reason why  the 
the study of  $\Hilb^{(m,2)}(\AA^2)$ is much more complicated than $\Hilb^{(m,1)}(\AA^2)$.

\section{Conclusions and open problems}\label{SectionConclusions}

In this paper we  studied the singularities of nested Hilbert schemes $\Hilb^\lambda(\AA^2)$,
where $\lambda$ is an integer partition.
In particular, 
we described the geometry of $\Hilb^{(m,2)}(\AA^2)$
and  proved that it has rational singularities.
Our methods may be used to tackle the problem of singularities for 
other classes of nested Hilbert schemes of $\AA^2$.
In this final section, we discuss some potential future directions, and 
collect the open problems suggested by our work.

\subsection{$F$-singularities}
The analogue of rational singularities for schemes over a field $\kk$ of positive characteristic is the notion of $F$-rational singularities.
It is natural to ask whether $\Hilb^\lambda(\AA^2)$ is $F$-rational,  when $\ch(\kk)>0$, 
for those partitions $\lambda$ such that $\Hilb^\lambda(\AA^2)$ is known to have rational singularities when $\ch(\kk)=0$.
We ask whether the  characteristic $p$ version of  Theorem \ref{MainTheorem} holds:

\begin{question}
Assume $\ch(\kk)>0$. 
Is the nested Hilbert scheme  $\Hilb^{(m,2)}(\AA^2)$  $F$-rational?
\end{question}

The proof of Theorem \ref{MainTheorem}
is mostly characteristic-free;
the assumption  $\ch(\kk)=0$ is required for the analysis of the variety $V(\mfI)$ in Section \ref{SectionGeometricTechnique}, 
where we used the representation theory of the general linear group.
Following our method, we can settle the $F$-rationality  problem for  $\Hilb^{(m,1)}(\AA^2)$, 
for which  the characteristic 0 version was proved in \cite[Theorem 1.1]{Song}
(the argument also gives a new proof of rational singularities in characteristic 0).

\begin{prop}\label{PropositionFRational}
Assume $\ch(\kk)>0$.
The   nested Hilbert scheme  $\Hilb^{(m,1)}(\AA^2)$ is $F$-rational.
\end{prop}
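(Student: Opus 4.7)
The plan is to replicate the strategy used for Theorem \ref{MainTheorem} in Section \ref{SectionProofMainTheorem}, adapted to $\Hilb^{(m,1)}(\AA^2)$ and to the $F$-rational setting. Three features make the argument significantly easier than in the $(m,2)$ case: the target $\Hilb^1(\AA^2) = \AA^2$ is smooth, the morphism $\Hilb^{(m,1)}(\AA^2) \to \Hilb^1(\AA^2)$ is flat with Cohen-Macaulay fibers (Fogarty), and the fiber over the Borel-fixed point will turn out to be a classical determinantal variety whose $F$-rationality is well known.

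First, I would record the $F$-rational analogues of Lemma \ref{LemmaPropertiesRationalSingularities}: openness of the $F$-rational locus for schemes of finite type over an $F$-finite field, the fact that if a flat morphism $\pi: X \to Y$ to a regular base has an $F$-rational fiber at $\pi(p)$, then $X$ is $F$-rational at $p$, stability under smooth products, invariance under completion, and the implication $F$-rational $\Rightarrow$ Cohen-Macaulay and normal. These are all standard (work of Smith, Hara--Watanabe, Hashimoto, Vélez) and hold in our setting. Combined with Lemma \ref{LemmaReductionPlane}, this reduces the problem to $\Hilb^{(m,1)}(\AA^2)$, and via Proposition \ref{PropositionGrobnerDegenerationToCompressed} together with Lemma \ref{LemmaCompleteLocalRings}---both of which are characteristic-free---to showing that $\Hilb^{(m,1)}(\AA^2)$ is $F$-rational at the compressed pair $[\V(\mm^n) \supseteq \V(\mm)]$, where $m = \binom{n+1}{2}$.

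Second, I would mimic Section \ref{SectionLocalEquations}: the chart $\mcV_1 \subseteq \Hilb^1(\P^2)$ of ideals with initial $(x,y)$ is parametrized by $(v_1,v_2) \in \AA^2$ via $(x+v_1 z,\, y + v_2 z)$, and an open neighborhood of the compressed pair in $\Hilb^{(m,1)}(\AA^2)$ is cut out in $\mcW \times \mcV_1$ by an ideal $\mfL_1 \subseteq B_1 := A[v_1,v_2]$ obtained by reducing the maximal minors $\Delta_i$ of $\bfX + z\bfW$ modulo the Gröbner basis of the universal family on $\mcV_1$. The restriction $\mfI_1 = \overline{\mfL_1} \subseteq A$ cuts out the fiber over the origin of $\mcV_1$. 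The argument of Corollary \ref{CorollaryDescriptionSetTheoreticLocusI} carries over verbatim to give, set-theoretically, $\{\bfW : I_n(\bfY+\bfW) \subseteq (y) \subseteq \kk[y]\}$, which (since $\bfY$ vanishes at $y=0$) is exactly the generic determinantal variety $\V(I_n(\bfW))$ of $(n+1)\times n$ matrices of rank $\leq n-1$. Since $I_n(\bfW)$ is already prime, a direct inspection as in Lemma \ref{LemmaDescriptionFi} (with $y$ rather than $y^2$) shows that in fact $\mfI_1 = I_n(\bfW)$ scheme-theoretically.

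Third, I would invoke the classical fact that the generic determinantal variety of maximal minors of an $(n+1)\times n$ matrix is $F$-rational in every characteristic---this follows from its Cohen-Macaulayness (Hochster-Eagon) together with the existence of a squarefree Gröbner degeneration to a vertex-decomposable Stanley-Reisner ring, or directly from its ASL/Hodge algebra structure. Then, following the end of the proof of Theorem \ref{MainTheorem}, the fiber $\V(\mfI_1)$ is $F$-rational at the origin, flatness of $\V(\mfL_1) \to \AA^2$ near the compressed pair follows from Miracle Flatness (as in Corollary \ref{CorollaryFlatMorphism}, since Cohen-Macaulayness is automatic here), and the $F$-rational version of Lemma \ref{LemmaPropertiesRationalSingularities}(2) yields the conclusion. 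The main obstacle I anticipate is purely bibliographic: assembling clean references for each item in the $F$-rational analogue of Lemma \ref{LemmaPropertiesRationalSingularities}, with hypotheses tight enough to apply over our algebraically closed $F$-finite field; none of these require new ideas, but their combined statement should be written out carefully to keep the later sections of the paper applicable without change.
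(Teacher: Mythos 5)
Your proposal is correct and follows essentially the same route as the paper's proof sketch: reduce to the compressed pair $[\V(\mm^n)\supseteq\V(\mm)]$ via the $F$-rational analogues of Lemma \ref{LemmaPropertiesRationalSingularities} together with the characteristic-free Sections \ref{SectionReductionCompressed} and \ref{SectionLocalEquations}, identify the fiber ideal over the origin with $I_n(\bfW)$, and cite $F$-rationality of the generic determinantal variety. The one small improvement you make over a literal repetition of Section \ref{SectionProofMainTheorem} is to obtain flatness directly from Miracle Flatness using Fogarty's prior Cohen-Macaulayness of $\Hilb^{(m,1)}(\AA^2)$, rather than re-running the regular-sequence argument of Propositions \ref{PropositionIntermediateLiPrime}--\ref{PropositionLocalizedLPrime}; this is a legitimate shortcut available precisely because, unlike in the $(m,2)$ case, Cohen-Macaulayness is already known.
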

\begin{proof}[Proof sketch]
Adapting the analysis of Section 5 to the compressed pair $[\V(\mm^n)\supseteq \V(\mm)]$ one obtains polynomials 
$\Gamma_1 = x+v_1 z, \Gamma_2 = y +v_2z$,
with grading  $\deg_2(v_1)=\deg_2(v_2)=1$.
The  ideals produced by the division algorithm are 
$\mfL = (G_1, \ldots, G_{n+1})\subseteq B = A \otimes_\kk \kk[v_1,v_2]$
and 
$\mfI = (F_1, \ldots, F_{n+1})\subseteq A$.
Thus, $\mfI = I_n(\bfW)$ is the (prime)  ideal of a determinantal variety, which is $F$-rational 
\cite[Example 8.12]{MP}.
The theorem now follows as in Section \ref{SectionProofMainTheorem},
since the analogue of Lemma \ref{LemmaPropertiesRationalSingularities} holds for $F$-rationality,
see \cite[Theorem 5.1, Proposition 6.4, Theorem 6.16]{MP}.
\end{proof}

We point out that there are two further known classes of nested Hilbert schemes with rational singularities, namely $\Hilb^{(m+2,m+1,m)}(\AA^2)$ and $\Hilb^{(m+1,m,1)}(\AA^2)$, see \cite[Corollary 5.5, Corollary 5.7]{RT}.

The proof of Proposition \ref{PropositionFRational}  suggests yet another question about $F$-singularities:
determinantal varieties are strongly $F$-regular,
 a stronger condition than $F$-rational.
Thus, we ask:

\begin{question}
Assume $\ch(\kk)>0$. 
Is  $\Hilb^{(m,1)}(\AA^2)$ strongly $F$-regular?
\end{question}

The answer to this question does not follow as in Proposition \ref{PropositionFRational},
since
strong $F$-regularity does not deform \cite[Example 8.9]{MP}.

\subsection{Two-step nested Hilbert schemes}
The most natural  questions concern  the case of arbitrary two-step nested Hilbert schemes
$\Hilb^{(m_1,m_2)}(\AA^2)$.
We observed in Section 3 that their irreducibility  is an open problem: 
\begin{question}
Is the nested Hilbert scheme $\Hilb^{(m_1,m_2)}(\AA^2)$ irreducible for every $m_1>m_2$?
\end{question}

This problem is equivalent to asking whether any arbitrary pair $Z_1 \supseteq Z_2$ of finite subschemes of $\AA^2$ is simultaneously smoothable.
More generally, nothing is known about their singularities, e.g. whether they are reduced, normal, Cohen-Macaulay, or rational.
 
\begin{question}
What are the singularities of   $\Hilb^{(m_1,m_2)}(\AA^2)$?
\end{question}

We point out that our method reduces this question to the study of a fairly concrete object in commutative algebra. 
In fact, applying Section \ref{SectionReductionCompressed}, it suffices to consider 
$\Hilb^{(m_1,m_2)}(\AA^2)$, where $m_1 = {n+1 \choose 2}$, $m_2 = {n \choose 2}$, 
around the compressed pair 
$[\V(\mm^n)\supseteq \V(\mm^{n-1})]$ consisting of two consecutive fat points.
Section \ref{SectionLocalEquations} then produces an explicit ideal $\mfL$, which encodes the containment of two determinantal ideals  associated to generic $(n+1)\times n$ and $n\times(n-1)$ matrices.

\subsection{Square-free initial ideals of varieties of matrices}
A crucial ingredient in our proof of Theorem \ref{MainTheorem}
was the existence of a square-free Gr\"obner degeneration for the variety of matrices $\mfX$.
As discussed in Section \ref{SectionGeometricTechnique},
$\mfX$ is  related to the rank variety $\mathfrak{Y} = X_{(1,1)}$.
More generally, for a partition $\bfv=(v_1, v_2, \ldots)$ of an integer $\ell \leq n$,
we have a rank variety
$$
X_\bfv = \big\{ \bfA\in \Mat(n,n) \, : \, \dim \ker(\bfA^i) \geq v_1 + \cdots + v_i\text{ for all } i\big\}.
$$
These varieties,
introduced in \cite{ES}, are natural generalizations of the well-known nilpotent orbit closures (which correspond to the case $\ell = n$).
Like the variety $\mfX$, they are  irreducible with rational singularities; in addition, they are also Gorenstein.
Motivated by \cite{CHT,CV} we ask:
\begin{question}
For which   $\bfv$ does 
$X_\bfv$ admit a (Gorenstein) square-free Gr\"obner degeneration?
\end{question}

A final question, of combinatorial nature, concerns the simplicial complex $\Delta$ of Section \ref{SectionSquarefreeInitial}.

\begin{question}
Is  $\Delta$ Cohen-Macaulay in all characteristics? 
Is it shellable, or vertex-decomposable?
\end{question}

\subsection*{Acknowledgments}
The authors would like to thank
David Eisenbud, Alessandro De Stefani, Paolo Lella, Michael Perlman, and Jerzy Weyman for  helpful conversations.
They also thank the anonymous referees for their suggestions. 
Alessio Sammartano was partially supported by 
 PRIN 2020355B8Y “Squarefree Gr\"obner degenerations, special varieties and related topics”.
Computations with Macaulay2 \cite{Macaulay2} 
 provided valuable insights during the preparation of this paper.

\bibliographystyle{amsalpha} 
\bibliography{references}

@Article{Addington,
    AUTHOR = {Addington, Nicolas},
     TITLE = {
     \href{http://content.algebraicgeometry.nl/2016-2/2016-2-011.pdf}
     {New derived symmetries of some hyperk\"{a}hler varieties}},
   JOURNAL = {Algebr. Geom.},
  FJOURNAL = {Algebraic Geometry},
    VOLUME = {3},
      YEAR = {2016},
    NUMBER = {2},
     PAGES = {223--260},
}

@Article{BehrendFantechi,
   author={Behrend, Kai and Fantechi, Barbara},
   title={
   \href{https://projecteuclid.org/euclid.ant/1513797253}
   {Symmetric obstruction theories and Hilbert schemes of points on
   threefolds}},
   journal={Algebra Number Theory},
   volume={2},
   date={2008},
   number={3},
   pages={313--345},
}

@Article{Briancon,
   author={Brian\c{c}on, Jo\"{e}l},
   title={
   \href{https://link.springer.com/article/10.1007/BF01390164}
   {Description de $\Hilb^n \mathbb{C}\{x, y\}$}},
   journal={Invent. Math.},
   volume={41},
   date={1977},
   number={1},
   pages={45--89},
}

@Article{BBKT,
   author={Buczy{\'n}ska, Weronika and Buczy{\'n}ski, Jaros{\l}aw and Kleppe, Johannes and Teitler, Zach},
   title={
   \href{https://projecteuclid.org/journals/michigan-mathematical-journal/volume-64/issue-4/Apolarity-and-direct-sum-decomposability-of-polynomials/10.1307/mmj/1447878029.full}
   {Apolarity and direct sum decomposability of polynomials}},
   journal={Michigan Math. J.},
   volume={64},
   date={2015},
   number={4},
   pages={675--719},
}

@Article{Cheah,
   author={Cheah, Jan},
   title={
   \href{https://msp.org/pjm/1998/183-1/p04.xhtml}
   {Cellular decompositions for nested Hilbert schemes of points}},
   journal={Pacific J. Math.},
   volume={183},
   date={1998},
   number={1},
   pages={39--90},
}

@Book{CheahThesis,
   author={Cheah, Jan},
   title={
   \href{https://www.proquest.com/docview/304136368\#}
   {The cohomology of smooth nested Hilbert schemes of points}},
   note={Thesis (Ph.D.)--The University of Chicago},
   publisher={ProQuest LLC, Ann Arbor, MI},
   year={1994},
   pages={237},
}

@incollection{CHT,
    AUTHOR = {Conca, Aldo and Ho\c{s}ten, Serkan and Thomas, Rekha R.},
     TITLE = {
     \href{https://bookstore.ams.org/conm-423}
     {Nice initial complexes of some classical ideals}},
 BOOKTITLE = {Algebraic and geometric combinatorics},
    SERIES = {Contemp. Math.},
    VOLUME = {423},
     PAGES = {11--42},
 PUBLISHER = {Amer. Math. Soc., Providence, RI},
      YEAR = {2006},
}

@article {CV,
    AUTHOR = {Conca, Aldo and Varbaro, Matteo},
     TITLE = {
     \href{https://link.springer.com/article/10.1007/s00222-020-00958-7}
    {Square-free {G}r\"{o}bner degenerations}},
   JOURNAL = {Invent. Math.},
  FJOURNAL = {Inventiones Mathematicae},
    VOLUME = {221},
      YEAR = {2020},
    NUMBER = {3},
     PAGES = {713--730},
}

@article {Constantinescu,
    AUTHOR = {Constantinescu, Alexandru},
     TITLE = {
     \href{https://www.sciencedirect.com/science/article/pii/S0021869311004546}
     {Parametrizations of ideals in $\kk[x,y]$ and $\kk[x,y,z]$}},
   JOURNAL = {J. Algebra},
  FJOURNAL = {Journal of Algebra},
    VOLUME = {346},
      YEAR = {2011},
     PAGES = {1--30},
}

@article {CDM,
    AUTHOR = {Constantinescu, Alexandru and De Negri, Emanuela and Varbaro,
              Matteo},
    TITLE = {
    \href{https://doi.org/10.1112/blms.12358}
    {Singularities and radical initial ideals}},
    JOURNAL = {Bull. Lond. Math. Soc.},
    FJOURNAL = {Bulletin of the London Mathematical Society},
    VOLUME = {52},
    YEAR = {2020},
    NUMBER = {4},
    PAGES = {674--686},
}

@book {Eisenbud,
    AUTHOR = {Eisenbud, David},
     TITLE = {
     \href{https://www.springer.com/gp/book/9780387942681}
    {Commutative algebra. With a view toward algebraic geometry}},
    SERIES = {Graduate Texts in Mathematics},
    VOLUME = {150},
 PUBLISHER = {Springer-Verlag, New York},
      YEAR = {1995},
     PAGES = {xvi+785},
}

@article {Elkik,
    AUTHOR = {Elkik, Ren\'{e}e},
     TITLE = {
     \href{https://link.springer.com/article/10.1007\%2FBF01578068?LI=true}
     {Singularit\'{e}s rationnelles et d\'{e}formations}},
   JOURNAL = {Invent. Math.},
  FJOURNAL = {Inventiones Mathematicae},
    VOLUME = {47},
      YEAR = {1978},
    NUMBER = {2},
     PAGES = {139--147},
}

@article {Fogarty,
    AUTHOR = {Fogarty, John},
     TITLE = {
     \href{https://www.jstor.org/stable/2373541}
     {Algebraic families on an algebraic surface}},
   JOURNAL = {Amer. J. Math.},
  FJOURNAL = {American Journal of Mathematics},
    VOLUME = {90},
      YEAR = {1968},
     PAGES = {511--521},
}

@article {Fogarty2,
    AUTHOR = {Fogarty, John},
     TITLE = {
     \href{https://www.jstor.org/stable/2373734}
     {Algebraic families on an algebraic surface. {II}. {T}he
              {P}icard scheme of the punctual {H}ilbert scheme}},
   JOURNAL = {Amer. J. Math.},
  FJOURNAL = {American Journal of Mathematics},
    VOLUME = {95},
      YEAR = {1973},
     PAGES = {660--687},
}

@inCollection {ES,
    AUTHOR = {Eisenbud, David and Saltman, David},
     TITLE = {
     \href{https://link.springer.com/chapter/10.1007/978-1-4612-3660-3_9}
     {Rank varieties of matrices}},
 BOOKTITLE = {Commutative algebra ({B}erkeley, {CA}, 1987)},
    SERIES = {Math. Sci. Res. Inst. Publ.},
    VOLUME = {15},
     PAGES = {173--212},
 PUBLISHER = {Springer, New York},
      YEAR = {1989},
}

@Book {FultonHarris,
    AUTHOR = {Fulton, William and Harris, Joe},
     TITLE = {
     \href{https://www.springer.com/gp/book/9780387975276}
     {Representation theory. A first course}},
    SERIES = {Graduate Texts in Mathematics},
    VOLUME = {129},
 PUBLISHER = {Springer-Verlag, New York},
      YEAR = {1991},
     PAGES = {xvi+551},
}

@article {GH,
    AUTHOR = {Geertsen, J{\o}rgen Anders and Hirschowitz, Andr{\'e}},
     TITLE = {
     \href{https://doi.org/10.1081/AGB-120039277}
     {On the stratification of nested {H}ilbert schemes}},
   JOURNAL = {Comm. Algebra},
  FJOURNAL = {Communications in Algebra},
    VOLUME = {32},
      YEAR = {2004},
    NUMBER = {8},
     PAGES = {3025--3041},
}

@misc{Macaulay2,
  title={
  \href{http://www.math.uiuc.edu/Macaulay2/}
  {Macaulay2, a software system for research in algebraic geometry}},
  author={Grayson, Daniel R and Stillman, Michael E},
  year={2002}
}

@book {Harris,
    AUTHOR = {Harris, Joe},
     TITLE = {
     \href{https://www.springer.com/gp/book/9780387977164}
     {Algebraic geometry. A first course}},
    SERIES = {Graduate Texts in Mathematics},
    VOLUME = {133},
 PUBLISHER = {Springer-Verlag, New York},
      YEAR = {1995},
     PAGES = {xx+328},
}

@book {hartshornedef,
    AUTHOR = {Hartshorne, Robin},
     TITLE = {
     \href{https://www.springer.com/gp/book/9781441915955}
     {Deformation theory}},
    SERIES = {Graduate Texts in Mathematics},
    VOLUME = {257},
 PUBLISHER = {Springer, New York},
      YEAR = {2010},
     PAGES = {viii+234},
}

@article {Iarrobino,
    AUTHOR = {Iarrobino, Anthony},
     TITLE = {
     \href{https://doi.org/10.1007/BF01418644}
     {Reducibility of the families of {$0$}-dimensional schemes on a
              variety}},
   JOURNAL = {Invent. Math.},
  FJOURNAL = {Inventiones Mathematicae},
    VOLUME = {15},
      YEAR = {1972},
     PAGES = {72--77},
}

@article {KM,
    AUTHOR = {Knutson, Allen and Miller, Ezra},
     TITLE = {
     \href{https://www.jstor.org/stable/3597357}
     {Gr\"{o}bner geometry of {S}chubert polynomials}},
   JOURNAL = {Ann. of Math. (2)},
  FJOURNAL = {Annals of Mathematics. Second Series},
    VOLUME = {161},
      YEAR = {2005},
    NUMBER = {3},
     PAGES = {1245--1318},
}

@article {Lipman,
    AUTHOR = {Lipman, Joseph},
     TITLE = {
     \href{http://www.numdam.org/item/?id=PMIHES_1969__36__195_0}
     {Rational singularities, with applications to algebraic
              surfaces and unique factorization}},
   JOURNAL = {Inst. Hautes \'{E}tudes Sci. Publ. Math.},
  FJOURNAL = {Institut des Hautes \'{E}tudes Scientifiques. Publications
              Math\'{e}matiques},
    NUMBER = {36},
      YEAR = {1969},
     PAGES = {195--279},
}

@article {Lorincz,
    AUTHOR = {L\H{o}rincz, Andr\'{a}s Cristian},
     TITLE = {
     \href{https://www.ams.org/journals/proc/2021-149-05/S0002-9939-2021-15284-2/}
     {Minimal free resolutions of ideals of minors associated to
              pairs of matrices}},
   JOURNAL = {Proc. Amer. Math. Soc.},
  FJOURNAL = {Proceedings of the American Mathematical Society},
    VOLUME = {149},
      YEAR = {2021},
    NUMBER = {5},
     PAGES = {1857--1873},
}

@misc{MP,
  title={
  \href{https://www.math.purdue.edu/~ma326/F-singularitiesBook.pdf}
  {F-singularities: a commutative algebra approach (preliminary version)}},
  author={Ma, Linquan  and Polstra, Thomas},
  year = {2021},
}

@book {Mumford,
    AUTHOR = {Mumford, David},
     TITLE = {
     \href{https://link.springer.com/book/10.1007/b62130}
     {The red book of varieties and schemes}},
    SERIES = {Lecture Notes in Mathematics},
    VOLUME = {1358},
   EDITION = {expanded},
 PUBLISHER = {Springer-Verlag, Berlin},
      YEAR = {1999},
     PAGES = {x+306},
}

@article {NS,
    AUTHOR = {Notari, Roberto and Spreafico, Maria L.},
     TITLE = {
     \href{https://link.springer.com/article/10.1007/s002290050225}
     {A stratification of {H}ilbert schemes by initial ideals and
              applications}},
   JOURNAL = {Manuscripta Math.},
  FJOURNAL = {Manuscripta Mathematica},
    VOLUME = {101},
      YEAR = {2000},
    NUMBER = {4},
     PAGES = {429--448},
}

@article{RT,
  title={
  \href{https://arxiv.org/abs/2106.06570}
  {Irreducibility and singularities of some nested Hilbert schemes}},
  author={Ryan, Tim and Taylor, Gregory},
  journal={arXiv preprint},
  year={2021}
}

@article {RY,
    AUTHOR = {Ryan, Tim and Yang, Ruijie},
     TITLE = {
     \href{https://doi.org/10.1093/imrn/rny088}
     {Nef cones of nested {H}ilbert schemes of points on surfaces}},
   JOURNAL = {Int. Math. Res. Not. IMRN},
  FJOURNAL = {International Mathematics Research Notices. IMRN},
      YEAR = {2020},
    NUMBER = {11},
     PAGES = {3260--3294},
}

@article {S79,
    AUTHOR = {Sally, Judith D.},
     TITLE = {
     \href{https://londmathsoc.onlinelibrary.wiley.com/doi/abs/10.1112/jlms/s2-20.1.19}
     {Stretched {G}orenstein rings}},
   JOURNAL = {J. London Math. Soc. (2)},
  FJOURNAL = {Journal of the London Mathematical Society. Second Series},
    VOLUME = {20},
      YEAR = {1979},
    NUMBER = {1},
     PAGES = {19--26},
}

@article {Schaps,
    AUTHOR = {Schaps, Mary},
     TITLE = {
     \href{https://londmathsoc.onlinelibrary.wiley.com/doi/abs/10.1112/jlms/s2-20.1.19}
     {Deformations of {C}ohen-{M}acaulay schemes of codimension
              {$2$} and non-singular deformations of space curves}},
   JOURNAL = {Amer. J. Math.},
  FJOURNAL = {American Journal of Mathematics},
    VOLUME = {99},
      YEAR = {1977},
    NUMBER = {4},
     PAGES = {669--685},
}

@book {S06,
    AUTHOR = {Sernesi, Edoardo},
     TITLE = {
     \href{https://www.jstor.org/stable/2373859?seq=1\#metadata_info_tab_contents}
     {Deformations of algebraic schemes}},
    SERIES = {Grundlehren der Mathematischen Wissenschaften [Fundamental
              Principles of Mathematical Sciences]},
    VOLUME = {334},
 PUBLISHER = {Springer-Verlag, Berlin},
      YEAR = {2006},
     PAGES = {xii+339},
}

@article {Song,
    AUTHOR = {Song, Lei},
     TITLE = {
     \href{https://www.sciencedirect.com/science/article/pii/S0021869316001393?via\%3Dihub}
     {On the universal family of {H}ilbert schemes of points on a
              surface}},
   JOURNAL = {J. Algebra},
  FJOURNAL = {Journal of Algebra},
    VOLUME = {456},
      YEAR = {2016},
     PAGES = {348--354},
}

@article{stacks,
  title={
  \href{https://stacks.math.columbia.edu/}
  {Stacks project}},
  author={Stacks Project Authors},
  year={2019}
}

@article {Weyman,
    AUTHOR = {Weyman, Jerzy},
     TITLE = {
     \href{https://link.springer.com/content/pdf/10.1007/BF01388851.pdf}
     {The equations of conjugacy classes of nilpotent matrices}},
   JOURNAL = {Invent. Math.},
  FJOURNAL = {Inventiones Mathematicae},
    VOLUME = {98},
      YEAR = {1989},
    NUMBER = {2},
     PAGES = {229--245},
      ISSN = {0020-9910},
}

@book {WeymanBook,
    AUTHOR = {Weyman, Jerzy},
     TITLE = {
     \href{https://www.cambridge.org/core/books/cohomology-of-vector-bundles-and-syzygies/CBC929779F70722D5D67ACDCE5970087}
     {Cohomology of vector bundles and syzygies}},
    SERIES = {Cambridge Tracts in Mathematics},
    VOLUME = {149},
 PUBLISHER = {Cambridge University Press, Cambridge},
      YEAR = {2003},
     PAGES = {xiv+371},
}

@book {Matsumura,
    AUTHOR = {Matsumura, Hideyuki},
     TITLE = {
     \href{https://www.cambridge.org/core/books/commutative-ring-theory/02819830750568B06C16E6199F3562C1}
     {Commutative ring theory}},
    SERIES = {Cambridge Studies in Advanced Mathematics},
    VOLUME = {8},
      NOTE = {Translated from the Japanese by M. Reid},
 PUBLISHER = {Cambridge University Press, Cambridge},
      YEAR = {1986},
     PAGES = {xiv+320},
}

@article {ABCH,
    AUTHOR = {Arcara, Daniele and Bertram, Aaron and Coskun, Izzet and
              Huizenga, Jack},
     TITLE = {
     \href{https://www.sciencedirect.com/science/article/pii/S0001870812004598}
     {The minimal model program for the {H}ilbert scheme of points
              on {$\Bbb{P}^2$} and {B}ridgeland stability}},
   JOURNAL = {Adv. Math.},
  FJOURNAL = {Advances in Mathematics},
    VOLUME = {235},
      YEAR = {2013},
     PAGES = {580--626},
}

@Article{HAIMAN,
   author={Haiman, Mark},
   title={
   \href{https://www.jstor.org/stable/827120}
   {Hilbert schemes, polygraphs and the Macdonald positivity
   conjecture}},
   journal={J. Amer. Math. Soc.},
   volume={14},
   date={2001},
   number={4},
   pages={941--1006},
}

@Article{BEAUVILLE,
   author={Beauville, Arnaud},
   title={
   \href{https://projecteuclid.org/journals/journal-of-differential-geometry/volume-18/issue-4/Vari\%C3\%A9t\%C3\%A9s-K\%C3\%A4hleriennes-dont-la-premi\%C3\%A8re-classe-de-Chern-est-nulle/10.4310/jdg/1214438181.full}
   {Vari\'{e}t\'{e}s K\"{a}hleriennes dont la premi\`ere classe de Chern est nulle}},
   journal={J. Differential Geom.},
   volume={18},
   date={1983},
   number={4},
   pages={755--782},
}

@Article{NAKAJIMA,
   author={Nakajima, Hiraku},
   title={
   \href{https://www.jstor.org/stable/2951818}
   {Heisenberg algebra and Hilbert schemes of points on projective
   surfaces}},
   journal={Ann. of Math. (2)},
   volume={145},
   date={1997},
   number={2},
   pages={379--388},
}

@article {ELLINGSRUD_STROMME,
    AUTHOR = {Ellingsrud, Geir and Str{\o}mme, Stein A.},
     TITLE = {
     \href{https://link.springer.com/content/pdf/10.1007/BF01389419.pdf}
     {On the homology of the {H}ilbert scheme of points in the
              plane}},
   JOURNAL = {Invent. Math.},
  FJOURNAL = {Inventiones Mathematicae},
    VOLUME = {87},
      YEAR = {1987},
    NUMBER = {2},
     PAGES = {343--352},
}

@book {NakajimaBook,
    AUTHOR = {Nakajima, Hiraku},
     TITLE = {
     \href{https://www.ams.org/books/ulect/018/}
     {Lectures on {H}ilbert schemes of points on surfaces}},
    SERIES = {University Lecture Series},
    VOLUME = {18},
 PUBLISHER = {American Mathematical Society, Providence, RI},
      YEAR = {1999},
     PAGES = {xii+132},
}

@article {GSY,
    AUTHOR = {Gholampour, Amin and Sheshmani, Artan and Yau, Shing-Tung},
     TITLE = {
     \href{https://www.sciencedirect.com/science/article/abs/pii/S0001870820300712}
     {Nested {H}ilbert schemes on surfaces: virtual fundamental
              class}},
   JOURNAL = {Adv. Math.},
  FJOURNAL = {Advances in Mathematics},
    VOLUME = {365},
      YEAR = {2020},
     PAGES = {107046},
}

@article {GNR,
    AUTHOR = {Gorsky, Eugene and Negu\c{t}, Andrei and Rasmussen, Jacob},
     TITLE = {
     \href{https://www.sciencedirect.com/science/article/abs/pii/S0001870820305703}
     {Flag {H}ilbert schemes, colored projectors and
              {K}hovanov-{R}ozansky homology}},
   JOURNAL = {Adv. Math.},
  FJOURNAL = {Advances in Mathematics},
    VOLUME = {378},
      YEAR = {2021},
     PAGES = {107542},
}

@article {SV,
    AUTHOR = {Schiffmann, Olivier and Vasserot, Eric},
     TITLE = {
     \href{https://projecteuclid.org/journals/duke-mathematical-journal/volume-162/issue-2/----Custom-HTML----The/10.1215/00127094-1961849.short}
    {The elliptic {H}all algebra and the {$K$}-theory of the
              {H}ilbert scheme of {$\Bbb A^2$}}},
   JOURNAL = {Duke Math. J.},
  FJOURNAL = {Duke Mathematical Journal},
    VOLUME = {162},
      YEAR = {2013},
    NUMBER = {2},
     PAGES = {279--366},
}

@article {BFT,
    AUTHOR = {Bonelli, Giulio and Fasola, Nadir and Tanzini, Alessandro},
     TITLE = {
     \href{https://doi.org/10.1007/s11005-021-01366-5}
     {Defects, nested instantons and comet-shaped quivers}},
   JOURNAL = {Lett. Math. Phys.},
  FJOURNAL = {Letters in Mathematical Physics},
    VOLUME = {111},
      YEAR = {2021},
     PAGES = {1--53},
     }

@article {OR,
    AUTHOR = {Oblomkov, Alexei and Rozansky, Lev},
     TITLE = {
     \href{https://doi.org/10.1007/s00029-017-0385-8}
     {Knot homology and sheaves on the {H}ilbert scheme of points on
              the plane}},
   JOURNAL = {Selecta Math. (N.S.)},
  FJOURNAL = {Selecta Mathematica. New Series},
    VOLUME = {24},
      YEAR = {2018},
    NUMBER = {3},
     PAGES = {2351--2454},
}

@article {VFJ,
    AUTHOR = {von Flach, Rodrigo A. and Jardim, Marcos},
     TITLE = {
     \href{https://doi.org/10.1016/j.geomphys.2017.01.019}
     {Moduli spaces of framed flags of sheaves on the projective
              plane}},
   JOURNAL = {J. Geom. Phys.},
  FJOURNAL = {Journal of Geometry and Physics},
    VOLUME = {118},
      YEAR = {2017},
     PAGES = {138--168},
}

@article {BE,
    AUTHOR = {Bulois, Micha\"{e}l and Evain, Laurent},
     TITLE = {
     \href{https://arxiv.org/abs/1306.4838}
     {Nested punctual {H}ilbert schemes and commuting varieties of
              parabolic subalgebras}},
   JOURNAL = {J. Lie Theory},
  FJOURNAL = {Journal of Lie Theory},
    VOLUME = {26},
      YEAR = {2016},
    NUMBER = {2},
     PAGES = {497--533},
}

@article {JJ,
    AUTHOR = {Jelisiejew, Joachim},
     TITLE = {
     \href{https://link.springer.com/article/10.1007/s00222-019-00939-5}
     {Pathologies on the {H}ilbert scheme of points}},
   JOURNAL = {Invent. Math.},
  FJOURNAL = {Inventiones Mathematicae},
    VOLUME = {220},
      YEAR = {2020},
    NUMBER = {2},
     PAGES = {581--610},
}

@article {JJ2,
    AUTHOR = {Jelisiejew, Joachim},
     TITLE = {
     \href{https://londmathsoc.onlinelibrary.wiley.com/doi/abs/10.1112/jlms.12212}
     {Elementary components of {H}ilbert schemes of points}},
   JOURNAL = {J. Lond. Math. Soc. (2)},
  FJOURNAL = {Journal of the London Mathematical Society. Second Series},
    VOLUME = {100},
      YEAR = {2019},
    NUMBER = {1},
     PAGES = {249--272},
}

@inproceedings {Gottsche,
    AUTHOR = {G\"{o}ttsche, Lothar},
     TITLE = {
     \href{https://arxiv.org/abs/math/0304302}
     {Hilbert schemes of points on surfaces}},
 BOOKTITLE = {Proceedings of the {I}nternational {C}ongress of
              {M}athematicians, {V}ol. {II} ({B}eijing, 2002)},
     PAGES = {483--494},
 PUBLISHER = {Higher Ed. Press, Beijing},
      YEAR = {2002},
}

@article {Negut,
    AUTHOR = {Negu\c{t}, Andrei},
     TITLE = {
     \href{https://arxiv.org/pdf/1804.03645}
     {Hecke correspondences for smooth moduli spaces of sheaves}},
       journal={arXiv preprint},
  year={2018},
}

@article {Szachniewicz,
    AUTHOR = {Szachniewicz, Micha{\l}},
     TITLE = {
     \href{https://arxiv.org/abs/2109.11805}
     {Non-reducedness of the Hilbert schemes of few points}},
       journal={arXiv preprint},
  year={2021},
}

@article{DS,
  AUTHOR={Dimca, Alexandru and Szendroi, Bal{\'a}zs},
  TITLE={
  \href{https://dx.doi.org/10.4310/MRL.2009.v16.n6.a12}
  {The Milnor fibre of the Pfaffian and the Hilbert scheme of four points on $C^3$}},
  journal={Math. Res. Lett.},
  volume={16},
  number={5},
  pages={1037--1055},
  year={2009},
  publisher={International Press},
}

\end{document}